\address{Max-Planck-Institut f\"ur Mathematik in den Naturwissenschaften. Inselstraße 22, 04103 Leipzig, Germany.}
  \email{vasirog[at]gmail.com}
\DeclareSymbolFont{cyrletters}{OT2}{wncyr}{m}{n}
\DeclareMathSymbol{\Sha}{\mathalpha}{cyrletters}{"58}
\begin{document}
\renewcommand{\refname}{Bibliography}
\newtheorem{prop}{Proposition}[section]
\newtheorem{thrm}[prop]{Theorem}
\newtheorem{lemma}[prop]{Lemma}
\newtheorem{cor}[prop]{Corollary}
\newtheorem{mainthm}{Theorem}
\newtheorem{maincor}[mainthm]{Corollary}
\theoremstyle{definition}
\newtheorem{df}{Definition}
\newtheorem{ex}{Example}
\newtheorem{rmk}{Remark}
\newtheorem{conj}{Conjecture}
\newtheorem{cl}{Claim}
\newtheorem{q}{Question}
\newtheorem{constr}{Construction}
\renewcommand{\proofname}{\textnormal{\textbf{Proof:  }}}
\renewcommand{\refname}{Bibliography}
\renewcommand{\themainthm}{\Alph{mainthm}}
\renewcommand{\themaincor}{\Alph{maincor}}

\renewcommand{\phi}{\varphi}
\renewcommand{\epsilon}{\varepsilon}

\renewcommand{\C}{\mathbb C}
\newcommand{\Z}{\mathbb Z}
\newcommand{\Q}{\mathbb Q}
\newcommand{\R}{\mathbb R}
\newcommand{\N}{\mathbb N}
\newcommand{\Fp}{\mathbb{F}_p}
\newcommand{\Fq}{\mathbb{F}_q}

\renewcommand{\O}{\mathcal O}
\newcommand{\g}{\mathfrak g}
\newcommand{\h}{\mathfrak h}
\newcommand{\E}{\mathcal E}
\newcommand{\F}{\mathcal F}
\newcommand{\m}{\mathfrak{m}}

\renewcommand{\i}{\sqrt{-1}}
\renewcommand{\o}{\otimes}
\newcommand{\di}{\partial}
\newcommand{\acts}{\lefttorightarrow}
\newcommand{\dibar}{\overline{\partial}}
\newcommand{\im}{\operatorname{im}}
\renewcommand{\ker}{\operatorname{ker}}
\newcommand{\Hom}{\operatorname{Hom}}
\newcommand{\tr}{\operatorname{tr}}
\newcommand{\codim}{\operatorname{codim}}
\newcommand{\rk}{\operatorname{rk}}
\newcommand{\nilp}{\operatorname{nilp}}
\newcommand{\hdot}{{\:\raisebox{3pt}{\text{\circle*{1.5}}}}}
\newcommand{\Supp}{\operatorname{Supp}}
\newcommand{\Alb}{\operatorname{Alb}}
\newcommand{\alb}{\operatorname{alb}}
\newcommand{\Hilb}{\operatorname{Hilb}}
\newcommand{\Sh}{\operatorname{Sh}}
\newcommand{\sh}{\operatorname{sh}}
\newcommand{\CP}{\mathbb{C}\mathbf{P}}
\newcommand{\Isom}{\operatorname{Isom}}
\newcommand{\Sym}{\operatorname{Sym}}
\newcommand{\Stab}{\operatorname{Stab}}
\newcommand{\Aut}{\operatorname{Aut}}
\newcommand{\sslash}{\mathbin{/\mkern-6mu/}}
\newcommand{\Pic}{\operatorname{Pic}}
\newcommand{\V}{\mathbb{V}}
\newcommand{\alg}{\operatorname{alg}}
\newcommand{\Ext}{\operatorname{Ext}}
\newcommand{\MHS}{\operatorname{MHS}}
\newcommand{\HS}{\operatorname{HS}}
\newcommand{\Gr}{\operatorname{Gr}}
\newcommand{\End}{\operatorname{End}}
\newcommand{\odef}{\operatorname{def}}

\newcommand{\GL}{\operatorname{GL}}
\newcommand{\SL}{\operatorname{SL}}
\newcommand{\SU}{\operatorname{SU}}
\renewcommand{\U}{\operatorname{U}}
\newcommand{\SO}{\operatorname{SO}}
\newcommand{\Ogr}{\operatorname{O}}
\newcommand{\Sp}{\operatorname{Sp}}

\newcommand{\gl}{\mathfrak{gl}}
\renewcommand{\sl}{\mathfrak{sl}}
\newcommand{\su}{\mathfrak{su}}
\renewcommand{\u}{\mathfrak{u}}
\newcommand{\so}{\mathfrak{so}}
\renewcommand{\sp}{\mathfrak{sp}}
\renewcommand{\g}{\mathfrak{g}}
\renewcommand{\h}{\mathfrak{h}}
\newcommand{\z}{\mathfrak{z}}
\newcommand{\ad}{\operatorname{ad}}
\newcommand{\cd}{\operatorname{cd}}
\newcommand{\an}{\operatorname{an}}
\newcommand{\orb}{\operatorname{orb}}
\newcommand{\Trop}{\operatorname{Trop}}
\newcommand{\Hdg}{\operatorname{Hdg}}
\newcommand{\T}{\mathbb{T}}
\newcommand{\Uu}{\mathcal{U}}
\newcommand{\Zz}{\mathcal{Z}}
\newcommand{\D}{\mathcal{D}}
\newcommand{\M}{\mathcal{M}}
\newcommand{\K}{\mathbb{K}}
\renewcommand{\V}{\mathbb{V}}
\newcommand{\G}{\mathcal{G}}
\newcommand{\Ppic}{\mathbf{Pic}}
\title{o-minimal geometry of higher Albanese manifolds}

\author{Vasily Rogov}

\begin{abstract}
Let X be a normal  quasi-projective variety over $\C$. We study its higher Albanese manifolds, introduced by Hain and Zucker, from the point of view of o-minimal geometry. We show that for each $s$ the higher Albanese manifold $\Alb^s(X)$ can be functorially endowed with a structure of an $\R_{\alg}$-definable complex manifold in such a way that the natural projections $\Alb^s(X) \to \Alb^{s-1}(X)$ are $\R_{\alg}$-definable and the higher Albanese maps $\alb^s \colon X^{\an} \to \Alb^s(X)$ are $\R_{\an, \exp}$-definable.

Suppose that for some $s \ge 3$ the definable manifold $\Alb^s(X)$ is definably biholomorphic to a quasi-projective variety. We show that in this case the higher Albanese tower stabilises at the second step, i.e. the maps $\Alb^r (X) \to \Alb^{r-1}(X)$ are isomorphisms for $r\ge 3$. It follows that if $\alb^s \colon X^{\an} \to \Alb^s(X)$ is dominant for some $s \ge 3$, then the higher Albanese tower stabilises at the second step and the pro-unipotent completion of $\pi_1(X)$ is at most 2-step nilpotent. This confirms a special case of a conjecture by Campana on nilpotent fundamental groups of algebraic varieties.

As another application, we construct explicit models for nilpotent Shafarevich reductions.
\end{abstract}

\dedicatory{In memory of Tobias Kreutz}

\maketitle


\section{Introduction}

\subsection{Higher Albanese manifolds}

Let $X$ be a normal quasi-projective variety over $\C$. Classically, one associates with it a semiabelian variety, known as \emph{Albanese manifold} $\Alb(X)$, and an algebraic map (the \emph{Albanese map}) $\alb \colon X \to \Alb(X)$. Topologically, it is related to $X$ by the canonical isomorphism
\[
\pi_1(\Alb(X)) \simeq \pi_1(X)^{\operatorname{ab}}/\operatorname{torsion}
\]
and the complex structure on $\Alb(X)$ is determined by the mixed Hodge structure on $H_1(X, \Z)$. Therefore, the Albanese manifold can be viewed as the geometric incarnation of the degree $1$ part of the Hodge theory of $X$ (or, more vaguely, as the shadow of $X$ in the world of $1$-motives). The theory of higher Albanese manifolds is developed in \cite{HZ} and allows one to extend  this construction from $H_1(X, \Z)$ to the nilpotent quotients of $\pi_1(X)$.

For a finitely generated group, $\Gamma$ we denote by $\Gamma_s$ its lower central series and by $\Gamma^s:=\Gamma/\Gamma_s$ its universal nilpotent quotients. For every natural number $s$, there exists a connected unipotent $\Q$-algebraic group $\G^s_{\Q}$ and a representation $\mu^s \colon \Gamma \to \G^s_{\Q}$  such that every Zariski dense $s$-step unipotent representation of $\Gamma$ over $\Q$ factorises through $\mu^s$ (these are the nilpotent quotients of the Malcev completion of $\Gamma$). The image $\G_{\Z}^{s}:=\mu^s(\Gamma)$ is a discrete Zariski dense subgroup of $\G^s_{\Q}(\Q)$ and is isomorphic to $\Gamma^s$ modulo torsion.

Suppose now that $\Gamma$ is the fundamental group of a normal complex quasi-projective variety $X$. In this case, the Lie algebra $\g^s=\operatorname{Lie}(\G^s_{\Q}(\Gamma))$ carries a functorial mixed Hodge structure $(W_{\bullet}\g^s, F^{\bullet}\g^s)$ consistent with the Lie bracket. In particular, $[F^p\g^s, F^q\g^s] \subseteq F^{p+q}\g^s$ and $F^0\G^s:=\exp(F^0\g^s)$ is a closed subgroup of $\G^s_{\Q}(\C)$. The $s$-th Albanese manifold of $X$ is defined as
 \[
 \Alb^s(X):=\G^s_{\Z} \backslash \G^s_{\Q}(\C)/F^0\G^s.
 \]
 This is always a smooth complex manifold; its fundamental group is isomorphic to $\G^s_{\Z}$, and  its universal cover is biholomorphic to $\C^d$. A morphism of normal varieties $f \colon X \to Y$ induces a holomorphic map $\Alb^s(f) \colon \Alb^s(X) \to \Alb^s(Y)$. Higher Albanese manifolds are related  to each other by  holomorphic projections $p^s \colon \Alb^s(X) \to \Alb^{s-1}(X)$, and for $s=1$ the construction recovers the classical Albanese manifold.

 Hain and Zucker constructed (\cite{HZ}) holomorphic maps $\alb^s \colon X^{\an} \to \Alb^s(X)$ that generalise the classical Albanese map and, together with the projections $p^s$, form a commutative diagram
\begin{equation}\label{diagram}
\xymatrix{
& \vdots \ar[d]^{p^{s+1}}\\
&\Alb^s(X) \ar[d]^{p^s} \\
& \vdots \ar[d]^{p^3}\\
& \Alb^2(X) \ar[d]^{p^2}\\
X^{\an} \ar[r]_{\alb} \ar[ru]^{\alb^2} \ar[ruuu]^{\alb^s}& \Alb(X).
}
\end{equation}

The induced homomorphisms $\alb^s_{*} \colon \pi_1(X) \to \pi_1(\Alb^s(X))=\G^s_{\Z}$ coincide with the canonical maps $\mu^s \colon \pi_1(X) \to \G^s_{\Z}=(\pi_1(X)))^s/(\operatorname{torsion})$. Higher Albanese maps share the following universal property: every period map of an $s$-step unipotent admissible polarisable variation of mixed $\Z$-Hodge structures on $X$ factorises through $\alb^s$ (\cite[Section 5]{HZ}).

While the classical Albanese map ($s=1$) is a morphism of algebraic varieties, the Hain-Zucker construction is a priori of transcendental nature. In this paper, we address the following question: How far is the diagram (\ref{diagram}) from being a diagram of algebraic varieties?

The answer turns out to be ambiguous. On the one hand, we show that the diagram (\ref{diagram}) fits perfectly  within the framework of o-minimal geometry, implying that the behaviour of the presenting maps is \emph{tame} in a certain precise sense.  On the other hand, it turns out that (\ref{diagram}) can almost never be realised as the analytification of a diagram of complex algebraic varieties. 

\subsection{Main results}

First, we show that higher Albanese manifolds and higher Albanese maps are definable in an o-minimal structure.  We refer the reader to Section \ref{o-minimal sec} for a reminder on o-minimal structures and definable complex analytic geometry.

\begin{mainthm}\label{definabilisation main}[Theorem \ref{definabilisation}]
Let $X$ be a complex normal quasi-projective variety. For every $s \ge 1$ the higher Albanese manifold $\Alb^s(X)$ can be endowed with a structure of an $\R_{\alg}$-definable complex manifold in such a way that
\begin{itemize}
\item[(i)] the projections $p_s \colon \Alb^s(X) \to \Alb^{s-1}(X)$ are definable;
\item[(ii)] for each $s$ there exists a definable commutative connected complex Lie group $C^s$ such that $p_s \colon \Alb^s(X) \to \Alb^{s-1}(X)$ is a definable holomorphic principal $C^s$-bundle and the action $C^s \times \Alb^s(X) \to \Alb^s(X)$ is definable. Each $C^s$ is abstractly isomorphic (as a complex Lie group) to the Jacobian of a mixed Hodge structure.
\item[(iii)] the higher Albanese maps  $\alb^s \colon X^{\an} \to \Alb^s(X)$ are $\R_{\an, \exp}$-definable;
\item[(iv)] if $s=1$, the resulting definable structure on $\Alb^1(X)=\Alb(X)$ is the same as the one determined by the standard algebraic structure on $\Alb(X)$;
\item[(v)] if $f \colon X \to Y$ is a morphism of normal  varieties, $\Alb(f) \colon \Alb^s(X) \to \Alb^s(Y)$ is definable.
\end{itemize}
Moreover,
\begin{itemize}
\item[(vi)] the reduced image $\alb^s(X)^{\operatorname{red}}$ is the definable analytification of a quasi-projective variety and $\alb^s \colon X \to \alb^s(X)^{\operatorname{red}}$ is the analytification of an algebraic morphism.
\end{itemize}
\end{mainthm}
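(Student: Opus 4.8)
The plan is to build the definable structure on $\Alb^s(X)$ inductively in $s$, reducing everything to the structure theory of the mixed Hodge Lie algebra $\g^s$ and exploiting the fact that a quotient of $\C^d$ by a lattice, together with the exponential of a mixed Hodge structure, is exactly the kind of object that is $\R_{\alg}$- or $\R_{\an,\exp}$-definable. Recall that the universal cover of $\Alb^s(X)$ is the complex homogeneous space $\G^s_{\Q}(\C)/F^0\G^s$, and the classical base case $s=1$ is already an algebraic semiabelian variety, hence carries a canonical $\R_{\alg}$-definable structure whose definability is the content of part (iv).

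For the inductive step I would analyze the fibre of $p_s \colon \Alb^s(X) \to \Alb^{s-1}(X)$. The lower central series of $\g^s$ gives a central extension whose kernel, after passing to the Hodge-theoretic quotient by $F^0$ and by the lattice $\G^s_{\Z}$, is a connected commutative complex Lie group $C^s$ isomorphic to the Jacobian $J(H)=H_{\C}/(F^0 H + H_{\Z})$ of a mixed Hodge structure $H$ (namely the top graded piece $\Gr^W_{-s}\g^s$ or the relevant central factor). This is precisely the object asserted in part (ii). The key point is that the Jacobian of a mixed $\Z$-Hodge structure is always $\R_{\alg}$-definable: one trivializes $H_{\C}$ by a $\Q$-basis compatible with the weight filtration, realizes $F^0 H$ as a $\C$-linear (hence $\R$-algebraic) subspace, and quotients by the lattice $H_{\Z}$ using a definable fundamental domain for the $\R$-linear action. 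Thus $C^s$ is $\R_{\alg}$-definable, and since $p_s$ is a holomorphic principal $C^s$-bundle built from a definable nilpotent Lie group quotient, I can equip $\Alb^s(X)$ with the $\R_{\alg}$-definable structure making $p_s$ definable: take a definable trivializing atlas for the bundle over an $\R_{\alg}$-definable cover of the inductively-constructed $\Alb^{s-1}(X)$, with the cocycle and the $C^s$-action definable by construction. This simultaneously yields (i), (ii), and, by naturality of the whole package in the morphism $f$, part (v).

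The genuinely transcendental input, and the main obstacle, is part (iii): the definability of the Hain--Zucker map $\alb^s$ in the larger structure $\R_{\an,\exp}$. Unlike the algebraic $s=1$ case, $\alb^s$ is defined via iterated integrals / Chen's transport and solving the Gauss--Manin connection on the unipotent variation, so it involves a genuinely transcendental multivalued lift $\widetilde{\alb^s} \colon \widetilde{X} \to \G^s_{\Q}(\C)$. Here I would invoke the definable version of the nilpotent orbit theorem and the asymptotic estimates on period maps for admissible variations of mixed Hodge structure — the same machinery (after Bakker--Klingler--Tsimerman and Bakker--Brunebarbe--Tsimerman) that establishes $\R_{\an,\exp}$-definability of period maps on a definable fundamental domain. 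Concretely, the lift of $\alb^s$ is governed by the monodromy of the tautological unipotent VMHS, and the logarithm of the unipotent monodromy together with the exponential comparison between the Betti and de Rham descriptions forces the use of the $\exp$ function; on a definable fundamental domain of $X^{\an}$ the map becomes a composition of $\R_{\an}$-definable analytic data with $\exp$, and its $\G^s_{\Z}$-equivariance makes the descent to $\Alb^s(X)$ definable. The hard part is uniformly controlling the growth of the iterated integrals near the boundary divisor so that the lift extends $\R_{\an,\exp}$-definably across the cusps — this is exactly where the polarisability and admissibility hypotheses are essential.

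Finally, part (vi) follows from (iii) together with the definable Chow and definable GAGA theorems. The image $\alb^s(X)^{\operatorname{red}}$ is a definable complex analytic subset of the $\R_{\an,\exp}$-definable complex manifold $\Alb^s(X)$, and it is the image of the quasi-projective $X$; by definable Chow (Peterzil--Starchenko / Bakker--Klingler--Tsimerman) a definable complex analytic subset that is also constructible-algebraic in the source is the analytification of a quasi-projective variety, and the induced map $X \to \alb^s(X)^{\operatorname{red}}$ is then algebraic by definable GAGA. I would carry this out by noting that the definable structure on $\Alb^s(X)$ restricts to the reduced image, that this image is of finite type because $X$ is, and that o-minimal finiteness upgrades the definable-analytic morphism to an algebraic one.
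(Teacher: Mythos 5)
There is a genuine gap, and it sits exactly where the paper had to work hardest: the compatibility of definable structures on a noncompact quotient. Your inductive step asserts that the principal $C^s$-bundle $p_s$ has ``the cocycle and the $C^s$-action definable by construction,'' but nothing in the construction provides this. For a noncompact quotient $\Gamma \backslash \Omega$ the definable structure is \emph{not} canonical: it depends on the choice of a definable fundamental domain (Proposition \ref{definable criterion} and the surrounding discussion), and a Malcev-coordinate semialgebraic domain for $\G^s_{\Z}$ would indeed give \emph{some} $\R_{\alg}$-structure, but you then have no argument that this structure is the one in which the Hain--Zucker map is $\R_{\an,\exp}$-definable, nor that it is functorial as required in (v). The paper resolves this by a different decomposition of the problem: it proves an Embedding Theorem (Theorem \ref{embedding theorem}) realising every nil-Jacobian, up to finite cover, as a sub-nil-Jacobian of a fibre of the purification map of a mixed Hodge variety ($\mathbf{G}=\mathbf{W}\rtimes \mathbf{P}$ with $\mathbf{P}$ the Mumford--Tate group), and then shows (Lemma \ref{subniljac}, via the $\operatorname{sl}_2$-splitting preserving $\mathcal{D}_{\mathbf{W}}$ and compactness of $N_{\mathbf{W},\R}$) that this sub-nil-Jacobian is definable for the Bakker--Brunebarbe--Klingler--Tsimerman structure. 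That single move ties the structure used in (i)--(ii) to the structure in which period maps are definable, which is precisely what your induction leaves unglued. Even your treatment of the $C^s$-action hides a real difficulty: the paper needs the linear-algebra splitting of Lemma \ref{linear algebra} to decompose $C^s$ into a compact torus and a copy of $\R^k$ before definability of the action can be checked.

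Two further steps would fail as written. For (iii), you propose to control the growth of iterated integrals near the boundary directly; the paper avoids this entirely by quoting Hain--Zucker (Theorem \ref{albanese are periods}): $\alb^s$ is a lift of the genuine period map of the canonical admissible unipotent VMHS through a map $\Psi^s$ which is a \emph{finite} cover onto its image (Proposition \ref{albanese are periods up to finite cover}, using commensurability of Zariski-dense lattices in unipotent groups), so $\R_{\an,\exp}$-definability follows from Theorem \ref{period maps o-minimal} plus the finite-cover transfer of Proposition \ref{finite covers} --- no new asymptotic estimates are needed, and the target structure is identified. For (vi), your appeal to definable Chow is inapplicable: Peterzil--Starchenko (Theorem \ref{definable chow}) requires the ambient space to be the definabilisation of an algebraic space, and $\Alb^s(X)$ is essentially never algebraic --- that is the content of the paper's Theorem B. The correct tool is the o-minimal Griffiths theorem of Bakker--Brunebarbe--Tsimerman (Theorem \ref{griffiths}) applied to the period map $\Phi_s$, after which $\alb^s(X)^{\operatorname{red}}$ is a finite cover of the algebraic period image and is quasi-projective by the Riemann existence theorem; both the finite-cover step and Riemann existence are missing from your argument.
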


\begin{rmk}
In some special cases, for example when $X=\C\setminus\{0,1\}$, the higher Albanese maps $\alb^s$ are known to be closely related to polylogarithms and their analogues \cite{HMcP, Del87, Us}. Perhaps one can get new results on the transcendence of special values of polylogarithms using Theorem \ref{definabilisation main} and the Pila-Wilkie Transcendence Theorem \cite{PW}.
\end{rmk}

The definability of higher Albanese manifolds and higher Albanese maps follows from their relation to the period maps of certain admissible variations of mixed Hodge structures. 

Although the close relation of higher Albanese manifolds to mixed period domains is well known \cite{HZ, KNU16, Hast}, the explicit description of higher Albanese manifolds in the spirit of Pink-Klingler  formalism of mixed Hodge varieties \cite{Kling}, as well as its relation to o-minimal geometry, seems to be absent from the literature.

In order to clarify this relation, we introduce the notion of a \emph{nil-Jacobian} that generalises higher Albanese manifolds and interpolates between mixed Hodge varieties and Jacobians of mixed Hodge structures. A nil-Jacobian is a double coset of the form $\Gamma_W \backslash \mathbf{W}(\C) /F^0\mathbf{W}$ associated to the following data: 
\begin{itemize}
\item a finite-dimensional unipotent Lie algebra in the category of $\Q$-mixed Hodge structures $(\mathfrak{w}, W_{\bullet}\mathfrak{w}, F^{\bullet}\mathfrak{w}_{\C})$ with wegihs concentrated in negative degrees (i.e. $W_{-1}\mathfrak{w}=\mathfrak{w}$); to it one associates the underlying unipotent $\Q$-algebraic group $\mathbf{W}$ and a closed connected subgroup $F^0\mathbf{W} =\exp(F^0\mathfrak{w}_{\C}) \subseteq \mathbf{W}(\C)$;
\item a discrete Zariski dense subgroup $\Gamma_W \subset \mathbf{W}(\Q)$.
\end{itemize}
We show that every nil-Jacobian can be realised as a definable closed subset of a mixed Hodge variety (we refer to Section \ref{hodge basic sec} for a reminder on mixed Hodge varieties and to subsection \ref{o-minimal hodge} for definable structures on mixed Hodge varieties). Moreover, the inherited structure of a definable complex manifold depends only on the nil-Jacobian, but not on the embedding to a mixed Hodge variety. Since higher Albanese varieties are nil-Jacobians, this allows us to endow them with structures of definable complex manifolds. This is the key step in the proof of Theorem \ref{definabilisation main}.

As another application of the theory of nil-Jacobians, we construct \emph{partial higher Albanese mapnifolds} $\Alb_{\rho}^s(X)$. They can be thought of as higher analogues of \emph{partial Albanese maps} $\alb_{\theta} \colon X \to \Alb_{\theta}$ associated with a character $\theta \in H^1(X, \C)$. Using them, we are able to give explicit descriptions of nilpotent Shafarevich reductions, see subsection \ref{shafarevich subsection}.

Our second main result is the following.

\begin{mainthm}\label{no algebraic main}[Theorem \ref{no algebraic}]
Let $X$ be a normal quasi-projective variety and $s \ge 3$ a natural number. Suppose that one of the following holds:
\begin{itemize}
\item[(i)] $\alb^s \colon X \to \Alb^s(X)$ is dominant;
\item[(ii)] $\Alb^s(X)$ is definably biholomorphic to the definable analytification of a quasi-projective variety.
\end{itemize}
Then the map $p^r \colon \Alb^r(X) \to \Alb^{r-1}(X)$ is a principal $(\C^{\times})^k$-bundle if $r=2$ and is an isomorphism for $r>2$. In particular, the tower of higher Albanese manifolds stabilises at the second step and the Malcev completion of $\pi_1(X)$ is $2$-step nilpotent.
\end{mainthm}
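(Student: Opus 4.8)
The plan is to reduce both hypotheses to the single statement that the nil-Jacobian $\Alb^s(X) = \G^s_{\Z}\backslash \G^s_{\Q}(\C)/F^0\G^s$ is the definable analytification of a quasi-projective variety, and then to extract from this a rigidity constraint on the mixed Hodge structure of $\g^s$. First I would check that (i) implies (ii): if $\alb^s$ is dominant, then by part (vi) of Theorem \ref{definabilisation main} its reduced image is a Zariski-dense quasi-projective subvariety of $\Alb^s(X)$ of full dimension, and combined with the $\R_{\alg}$-definable structure produced there this forces $\Alb^s(X)$ itself to be definably biholomorphic to a quasi-projective variety. Under (ii), definable GAGA (Peterzil--Starchenko) upgrades the given definable biholomorphism to an equivalence of algebraic structures, so that $\Alb^s(X)$ is honestly a smooth quasi-projective variety and every definable holomorphic map from it to an algebraic variety is algebraic; in particular the composite projection $q_s = p^2\circ\cdots\circ p^s \colon \Alb^s(X)\to\Alb^1(X)$ to the semiabelian Albanese is algebraic.

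Everything then reduces to the following \emph{Key Lemma}: if the nil-Jacobian attached to a polarisable nilpotent Lie algebra $\mathfrak{w}$ in $\Q$-mixed Hodge structures with $W_{-1}\mathfrak{w}=\mathfrak{w}$ is definably biholomorphic to a quasi-projective variety, then $\mathfrak{w}$ has weights only $-1$ and $-2$ and $\Gr^W_{-2}\mathfrak{w}$ is of Tate type $(-1,-1)$. Granting this and applying it to $\mathfrak{w}=\g^s$, the commutator $[\g^s,\g^s]$ lies in weight $\le -2$, while the triple commutator $[\g^s,[\g^s,\g^s]]$ lands in weight $\le -3$, which is zero; hence $\g^s$ is $2$-step nilpotent with central commutator equal to the Tate piece $\Gr^W_{-2}\g^s$. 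Therefore $\g^s\cong\g^2$, the whole Malcev completion is $2$-step nilpotent, $p^r$ is an isomorphism for $r>2$, and the fibre $C^2=J([\g^s,\g^s])$ of $p^2$ is the Jacobian of a Tate structure, i.e. $(\C^\times)^k$. This is exactly the asserted conclusion, so no further work is needed once the Key Lemma is in hand.

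To prove the Key Lemma I would filter $\mathfrak{w}$ by the sub-mixed-Hodge ideals $W_{\bullet}\mathfrak{w}$, producing a tower of definable nil-Jacobians whose graded stages are the Jacobians $J(\Gr^W_{-k}\mathfrak{w})$ of pure polarisable Hodge structures, the non-splitting of the tower being recorded by Carlson's extension classes in $\Ext^1_{\MHS}$. Applying definable GAGA at each stage of the tower — this is precisely where the o-minimal structure of Theorem \ref{definabilisation main} is indispensable — one algebraises the tower and concludes that each graded Jacobian, and each extension class, must be algebraic. The heart of the matter is then a Hodge-theoretic rigidity statement: for a pure polarisable Hodge structure $H$ of weight $-k$ the Hodge--Riemann form attached to the polarisation is indefinite on $F^0H$ as soon as $k\ge 3$, and on the non-Tate part of $F^0H$ when $k=2$, so the Hodge polarisation cannot be a genuine positive polarisation of the complex torus $J(H)$; these pieces therefore cannot occur inside an algebraic nil-Jacobian and must vanish.

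The main obstacle is exactly this final rigidity. Abstractly, a weight $-k$ intermediate Jacobian with $k\ge 3$ can perfectly well be an abelian variety for some \emph{accidental} polarisation — as happens for the intermediate Jacobian of a cubic threefold — so the Hodge--Riemann signs only bite once one knows that the ambient algebraic structure forces the \emph{Hodge} polarisation, carried rigidly by the (mutually isomorphic, hence constant) fibres and restricting from a projective compactification, to be the operative one. Establishing this compatibility, and simultaneously controlling the mixed extension classes so that the non-Tate weight $-2$ contributions and all weight $\le -3$ contributions are genuinely obstructed rather than merely generically so, is the crux of the argument; the remainder is the essentially formal weight bookkeeping sketched above.
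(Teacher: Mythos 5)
Your proposal diverges from the paper's argument at its core, and the divergence is fatal: the \emph{Key Lemma} on which everything rests is false. Take $\mathfrak{w}$ abelian, carrying a pure polarisable $\Z$-Hodge structure $H$ of weight $-3$ and rank $2$ with Hodge numbers $h^{0,-3}=h^{-3,0}=1$ (for instance $H^3$ of a rigid Calabi--Yau threefold, Tate-twisted to weight $-3$), with $\Gamma=H_{\Z}$. This satisfies every hypothesis of your lemma ($W_{-1}\mathfrak{w}=\mathfrak{w}$, graded polarisable), and the associated nil-Jacobian $J^0H=H_{\Z}\backslash H_{\C}/F^0H$ is a one-dimensional compact complex torus, i.e.\ an elliptic curve --- projective. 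Since the quotient is compact, the definable structure of Theorem \ref{nil-jacobians o-min} is independent of all choices and any biholomorphism with the elliptic curve is definable, so this nil-Jacobian is definably biholomorphic to the definabilisation of a quasi-projective variety while having weight $-3$ and no Tate piece at all. This is exactly the ``accidental polarisation'' phenomenon you flag in your last paragraph, but the compatibility you hope to establish there --- that algebraicity forces the \emph{Hodge} form to be the operative polarisation --- cannot exist: on a compact quotient the definable structure retains no memory whatsoever of the Hodge polarisation, so there is nothing for the Hodge--Riemann signs to bite on. Your weight bookkeeping therefore never gets started. (A secondary gap: your one-line deduction of (ii) from (i) does not work either; a dense quasi-projective image of full dimension does not algebraise the ambient $\Alb^s(X)$. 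The paper's Corollary \ref{when tower is algebraic} needs Brunebarbe's maximal partial compactification, on which the nilpotent local system extends with infinite local monodromy, to upgrade \emph{dominant} to \emph{surjective} before concluding.)

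The paper avoids pointwise positivity arguments entirely and works at the level of the bundle maps $p^2,p^3$, which is what rescues the true statement (a constraint on the kernels $\z^r$, not on all of $\Gr^W\g^s$) from counterexamples like the one above. After algebraising the truncated tower $\Alb^3(X)\to\Alb^2(X)\to\Alb^1(X)$ together with the $C^j$-actions (Corollary \ref{when tower is algebraic}), it shows the structure groups $\mathfrak{C}^2,\mathfrak{C}^3$ are algebraic tori: the unipotent part dies because the lattice coming from $\G^s_{\Z}$ is Zariski dense (Proposition \ref{no unipotent}), and the abelian-variety part dies via Blanchard's theorem (Theorem \ref{blanchard}) --- Deligne--Blanchard degeneration of the Leray spectral sequence kills the Chern--H\"ofer class of a principal compact-torus bundle with K\"ahler total space --- followed by a $\pi_1$-retraction argument showing the abelian factor is trivial. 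The third step is then killed purely algebro-geometrically: toric torsors are Zariski-locally trivial because $\mathbb{G}_m^k$ is special in the sense of Serre, and by the Fossum--Iversen exact sequence $\Pic_{\alg}$ of the total space of a toric bundle is pulled back from the base, so the extension class of $\pi_1(\Alb^3(X))$ pulls back from $\pi_1(\Alb^1(X))$ and Proposition \ref{nilpotency of extensions} (via Lemma \ref{toric tower topology}) caps the nilpotency at $2$. If you want to salvage your approach, you would have to replace the fiberwise Hodge--Riemann rigidity by some argument of this relative, bundle-theoretic kind; as written, the proposal has no valid path to the conclusion.
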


Hain and Zucker showed a homotopy version of item \textit{(ii)} of Theorem \ref{no algebraic main} under stronger assumptions: $s \ge 4$ and $\pi_1(X)$  not rationally nilpotent (\cite[Theorem 5.43]{HZ}; see also Remark 5.45 \textit{loc.cit.}. Their proof is based on rational homotopy theory. Our approach uses different (and, in some sense, more elementary) methods, namely results on the topology and algebraic geometry of principal bundles with abelian structure groups.

Theorem \ref{no algebraic main} is closely related to the long-standing problem of understanding nilpotent groups that arise as fundamental groups of smooth complex algebraic varieties. There are strong restrictions on the structure of such groups coming from Hodge theory \cite{CT, Camp}. Examples of smooth projective varieties with $2$-step nilpotent non-abelian fundamental group were constructed by Sommese and Van de Ven, and later by Campana \cite{SVdV, Camp}. So far, no essentially different new examples have been found, which motivates the following conjecture:

\begin{conj}[F. Campana]\label{campana conjecture}
Let $X$ be a complex normal quasi-projective variety. Suppose that $\pi_1(X)$ is virtually nilpotent. Then it is  virtually  at most $2$-step nilpotent.
\end{conj}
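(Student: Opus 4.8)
The plan is to reduce the conjecture to the special case already established in Theorem \ref{no algebraic main} and then to isolate precisely the extra input that would be needed. First I would dispose of the virtual and torsion issues. By hypothesis $\pi_1(X)$ contains a nilpotent subgroup of finite index, corresponding to a finite \'etale cover $X' \to X$, which is again normal and quasi-projective; since being virtually at most $2$-step nilpotent is insensitive to passing between a group and a finite-index subgroup, and since the torsion of a finitely generated nilpotent group is finite and can be killed by a further finite cover, I may assume that $\Gamma := \pi_1(X)$ is torsion-free nilpotent of some class $c$. The key observation is that for a torsion-free finitely generated nilpotent group every finite-index subgroup has the same Malcev completion, hence the same nilpotency class; so in this case ``virtually at most $2$-step'' is equivalent to ``at most $2$-step,'' and the conjecture reduces to showing that the Malcev completion of $\Gamma$ is $2$-step nilpotent. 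Because $\Gamma$ is nilpotent of class $c$, the Malcev completion is finite-dimensional and the higher Albanese tower stabilises: $\G^s_\Q = \G^c_\Q$ and $\Alb^s(X) = \Alb^c(X)$ for all $s \ge c$, with $\alb^c_\ast \colon \pi_1(X) \to \G^c_\Z$ an isomorphism modulo torsion.

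The target is now to verify one of the two hypotheses of Theorem \ref{no algebraic main} for $s = c$ (if $c \le 2$ we are done). Hypothesis (i), dominance of $\alb^c$, cannot hold in general: a dimension count shows that $\dim X$ may be strictly smaller than $\dim_\C \Alb^c(X)$ even when $\alb^c_\ast$ is surjective on $\pi_1$, for instance when $X$ is a curve with large nilpotent $\pi_1$. I would therefore aim at hypothesis (ii), the algebraicity of $\Alb^c(X)$ as a definable complex manifold, using the o-minimal structure furnished by Theorem \ref{definabilisation main}. By item (vi) of that theorem the reduced image $\alb^c(X)^{\operatorname{red}}$ is always a quasi-projective variety, algebraically embedded in $\Alb^c(X)$, and its fundamental group surjects onto $\G^c_\Z$ modulo torsion. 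The strategy is to leverage this algebraic subvariety together with definable GAGA and an Ax--Lindemann / bi-algebraicity statement for the uniformisation $\C^d = \G^c_\Q(\C)/F^0\G^c \to \Alb^c(X)$ of the ambient nil-Jacobian: one wants to show that the presence of an algebraic subvariety whose lift is invariant under the lattice $\G^c_\Z$ forces $\Alb^c(X)$ itself to be definably biholomorphic to a quasi-projective variety, whereupon Theorem \ref{no algebraic main}(ii) yields that the tower degenerates at the second step and $\Gamma$ is $2$-step.

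The main obstacle is exactly this last implication, and I do not expect it to follow by soft arguments. Nilpotency of $\pi_1(X)$ controls only the homotopy type of the target and the surjectivity of $\alb^c_\ast$; it does not by itself constrain the complex-analytic type of $\Alb^c(X)$, and there is no formal reason why a principal-bundle tower of Jacobians of mixed Hodge structures (the groups $C^s$ of Theorem \ref{definabilisation main}(ii)) should be algebraic. The genuinely new ingredient must therefore be Hodge-theoretic: one needs a positivity or rigidity property of the mixed Hodge structure carried by the Malcev Lie algebra $\g^c$ forcing $\Gr^W_{\le -3}\g^c = 0$, equivalently the vanishing of the obstruction to algebraicity of the higher fibres $C^s$ for $s \ge 3$. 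A natural route is to feed the o-minimal period-map description underlying Theorem \ref{definabilisation main} into the functional-transcendence machinery (Pila--Wilkie, Ax--Schanuel for admissible variations of mixed Hodge structure) to show that the algebraic image $\alb^c(X)^{\operatorname{red}}$ cannot sit in $\Alb^c(X)$ as a proper definable subvariety carrying nontrivial weight below $-2$ unless that higher weight vanishes. Making this precise --- that is, closing the gap between the algebraicity of the image and the degeneration of the whole tower --- is where the real work lies, and is presumably why only the special case of Theorem \ref{no algebraic main} is established here while the full conjecture remains open.
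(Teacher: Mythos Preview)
You have correctly identified the essential point: Conjecture~\ref{campana conjecture} is stated in the paper as an \emph{open conjecture}, not as a theorem, and the paper does not supply a proof. The paper proves only the special case covered by Theorem~\ref{no algebraic main} (Corollary~\ref{nilpotent cor}): if some $\alb^s$ with $s\ge 3$ is dominant, then the Malcev completion of $\pi_1(X)$ is at most $2$-step. Your proposal is not a proof but an honest reduction-and-obstacle analysis, and as such it is accurate.

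Your preliminary reductions are sound: passing to a finite \'etale cover to make $\pi_1$ nilpotent and torsion-free is legitimate, and for a torsion-free finitely generated nilpotent group the nilpotency class is stable under passage to finite-index subgroups (they share the same Malcev Lie algebra), so ``virtually $2$-step'' becomes ``$2$-step.'' The stabilisation $\Alb^s(X)=\Alb^c(X)$ for $s\ge c$ when $\pi_1(X)$ is $c$-step nilpotent is also correct. One small quibble: your throwaway example of ``a curve with large nilpotent $\pi_1$'' does not exist, since smooth (quasi-)projective curves have fundamental groups that are either trivial, $\Z$, $\Z^2$, free non-abelian, or surface groups --- none nilpotent of class $\ge 2$. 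The dimension obstruction to dominance is nonetheless genuine; one should look for higher-dimensional examples.

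Your diagnosis of the gap is exactly right and matches the paper's own framing. Knowing that $\alb^c(X)^{\mathrm{red}}$ is quasi-projective (Theorem~\ref{definabilisation}(vi)) and that $(\alb^c)_*$ is surjective on $\pi_1$ does not, by any argument currently available, force $\Alb^c(X)$ itself to be algebraisable; and without one of the hypotheses of Theorem~\ref{no algebraic main} there is no mechanism to conclude. The Ax--Schanuel/bi-algebraicity route you sketch is a natural guess, but as you say, making it work is the whole problem. The paper leaves this open and records related speculations in Section~\ref{conclusion} (Conjectures~\ref{special dominant} and~\ref{special nilpotent}).
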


Theorem \ref{no algebraic main} implies Conjecture \ref{campana conjecture} for varieties with dominant higher Albanese maps. It can also be seen as a generalisation of a theorem of Aguilar Aguilar and Campana (\cite{AC}) that says that if $X$ is a normal quasi-projective variety whose (classical) Albanese map $\alb \colon X \to \Alb(X)$ is surjective and \emph{proper}, then Malcev completion of $\pi_1(X)$ is abelian. Notice that we do not assume any properness of $\alb^s$ in Theorem \ref{no algebraic main}.

Let us explain the heuristic behind the proof of Theorem \ref{no algebraic main}.

While the projective examples of varieties with nilpotent non-abelian fundamental groups in \cite{SVdV} and \cite{Camp} are rather involved, it is relatively easy to construct a quasi-projective variety with such a property.

Let $X_1$ be an abelian variety and $L$ a holomorphic line bundle on it with $c_1(L) \neq 0$. Let $X_2:=\operatorname{Tot}(L) \setminus L_0$, where $L_0$ is the zero section. Then $\pi_1(X_2)$ is the central extension
\[1 \to \pi_1(\C^{\times})=\Z \to \pi_1(X_2) \to \pi_1(X_1) \to 1,
\] and $c_1(L) \neq 0$ guarantees  that this extension is non-trivial (see \cite[Example 11.26]{CDY} and Lemma \ref{topology} below). Thus, $\pi_1(X_2)$ is non-abelian and $2$-step nilpotent.

But can we construct examples of quasi-projective varieties with nilpotent but not $2$-step nilpotent fundamental group? The first thing that comes to mind is to upgrade the example above by considering a principal bundle $p \colon X_3 \to X_2$ with fibres having abelian fundamental group. If one wants $\pi_1(X_3)$ to be not $2$-step nilpotent, one should require at least that such a bundle $p \colon X_3 \to X_2$ is topologically non-trivial. After a short analysis of cases, one is essentially reduced to one of the  two following situations:
\begin{itemize}
\item[(i)] $p$ is a principal $A$-bundle, where $A$ is an abelian variety; 
\item[(ii)] $p$ is a principal $(\C^{\times})^k$-bundle. 
\end{itemize}
The case \textit{(i)} is then ruled out by a theorem of Blanchard (Theorem \ref{blanchard}), that suggests that either $p$ is topologically trivial, or $X_3$ is not a K\"ahler (or even a Fujiki class $\mathcal{C}$) manifold.

Suppose we are in the case \textit{(ii)} . Then, using the fact that $X_2 \to X_1$ is an algebraic fibration with affine fibres one can show that the map $\Pic(X_1) \to \Pic(X_2)$ is surjective\footnote{Here we are talking about \emph{algebraic} Picard groups; for analytic  bundles this is no longer true.}, and, more generally, every algebraic principal $(\C^{\times})^k$-bundle on $X_2$ is a pull-back of a bundle on $X_1$. Therefore, the corresponding class of the extension of $\pi_1(X_2)$ by $\pi_1((\C^{\times})^k)=\Z^k$ pulls back from $\pi_1(X_1)$ and $\pi_1(X_3)$ is again only $2$-step nilpotent (cf. Lemma \ref{toric tower topology}).

Combining these considerations, one concludes that there exists no sequence of algebraic varieties
\[
X_3 \xrightarrow{p_3} X_2 \xrightarrow{p_2} X_1,
\]
such that both $p_3$ and $p_2$ are holomorphic principal bundles and each $\pi_1(X_j)$ is $j$-step (but not $j-1$-step) nilpotent, for $j=1, \ 2, \ 3 $. 

On the other hand, as we show, if $\Alb^s(X)$ is algebraic for some $s \ge 3$, then the  truncated   higher Albanese tower  $\Alb^s(X) \to \Alb^{s-1}(X) \to \ldots \to \Alb(X)$ is algebraic and the lower levels of this tower would provide  a triple of algebraic varieties as above. This leads to a contradiction.

\subsection{Organisation of the paper}

The paper's organization is as follows:

Sections \ref{hodge basic sec} to \ref{higher albanese basic} are preliminary and can be safely bypassed by a specialist. 

Section \ref{hodge basic sec} contains basics on mixed Hodge structures and mixed Hodge varieties and is essentially included for the sake of fixing the notations. 

Subsection \ref{hodge structures subsec} contains a reminder on the approach to mixed Hodge structures via Deligne torus formalism. 

Subsection \ref{hodge varieties subsec} presents a quick overview of the theory of mixed Hodge data and mixed Hodge varieties in the spirit of \cite{Kling}. This level of abstractness is necessary for the proof of the Embedding Theorem in subsection \ref{embedding subsec}; apart from that, the reader who is not comfortable with such formalism can think in terms of a more classical approach to (mixed) period domains, e.g. as in \cite{CKS}.

In subsection \ref{purification subsec} we discuss \emph{purification maps}. These are canonical maps from mixed Hodge varieties to pure Hodge varieties, analogous to passing from a mixed Hodge structure to the direct sum of the associated graded pieces of its weight filtration.

In subsection \ref{about splitting subsec} we recall the $\operatorname{sl}_2$-splitting -- a technical tool from mixed Hodge theory that plays important role in the o-minimal approach to mixed Hodge varieties in \cite{BBKT}.

Subsection \ref{o-minimal sec} contains necessary facts from o-minimal geometry. We are not giving complete and rigorous overview here, referring the reader to the great expositions in \cite{VdD} and \cite{BBT}. Rather, we collect the necessary facts and try to present few motivating examples and vague slogans for the reader not familiar with the topic. We recall the general principles of the o-minimal geometry in subsection \ref{o-minimal basic} and discuss Bakker - Brunebarbe - Tsimerman's o-minimal complex analytic geometry in subsection \ref{complex o-minimal}. In the next two subsections we collect the main applications of o-minimality in complex geometry: algebraisation results (subsection \ref{algebraisation results}) and the definability of period maps (subsection \ref{o-minimal hodge}).

In Section \ref{higher albanese basic} we introduce higher Albanese manifolds. We recall generalities on nilpotent groups and Malcev completions in subsection \ref{malcev}. We discuss Morgan - Hain mixed Hodge structure on Malcev completion of $\pi_1(X)$ in subsection \ref{morgan hain subsec} and Hain - Zucker theory of higher Albanese manifolds in subsection \ref{higher albanese subsec}.

In Section \ref{niljacobians section} we develop the theory of nil-Jacobians. We define nil-Jacobians and morphisms thereof and discuss their elementary properties in subsection \ref{niljacobians basic}. In subsection \ref{embedding subsec} we prove the Embedding Theorem (Theorem \ref{embedding theorem}) that says that every nil-Jacobian admits an embedding to a mixed Hodge variety. In subsection \ref{definability of nil-Jacobians} we show that every nil-Jacobian can be endowed with a canonical $\R_{\alg}$-definable complex manifold structure in such a way that morphisms of nil-Jacobians are definable.

In Section \ref{main sec} we prove Theorem \ref{definabilisation main}. We prove the definability of higher Albanese manifolds and higher Albanese maps in subsection \ref{main subsec}; we also deduce some consequences of surjectivity of higher Albanese maps that will be important in Section \ref{algebraic sec}.  In subsection \ref{shafarevich subsection} we discuss applications of our results to nilpotent Shafarevich reductions.

Section \ref{algebraic sec} is dedicated to the proof of Theorem \ref{no algebraic main}. We discuss some general facts about commutative algebraic groups in subsection \ref{commutative groups subsec}. In subsection \ref{chern hoefer subsec} we explain the general approach to study the topology of total spaces of holomorphic principal bundles with commutative structure group. In subsection \ref{blanchard subsec} we recall Blanchard's theorem on holomorphic principal compact torus bundles with K\"ahler total space. In subsection \ref{toric subsec} we obtain results on geometry and topology of total spaces of algebraic toric bundles. We complete the proof of Theorem \ref{no algebraic main} in subsection \ref{algebraic proof subsec}.

We conclude with some conjectures and open questions in Section \ref{conclusion}.
\\

\textbf{Acknowledgements.} I am thankful to Jacques Audibert, Yohanne Brunebarbe, Richard Hain and Bruno Klingler for fruitful conversations on various parts of this work. 
\\

\textbf{Conventions.} All algebraic varieties are assumed to be connected, irreducible and over the complex numbers, unless different is explicitly stated. If $X$ is a variety and $x \in X(\C)$ a closed point, we write $\pi_1(X; x)$ for its topological fundamental group $\pi_1^{\operatorname{top}}(X^{\an}_{\C}; x)$. We omit the marked point from the notation when its choice is not important.

If $V$ is a module over a ring $K$ and $K \subseteq L$ is a ring extension, we write $V_{L}:=V_K \o L$.

Throughout this paper, we sometimes work simultaneously in the category of algebraic spaces and the category of (definable) complex analytic spaces. Whenever this happens, we denote the algebraic spaces and morphisms between them by fraktur letters ($\mathfrak{A}, \mathfrak{B}, \mathfrak{C}, \ldots, \mathfrak{X}, \mathfrak{Y}, \mathfrak{Z}, \ldots, \mathfrak{f}, \mathfrak{g}, \mathfrak{h},\ldots)$ and the analytic or definable spaces and holomorphic maps between them by the regular font ($A, B, C, \ldots X, Y, Z, \ldots, f,g,h \ldots)$.

\section{Preliminaries from Hodge theory}\label{hodge basic sec}

\subsection{Mixed Hodge structures}\label{hodge structures subsec}
We briefly recall the theory of mixed Hodge structures following the Deligne torus formalism. The main references for this and the next  subsections are \cite{Kling} and \cite{Pink}, see also \cite{BBKT}.

The \emph{Deligne torus} is the group $\mathbb{S}:=\operatorname{Res}_{\C/\R} \C^{\times}$. As a complex algebraic group, $\mathbb{S}(\C) \simeq \C^{\times} \times \C^{\times}$, but the real structure is non-standard: $\mathbb{S}(\R)$ is embedded into $\mathbb{S}(\C)$ as $\{(z, \overline{z})\} \subset \C^{\times} \times \C^{\times}$.

The datum of a Hodge structure is the same as the datum of an $\mathbb{S}$-module. More explicitly, let $V$ be a finite-dimensional $\Q$-vector space. Let $w$ denote the morphism of $\R$-algebraic groups $w \colon \mathbb{G}_{m, \R} \to \mathbb{S}$ given on the real points by the embedding $\R^{\times} \to \C^{\times}$. Let $h \colon \mathbb{S} \to \GL(V \o \R)$ be a representation such that the composition 
\[
\mathbb{G}_{m}(\R) \xrightarrow{w} \mathbb{S} \xrightarrow{h} \GL(V \o \R)
\]
is of the form $t \mapsto t^{-n}\operatorname{Id}$. This equips $V$ with a weight $n$ pure Hodge structure: the  Hodge decomposition $V_{\C} = \bigoplus_{p+q=n} V^{p,q}$ is given by the decomposition of the representation $h_{\C} \colon \mathbb{S}(\C) \to \GL(V_{\C})$ into isotypic components. The action of $\mathbb{S}(\C)=\C^{\times} \times \C^{\times}$ on $V^{p,q}$ is given by $(z,w) \mapsto z^{-p}w^{-q}$.

Vice versa, any pure  $\Q$-Hodge structure of weight $n$ is obtained this way, and  morphisms of Hodge structures are precisely $\mathbb{S}$-equivariant $\Q$-linear maps.

The extension of the correspondence between Hodge structures and  Deligne torus representations to the mixed case is based on the existence of the so-called \emph{Deligne splitting}.

\begin{lemma}[Deligne, \cite{Del94}]\label{Deligne}
Let $(V_{\Q}, W_{\bullet}V_{\Q}, F^{\bullet}V_{\C})$ be a mixed $\Q$-Hodge structure. There exists a functorial splitting $V_{\C}=\bigoplus_{r,s}I^{r,s}$ such that:
\begin{itemize}
\item[(i)] $W_{k}V=\bigoplus_{r+s\le k} I^{r,s}$;
\item[(ii)] $F^pV=\bigoplus_{r \ge p} I^{r,s}$;
\item[(iii)] $I^{r,s} \equiv \overline{I^{s,r}} \operatorname{mod} \bigoplus_{\substack{r'<r \\s'<s}}I^{r',s'}$.
\end{itemize}
\end{lemma}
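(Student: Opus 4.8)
The plan is to write down Deligne's explicit formula for the subspaces $I^{r,s}$ and then verify properties (i)--(iii) together with the directness of the sum; functoriality will come for free. Concretely, I would set
\[
I^{r,s} := \big(F^r \cap W_{r+s}\big) \cap \Big(\overline{F^s}\cap W_{r+s} \;+\; \sum_{j\ge 1}\overline{F^{s-j}}\cap W_{r+s-j-1}\Big),
\]
where $\overline{F^{\bullet}}$ denotes the complex conjugate filtration with respect to the real structure $V_{\R}\subset V_{\C}$. The correction terms $\sum_{j\ge 1}$ are inserted precisely to compensate for the fact that $F$ and $\overline{F}$ need not be opposed on the nose: they are opposed only after passing to the associated graded.

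The single structural input I would use is the definition of a mixed Hodge structure itself, namely that $F$ induces on each graded piece a pure Hodge structure of weight $k$, so that $F$ and $\overline{F}$ are $k$-opposed there and $\Gr^W_k\otimes\C = \bigoplus_{r+s=k}(\Gr^W_k)^{r,s}$ with $(\Gr^W_k)^{r,s}=\overline{(\Gr^W_k)^{s,r}}$. The strategy is then an induction on the weight (equivalently, on the number of nonzero graded pieces), whose central claim is that the projection $W_{r+s}\twoheadrightarrow \Gr^W_{r+s}$ restricts to an isomorphism $I^{r,s}\xrightarrow{\sim}(\Gr^W_{r+s})^{r,s}$.

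Granting this claim, the rest is formal. Property (i) follows because in each weight $m\le k$ both $\bigoplus_{r+s\le k}I^{r,s}$ and $W_k$ surject onto $\Gr^W_m$ with the same image, and they agree on $W_{k-1}$ by induction; property (ii) is proved symmetrically, using that $I^{r,s}\subseteq F^r$ by construction and that the pieces with $r\ge p$ exhaust $F^p$ after comparison on the associated graded. Directness is immediate from the isomorphism claim together with $\bigoplus_{r+s=k}(\Gr^W_k)^{r,s}=\Gr^W_k$. For the conjugation symmetry (iii): modulo $W_{r+s-1}$ the class of $I^{r,s}$ is the genuine Hodge piece $(\Gr^W_{r+s})^{r,s}$, whose conjugate $(\Gr^W_{r+s})^{s,r}$ is the reduction of $I^{s,r}$; hence $I^{r,s}-\overline{I^{s,r}}$ lands in $W_{r+s-1}$, and inspecting the formula — both members lie in $W_{r+s}$ and differ only by elements drawn from the lower-weight correction pieces — shows that this difference lies in $\bigoplus_{r'<r,\,s'<s}I^{r',s'}$. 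Functoriality is then automatic: a morphism of mixed Hodge structures is a $\Q$-linear map preserving $W$ and $F$, hence also $\overline{F}$, so it carries the defining formula for $I^{r,s}$ into the corresponding one for the target and respects the splitting.

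The main obstacle I anticipate is exactly the isomorphism claim $I^{r,s}\xrightarrow{\sim}(\Gr^W_{r+s})^{r,s}$. Establishing both injectivity (that no nonzero element of $I^{r,s}$ lies in $W_{r+s-1}$) and surjectivity requires careful inductive bookkeeping of how the correction terms interact with the lower steps of the weight filtration, and it is here that the precise index range of the sum $\sum_{j\ge 1}\overline{F^{s-j}}\cap W_{r+s-j-1}$ is essential — any other shape would spoil either directness or the symmetry in (iii). Once this lemma is in hand, everything else reduces to elementary linear algebra on the opposed filtrations of the pure pieces.
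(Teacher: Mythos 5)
You should first note the benchmark here: the paper does not prove this lemma at all — it is quoted from Deligne with the citation \cite{Del94} — so the relevant comparison is with the classical argument (Deligne's, as reproduced e.g. in Peters--Steenbrink or Cattani--Kaplan--Schmid). Measured against that, your formula is exactly the standard one: reindexing $j\mapsto j+1$ turns your sum $\sum_{j\ge 1}\overline{F^{s-j}}\cap W_{r+s-j-1}$ into the usual $\sum_{j\ge 2}\overline{F^{s-j+1}}\cap W_{r+s-j}$, and your overall strategy — show that the projection $W_{r+s}\twoheadrightarrow \Gr^W_{r+s}$ restricts to an isomorphism $I^{r,s}\xrightarrow{\sim}(\Gr^W_{r+s})^{r,s}$, deduce (i), (ii) and directness by induction on the weight, and get functoriality for free since $I^{r,s}$ is defined purely in terms of $W$, $F$ and $\overline{F}$ — is the standard route. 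The functoriality paragraph is correct as stated.

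As a proof, however, the proposal has a genuine gap, and you flag it yourself: the isomorphism claim $I^{r,s}\xrightarrow{\sim}(\Gr^W_{r+s})^{r,s}$ is announced as ``the main obstacle'' but never established, and it is the entire content of the lemma — everything in your deduction of (i), (ii) and directness is conditional on it. Injectivity amounts to showing $I^{r,s}\cap W_{r+s-1}=0$, and surjectivity requires lifting a class in $(\Gr^W_{r+s})^{r,s}$ through the correction terms, both of which need the strictness properties of morphisms of mixed Hodge structures and a downward induction through the weight filtration; none of this bookkeeping is carried out. The treatment of (iii) has the same defect in sharper form: you must show that for $v\in I^{r,s}$ the error $v-\overline{w}$ (with $w\in I^{s,r}$ the matching graded class) lies in the specific quadrant $\bigoplus_{r'<r,\,s'<s}I^{r',s'}$, not merely in $W_{r+s-2}$; your phrase ``differ only by elements drawn from the lower-weight correction pieces'' does not exclude components $I^{r',s'}$ with $r'\ge r$ or $s'\ge s$, and ruling those out is precisely where the exact shape of the correction sum is used (via the inclusions $I^{r,s}\subseteq F^r$ and the conjugated formula, argued by descending induction). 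So what you have is a correct formula and a well-organised outline of the standard proof, with the two steps that carry all the difficulty — the bijectivity onto the graded Hodge piece and the refined conjugation symmetry — left unproven.
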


Therefore, to a mixed $\Q$-Hodge structure $(V, W_{\bullet}V, F^{\bullet}V)$ one can functorially associate a representation $h_{\C} \colon \mathbb{S} \to \GL(V_{\C})^W$ for which the Deligne splitting of $V_{\C}$ is the isotypic decomposition of the $\mathbb{S}$-module $V_{\C}$ (here $\GL(V_{\C})^W$ denotes the subgroup of $\GL(V_{\C})$ preserving the weight filtration $W_{\bullet}V$). The projection $\GL(V_{\C})^W \to \prod_k \GL(\Gr_k^W V_{\C})$ sends $h$ to $\bigoplus_k h_k$, where $h_k$ are representations of $\mathbb{S}$ corresponding to the pure weight $k$ Hodge structures on the graded pieces $W_kV/W_{k-1}V$.

Pink gave a complete description of representations of $\mathbb{S}$ that come from mixed Hodge structures \cite{Pink}. More precisely, he proved the following.

\begin{thrm}[Pink]\label{pink}
Let $\mathbf{G}$ be a connected algebraic group over $\Q$. Denote by $\mathbf{U}$ its unipotent radical, by $\mathbf{H}:=\mathbf{G}/\mathbf{U}$ its reductive quotient, by $\g$ its Lie algebra and by $\u$ the Lie algebra of $\mathbf{U}$. Let $\rho \colon \mathbf{G} \to \GL(V_{\Q})$ be a faithful finite-dimensional representation of $\mathbf{G}$ over $\Q$ and $h \colon \mathbb{S}_{\C} \to \mathbf{G}_{\C}$ a morphism of complex algebraic groups.

Then there exists a unique mixed Hodge structure on $V_{\Q}$ inducing the  representation $\mathbb{S}_{\C} \xrightarrow{h} \mathbf{G}_{\C} \xrightarrow{\rho_{\C}} \GL(V_{\C})$ if and only if the following holds:
\begin{itemize}
\item[(i)] $\mathbb{S}_{\C} \xrightarrow{h} \mathbf{G}_{\C} \to \mathbf{H}_{\C}$ is defined over $\R$;
\item[(ii)] $\mathbb{G}_m \xrightarrow{w} \mathbb{S}_{\C} \xrightarrow{h} \mathbf{G}_{\C} \to \mathbf{H}_{\C}$ is defined over $\Q$.
\end{itemize}
In this case the $\mathbf{G}$-action on $V_{\Q}$ preserves the weight filtration. Moreover, the group $\mathbf{U}$ acts trivially on the associated graded $\bigoplus \Gr^W_k V_{\Q}$ if and only if
\begin{itemize}
\item[(iii)] the composition $\mathbb{S} \xrightarrow{h} \mathbf{G} \xrightarrow{\operatorname{Ad}} \GL(\g)$ endows $\g$ with a rational mixed Hodge structure, such that $W_{-1}\g=\u$.
\end{itemize}
\end{thrm}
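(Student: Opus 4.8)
The plan is to prove Pink's theorem by reducing the mixed case to the well-understood pure case via the Deligne splitting (Lemma \ref{Deligne}) and then carefully tracking how the representation $h$ interacts with the weight filtration. First I would establish the necessity of conditions (i) and (ii). Suppose a mixed Hodge structure on $V_{\Q}$ inducing $\rho_{\C} \circ h$ exists. The associated graded pieces $\Gr^W_k V_{\Q}$ carry pure Hodge structures of weight $k$, which correspond to representations $h_k \colon \mathbb{S} \to \GL(\Gr^W_k V)_{\R}$ defined over $\R$, and the composite with $w$ is defined over $\Q$ (it is multiplication by $t^{-k}$). Since $\mathbf{U}$ is the unipotent radical and $\mathbf{H} = \mathbf{G}/\mathbf{U}$ is reductive, the faithful representation $\rho$ induces on the reductive quotient $\mathbf{H}$ an action on $\bigoplus_k \Gr^W_k V_{\Q}$; one checks that the composite $\mathbb{S}_{\C} \to \mathbf{G}_{\C} \to \mathbf{H}_{\C}$ is determined by the $h_k$, whence conditions (i) and (ii) follow from the reality and rationality properties just noted.

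For sufficiency, I would reverse this reasoning. Assuming (i) and (ii), the induced morphism $\bar{h} \colon \mathbb{S}_{\C} \to \mathbf{H}_{\C}$ is defined over $\R$ with $\bar{h} \circ w$ defined over $\Q$, so by the pure case the action of $\mathbf{H}$ through $\rho$ on the associated graded $\bigoplus_k \Gr^W_k V_{\Q}$ defines a genuine pure Hodge structure of weight $k$ on each $\Gr^W_k V_{\Q}$. The key point is then to produce the mixed Hodge structure on $V_{\Q}$ itself: I would define the weight filtration $W_{\bullet} V_{\Q}$ as the filtration preserved by $\mathbf{G}$ (using that $\mathbf{G}$ acts on $V_{\Q}$ and the weight grading is intrinsic to the reductive quotient action), and define the Hodge filtration $F^{\bullet} V_{\C}$ directly from the isotypic decomposition of the $\mathbb{S}$-module $V_{\C}$ via $h_{\C}$, setting $F^p V_{\C} = \bigoplus_{r \ge p} V_{\C}^{r, \bullet}$. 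One must verify that $W_{\bullet}$ is defined over $\Q$, that $F^{\bullet}$ induces the correct pure Hodge filtrations on each graded piece (this is exactly where (i) and (ii) feed in, guaranteeing the gradeds are genuine $\Q$-Hodge structures), and that uniqueness holds because any such mixed Hodge structure must reproduce both filtrations. The verification of the compatibility axioms for a mixed Hodge structure reduces, graded piece by graded piece, to the pure statement already in hand.

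Finally, for the last assertion concerning condition (iii), I would observe that $\mathbf{U}$ acts trivially on $\bigoplus_k \Gr^W_k V_{\Q}$ precisely when the unipotent radical lands in the weight-decreasing part; equivalently, the adjoint representation $\operatorname{Ad} \circ h \colon \mathbb{S} \to \GL(\g)$ must place $\u = \operatorname{Lie}(\mathbf{U})$ in negative weights. Concretely, $\mathbf{U}$ acts trivially on the gradeds if and only if, for every $\mathbf{G}$-stable weight filtration, the elements of $\u$ shift weights strictly downward; applying the already-established correspondence to the faithful adjoint-type representation on $\g$ shows this is equivalent to $\operatorname{Ad} \circ h$ defining a mixed Hodge structure on $\g$ with $W_{-1}\g = \u$. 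Here $\u$ being exactly $W_{-1}\g$ captures both that $\u$ has strictly negative weights and that the reductive part $\mathbf{H}$ sits in weight zero.

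I expect the main obstacle to be the sufficiency direction: specifically, showing that the Hodge filtration $F^{\bullet}$ defined from the complex representation $h_{\C}$ is compatible with the rationally-defined weight filtration $W_{\bullet}$ in the precise sense required by the definition of a mixed Hodge structure — namely that $F^{\bullet}$ induces a pure weight-$k$ Hodge structure on each $\Gr^W_k V_{\Q}$. The delicate part is that $h$ itself need not be defined over $\R$ (only its projection to $\mathbf{H}$ is), so one cannot directly invoke the pure correspondence for $V$; instead one must argue that the failure of reality is absorbed entirely by the unipotent radical $\mathbf{U}$, which acts trivially on the gradeds, so that the induced data on each $\Gr^W_k V$ does come from the real morphism $\bar{h}$. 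Making this descent argument rigorous, and checking the Deligne-splitting compatibility \emph{(iii)} of Lemma \ref{Deligne} is automatically satisfied, is the technical heart of the proof.
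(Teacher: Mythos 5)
You cannot be compared against the paper here in the usual sense: the paper offers no proof of Theorem \ref{pink} at all — it is quoted as background from Pink \cite{Pink}. Judged on its own, your sketch contains a genuine gap that infects both directions of the argument. You repeatedly assume that the $\mathbf{G}$-action on $\bigoplus_k \Gr^W_k V_{\Q}$ factors through the reductive quotient $\mathbf{H}$: first in the necessity step (``the faithful representation $\rho$ induces on the reductive quotient $\mathbf{H}$ an action on $\bigoplus_k \Gr^W_k V_{\Q}$'', from which you read off (i) and (ii)), and again at what you yourself call the technical heart of sufficiency (``the failure of reality is absorbed entirely by the unipotent radical $\mathbf{U}$, which acts trivially on the gradeds''). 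But triviality of the $\mathbf{U}$-action on the associated graded is precisely the content of the final clause of the theorem: it holds if and only if the strictly stronger condition (iii) does, and it can fail while (i) and (ii) hold. Concretely, take $\mathbf{G}=\mathbb{G}_a$ embedded in $\GL_2$ as the strictly upper-triangular unipotent subgroup, $V_{\Q}=\Q^2$ the (faithful) standard representation, and $h$ the trivial cocharacter: the unique induced mixed Hodge structure is pure of type $(0,0)$, conditions (i) and (ii) hold vacuously, yet $\mathbf{U}=\mathbf{G}$ acts nontrivially on $\Gr^W_0 V_{\Q}=V_{\Q}$ (and indeed $W_{-1}\g=0\neq\u$, so (iii) fails). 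So both your derivation of (i)--(ii) from the graded representations $h_k$ and your descent of the Hodge data through $\mathbf{H}$ rest on a false premise.

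A second, related defect: defining $W_{\bullet}V_{\Q}$ ``as the filtration preserved by $\mathbf{G}$'' is not well posed — in the example above both $0\subset V_{\Q}$ and the full flag $0\subset \Q e_1\subset V_{\Q}$ are $\mathbf{G}$-stable, and only the first is the weight filtration. The weight filtration must be extracted from $h$ itself, as the eigenspace filtration of the weight cocharacter $h\circ w$ on $V_{\C}$; the real work is to show that this filtration is defined over $\Q$ and satisfies the mixed Hodge axioms. The standard mechanism (essentially Pink's route) is a $\Q$-Levi decomposition of $\mathbf{G}$ (Mostow): condition (ii) lets one handle the weight on the Levi quotient, and one shows that $h$ differs from a cocharacter factoring through a Levi subgroup by conjugation by an element of $\mathbf{U}(\C)$ — it is this conjugation, not a graded descent, that absorbs the non-reality of $h$ into the unipotent radical — after which the pure correspondence applies on the Levi and the Deligne splitting (Lemma \ref{Deligne}) yields the compatibility axioms and uniqueness. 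Your treatment of (iii) is closer to correct (apply the already-established correspondence to the adjoint representation: $\mathbf{U}$ acts trivially on the gradeds iff $\rho(\u)$ shifts $W_{\bullet}V$ strictly downwards iff $\u\subseteq W_{-1}\g$, while conversely $W_{-1}\g\subseteq\u$ because $W_{-1}\g$ is a nilpotent ideal), but as written it again quantifies over ``every $\mathbf{G}$-stable weight filtration'', presupposing exactly the stability you never established.
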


Remarkably, the conclusion of Pink's Theorem does not depend on the representation $\rho$, but only on the homomorphism $h$.

\begin{df}
A \emph{Hodge cocharacter} is a morphism $h \colon \mathbb{S}_{\C} \to \mathbf{G}_{\C}$ satisfying conditions \textit{(i)-(iii)} of Theorem \ref{pink}
\end{df}

Let $V=(V_{\Q}, W_{\bullet}V, F^{\bullet}V)$ be a mixed $\Q$-Hodge structure and $V_{\C} = \bigoplus_{r,s} I_V^{r,s}$ its Deligne splitting. The \emph{complex conjugate} mixed Hodge structure is the unique $\Q$-mixed Hodge structure  $\overline{V}$ on $V_{\Q}$ for which  the pieces of its Deligne splitting are $I_{\overline{V}}^{r,s} = \overline{I_{V}^{s,r}}$. We say that a mixed Hodge structure \emph{splits over $\R$} if it is isomorphic to its complex conjugate. If $V$ is a pure Hodge structure, its Deligne splitting coincides with the Hodge splitting, and Hodge duality $V^{p,q}=\overline{V^{q,p}}$ means that every pure Hodge structure automatically splits over $\R$.

Let $\mathbf{G}$ be a connected algebraic group over $\Q$ and $\rho \colon \mathbf{G} \to \GL(V_{\Q})$ a finite dimensional faithful representation. Every Hodge cocharacter $h \colon \mathbb{S}_{\C} \to \mathbf{G}$ gives rise to a mixed $\Q$-Hodge structure $V_h=(V_{\Q}, W^h_{\bullet}V, F^{\bullet}_hV)$. The weight filtration $W_{\bullet}V=W^h_{\bullet}V$ can be recovered as the maximal flag fixed by the unipotent radical $\mathbf{U} \subset \mathbf{G}$ and does not depend on $h$. The complex conjugation on mixed Hodge structures induces an involution on the set of $\mathbf{G}$-valued Hodge cocharacters via $h \mapsto \overline{h}, \ V_{\overline{h}}=\overline{V_h}$. This involution does not depend on the choice of representation $\rho$ and, in fact, is induced by the simultaneous complex conjugation on $\mathbb{S}(\C)$ and $\mathbf{G}(\C)$. In particular, a mixed Hodge structure $V_h$ splits over $\R$ if and only if its Hodge cocharacter $h \colon \mathbb{S}_{\C} \to \mathbf{G}_{\C}$ is defined over $\R$.

Another important tool of the theory of mixed Hodge structures that we need to recall is the notion of a \emph{Jacobian}.

\begin{df}\label{jacobian def}
Let $V=(V_{\Z}, W_{\bullet}V, F^{\bullet}V)$ be a mixed $\Z$-Hodge structure. Its \emph{$p$-th Jacobian} is defined as the double coset
\[
J^pV:=V_{\Z}\backslash V_{\C}/F^pV.
\]
\end{df}

\begin{prop}\label{carlson theorem}
Let $V=(V_{\Z}, W_{\bullet}V, F^{\bullet}V)$ be a mixed $\Z$-Hodge structure. Suppose that $V$ has  negative weights, i.e. $W_{-1}V=V$. Then $V_{\Z}$ acts on $V_{\C}/F^0V$ properly discontinuous. In particular, $J^0V$ is a connected commutative complex Lie group.
\end{prop}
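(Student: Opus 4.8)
The plan is to reduce everything to the single linear-algebra fact that $V_\R \cap F^0V = 0$ inside $V_\C$, after which the Lie group structure is formal. First I would observe that proper discontinuity of the translation action of $V_\Z$ on $V_\C/F^0V$ is equivalent to the image of $V_\Z$ in $V_\C/F^0V$ being discrete. Since the torsion of $V_\Z$ maps to $0$ in the torsion-free group $V_\C$, the action factors through the lattice $\Lambda := V_\Z/\mathrm{tors} \subset V_\R$, which is full, i.e. $\Lambda \otimes_\Z \R = V_\R$. Hence it suffices to show that the composite $V_\R \hookrightarrow V_\C \twoheadrightarrow V_\C/F^0V$ is injective: then $V_\R$ maps isomorphically onto a real subspace of $V_\C/F^0V$ and $\Lambda$ maps to a discrete subgroup (a full lattice inside that subspace). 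Injectivity of this composite is exactly the condition $V_\R \cap F^0V = 0$.

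The heart of the argument is therefore to prove $V_\R \cap F^0V = 0$ using the hypothesis $W_{-1}V = V$. I would work with the Deligne splitting $V_\C = \bigoplus_{r,s}I^{r,s}$ of Lemma \ref{Deligne}. The weight hypothesis translates into $I^{r,s} = 0$ whenever $r+s \ge 0$, and by part (ii) of that lemma $F^0V = \bigoplus_{r\ge 0}I^{r,s}$. The key point is to locate the complex conjugate $\overline{F^0V}$. Part (iii) gives $\overline{I^{r,s}} \subseteq I^{s,r}\oplus\bigoplus_{r'<s,\,s'<r}I^{r',s'}$, so every summand occurring in $\overline{I^{r,s}}$ has first index $\le s$. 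For an index with $r \ge 0$ the weight bound $r+s \le -1$ forces $s \le -1$, hence $\overline{F^0V} \subseteq \bigoplus_{r'<0}I^{r',s'}$. Comparing with $F^0V \subseteq \bigoplus_{r\ge 0}I^{r,s}$ and using that the $I^{r,s}$ give a direct sum decomposition, I conclude $F^0V \cap \overline{F^0V} = 0$. Now if $v \in V_\R \cap F^0V$, then $v$ is fixed by complex conjugation, so $v = \overline{v} \in F^0V \cap \overline{F^0V} = 0$. In the special case where $V$ is pure this is simply the statement that $F^0V$ and $\overline{F^0V}$ involve Hodge pieces $V^{p,q}$ with $p\ge 0$ and $p<0$ respectively, which is precisely where negativity of the weight enters; the mixed case is controlled by property (iii), and this is the only genuinely delicate step.

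Finally I would assemble the conclusion. Since $V_\R \cap F^0V = 0$, the lattice $\Lambda$ acts on the complex vector space $V_\C/F^0V$ by translations through a discrete subgroup; such an action is free and properly discontinuous, so $J^0V = \Lambda \backslash \bigl(V_\C/F^0V\bigr)$ is a complex manifold, realised as the quotient of the complex Lie group $(V_\C/F^0V, +)$ by a discrete subgroup of translations. It is therefore a connected commutative complex Lie group, being the continuous image of the connected abelian group $V_\C/F^0V$. The main obstacle, as indicated, is entirely contained in controlling $\overline{F^0V}$ via the Deligne splitting in the mixed setting; once $V_\R \cap F^0V = 0$ is established, no further Hodge theory is needed.
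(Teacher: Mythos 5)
Your proof is correct and follows essentially the same route as the paper, whose entire argument is that $W_{-1}V=V$ implies $F^0V\cap\overline{F^0V}=0$, hence the image of $V_{\Z}$ in $V_{\C}/F^0V$ is discrete (citing Peters--Steenbrink for the latter step). You simply write out in full, via the Deligne splitting, both the key identity $F^0V\cap\overline{F^0V}=0$ and the passage from $V_{\R}\cap F^0V=0$ to discreteness of the lattice image, details the paper delegates to the citation.
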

\begin{proof}
The condition $W_{-1}V=V$ implies $F^0V \cap \overline{F^0V}=0$. Thus $V_{\Z} \cap F^0V=0$ and $V_{\Z}$  embeds as a discrete subgroup in $V_{\C}/F^0V$ (cf. \cite[Lemma 3.29.]{PetSteen}) .
\end{proof}

Jacobians are functorial. Namely, let  $V$ and $V'$ be mixed $\Z$-Hodge structures with $W_{-1}V=V$ and $W_{-1}V'=V'$ and let $f \colon V  \to V'$ be a morphism of mixed Hodge structures. Then it induces a morphism of complex Lie groups $J^0f \colon J^0V \to J^0V'$.

\begin{ex}\label{albanese is jacobian}
Let $X$ be a normal quasi-projective variety. By Deligne, $H_1(X,\Z)$ carries a mixed $\Z$-Hodge structure with negative weights. Then, 
\[
J^0H_1(X,\Z)=H_1(X,\Z) \backslash H_1(X,\C) / F^0H_1(X,\C) = \Alb(X).
\]
\end{ex}

In subsection \ref{niljacobians basic} we introduce the notion of a \emph{nil-Jacobian} that allows to generalise the example above and realise higher Albanese manifolds as nil-Jacobians of Hodge structures on quotients of the Malcev completion of $\pi_1(X)$. 

\subsection{Mixed Hodge varieties}\label{hodge varieties subsec}

We briefly recall the modern approach to period domains and their arithmetic quotients in the mixed setting, based on the notion of a \emph{mixed Hodge datum} introduced in \cite{Kling}. This formalism generalises Pink's approach to mixed Shimura varieties \cite{Pink}. See \cite[Subsection 3.2.]{Kling} on the precise relation between the two theories. 

Fix the following notations. As in Theorem \ref{pink}, we denote by $\mathbf{G}$ a connected $\Q$-algebraic group, by $\g$ its Lie algebra, by $\mathbf{U}$ its unipotent radical, by $\mathbf{H}:=\mathbf{G}/\mathbf{U}$ the reductive quotient, and by $\u$ the Lie algebra of $\mathbf{U}$. Let $\widetilde{G}$ be the preimage of $\mathbf{H}(\R) \subset \mathbf{H}(\C)$ under the map $\mathbf{G}(\C) \to \mathbf{H}(\C)$. This is a real algebraic group, which is an extension
\[
1 \to \mathbf{U}(\C) \to \widetilde{G} \to \mathbf{H}(\R) \to 1.
\]
Observe that $\mathbf{G}(\R) \subseteq \widetilde{G} \subseteq \mathbf{G}(\C)$.

Let $h \colon \mathbb{S}_{\C} \to \mathbf{G}_{\C}$ be a Hodge cocharacter. Denote by $X_{\mathbf{G}}$ its $\widetilde{G}$-conjugacy class. The set $X_{\mathbf{G}}$ carries a structure of a real semialgebraic domain inside a complex algebrac variety and admits a transitive real semialgebraic action of $\widetilde{G}$. 

For a Hodge cocharacter $h \in X_{\mathbf{G}}$ the composition $\mathbb{S} \xrightarrow{h} \mathbf{G} \xrightarrow{\operatorname{Ad}} \End(\g)$
defines a mixed Hodge structure $(W_{\bullet}\g_{\Q}, F_h^{\bullet}\g_{\C})$ on $\g$. The Hodge filtration on $\g_{\C}$ is respected by the Lie bracket: $[F^p\g, F^q\g] \subseteq F^{p+q}\g$. Thus, $F^0\g$ is a Lie subalgebra. Denote $F_h^0\mathbf{G}:=\exp(F_h^0\g) \subseteq \mathbf{G}(\C)$ and $F_h^0\widetilde{G}:=F_h^0\mathbf{G} \cap \widetilde{G}$. 

Fix a faithful finite-dimensional $\Q$-representation $\rho \colon \mathbf{G} \to \GL(V_{\Q})$. A cocharacter $h \in X_{\mathbf{G}}$ determines a mixed $\Q$-Hodge structure $(W_{\bullet}V, F_h^{\bullet}V)$ on $V_{\Q}$. For $g \in \mathbf{G}(\C)$ the operator $\rho(g) \in \GL(V_{\C})$ preserves the Hodge filtration $F^{\bullet}_hV$ if and only if $g \in F^0_h\mathbf{G}$. Therefore,  all possible mixed Hodge structures on $V$  given by fixed $\rho$ and some $h \in X_{\mathbf{G}}$  are parametrised by
\[
\mathcal{D}_{\mathbf{G}, X_{\mathbf{G}}}:=X_{\mathbf{G}}/F_h^0\widetilde{G}.
\]

The quotient $\mathcal{D}_{\mathbf{G}, X_{\mathbf{G}}}$ is again a real semiaglebraic domain that does not depend neither on the choice of $h \in X_{\mathbf{G}}$, nor on the representation $\rho$ (\cite{Pink}) and generalises the classical notion of a period domain.

A technical issue arising here is that $\mathcal{D}_{\mathbf{G}, X_{\mathbf{G}}}$ might be not connected. To choose  a connected component of $\mathcal{D}_{X_{\mathbf{G}}}$  is the same as to choose a connected component of $\mathbf{G}(\R)$.

\begin{df}
A \emph{connected mixed Hodge datum} is a triple $(\mathbf{G}, X_{\mathbf{G}}, \mathcal{D})$, where $\mathbf{G}$ is a geometrically connected $\Q$-algebraic group, $X_{\mathbf{G}}$ is a $\widetilde{G}$-conjugacy class of a $\mathbf{G}$-valued Hodge cocharacter and $\mathcal{D} \subseteq \mathcal{D}_{\mathbf{G}, X_{\mathbf{G}}}$ is a connected component. One says that $\mathcal{D}$ is a \emph{connected Hodge domain}. A choice of a faithful representation $\rho \colon \mathbf{G} \to \GL(V_{\Q})$ identifies the points of $\mathcal{D}$ with mixed Hodge structures on $V_{\Q}$ of a certain type.

A connected mixed Hodge datum is \emph{pure} if the unipotent radical $\mathbf{U} \subseteq \mathbf{G}$ is trivial. In this case, the Hodge structures parametrised by $\mathcal{D}$ are pure\footnote{We follow a convention, in which a \emph{pure} Hodge structure is a direct sum of several pure Hodge structures of given (perhaps, different) weights. This is also natural from categorical point of view, as one wants the category of pure Hodge structures to be abelian.}.

A \emph{morphism of connected mixed Hodge data} $(\mathbf{G}, X_{\mathbf{G}}, \mathcal{D}) \to (\mathbf{G}', X'_{\mathbf{G}'}, \mathcal{D}')$ is a morphism $F \colon \mathbf{G} \to \mathbf{G}'$ of algebraic groups over $\Q$ such that $X'_{\mathbf{G}'}$ is the conjugacy class of $h \circ F$ for some $h \in X_{\mathbf{G}}$ and $F$ sends the connected component of $\mathbf{G}(\R)$ corresponding to $\mathcal{D}$ to the component of $\mathbf{G}'(\R)$ corresponding to $\mathcal{D}'$.

Let ($\mathbf{G}, X_{\mathbf{G}}, \mathcal{D})$ be a connected mixed Hodge datum, $\mathbf{G}^{+}(\R)\subseteq \mathbf{G}(\R)$ the connected component corresponding to $\mathcal{D}$ and $\mathbf{G}^{+}(\Q):=\mathbf{G}(\Q) \cap \mathbf{G}^{+}(\R)$. A \emph{mixed Hodge variety} is a quotient
\[
M:=\Gamma \backslash \mathcal{D},
\]
where $\Gamma \subset \mathbf{G}^{+}(\Q)$ is an arithmetic subgroup.

A \emph{morphism of mixed Hodge varieties}
\[
\Gamma\backslash \mathcal{D}=M \to M'=\Gamma'\backslash \mathcal{D}'
\]
is a morphism of the underlying Hodge data $(\mathbf{G},X_{\mathbf{G}}, \mathcal{D}) \to (\mathbf{G}', X'_{\mathbf{G}'},  \mathcal{D}')$ such that the image of $\Gamma \subset \mathbf{G}(\Q)$ is contained in $\Gamma' \subseteq \mathbf{G}'(\Q)$.

A \emph{pure}  Hodge variety is a mixed Hodge variety whose underlying Hodge datum is pure.
\end{df}

The reader should not be mislead by the terminology: mixed Hodge varieties are rarely algebraic varieties; the natural structure possesed by a Hodge variety is the structure of an analytic Deligne-Mumford stack or a complex orbifold (depending on the reader's tastes).

Every mixed Hodge domain can be realised as a real semialgebraic homogeneous domain inside a complex algebraic variety, in particular it is a homogeneous complex manifold. A morphism of mixed Hodge data induces a holomorphic map of mixed Hodge domains. Similarly, a morphism of mixed Hodge varieties induces a holomorphic map between them (or a morphism of analytic DM-stacks).

A Hodge datum is called \emph{graded polarisable} if for some (equivalently, any) $h \in X_{\mathbf{G}}$ and some (equiv. any) faithful representation $ \rho \colon \mathbf{G} \to \GL(V_{\Q})$ the resulting mixed Hodge structure on $V_{\Q}$ is graded polarisable. In this case, the reductive quotient $\mathbf{H}$ is semisimple.

A mixed Hodge variety is said to be graded polarisable if the underlying Hodge datum is graded polarisable.

\subsection{The purification map}\label{purification subsec}

Every  graded polarised mixed Hodge variety admits a canonical morphism to a pure Hodge variety, which we refer to as \emph{the purification map}.

Namely, let $(\mathbf{G}, X_{\mathbf{G}}, \mathcal{D})$  be a connected graded polarisable mixed Hodge datum. Pick a Hodge cocharacter $h \in X_{\mathbf{G}}$. Let $h_{\sigma}$  be the composition $\mathbb{S}_{\C} \xrightarrow{h} \mathbf{G}_{\C} \to \mathbf{H}_{\C}$. If $X^{\sigma}_{\mathbf{H}}$ is its $\mathbf{H}(\R)$-conjugacy class and $\mathcal{D}_{\sigma}$ is the corresponding connected Hodge domain,  the projection $\mathbf{G} \to \mathbf{H}$ defines a morphism to a pure Hodge datum

\begin{equation}\label{purification above}
(\mathbf{G}, X_{\mathbf{G}}, \mathcal{D}) \to (\mathbf{H}, X^{\sigma}_{\mathbf{H}}, \mathcal{D}_{\sigma}).
\end{equation}

Let $\rho \colon \mathbf{G} \to \GL(V_{\Q})$ is a faithful representation, so that every point in $\mathcal{D}$ can be interpreted is a mixed Hodge structure $(W_{\bullet}V_{\Q}, F^{\bullet}V_{\C})$ on $V_{\Q}$. Then the map $\mathcal{D} \to \mathcal{D}_{\sigma}$ induced by (\ref{purification above}) corresponds to the operation of taking associated graded of the weight filtration 
\[
(W_{\bullet}V_{\Q}, F^{\bullet}V_{\C}) \mapsto \bigoplus_k ( \Gr^W_k V_{\Q}, \Gr^W_k F^{\bullet}V_{\C}).
\]

(the period domain $\mathcal{D}_{\sigma}$ canonically splits into a product $\prod_k \mathcal{D}_{\sigma_k}$, where each $\mathcal{D}_{\sigma_k}$ parametrises pure weight $k$ Hodge structures on $W_kV/W_{k-1}V$).

 Let $\Gamma \subset \mathbf{G}^{+}(\Q)$ be an arithmetic subgroup and $M=\Gamma \backslash \mathcal{D}$ a mixed Hodge variety. Choose a representation $\rho \colon \mathbf{G} \to \GL(V_{\Q})$ in such a way that $\rho(\Gamma)$ is conjugate to $\SL(V_{\Z}) \cap \rho(\mathbf{G}(\Q))$ for some $\Z$-structure $V_{\Z} \subset V_{\Q}$. A choice of a Hodge cocharacter $h \in X_{\mathbf{G}}$ endows $V_{\Q}$ with a mixed $\Z$-Hodge structure $(V_{\Z}, W_{\bullet}V_{\Q}, F^{\bullet}V_{\C})$. The representation $\rho$ descends to $\rho_{\sigma} \colon \mathbf{H} \to \GL(\bigoplus_k\Gr^W_kV)$ and the image $\Gamma_{\sigma}$ of $\Gamma$ in $\mathbf{H}(\Q)$ is conjugate to $\rho_{\sigma}(\mathbf{H}(\Q)) \cap \SL(\bigoplus \Gr^W_kV_{\Z})$. This shows, that $\Gamma_{\sigma}$ is arithmetic and $M_{\sigma}:=\Gamma_{\sigma} \backslash \mathcal{D}_{\sigma}$ is  a pure Hodge variety. The morphism of Hodge data (\ref{purification above}) descends to a morphism of mixed Hodge varieties
\[
\sigma_M\colon M \to M_{\sigma}.
\]
The described procedure is functorial: if $f \colon M \to M'$ is a morphism of mixed Hodge varieties, there exists a canonical morphism $f_{\sigma} \colon M_{\sigma} \to M'_{\sigma}$ of their purifications such that the diagram 
\[
\xymatrix{
M \ar[r]^f \ar[d]_{\sigma_M} & M' \ar[d]^{\sigma_{M'}}\\
M_{\sigma} \ar[r]_{f_{\sigma}} & M'_{\sigma}
}
\]
commutes (\cite[Proposition 2.9]{Pink}). The purification map $M \to M_{\sigma}$ is an isomorphism if and only if $M$ is pure. Every morphism from a mixed Hodge variety $M$ to a pure Hodge variety factorises through $\sigma$.

The fibres of the purification map can be explicitly described. Namely, let  $x \in M_{\sigma}$ and  denote by $N_{x}:=\sigma_M^{-1}(\{x\}) \subseteq M$ the fibre.  Choose any $h \in X_{\mathbf{G}}$ such that its image under the composition $X_{\mathbf{G}} \to \mathcal{D} \to M$ belongs to $N_x$. Then $h$ determines a Hodge filtration $F_h^{\bullet}\g$ on $\g$. Let $F^0_x\mathbf{U}:=\mathbf{U}(\C) \cap \exp(F^0\g)$ and $\Gamma_U:=\Gamma \cap \mathbf{U}(\Q)$. In this notations, 
\[
N_{x}=\Gamma_{U} \backslash \mathbf{U}(\C)/F_x^0\mathbf{U}.
\]

\begin{ex}
Let $\mathcal{A}_g=[\Sp_{2g}(\Z) \backslash \mathcal{H}_g]$ be the moduli stack of principly polarised abelian varieties of dimension $g$ and $\mathcal{X}_g \to \mathcal{A}_g$ the universal abelian variety. Then $\mathcal{X}_g$ is a mixed Hodge variety (\cite[Example 2.25.]{Pink}) and $\mathcal{X}_g \to \mathcal{A}_g$ is its purification map. Although both $\mathcal{X}_g$ and $\mathcal{A}_g=(\mathcal{X}_g)_{\sigma}$ are locally homogeneous, the complex structure on the fibres of the purification map evidently varies from point to point.
\end{ex}

Let $X$ be a smooth complex quasi-projective variety and $\V$ a graded polarised admissible $\Z$-variation of mixed Hodge structures on it. One associates to it the \emph{period map}
\[
\Phi_{\V} \colon X^{\an} \to M,
\]
whose target is a mixed Hodge variety $M$ (see \cite[Subsection 3.5.]{Kling}). The variation $\V$ is said to be \emph{unipotent} if one of the following equivalent conditions is satisfied:
\begin{itemize}
\item the associated graded variation of pure Hodge structures $\Gr^W\V$ is trivial;
\item the global monodromy of $\V$ is quasi-unipotent;
\item the image of $\Phi_{\V}$ is contained in a fibre of the purification map $M \to M_{\sigma}$.
\end{itemize}
\subsection{The $\operatorname{sl}_2$-splitting}\label{about splitting subsec}

We fix a graded polarisable mixed Hodge datum $(\mathbf{G}, X_{\mathbf{G}}, \mathcal{D})$. Recall, that $\mathcal{D}$ admits a transitive holomorphic action of the group $\widetilde{G}$. The so-called \emph{$\operatorname{sl}_2$-splitting} is a canonical real semialgebraic retraction of $\mathcal{D}$ on a certain orbit of $\mathbf{G}(\R) \subset \widetilde{G}$. It plays the crucial role in the construction of the definable structure on mixed Hodge varieties due to \cite{BBKT} (see also subsection \ref{o-minimal hodge}).

Fix a faithful finite-dimensional rational representation $\rho \colon \mathbf{G} \to \GL(V_{\Q})$. As we mentioned, the Hodge domain $\mathcal{D}$ can be viewed as the space of  Hodge filtrations on the rational filtred vector space $(V_{\Q}, W_{\bullet}V)$. Under a \emph{splitting} of the weight filtration $W_{\bullet}V$ we understand a splitting $V=\bigoplus V_k$ such that $W_kV=\bigoplus_{l \le k} V_{l}$. The set of all possible splittings is naturally an algebraic variety over $\Q$ (\emph{the variety of splittings}), which we denote by $\mathcal{S}(W)$. There is a real semialgebraic map
\begin{equation}\label{splitting above}
\mathcal{S}(W) \times \mathcal{D}_{\sigma} \to \mathcal{D}
\end{equation}
and the existence of the Deligne splitting (Lemma \ref{Deligne}) implies that this map is surjective.

Let $T=(W_{\bullet}, F^{\bullet} )\in \mathcal{D}$ be a mixed Hodge structure on $V_{\Q}$ and $V_{\C}=\bigoplus I_{T}^{r,s}$ the Deligne splitting of the corresponding mixed Hodge structure on $V_{\Q}$. The \emph{complex conjugate} mixed Hodge structure  $\overline{T}$ is the one whose Deligne splitting satisfies 
\[
I^{r,s}_{\overline{T}} = \overline{I^{s,r}_T}.
\]
As we mentioned, this operation does not depend on the representation $\rho$ and gives  a well-defined antiholomorphic involution  $\mathcal{D} \to \mathcal{D}$.

The involution $T \mapsto \overline{T}$ preserves the fibres of the purification map $\mathcal{D} \to \mathcal{D}_{\sigma}$ and fibrewise  lifts to the complex conjugation on $\mathcal{S}(W)(\C)$. In particular, if $\mathbf{u}$ is an element of the unipotent radical $\mathbf{U}(\C) \subseteq \widetilde{G}$, then 
\[
\overline{\mathbf{u} \cdot T}=\overline{\mathbf{u}} \cdot \overline{T},
\]
where $\mathbf{u} \mapsto \overline{\mathbf{u}}$ is the conjugation on $\mathbf{U}(\C)$ induced by the real form $\mathbf{U}(\R) \subset \mathbf{U}(\C)$.

Therefore, a mixed Hodge structure splits over $\R$ (i.e. $T \simeq \overline{T}$) if and only if it lies in the image of $\mathcal{S}(W)(\R) \times \mathcal{D}_{\sigma}$ under the map (\ref{splitting above}).

We denote the subset of Hodge structures that split over $\R$ by $\mathcal{D}_{\R} \subseteq \mathcal{D}$. The group $\mathbf{G}(\R)$ acts transitively on it. The following Lemma is \cite[Proposition 2.20]{CKS}

\begin{lemma}[Cattani-Kaplan-Schmid]
Let $T \in \mathcal{D}$. Then there exists a unique element $\delta_T \in \g$ such that $\overline{T}=e^{-2\i \delta_T}\cdot T$. Moreover, $e^{-\i \delta_T} \cdot T$ splits over $\R$ and the map $T \mapsto e^{-\i \delta_T} \cdot T$ is a real semialgebraic retraction of $\mathcal{D}$ on $\mathcal{D}_{\R}$ (\emph{''the $\delta$-splitting''}).
\end{lemma}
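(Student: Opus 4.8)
The plan is to reduce the whole statement to Deligne's construction of the canonical real splitting \cite{Del94} (recorded in \cite[Proposition 2.20]{CKS}), and then read off the displayed formula and the retraction properties by equivariance. Concretely, write $T=(W_\bullet, F^\bullet)$ and let $\g_\C=\bigoplus_{r,s}\g^{r,s}$ be the Deligne bigrading induced on the Lie algebra by the adjoint mixed Hodge structure $\mathbb{S}\xrightarrow{h}\mathbf{G}\xrightarrow{\operatorname{Ad}}\GL(\g)$. The key input is: there is a unique \emph{real} element $\delta_T\in\bigoplus_{r,s<0}\g^{r,s}$ (i.e. $\overline{\delta_T}=\delta_T$) such that $(W_\bullet, e^{-\i\delta_T}F^\bullet)$ splits over $\R$. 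Granting this, note that since $r,s<0$ forces $r+s\le -2$, condition (iii) of Theorem \ref{pink} gives $\delta_T\in W_{-1}\g=\u$, so $e^{-\i\delta_T}\in\mathbf{U}(\C)\subseteq\widetilde{G}$ and the equivariance $\overline{\mathbf{u}\cdot T}=\overline{\mathbf{u}}\cdot\overline{T}$ recalled above applies to it. Setting $T':=e^{-\i\delta_T}\cdot T$, reality of $\delta_T$ yields $\overline{e^{-\i\delta_T}}=e^{\i\delta_T}$, hence $\overline{T'}=e^{\i\delta_T}\cdot\overline{T}$; since $T'$ is $\R$-split we have $\overline{T'}=T'$, and rearranging gives $\overline{T}=e^{-2\i\delta_T}\cdot T$. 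Conversely, for real $\delta$ in this subspace the condition $\overline{T}=e^{-2\i\delta}\cdot T$ is equivalent to $e^{-\i\delta}F^\bullet$ being $\R$-split (run the same computation with $T''=e^{-\i\delta}\cdot T$, which gives $\overline{T''}=T''$), so the uniqueness claimed in the statement is exactly Deligne's uniqueness.

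To produce $\delta_T$ I would argue by downward induction on the total degree $r+s$, using property (iii) of Lemma \ref{Deligne}. The obstruction to $T$ being $\R$-split is the failure of $I_T^{r,s}=\overline{I_T^{s,r}}$; by the congruence in Lemma \ref{Deligne} this defect lies in $\bigoplus_{r'<r,\,s'<s}I_T^{r',s'}$, so at each stage it is captured by a component in $\g^{r,s}$ which can be cancelled, modulo strictly lower-degree terms, by the corresponding homogeneous component of $\delta_T$; the reality requirement pins that component down uniquely, and the procedure terminates because $\g$ is finite-dimensional and the bigrading is bounded. The point demanding care is that the bigrading $\g^{r,s}$ is the one induced by $T$ itself (so there is a mild bootstrap), and that the successive corrections preserve $W_\bullet$ and move only within the fibre of the purification map through $T$ — which is consistent with $T$ and $\overline{T}$ having the same associated graded pure Hodge structures, the latter being automatically $\R$-split.

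Finally, for semialgebraicity and the retraction properties, recall that the Deligne projectors onto $I_T^{r,s}$ are given by the explicit rational formulas of \cite{Del94} in terms of the three flags $W_\bullet$, $F^\bullet$ and $\overline{F^\bullet}$, hence depend real-semialgebraically on $T\in\mathcal{D}$; as the inductive solution for $\delta_T$ uses only these projectors and linear algebra over $\R$, the assignments $T\mapsto\delta_T$ and $T\mapsto e^{-\i\delta_T}\cdot T$ are real semialgebraic. By the formula above this map lands in $\mathcal{D}_\R$, and if $T$ already splits over $\R$ then $\overline{T}=T$ forces $\delta_T=0$ by uniqueness, so the map restricts to the identity on $\mathcal{D}_\R$; it is therefore a real semialgebraic retraction of $\mathcal{D}$ onto $\mathcal{D}_\R$. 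The genuinely hard step is the inductive existence-and-uniqueness of Deligne's $\delta_T$; everything else is a formal consequence of equivariance and of the explicit description of the Deligne bigrading.
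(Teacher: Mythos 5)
Your proposal is correct and matches the paper's treatment: the paper states this lemma without proof, citing \cite[Proposition 2.20]{CKS}, and your argument is a faithful reconstruction of exactly that result --- Deligne's unique real $\delta_T$ in the bigraded piece $\bigoplus_{r,s\le -1}\g^{r,s}$ determined by $T$, with $(W_\bullet, e^{-\i\delta_T}F^{\bullet})$ split over $\R$, from which the identity $\overline{T}=e^{-2\i\delta_T}\cdot T$ (via reality of $\delta_T$, the inclusion $\delta_T\in W_{-2}\g\subseteq\u$, and $\mathbf{U}(\C)$-equivariance of conjugation), the retraction property, and semialgebraicity of the Deligne projectors all follow as you say. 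One small point you handle correctly but that the paper's wording blurs: uniqueness holds for \emph{real} $\delta_T$ in that specific subspace (as CKS state it), not for arbitrary elements of $\g$.
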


The Bakker-Brunebarbe-Klingler-Tsimerman definable structure on mixed Hodge varieties is based not on the $\delta$-splitting itself, rather on its technical upgrade, the \emph{$\operatorname{sl_2}$-splitting}. It is of the form
\[
r \colon \mathcal{D} \to \mathcal{D}_{\R}, \ T \mapsto e^{-\i\zeta_T} e^{-\i\delta_T} \cdot T,
\]
where $\zeta_T$ is a certain canonical polynomial in $\delta_T$ (see \cite{CKS} for more details and \cite{KNU00} where the first terms of $\zeta_T$ are explicitly computed). The $\operatorname{sl}_2$-splitting is again real semi-algebraic and functorial under morphisms of mixed Hodge data.

\section{Preliminaries from o-minimal geometry}\label{o-minimal sec}

We briefly recall the necessary facts from o-minimal geometry.

Theory of o-minimal structures originated in the context of model theory and found unexpected applications in algebraic geometry and Hodge theory in the last decade (\cite{BKT}, \cite{BBKT}, \cite{BBT}, \cite{EK} etc.). There are two main contexts in which o-minimality arises in complex geometry. First, it provides strong \emph{algebraisation criteria}, that is, it gives tools to verify the algebraicity of some objects of a priori complex analytic nature. Second, it  gives a suitable setup for the study of period maps of variations of Hodge structures and their behaviour at infinity.

In subsection \ref{o-minimal basic} we briefly recall the basic notions related to o-minimality with an emphasis on geomeric applications and prove some simple statements about definable spaces. In subsection \ref{complex o-minimal} we recap Bakker-Brunebarbe-Tsimerman's theory of definable analytic spaces. In subsection \ref{algebraisation results} we collect the algebraisation results that will be used further. In subsection \ref{o-minimal hodge} we discuss the Bakker - Brunebarbe - Klingler - Tsimerman definable structures on mixed Hodge varieties.

\subsection{o-minimal geometry}\label{o-minimal basic}

The primary reference is \cite{VdD}.

Recall that \emph{a structure} $\Sigma=(\Sigma_n)$ is a collection $\Sigma_n$ of boolean subalgebras of $\R^n$ satisfying the following conditions:
\begin{itemize}
\item if $A \in \Sigma_n$ and $B \in \Sigma_m$ then $A \times B \in \Sigma_{n+m}$;
\item if $A \in \Sigma_n$ and $p \colon \R^n \to \R^{n-1}$ is a linear projection, then $p(A) \in \Sigma_{n-1}$;
\item if $A \subseteq \R^n$ is real semialgebraic\footnote{This means that $A$ can be written as $A=A_1 \cup \ldots \cup A_k$, where each $A_k$ is given by a finite collection of real polynomial equalitites and inequalities.}, then $A \in \Sigma_n$.
\end{itemize}
Sets belonging to $\Sigma_n$ are called \emph{($\Sigma$-)definable}. A map between definable sets $f \colon A \to B$ is \emph{definable} if its graph $\Gamma_f \subseteq A \times B$ is definable.

Every structure is closed under basic topological and set-theoretic operations: taking topological closures, interiours, frontiers, images and preimages under definable maps etc. (\cite[Chapters 1,2.]{VdD}.

A typical example is the structure of real semialgebraic sets $\R_{\alg}$.

\begin{df}
A structure $\Sigma$ is called \emph{o-minimal} if $\Sigma_1=(\R_{\alg})_1$. Equivalently, the only $1$-dimensional definable sets are finite unions of points, intervals and rays.
\end{df}

The structure $\R_{\alg}$ is clearly o-minimal.  Another o-minimal structure which is often used in the geometric applications is $\R_{\an, \exp}$. This is the minimal structure such that the graphs of analytic functions with compact support $f \colon \R^n \to \R^m$ and the graph of the real exponent $\exp \colon \R \to \R$ are definable (its o-minimality is a deep theorem of \cite{VdDM}).

Throughout this paper we work exclusively with o-minimal structures. We will often abbreviate <<definable>> for <<definable in some o-minimal structure $\Sigma$>> without referring to the particular choice of $\Sigma$. Usually, it is sufficient to work in $\R_{\alg}$, although for some applications one has to pass to  $\R_{\an,\exp}$.

The o-minimality condition imposes  strong restrictions on the topology and geometry of definable sets:

\begin{itemize}
\item A set definable in an o-minimal structure is connected if and only if it is path-connected.
\item A set definable in an o-minimal structure admits a \emph{finite definable cell decomposition}. This means that a definable set $A$ can be written as a finite union of non-intersecting definable locally closed subsets $A=C_1 \sqcup \ldots \sqcup C_k$ with each $C_j$ being definably homeomorphic to $\R^{n_j}$. (\cite[Chapter 3, Theorem 2.11]{VdD}). In particular, every definable set has the homotopy type of a finite CW-complex.
\item If $A$ and $B$ are definable in an o-minimal structure and $f \colon A \to B$ is a definable map, then $B$ admits a finite definable stratification, such that the restrictions of  $f$ on the strata are locally trivial fibrations (\cite[Chapter 9]{VdD}).
\end{itemize}

The general slogan is that every topological or geometrical statement that is true for real semialgebraic sets, also holds for sets definable in an arbitrary o-minimal structure.

A prototypical example of a set that is not definable in any o-minimal structure is the graph of the sine function $\sin \colon \R \to \R$. Indeed, if it were definable,  the intersection of its graph $\{(x, \sin x)\} \subset \R^2$ with the algerbaic set $\{y=0\}$  would be definable, which contradicts the existence of a finite cell decomposition.

Similarly, let $f \colon \C \to \C$ be an entire function and suppose that its graph $\Gamma_f \subseteq \C^2$ is definable in some o-minmal structure (as a subset of $\C^2=\R^4$). Then $f$ is algebraic. Indeed, $f$ is not algebraic, it must have an essential singularity and, by Picard's great theorem, its level sets $\{f(z)=a\}=\Gamma_f \cap \left(\C \times \{a\} \right)$  are discrete countable subsets of $\C$. This contradicts the definability of $\Gamma_f$. This observation provides an insight into the connection between definability and algebraicity in complex analysis, see subsection  \ref{algebraisation results}.

One can also work in the more abstract setting of definable topological spaces. 

\begin{df}
Let $\Sigma$ be a structure and $X$ a topological space. A \emph{($\Sigma$)-definable atlas} on $X$ is a finite collection $(U_{\alpha}, \phi_{\alpha})$ is a finite collection of open subsets $U_{\alpha}$ and continuous open embeddings $\phi_{\alpha} \colon U_{\alpha} \hookrightarrow \R^{n_{\alpha}}$ such that:
\begin{itemize}
\item $\phi_{\alpha}(U_{\alpha})$ is definable for every $\alpha$;
\item $\phi_{\alpha}(U_{\alpha} \cap U_{\beta})$ is definable for every $\alpha$ and $\beta$;
\item $\phi_{\beta} \circ \phi_{\alpha}^{-1} \colon \phi_{\alpha}(U_{\alpha}\cap U_{\beta}) \to \phi_{\beta}(U_{\beta} \cap U_{\alpha})$ is definable for every $\alpha$ and $\beta$.
\end{itemize}
Two atlases $(U_{\alpha}, \phi_{\alpha})$ and $(U'_{\alpha}, \phi'_{\alpha})$ are \emph{equivalent}, if their union is a definable atlas. A \emph{definable space} is a pair $(X, \xi)$, where $\xi$ is an equivalence class of definable atlases.
\end{df}

A subset $Z$ of a definable space $(X, \xi=[(U_{\alpha}, \phi_{\alpha})])$ is \emph{definable} if $\phi_{\alpha}(Z \cap U_{\alpha})$ is definable for every $\alpha$. In this case, $Z$ canonically inherits a  structure of a definable space.

The product of definable topological spaces admits a natural definable space structure. A map $f \colon (X, \xi_X) \to (Y, \xi_Y)$ is definable if its graph $\Gamma_f \subseteq (X \times Y, \xi_X \times \xi_Y)$ is.

All geometric and topological properties of definable sets translate directly to the context of definable spaces. For example, a space definable in an o-minimal structure admits a finite definable cell decomposition and, therefore, has a finite homotopy type.

Abusively, we tend to omit the definable atlas $\xi_X$ from the notation, indentifiying a definable space $(X, \xi_X)$ with the underlying topological space $X$, when it leads to no misunderstanding.

A \emph{definable manifold} is a definable topological space, such that the atlas of definable charts provides a smooth manifold structure on it. 

\begin{prop}\label{finite covers}
Let $\tau \colon \widehat{X} \to X$ be a finite cover of connected locally contractible topological spaces. The following holds:
\begin{itemize}
\item[(i)] for a definable space structure $(X, \xi)$ on $X$ there exists  unique definable space structure $\widehat{\xi}$ on $\widehat{X}$ such that $\tau \colon \widehat{X} \to X$ is definable. Vice versa, for a definable space structure $(\widehat{X}, \widehat{\xi})$ there exists unique definable space structure $\xi$ on $X$ such that $\tau$ is definable;
\item[(ii)] let $(\widehat{X}, \widehat{\xi}) \xrightarrow{\tau} (X, \xi)$ be as above, $(Y, \eta)$ be a definable connected topological space and $f \colon Y \to X$ a  continuous map that admits a continuous lift $g \colon Y \to \widehat{X}$ (that is, the composition $\pi_1(Y) \xrightarrow{f_*} \pi_1(X) \to \operatorname{Aut}_X(\widehat{X})$ is trivial). 
\[
\xymatrix{
& \widehat{X} \ar[d]^{\tau}\\
Y \ar[ru]^{g} \ar[r]_f & X
}
\]
Then $f$ is definable if and only if $g$ is;
\item[(iii)] let $f \colon Y \to X$ is a continuous  map between definable spaces and $ \sigma \colon \widehat{Y} \to Y$ (resp. $\tau \colon \widehat{X} \to X$) be finite covers. Suppose there exists a lift $\widehat{f} \colon \widehat{Y} \to \widehat{X}$ of $f$.
\[
\xymatrix{
\widehat{Y} \ar[d]_{\sigma} \ar[r]^{\widehat{f}} & \widehat{X} \ar[d]^{\tau}\\
Y \ar[r]_{f} & X
}
\]
Then $\widehat{f}$ is definable if and only if $f$ is.
\end{itemize}
\end{prop}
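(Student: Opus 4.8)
The plan is to deduce all three items from one geometric input---that a finite covering is a local homeomorphism which is \emph{definably locally trivial}---together with a handful of purely formal closure properties of the definable category: composites of definable maps are definable; a definable continuous bijection of definable spaces has definable inverse (swap the coordinates of its graph); the diagonal of a definable space and the product and fibre product of definable spaces and maps are definable; and a definable space has only finitely many connected components, each of them definable. The geometric input is produced by a definable triangulation. Concretely, I fix a definable triangulation of the base and let $\{V_i\}$ be the finite family of open stars of its vertices: these are definable open sets covering the base, each definably contractible, so that each restriction $\tau^{-1}(V_i)\to V_i$ is a \emph{trivial} cover. Thus $\tau^{-1}(V_i)=\bigsqcup_{k=1}^{d}V_i^{(k)}$, where the sheets $V_i^{(k)}$ are the connected components of the definable set $\tau^{-1}(V_i)$, hence themselves definable, and $\tau|_{V_i^{(k)}}\colon V_i^{(k)}\to V_i$ is a definable continuous bijection with definable inverse section $s_i^{(k)}$. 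The crucial observation is that on an overlap the identification of a sheet $V_i^{(k)}$ with a sheet $V_j^{(l)}$ is locally constant, hence constant on each of the finitely many definable connected components of $V_i\cap V_j$; this is exactly what makes the combinatorial covering data definable.

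For \textit{(i)} I transport charts along $\tau$. On $V_i^{(k)}$ I take the chart $\phi\circ\tau|_{V_i^{(k)}}$, where $\phi$ is a chart of $X$ on $V_i$. Over each connected component of $V_i\cap V_j$ the resulting transition map equals a transition map of the atlas of $X$ up to the constant relabelling of sheets, hence is definable; this yields a finite definable atlas $\widehat{\xi}$ for which $\tau$ is definable. For uniqueness, suppose $\widehat{\xi}_1$ and $\widehat{\xi}_2$ both make $\tau$ definable. Over each $V_i$ the sheets $V_i^{(k)}$ are connected components of the definable set $\tau^{-1}(V_i)$ and $\tau|_{V_i^{(k)}}$ is a definable continuous bijection in either structure, so the sections $s_i^{(k)}$ are definable in both; writing the identity locally as $s_i^{(k)}\circ\tau$ shows $\mathrm{id}\colon(\widehat{X},\widehat{\xi}_1)\to(\widehat{X},\widehat{\xi}_2)$ and its inverse are definable, so the structures coincide. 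The converse assertion is proved symmetrically, triangulating $\widehat{X}$ and pushing charts down through local inverses $\tau|_{\widehat{U}_i}^{-1}$ on open sets $\widehat{U}_i$ small enough that $\tau$ is injective on them.

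Item \textit{(ii)} is the cleanest. If $g$ is definable then $f=\tau\circ g$ is definable, since $\tau$ is definable by \textit{(i)} and composites are definable. For the converse I form the fibre product $Y\times_X\widehat{X}=(f,\tau)^{-1}(\Delta_X)\subseteq Y\times\widehat{X}$, which is a definable space because $f$ and $\tau$ are definable and $\Delta_X$ is definable. The lift $g$ gives a continuous section $y\mapsto(y,g(y))$ of the finite covering $Y\times_X\widehat{X}\to Y$; a section of a covering over the connected space $Y$ is an open-and-closed embedding, so its image $\Gamma_g$ is a single connected component of $Y\times_X\widehat{X}$. As connected components of a definable space are definable, $\Gamma_g$ is a definable subset of $Y\times\widehat{X}$, i.e.\ $g$ is definable.

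For \textit{(iii)}, by \textit{(i)} the covers $\sigma,\tau$ are definable, so $\tau\circ\widehat{f}=f\circ\sigma$. If $\widehat{f}$ is definable then $f\circ\sigma$ is definable, and I descend it through $\sigma$: over a finite definable evenly covered cover $\{V\}$ of $Y$ with definable sections $s_k\colon V\to\widehat{Y}$ one has $f|_V=(f\circ\sigma)\circ s_k$, so $f$ is definable. Conversely, if $f$ is definable then $f\circ\sigma$ is definable and $\widehat{f}$ is a continuous lift of it through $\tau$; applying \textit{(ii)} to the restriction of $\widehat{f}$ to each of the finitely many connected components of $\widehat{Y}$ (each connected and definable) shows $\widehat{f}$ is definable. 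The only genuinely non-formal step is the local model of the first paragraph: producing, with \emph{finitely many} definable charts, an open cover adapted to the covering and verifying that the gluing data is definable. Finiteness comes from definable triangulation and definability of the sheet identifications from o-minimality (finitely many components on overlaps, on each of which the identification is constant); I expect the main point requiring care to be the converse of \textit{(i)}, where one must ensure that a sufficiently fine definable triangulation of $\widehat{X}$ has stars on which $\tau$ is injective, a subtlety that is automatic in the compact case but needs attention when $\widehat{X}$ is non-compact.
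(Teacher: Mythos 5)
Your overall strategy coincides with the paper's. For the forward half of \textit{(i)} the paper likewise builds charts on $\widehat{X}$ of the form $\phi_\alpha\circ\tau$ on the connected components of $\tau^{-1}(U_\alpha)$, for a finite atlas of $X$ by simply connected definable open sets (obtained there by appeal to cell decomposition; your triangulation-and-open-stars device is in fact the cleaner way to actually produce such an open cover, and your observation that sheet identifications are locally constant, hence constant on the finitely many definable components of overlaps, is exactly the verification the paper leaves implicit). For \textit{(ii)}/\textit{(iii)} the paper runs the same graph-and-components mechanism as you, only in the opposite order: it proves \textit{(iii)} directly, noting that $\Gamma_{\widehat{f}}$ sits inside the definable set $(\sigma\times\tau)^{-1}(\Gamma_f)$ as a union of components and that $\Gamma_f=(\sigma\times\tau)(\Gamma_{\widehat{f}})$, and obtains \textit{(ii)} as the special case $\widehat{Y}=Y$; you prove \textit{(ii)} via the fibre product and bootstrap to \textit{(iii)} componentwise. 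These are equivalent, and your explicit uniqueness argument in \textit{(i)} (absent from the paper's proof) is correct.

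There is, however, a genuine gap in the converse half of \textit{(i)}, and you have located the difficulty in the wrong place. Injectivity of $\tau$ on small stars is the harmless part; the real problem is the definability of the transition maps of the pushed-down atlas. These transitions are $\phi_{\beta'}\circ(\tau|_{\widehat{U}_{\beta'}})^{-1}\circ\tau|_{\widehat{U}_{\beta}}\circ\phi_\beta^{-1}$, and so involve the sheet identifications $(\tau|_{\widehat{U}_{\beta'}})^{-1}\circ\tau|_{\widehat{U}_{\beta}}$: continuous maps upstairs characterised only by commuting with $\tau$. Since at this stage there is no definable structure on $X$, nothing forces these maps to be $\widehat{\xi}$-definable, and in general they are not. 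Concretely, let $d$ be a fixed-point-free involution of $S^1$ conjugate to the antipodal map by a homeomorphism that is wild on one semicircle (say, non-differentiable at infinitely many points, so that by the monotonicity theorem its graph is definable in no o-minimal structure); take $\widehat{X}=S^1$ with its standard structure and $\tau\colon S^1\to X=S^1/d$. If some structure $\xi$ on $X$ made $\tau$ definable, then the fibre relation $R=\{(u,v):\tau(u)=\tau(v)\}$ would be definable (it is a projection of a definable subset of $\widehat{X}\times\widehat{X}\times X$), and since $R$ is the disjoint union of the diagonal and $\Gamma_d$, the graph $\Gamma_d$ would be definable -- a contradiction. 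So the converse of \textit{(i)} fails for a bare topological finite cover; what is needed, and suffices, is the extra input that $R$ (equivalently, the local sheet identifications, or the deck data) be definable on $(\widehat{X},\widehat{\xi})$. To be fair, the paper's own one-line proof of this direction has the same omission, and in all its applications (covers of nil-Jacobians induced by finite-index subgroups $\Gamma_1\subseteq\Gamma$, where the identifications come from a definable group action) the needed definability is available, so nothing downstream is affected; but neither your argument nor the paper's establishes the converse of \textit{(i)} at the stated level of generality, and your proposal should make the definability of the gluing data an explicit hypothesis or verify it in the cases used.
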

\begin{proof}
 \textit{(i)}. Let $\xi$ be a definable space structure on $X$. By Finite Cell Decomposition Theorem each definable set admits a finite covering by open simply connected definable subsets. Therefore, there exists a definable atlas $(U_{\alpha}, \phi_{\alpha})$ on $X$ in the equivalence class $\xi$ with each $U_{\alpha}$ being simply connected. Let $\widehat{U}_{\alpha, \beta} \subset \widehat{X}$ be connected components of $\tau^{-1}(U_{\alpha})$. Then $(\widehat{U}_{\alpha, \beta}, \phi_{\alpha} \circ \pi)$ is a definable atlas on $\widehat{X}$. Vice versa, if $\widehat{\xi}$ is a definable space structure on $\widehat{X}$, there exists a definable atlas $(\widehat{U}_{\beta}, \widehat{\phi}_{\beta}) \in \widehat{\xi}$, such that $\tau|_{\widehat{U}_{\beta}}$ is a homeomorphism on its image. Then $(\tau(\widehat{U}_{\beta}), \phi_{\beta} \circ (\tau|_{\widehat{U}_{\beta}})^{-1})$ is a definable atlas on $X$. The definability of $\tau$ in both cases is immediate.
\\

Item \textit{(ii)} is the special case of \textit{(iii)} when $\widehat{Y}=Y$.
\\

Item \textit{(iii)} follows from the fact that the graph of $\widehat{f}$ is the preimage of the graph of $f$ under the map $\widehat{Y} \times \widehat{X} \xrightarrow{\sigma \times \tau} Y \times X$. Images and preimages of definable sets are definable.
\end{proof}

A \emph{definable Lie group} is a definable manifold $G$ endowed with a definable smooth group structure $m \colon G \times G \to G, \ \iota \colon  G \to G$. 

Notice, that the definability of the inversion map $\iota$ follows from the definabiliy of the multiplication map $m$, since its graph $\Gamma_{\iota} \subseteq G \times G$ is the same as the fibre $m^{-1}(e_G)$.

We say that the action of a definable Lie group $G$ on a definable  manifold $M$ is definable if the action map $G \times M \to M$ is definable.

\begin{prop}\label{lifting of action}
\begin{itemize}
\item[(i)] Let $G$ be a definable manifold. Assume that $G$ is endowed with a Lie group structure (a priori not definable). Assume also that there exists a definable manifold $M$ and a free smooth action of $G$ on $M$, such that the action map $G \times M \to M$ is definable. Then the group structure on $G$ is definable.
\item[(ii)] Let $G$ be a definable Lie group acting smoothly on a definable manifold $M$. Suppose $\widehat{G} \to G$ and $\widehat{M} \to M$ are finite covers of $G$ and $M$ respectively, and the action of $G$ on $M$ lifts to an action of $\widehat{G}$ on $\widehat{M}$. Then the first action is definable if and only if the second is.
\end{itemize}
\end{prop}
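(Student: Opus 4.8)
The plan is to reconstruct the group law from the given free definable action by fixing a single base point. Let $a \colon G \times M \to M$ denote the action map, fix $x_0 \in M$, and consider the orbit map $o \colon G \to M$, $o(g) = a(g, x_0)$. Being the restriction of the definable map $a$ to the definable subset $G \times \{x_0\}$, the map $o$ is definable; and since the action is free, $o$ is injective. Consequently its image $o(G) \subseteq M$ is definable, and the inverse $o^{-1} \colon o(G) \to G$ is definable as well, its graph being the image of the graph of $o$ under the coordinate swap on $M \times G$. The associativity of the action gives the identity $a(g, o(h)) = g \cdot (h \cdot x_0) = (gh) \cdot x_0 = o(gh)$, and hence, using injectivity of $o$,
\[
m(g,h) = o^{-1}\bigl(a(g, o(h))\bigr).
\]
The right-hand side is a composition of definable maps, namely $(g,h) \mapsto (g, o(h))$, followed by $a$, followed by $o^{-1}$, so $m$ is definable. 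The definability of the inversion $\iota$ is then automatic, as noted in the remark preceding the proposition, since $\Gamma_\iota = m^{-1}(e_G)$.

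\textbf{Part (ii).} The plan is to deduce the equivalence directly from Proposition \ref{finite covers}(iii). I would apply that statement to the continuous map $a \colon G \times M \to M$ together with the finite cover $\tau_G \times \tau_M \colon \widehat{G} \times \widehat{M} \to G \times M$ on the source and the finite cover $\tau_M \colon \widehat{M} \to M$ on the target, taking the lift to be $\widehat{a} \colon \widehat{G} \times \widehat{M} \to \widehat{M}$. Here $G \times M$ and $\widehat{G} \times \widehat{M}$ carry their product definable structures, which agree with the structures induced by the covers by the uniqueness in Proposition \ref{finite covers}(i). The hypothesis that the action lifts means precisely that $\widehat{a}$ is a lift of $a$, i.e. that $\tau_M \circ \widehat{a} = a \circ (\tau_G \times \tau_M)$. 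Proposition \ref{finite covers}(iii) then yields that $\widehat{a}$ is definable if and only if $a$ is, which is exactly the claim.

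I do not expect any genuine obstacle here; the argument is structural. The only two points requiring a word of care are, in part (i), the standard o-minimal fact that the inverse of an injective definable map onto its (definable) image is again definable — which is what makes $o^{-1}$ legitimate — and, in part (ii), the observation that the product $\tau_G \times \tau_M$ of two finite covers is again a finite cover of $G \times M$, so that Proposition \ref{finite covers}(iii) applies verbatim. Everything else is routine composition and bookkeeping of definable maps.
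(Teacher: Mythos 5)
Your proposal is correct and is essentially the paper's own argument: in part (i) the paper likewise restricts to the (definable) orbit of a single point, where the free action makes $M$ a $G$-torsor and the multiplication coincides with the action under the definable identification $g \mapsto g \cdot x_0$ — your formula $m(g,h) = o^{-1}\bigl(a(g, o(h))\bigr)$ just makes this explicit. Part (ii) matches the paper verbatim, namely an application of Proposition \ref{finite covers}\textit{(iii)} to the action maps $G \times M \to M$ and $\widehat{G} \times \widehat{M} \to \widehat{M}$.
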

\begin{proof}
\textit{(i)}. The obrits of the action are definable subsets of $M$. Indeed, they are given by images of the sets of the form $G \times \{x\} \subset G \times M$ under the definable action map $G \times M \to M$. 

Therefore, replacing $M$ with an orbit of the action, we may assume that the action is transitive and $M$ is a $G$-torsor. In this case, the multiplication map coincides with the action after choosing a definable diffeomorphism $G \xrightarrow{\sim} M, g \mapsto g \cdot x$.
\\

\textit{(ii)}. Follows by applying \textit{(iii)} of Proposition \ref{finite covers} to the action maps $G \times M \to M$ and $\widehat{G} \times \widehat{M} \to \widehat{M}$.
\end{proof}

\subsection{Complex analytic o-minimal geometry}\label{complex o-minimal}

We recall the theory of definable analytic spaces after Bakker, Brunebarbe and Tsimerman. See \cite{BBT} for more details.

The \emph{definable site} $\underline{X}$ of a definable space $X$ is the site whose underlying category is the category of definable open subsets of $X$ (with inclusions as morphisms) and whose coverings are finite coverings by definable open sets. Further, one can make sense of \emph{definable sheaves} and \emph{definable locally ringed spaces} (\cite[Subsection 2.2]{BBT}).

Take the standard identification of $\C^n$ with $\R^{2n}$. Denote by $\O^{\odef}_{\C^n}$ the sheaf of definable holomorphic functions on the definable site $\underline{\C^n}$. Explicitly, for a definable open subset $U \subseteq \C^n$ we put 
\[
\O^{\odef}_{\C^n}(U):=\{f \colon U \to \C \ | f \text{ is holomorphic and definable} \}.
\]
This turns $(\C^n, \O^{\odef}_{\C^n})$ into a \emph{$\C$-locally ringed definable space}.

Given a definable open $U \subseteq \C^n$ and a finitely generated ideal $\mathcal{I}_X \subseteq \O^{\odef}_U=\O^{\odef}_{\C^n}|_{U}$, the vanishing locus $X=|V(\mathcal{I}_X)|$ is a definable subspace of $U$ and inherits  a definable space structure.

A \emph{basic definable complex analytic space} is a $\C$-locally ringed definable space $(X, \O^{\odef}_X)$ isomorphic to $(|V(\mathcal{I})|, \O^{\odef}_U/\mathcal{I})$ for some $U$ and $ \mathcal{I}$ as above. A \emph{definable complex analytic space} is a $\C$-locally ringed definable space $(X, \O_X)$ locally definably isomorphic to a basic definable complex analytic space. A \emph{definable complex manifold} is a smooth definable complex analytic space.

The authors of \cite{BBT} also introduce the notion of a \emph{coherent sheaf over a definable complex analytic space} and show that the structure sheaf $\O^{\odef}_X$ of on a definable analytic space $(X, \O^{\odef}_X)$ is coherent (\cite[Theorem 2.38]{BBT}). 

There is a natural analytification functor $(-)^{\an}$ from the category of definable analytic spaces $\operatorname{DefAnSp}_{\C}$ to the category of complex analytic spaces $\operatorname{AnSp}_{\C}$ and the functor of sheaf analytification $(-)^{\an} \colon \mathbf{Coh}(X) \to \mathbf{Coh}(X^{\an})$ with canonical isomorphism $\O_X^{\an} \simeq(\O^{\odef}_X)^{\an}$.  

The reader not familiar with the discussed concepts can see  the following proposition  as a - perhaps slightly tedious - exercise.

\begin{prop}
Propositions \ref{finite covers} and \ref{lifting of action} also hold in the category of definable complex analytic spaces.
\end{prop}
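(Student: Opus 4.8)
The plan is to carry the two preceding propositions over to the definable analytic category by observing that their proofs were purely topological-definable in nature and that the only new ingredient---holomorphicity---is automatically preserved by the constructions already used. First I would recall that a finite cover $\tau \colon \widehat{X} \to X$ of definable complex analytic spaces is, in particular, a finite cover of the underlying definable topological spaces, and that the transition and chart maps witnessing definability in Proposition \ref{finite covers} are built by pulling back or restricting the charts of the base (respectively the total space) along $\tau$. The key point is that if the original charts $\phi_\alpha$ are holomorphic coordinates realising $X$ as a definable complex manifold (or, more generally, a definable complex analytic space), then the lifted charts $\phi_\alpha \circ \tau$ and the restricted charts $\phi_\beta \circ (\tau|_{\widehat{U}_\beta})^{-1}$ are again holomorphic, because $\tau$ is a local biholomorphism: a finite cover of complex analytic spaces is \'etale. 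Thus the definable atlas produced in the proof of Proposition \ref{finite covers}(i) is automatically a definable \emph{analytic} atlas, and the resulting structure sheaf $\O^{\odef}_{\widehat{X}}$ is the one pulled back along the local isomorphism $\tau$.

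Next I would treat items (ii) and (iii): these are statements about when a continuous (here, holomorphic) map is definable, given a definable lift along covers. The arguments in the topological setting reduced everything to the fact that images and preimages of definable sets under the definable maps $\sigma \times \tau$ (and projections) are definable. Since in the analytic setting the maps $f, g, \widehat f, \sigma, \tau$ are assumed holomorphic, the only thing to check beyond the topological proof is that definability of the graph is equivalent in the two settings; but definability of a map of definable analytic spaces means precisely definability of its graph as a subset of the product definable space, which is the same underlying condition as in Proposition \ref{finite covers}. Holomorphicity is an extra property that is either assumed on both sides or preserved by the local biholomorphism, so the equivalences transfer verbatim.

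For Proposition \ref{lifting of action} the situation is identical: in part (i) the orbits are images of $G \times \{x\}$ under the definable (holomorphic) action map, hence definable analytic subspaces, and after restricting to a single orbit the identification $G \xrightarrow{\sim} M$, $g \mapsto g\cdot x$, is a definable biholomorphism through which the multiplication map is identified with the action map; thus the holomorphic group law is definable. Part (ii) follows, exactly as before, by applying the analytic version of Proposition \ref{finite covers}(iii) to the two action maps $G \times M \to M$ and $\widehat G \times \widehat M \to \widehat M$, now regarded as holomorphic maps of definable analytic spaces.

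The main obstacle, and essentially the only substantive point to verify, is that the charts constructed in the original proofs are compatible with the \emph{holomorphic} (rather than merely smooth or topological) definable structure---that is, that a finite cover of definable complex analytic spaces is locally a biholomorphism onto its image and that pulling back the structure sheaf along such a local isomorphism recovers the intended structure. Granting the standard fact that finite covers of complex analytic spaces are \'etale, every step of the two propositions goes through with ``continuous'' replaced by ``holomorphic'' and ``definable space'' replaced by ``definable complex analytic space,'' so no genuinely new argument is required beyond this bookkeeping.
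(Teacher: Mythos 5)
Your proposal is correct and matches the paper's intent exactly: the paper offers no proof of this proposition, explicitly leaving it as a ``perhaps slightly tedious'' exercise, and the intended argument is precisely your transfer of the topological proofs, with the single substantive observation that a finite cover is \'etale (equivalently, that the analytic structure on the cover is the pullback of the structure sheaf along a local biholomorphism), so the definable atlases and graph arguments carry over verbatim. The only refinement worth recording is that for possibly singular or non-reduced definable analytic spaces one should \emph{define} the structure on $\widehat{X}$ by pulling back $\O^{\odef}_X$, rather than appeal to \'etaleness of an a priori given holomorphic covering; in the smooth setting in which the proposition is actually used (nil-Jacobians and higher Albanese manifolds), this distinction is immaterial.
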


Let $\operatorname{AlgSp}_{\C}$ denote the category of separated complex algebraic spaces of finite type.

\begin{thrm}[Bakker - Brunebarbe - Tsimerman's definable GAGA; \cite{BBT}]
.

\begin{itemize}
\item[(i)] There is a \emph{definabilisation functor} $(-)^{\odef} \colon \operatorname{AlgSp}_{\C} \to \operatorname{DefAnSp}_{\C}$. The diagram
\[
\xymatrix{
\operatorname{AlgSp}_{\C} \ar[rr]^{(-)^{\odef}} \ar[rd]_{(-)^{\an}}&& \operatorname{DefAnSp}_{\C} \ar[ld]^{(-)^{\an}}\\
& \operatorname{AnSp}_{\C}
}
\]
is commutative up to a natural transformation;
\item[(ii)] Let $\mathfrak{X} \in \operatorname{AlgSp}_{\C}$. There is a fully faithful exact \emph{sheaf definabilisation functor} $(-)^{\odef} \colon \mathbf{Coh}(\mathfrak{X}) \to \mathbf{Coh}(\mathfrak{X}^{\odef})$. Its essential image is closed under taking quotients and subobjects.
\end{itemize}
\end{thrm}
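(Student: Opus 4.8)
The plan is to build $(-)^{\odef}$ from the bottom up, starting from affine schemes and gluing, and to deduce the sheaf-theoretic statements from a definable analogue of Serre's GAGA whose engine is the o-minimal rigidity already recalled in Subsection~\ref{o-minimal basic}: a definable entire function on $\C^n$ is algebraic, hence polynomial. First I would construct the definabilisation on affine schemes. An affine scheme $\mathfrak{U}=\operatorname{Spec}A$ of finite type with a chosen closed embedding $\mathfrak{U}\hookrightarrow\mathbb{A}^n_{\C}$ is cut out by finitely many polynomials; under the identification $\C^n=\R^{2n}$ these generate a finitely generated ideal $\mathcal{I}\subseteq\O^{\odef}_{\C^n}$, so $|V(\mathcal{I})|$ is a basic definable complex analytic space. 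Independence of the embedding and functoriality for morphisms of affine schemes follow because polynomial maps are definable holomorphic, so any two embeddings are intertwined by a definable isomorphism. To reach the general case I would glue: a finite affine Zariski cover of a scheme definabilises to a finite definable atlas with definable transition maps, and for a separated algebraic space $\mathfrak{X}$ of finite type I would choose an étale atlas $\mathfrak{U}\to\mathfrak{X}$ with $\mathfrak{U}$ a scheme and étale equivalence relation $\mathfrak{R}=\mathfrak{U}\times_{\mathfrak{X}}\mathfrak{U}$, definabilise $\mathfrak{U}$ and $\mathfrak{R}$, and form the quotient in $\operatorname{DefAnSp}_{\C}$. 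The separatedness hypothesis makes $\mathfrak{R}\hookrightarrow\mathfrak{U}\times\mathfrak{U}$ a closed immersion, so the quotient is a well-behaved definable analytic space, locally definably isomorphic to $\mathfrak{U}^{\odef}$; o-minimal finiteness ensures the atlas is finite. The triangle commutes up to natural isomorphism because analytification commutes with gluing and with étale quotients, whence $(\mathfrak{X}^{\odef})^{\an}$ is canonically $\mathfrak{X}^{\an}$.

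For part (ii), I would define $\mathcal{F}^{\odef}$ by local presentations. On an affine chart $\operatorname{Spec}A$ a coherent $\mathcal{F}$ corresponds to a finitely generated $A$-module $M$ with a presentation $A^m\to A^n\to M\to0$; the matrix entries are regular, hence definable holomorphic, so applying $\O^{\odef}$ yields $(\O^{\odef})^m\to(\O^{\odef})^n\to\mathcal{F}^{\odef}\to0$ and one sets $\mathcal{F}^{\odef}:=\operatorname{coker}$. Independence of the presentation, functoriality, and compatibility with the gluing data for $\mathcal{F}$ are routine. Coherence of $\mathcal{F}^{\odef}$ is immediate from coherence of $\O^{\odef}_{\mathfrak{X}^{\odef}}$ (\cite[Theorem 2.38]{BBT}), since coherent sheaves are closed under cokernels of maps between finite free modules. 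Exactness of $(-)^{\odef}$ I would deduce from flatness: the definabilisation is the pullback $\rho^{*}$ along the morphism of locally ringed sites $\rho\colon(\mathfrak{X}^{\odef},\O^{\odef})\to(\mathfrak{X},\O^{\alg}_{\mathfrak{X}})$, and faithful flatness of $\O^{\odef}_{\mathfrak{X}^{\odef},x}$ over the pulled-back algebraic local ring $\O^{\alg}_{\mathfrak{X},x}$ at each point makes $\rho^{*}$ exact.

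Full faithfulness is the GAGA heart and the main obstacle. I would reduce it to a comparison of cohomology: the natural maps $\Ext^i_{\mathfrak{X}}(\mathcal{F},\mathcal{G})\to\Ext^i_{\mathfrak{X}^{\odef}}(\mathcal{F}^{\odef},\mathcal{G}^{\odef})$ are isomorphisms for all $i$, the case $i=0$ being full faithfulness. Using a finite affine (étale) cover and the resulting \v{C}ech-to-derived-functor spectral sequences on both sides, this reduces to the affine statement: for $\mathfrak{U}=\operatorname{Spec}A$ of finite type and $\mathcal{F}$ coherent, $\Gamma(\mathfrak{U}^{\odef},\mathcal{F}^{\odef})=M$ and $H^i(\mathfrak{U}^{\odef},\mathcal{F}^{\odef})=0$ for $i>0$. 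The vanishing is a definable Cartan Theorem~B, and the identification of global sections is where o-minimality is essential — for $\mathcal{F}=\O$ it is exactly the assertion that a definable holomorphic function on an affine variety is regular, which follows from the rigidity argument of Subsection~\ref{o-minimal basic}. The difficulty is the affine vanishing: unlike classical GAGA one has no properness to invoke, so I would instead exploit that o-minimal finiteness, via a finite definable cell decomposition of $\mathfrak{U}^{\odef}$, forces the higher definable coherent cohomology to vanish and pins the global sections down to the finitely generated module $M$.

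Finally, the essential image being closed under subobjects and quotients follows from exactness together with coherence. If $\mathcal{G}\subseteq\mathcal{F}^{\odef}$ is a definable coherent subsheaf with quotient $\mathcal{Q}$, then by the comparison just established the inclusion and projection are definabilisations of algebraic maps between coherent sheaves on $\mathfrak{X}$, so $\mathcal{G}$ and $\mathcal{Q}$ are the definabilisations of the corresponding algebraic kernel and cokernel; exactness guarantees these reproduce $\mathcal{G}$ and $\mathcal{Q}$ after applying $(-)^{\odef}$, which exhibits them in the essential image.
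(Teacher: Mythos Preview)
The paper does not prove this theorem. It is stated as a cited result from \cite{BBT} in the preliminary Section~\ref{o-minimal sec}, with no proof given; the paper merely uses it as a black box. There is therefore no proof in this paper against which to compare your proposal.

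Since you have nonetheless sketched an argument, let me flag two genuine gaps. First, your claim that ``a definable holomorphic function on an affine variety is regular'' does not follow from the entire-function rigidity remark in Subsection~\ref{o-minimal basic}; that remark only treats $\C\to\C$ via Picard's theorem. The correct input is essentially the Peterzil--Starchenko definable Chow theorem (stated separately in the paper as Theorem~\ref{definable chow}), applied to the graph of the function inside $\mathfrak{U}\times\mathbb{A}^1$. Second, and more seriously, your appeal to ``finite definable cell decomposition'' to force the vanishing of higher coherent cohomology on affines is not an argument: cell decomposition is a real-topological statement and says nothing about sheaf cohomology for $\O^{\odef}$-modules. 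In \cite{BBT} the acyclicity on (suitable) definable opens is a nontrivial theorem requiring a definable analogue of the Cartan--Oka package, and it is this, not cell decomposition, that drives the GAGA comparison. Your overall architecture (definabilise locally, glue, use flatness for exactness, reduce full faithfulness to an $\Ext$ comparison) is in the right spirit, but the analytic core is missing.
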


We say that a diagram of definable spaces \emph{admits an algebraisation}, or, simply, \emph{is algerbaisable}, if it lies in the essential image of the functor $(-)^{\odef} \colon \operatorname{AlgSp}_{\C} \to \operatorname{DefAnSp}_{\C}$.

Although the definabilisation functor embeds the category of algebraic spaces into the category of definable complex analytic spaces (even as a full subcategory, see Corollary \ref{algebraisation basic}, \textit{(i)}), the world of definable complex analytic spaces if significantly bigger. For example a unit disk $\{|z| < 1\} \subset \C$ is an $\R_{\alg}$-definable complex manifold which is not a definabilisation of any algebraic space (the same is true for any semialgebraic domain in $\CP^n$).

\begin{rmk}\label{expansion}
One says that a structure $\Sigma'$ is an \emph{expansion} of a structure $\Sigma$ if every $\Sigma'$-definable set is $\Sigma$ definable.  For example, $\R_{\an, \exp}$ is an expansion of $\R_{\alg}$. If $\Sigma'$ is an expansion of $\Sigma$, there is a natural fully faithful functor from the category of $\Sigma$-definable spaces (resp. $\Sigma$-definable complex analytic spaces) to the  category of $\Sigma'$-definable spaces (resp. $\Sigma'$-definable complex analytic spaces). Every structure is an expansion of $\R_{\alg}$ and the Bakker - Brunebarbe -Tsimerman's definabilisation functor $(-)^{\odef}$ hits to the category of $\R_{\alg}$-definable complex analytic spaces, and then can be cannonically extended to a functor to the category of $\Sigma$-definable complex analytic spaces for any o-minimal $\Sigma$.

We sometimes say that a map between two $\R_{\alg}$-definable spaces is $\R_{\an,\exp}$-definable, meaning that it is definable as the map between underlying $\R_{\an,\exp}$-definable spaces.
\end{rmk}

\subsection{Algebraisation theorems}\label{algebraisation results}

One of the main purposes of the theory of definable analytic spaces is to provide various algebraisation criteria. The work \cite{BBT} was to large extend  motivated by a theorem of Peterzil and Starchenko, known as \emph{definable Chow theorem}(\cite[Theorem 5.1.]{PetStar}).

\begin{thrm}[Peterzil-Starchenko's definable Chow Theorem]\label{definable chow}
Let $\mathfrak{X}$ be a reduced complex algebraic space and $X=\mathfrak{X}^{\odef}$. Let $Y \subseteq X$ be a closed analytic definable subset. Then $Y=\mathfrak{Y}^{\odef}$ for some algebraic subspace $\mathfrak{Y} \subseteq \mathfrak{X}$.
\end{thrm}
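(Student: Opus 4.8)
The plan is to reduce to the affine case and then replace the compactness that powers the classical Chow theorem by the tameness of definable sets. First I would reduce to the statement that a closed definable $\C$-analytic subset $Y \subseteq \C^N$ is algebraic. Since algebraicity can be checked étale-locally, I cover $\mathfrak{X}$ by affine étale charts; embedding such a chart as a closed algebraic subvariety $\mathfrak{U} \hookrightarrow \C^N$, the trace of $Y$ becomes a closed definable analytic subset of $\C^N$, and it suffices to algebraise it there. Decomposing $Y$ into its finitely many (by o-minimality) irreducible analytic components, I may assume $Y$ is irreducible of pure complex dimension $d$.

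Next I would exhibit $Y$ as a branched cover of bounded degree. After a generic linear change of coordinates, the projection $\pi \colon \C^N \to \C^d$ onto the first $d$ coordinates has finite generic fibres on $Y$; by o-minimality (definable families of finite sets have uniformly bounded cardinality) there is an integer $k$ bounding all fibres, and the set $Z \subset \C^d$ over which the fibre fails to consist of $k$ distinct points, or over which a sheet escapes to infinity, is a proper definable subset of complex codimension $\ge 1$. Over $\C^d \setminus Z$ the remaining coordinate functions $z_{d+1}, \dots, z_N$ are the branches of a $k$-valued holomorphic map, so their elementary symmetric functions $s_{j,\ell}$ are single-valued, holomorphic and definable on $\C^d \setminus Z$.

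The heart of the argument is then to show that each $s_{j,\ell}$ is a rational function, and this is precisely where o-minimality plays the role of compactness. A definable holomorphic function cannot have an essential singularity: by Picard's theorem its level sets near such a point would be infinite discrete sets, contradicting the finite cell decomposition — exactly the mechanism recorded for entire functions in Subsection~\ref{o-minimal basic}. Hence each $s_{j,\ell}$ extends meromorphically across $Z$ to a definable meromorphic function on $\C^d$; restricting to a generic line and invoking the one-variable statement that a definable entire function is a polynomial, together with the uniform degree bounds furnished by o-minimality, shows that $s_{j,\ell}$ is rational. Consequently the coordinates satisfy monic equations $z_j^k - s_{j,1} z_j^{k-1} + \cdots \pm s_{j,k} = 0$ with rational coefficients, so $Y$ lies in an algebraic set of dimension $d$. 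Taking $\mathfrak{Y}$ to be the Zariski closure of $Y$, one checks that $\dim \mathfrak{Y} = d$ and that $Y$, being a pure-dimensional closed analytic subset Zariski-dense in $\mathfrak{Y}$, equals $\mathfrak{Y}^{\odef}$.

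I expect the main obstacle to be the behaviour at infinity: unlike in the projective setting, the generic projection $\pi$ is not proper on $Y$, so sheets may run off to infinity over the divisor $Z$, and one must show that this produces only poles (meromorphic extension) rather than genuinely transcendental behaviour. This is exactly the point at which definability is indispensable — it forbids essential singularities and thereby forces the symmetric functions to be rational — while the uniform sheet bound $k$ and the control of $Z$ through cell decomposition form the technical core that replaces the compactness argument of classical Chow.
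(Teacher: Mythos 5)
Note first that the paper does not prove Theorem \ref{definable chow} at all: it is quoted verbatim from Peterzil--Starchenko \cite{PetStar}, so your attempt has to be measured against their published argument rather than against anything in this text.

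Your outer reductions are fine: algebraicity and Zariski closures are \'etale-local, so passing to affine \'etale charts reduces to a closed definable analytic $Y\subseteq \C^N$, and the endgame (a pure $d$-dimensional closed analytic subset of a $d$-dimensional variety is a union of its irreducible components, so $Y=\mathfrak{Y}^{\odef}$ for $\mathfrak{Y}$ the Zariski closure) is classical. The genuine gap is the middle step, which you assert rather than prove: that each symmetric function $s_{j,\ell}$ \emph{extends meromorphically across $Z$}. Two concrete problems. First, $Z$ is only known to be definable, closed, and of smaller dimension --- it is not known to be complex analytic, and ``meromorphic across $Z$'' presupposes an analytic polar set; the one-variable no-essential-singularity theorem controls slices, but promoting slice-wise pole behaviour to a several-variable meromorphic extension with uniformly bounded pole orders and an analytic polar divisor is exactly the hard content of Peterzil--Starchenko's definable removable-singularity/Remmert--Stein theorems, not a routine consequence of cell decomposition and Picard's theorem. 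Second, even the geometry of $Z$ needs care: generic o-minimal trivialization of $\pi|_Y$ gives properness and constant sheet number over the open top-dimensional cells, so $Z$ has empty interior, but you must also rule out $Z$ having real codimension one, since a real hypersurface would disconnect $\C^d\setminus Z$ and destroy the single-valuedness of the $s_{j,\ell}$ (and branch matching) that your argument needs. As long as these points are open, the proof does not close; they are where the ``escape to infinity'' you correctly identify as the obstacle actually has to be defeated.

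For comparison, the actual proof in \cite{PetStar} runs along a different and shorter route: take the closure $\overline{Y}$ of $Y$ in $\CP^N$; o-minimality forces the frontier $\overline{Y}\setminus Y$ to have strictly smaller real dimension, and their definable Remmert--Stein-type theorem (which needs no volume bounds and no analyticity of the exceptional set --- and is false without definability) shows $\overline{Y}$ is analytic in $\CP^N$; classical Chow then makes $\overline{Y}$ algebraic and $Y=\overline{Y}\cap\C^N$. In other words, the ``tameness replaces compactness'' principle is implemented once, at the frontier at infinity, rather than through projections, symmetric functions and rationality of definable meromorphic functions. Your program can likely be carried out, but only by first proving essentially those same extension theorems, so as written it relocates rather than removes the difficulty.
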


\begin{cor}\label{algebraisation basic}
Let $\mathfrak{X}$ and $\mathfrak{Y}$ be complex algebraic spaces and put $X=\mathfrak{X}^{\odef}$(repsectively, $Y=\mathfrak{Y}^{\odef}$).
\begin{itemize}
\item[(i)] Let $f \colon X \to Y$ be a morphism of definable analytic spaces. Then it is algebraisable;
\item[(ii)] suppose $X$ admits a definable complex Lie group structure. Then the group structure on $X$ admits algebraisation. Namely, $\mathfrak{X}$ admits a structure of a complex algebraic group and the group structure morphisms $m \colon X \times X \to X, \ i \colon X \to X$ algebraise to group structure morphisms on $\mathfrak{X}$;
\item[(iii)] suppose $\mathfrak{X}$ is an algebraic group (thus, $X$ is a definable complex Lie group). Then every definable action $a \colon X \times Y \to Y$ admits algebraisation. 
\end{itemize}
\end{cor}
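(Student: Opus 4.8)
The plan is to establish \textit{(i)} directly from the definable Chow Theorem (Theorem \ref{definable chow}) and then deduce \textit{(ii)} and \textit{(iii)} formally, using that the definabilisation functor $(-)^{\odef}$ is faithful. For \textit{(i)}, I would first argue that the graph $\Gamma_f \subseteq X \times Y$ is a closed definable analytic subset. Indeed, since $\mathfrak{Y}$ is separated, $Y=\mathfrak{Y}^{\odef}$ is separated and the diagonal $\Delta_Y \subseteq Y \times Y$ is a closed definable analytic subspace; the graph $\Gamma_f$ is then the preimage of $\Delta_Y$ under the definable holomorphic map $X \times Y \to Y \times Y$, $(x,y)\mapsto (f(x),y)$. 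Using that definabilisation commutes with finite products, $X\times Y=(\mathfrak{X}\times\mathfrak{Y})^{\odef}$, so Theorem \ref{definable chow} (applied to the reduction, which suffices to locate the underlying subspace) yields an algebraic subspace $\mathfrak{Z}\subseteq \mathfrak{X}\times\mathfrak{Y}$ with $\mathfrak{Z}^{\odef}=\Gamma_f$.

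The crux of \textit{(i)}, and the step I expect to be the main obstacle, is upgrading the projection to an algebraic isomorphism. The first projection $\pi_1\colon \mathfrak{Z}\to\mathfrak{X}$ is an algebraic morphism whose definabilisation is the restriction of $X\times Y\to X$ to the graph $\Gamma_f$, hence a definable biholomorphism. Analytifying and using the commutativity of the definable GAGA triangle, $\pi_1^{\an}$ is a biholomorphism of complex analytic spaces; since $(-)^{\an}$ reflects isomorphisms of separated finite-type algebraic spaces over $\C$, the morphism $\pi_1$ is an isomorphism of algebraic spaces. Setting $\mathfrak{f}:=\pi_2\circ\pi_1^{-1}\colon\mathfrak{X}\to\mathfrak{Y}$ gives an algebraic morphism with graph $\mathfrak{Z}$, so that $\mathfrak{f}^{\odef}=f$. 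This proves \textit{(i)}, and in particular shows $(-)^{\odef}$ is full.

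For \textit{(ii)}, I would apply \textit{(i)} to the multiplication $m\colon X\times X\to X$ to obtain an algebraic morphism $\mathfrak{m}\colon\mathfrak{X}\times\mathfrak{X}\to\mathfrak{X}$, and likewise algebraise the (automatically definable) inversion $\iota$ to $\mathfrak{i}\colon\mathfrak{X}\to\mathfrak{X}$. The group axioms are equalities of definable morphisms between definabilisations of algebraic spaces; since $(-)^{\an}$ is faithful and the GAGA triangle commutes, $(-)^{\odef}$ is faithful, so these equalities descend to the corresponding identities among $\mathfrak{m}$, $\mathfrak{i}$ and the unit $\C$-point. Hence $\mathfrak{X}$ acquires the structure of an algebraic group. (Reducedness of the ambient space in Theorem \ref{definable chow} is harmless here, as algebraic groups over $\C$ are automatically reduced.) For \textit{(iii)}, the same mechanism applies verbatim: apply \textit{(i)} to the action map $a\colon X\times Y\to Y$ to obtain $\mathfrak{a}\colon\mathfrak{X}\times\mathfrak{Y}\to\mathfrak{Y}$, and transfer the associativity and unit axioms of the action from the definable to the algebraic setting by faithfulness of $(-)^{\odef}$.
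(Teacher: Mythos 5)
Your proposal is correct and follows exactly the paper's route: the paper's entire proof is to apply the definable Chow Theorem to the graph of $f$ (respectively, to the graphs of the group structure morphisms and of the action map), which is precisely your argument with the routine details — closedness of the graph via separatedness, recovery of $\mathfrak{f}$ as $\pi_2\circ\pi_1^{-1}$, and transfer of the axioms by faithfulness of $(-)^{\odef}$ — written out in full.
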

\begin{proof}
Apply Theorem \ref{definable chow} to the graph of $f$ (respectively, to the graphs of the group structure morphisms and to the graph of $a$).
\end{proof}

The following generalisation of definable Chow Theorem is  \cite[Theorem 4.2]{BBT}

\begin{thrm}[Baker -Brunebarbe -Tsimerman]\label{algebraisation proper}
Let $\mathfrak{X}$ be an algebraic space, $X = \mathfrak{X}^{\odef}$ and $f \colon X \to Y$ a proper dominant definable holomorphic map to a definable analytic space $Y$. Then it is algebraisable: there exist unique up to an isomorphism algebraic space $\mathfrak{Y}$ and morphism $\mathfrak{f} \colon \mathfrak{X} \to \mathfrak{Y}$ such that, $\mathfrak{Y}^{\odef}=Y$ and $\mathfrak{f}^{\odef}=f$.
\end{thrm}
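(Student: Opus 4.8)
The plan is to build an algebraic structure on $Y$ from the proper direct image of $\O_X^{\odef}$ and then to reduce the algebraicity of the pieces to the definable Chow Theorem \ref{definable chow} via a Stein factorisation. First, two cheap reductions. Since $f$ is proper its image is closed, and since it is dominant the image is dense; hence $f$ is surjective. Uniqueness of the pair $(\mathfrak{Y}, \mathfrak{f})$ is immediate from Corollary \ref{algebraisation basic}: if $(\mathfrak{Y}_1, \mathfrak{f}_1)$ and $(\mathfrak{Y}_2, \mathfrak{f}_2)$ both algebraise $f$, then $\operatorname{id}_Y$ is a definable isomorphism $\mathfrak{Y}_1^{\odef} \xrightarrow{\sim} \mathfrak{Y}_2^{\odef}$, which by Corollary \ref{algebraisation basic}\textit{(i)} algebraises to an isomorphism $\mathfrak{Y}_1 \simeq \mathfrak{Y}_2$ compatible with the $\mathfrak{f}_i$. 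It therefore remains to produce one algebraisation; as algebraic spaces are glued in the étale topology and the construction below is canonical, we may argue étale-locally on $Y$.

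For existence I would first establish the coherence of the pushforward: the sheaf of $\O_Y^{\odef}$-algebras $\mathcal{A} := f_*\O_X^{\odef}$ is coherent. This is the definable analogue of Grauert's proper direct image theorem; it is not among the results recalled in the excerpt, so I would either invoke it from the framework of \cite{BBT} or deduce it from the coherence of $\O_X^{\odef}$ (\cite[Theorem 2.38]{BBT}) using definable triangulation and the finiteness of definable cohomology of proper maps. Taking the relative analytic spectrum of $\mathcal{A}$ then yields a Stein factorisation
\[
X \xrightarrow{\ g\ } Y' \xrightarrow{\ h\ } Y,
\]
where $h$ is finite and $g$ is proper surjective with $g_*\O_X^{\odef}=\O_{Y'}^{\odef}$. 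It suffices to algebraise the two factors separately. For the finite factor $h$, one has $\mathcal{A}=h_*\O_{Y'}^{\odef}$ with $\O_Y^{\odef}$ sitting inside $\mathcal{A}$ as a coherent subalgebra; once $Y'$ is known to be algebraic, the exactness and full faithfulness of the sheaf definabilisation functor, together with the closure of its essential image under subobjects and quotients (definable GAGA), algebraise the $\O_Y^{\odef}$-algebra $\mathcal{A}$ and let one descend along the finite map to an algebraic structure $\mathfrak{Y}$ with $\mathfrak{Y}^{\odef}=Y$ and $h$ algebraic.

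The genuine difficulty is the connected-fibre factor $g\colon X \to Y'$: one must show that the contraction of the algebraic space $X$ along a proper definable map with $g_*\O_X^{\odef}=\O_{Y'}^{\odef}$ is again the definabilisation of an algebraic space. I expect this to be the main obstacle, since it is precisely where input beyond the definable Chow Theorem is needed, and it is also where the category of algebraic \emph{spaces} (rather than schemes) becomes essential, as such contractions need not be schemes even when $X$ is. The approach I would take is a relative, definable form of Artin's algebraisation and contraction criterion: working étale-locally on $Y'$ one controls the infinitesimal neighbourhoods of the fibres through the coherent sheaves $g_*\bigl(\O_X^{\odef}/\mathcal{I}^n\bigr)$ (coherent by the same proper-pushforward input), algebraises this formal relative data, and then appeals to an effectivity statement in $\operatorname{AlgSp}_{\C}$ to produce an algebraic contraction. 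Full faithfulness of $(-)^{\odef}$ on algebraic spaces guarantees that the resulting algebraic object definabilises back to $g$, and assembling the two factors completes the construction.
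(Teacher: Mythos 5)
First, a point of calibration: the paper offers no proof of this statement at all --- it is quoted as \cite[Theorem 4.2]{BBT} --- so your argument can only be measured against the original Bakker--Brunebarbe--Tsimerman proof. Your overall skeleton (Stein factorisation; handle the finite factor by GAGA-type descent; handle the connected-fibre factor by algebraising a contraction) is close in spirit to what is done there, and your uniqueness argument via Corollary \ref{algebraisation basic}\textit{(i)} is correct.

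As a proof, however, the proposal has genuine gaps, two of which you flag yourself. First, the coherence of $f_*\O_X^{\odef}$ is a definable Grauert theorem and is itself a substantial result; your proposed substitute derivation ``from the coherence of $\O_X^{\odef}$ using definable triangulation and the finiteness of definable cohomology of proper maps'' cannot work, because triangulation yields topological finiteness of definable sets, whereas coherence of the analytic pushforward and, crucially, its realisability as a coherent sheaf on the definable site of $Y$ are analytic statements that do not follow from finiteness of Betti-type invariants. Second, the contraction step $g\colon X\to Y'$ is deferred to ``a relative, definable form of Artin's algebraisation and contraction criterion'' --- but this is precisely the heart of the theorem, and none of Artin's hypotheses (effectivity of the formal data along the fibres, the conditions under which a contraction exists in $\operatorname{AlgSp}_{\C}$) are verified; leaving it as an expectation means the main difficulty is unresolved. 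There are also two structural circularities. You propose to ``argue \'etale-locally on $Y$'', but at that stage $Y$ is only a definable analytic space: the definable site admits only finite covers by definable opens, and \'etale localisation is meaningful only after $Y$ has been algebraised. Similarly, in the finite factor you apply sheaf definabilisation/GAGA to the algebra $\mathcal{A}=h_*\O^{\odef}_{Y'}$ regarded as a sheaf \emph{on $Y$}, yet the GAGA statement recorded in the paper is available only over a fixed algebraic space, so it cannot be invoked over $Y$ before $Y$ is known to be algebraic. A non-circular route for that factor would be to algebraise the finite equivalence relation $Y'\times_Y Y'\subseteq Y'\times Y'$ by the definable Chow Theorem \ref{definable chow} (once $Y'$ is algebraic) and then quote the existence of quotients of algebraic spaces by finite equivalence relations. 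In sum: a reasonable outline whose two hardest ingredients are assumed rather than proved.
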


\subsection{o-minimal geometry and Hodge theory}\label{o-minimal hodge}

Apart from the algebraisation results, another source of the rise of interest in o-minimality in the complex algebraic geometry in the last years is the definability of period maps. The following theorem was proved in \cite{BKT} in the pure case and in \cite{BBKT} in general.

\begin{thrm}[Bakker- Brunebarbe - Klingler - Tsimerman, \cite{BBKT}]\label{period maps o-minimal}
Let $M$ be a mixed Hodge variety. Then $M$ admits a structure of a $\R_{\alg}$-definable complex analytic space in such a way that:
\begin{itemize}
\item[(i)] every morphism  of mixed Hodge varieties is definable;
\item[(ii)] if $\mathfrak{X}$ is a smooth quasi-projective variety,  and $\Phi \colon \mathfrak{X}^{\an} \to M$ a period map of an admissible variation of mixed $\Z$-Hodge structures targeted to $M$, then $\Phi \colon \mathfrak{X}^{\odef} \to M$ is $\R_{\an, \exp}$-definable\footnote{ See Remark \ref{expansion}.} .
\end{itemize}
\end{thrm}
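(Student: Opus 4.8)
The plan is to transport a tautological definable structure from the Hodge domain $\mathcal{D}$ down to the quotient $M=\Gamma\backslash\mathcal{D}$ by means of a semialgebraic fundamental set supplied by reduction theory, and then to deduce the definability of period maps from an asymptotic analysis near the boundary. The first observation is that $\mathcal{D}$ itself carries a canonical $\R_{\alg}$-definable complex analytic structure: it embeds as an open real semialgebraic subset of its compact dual $\check{\mathcal{D}}=\mathbf{G}(\C)/F^0\mathbf{G}$, which is a complex algebraic variety and hence $\R_{\alg}$-definable through the functor $(-)^{\odef}$, and an open semialgebraic subset of a definable analytic space inherits such a structure; moreover the $\widetilde{G}$-action on $\mathcal{D}$ is real semialgebraic. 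The entire difficulty is to descend this structure along the quotient by the \emph{infinite} arithmetic group $\Gamma\subset\mathbf{G}^{+}(\Q)$.

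Here I would invoke reduction theory to produce a \emph{Siegel set} $\mathfrak{S}\subseteq\mathcal{D}$ that is real semialgebraic, surjects onto $M$, and enjoys the Siegel finiteness property that $\{\gamma\in\Gamma : \gamma\mathfrak{S}\cap\mathfrak{S}\neq\emptyset\}$ is finite. In the mixed setting the correct fundamental set must account for the unipotent radical $\mathbf{U}$, and this is exactly where the $\operatorname{sl}_2$-splitting $r\colon\mathcal{D}\to\mathcal{D}_{\R}$ recalled above becomes essential: it is a real semialgebraic retraction onto the locus of $\R$-split structures, on which $\mathbf{G}(\R)$ acts and to which classical reduction theory for the reductive quotient $\mathbf{H}$ applies, while the fibre directions of the purification map---governed by the nilpotent data $N_x=\Gamma_U\backslash\mathbf{U}(\C)/F^0_x\mathbf{U}$---are treated by induction along the weight filtration. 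Using $\mathfrak{S}$ together with the finitely many identifications one equips $M$ with a definable atlas whose transition maps are semialgebraic, and independence of the choices plus compatibility with the complex structure follow routinely from the Siegel property. This also yields part \textit{(i)}: a morphism of mixed Hodge varieties is induced by a morphism $F\colon\mathbf{G}\to\mathbf{G}'$ of $\Q$-groups; the induced $\mathcal{D}\to\mathcal{D}'$ is the restriction of the algebraic map $\check{\mathcal{D}}\to\check{\mathcal{D}}'$, hence definable, and the functoriality of the $\operatorname{sl}_2$-splitting guarantees that it carries Siegel sets compatibly, so the descended map $M\to M'$ is definable.

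Part \textit{(ii)} is the heart of the matter. Fixing a smooth compactification $\mathfrak{X}\subseteq\overline{\mathfrak{X}}$ with normal crossing boundary, the period map lifts locally on $\mathbb{H}^{k}\times\Delta^{n-k}$ and, by the nilpotent orbit theorem for admissible variations, takes the form $\widetilde{\Phi}(z,w)=\exp\!\big(\sum_j z_j N_j\big)\cdot\Psi(e^{2\pi i z},w)$ with $\Psi$ holomorphic across the boundary. Now $\exp(\sum_j z_j N_j)$ is polynomial in $z$ because the $N_j$ are nilpotent, hence $\R_{\alg}$-definable; the map $z\mapsto e^{2\pi i z}$ is $\R_{\an,\exp}$-definable on the relevant region of bounded real part and large imaginary part; and $\Psi$, being holomorphic on a closed polydisk, is $\R_{\an}$-definable. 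Consequently $\widetilde{\Phi}$ is $\R_{\an,\exp}$-definable into $\mathcal{D}$, and the $SL_2$-orbit estimates (Schmid in the pure case, Cattani--Kaplan--Schmid in the mixed case) together with the semialgebraicity of $r$ force this lift into a Siegel set, so that its composition with the quotient $\mathcal{D}\to M$ stays $\R_{\an,\exp}$-definable. Gluing over the finitely many coordinate charts of $\overline{\mathfrak{X}}$ then produces the global definability of $\Phi$.

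I expect the main obstacle to be precisely this boundary analysis in the mixed case: controlling the composition of the lifted period map with the quotient projection demands the full several-variable $SL_2$-orbit theorem together with the semialgebraicity and functoriality of the $\operatorname{sl}_2$-splitting, and it must be matched with a form of reduction theory compatible with the weight filtration, so that the Siegel sets and the asymptotics of the Hodge metric interact correctly along each stratum of the boundary.
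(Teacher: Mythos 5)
Your proposal is correct and follows essentially the same route as the source: this theorem is not proved in the paper but quoted from \cite{BBKT}, and the paper's accompanying sketch — endowing $M=\Gamma\backslash\mathcal{D}$ with the definable structure determined by the fundamental domain $\Xi=r^{-1}(\Xi_{\R})$, where $\Xi_{\R}$ comes from reduction theory on the $\R$-split locus and $r$ is the semialgebraic, functorial $\operatorname{sl}_2$-splitting — is exactly the mechanism you describe, as is your treatment of part \textit{(ii)} via the nilpotent orbit expansion $\exp\bigl(\sum_j z_j N_j\bigr)\cdot\Psi(e^{2\pi i z},w)$ and the finiteness of the $\Gamma$-translates of the fundamental set met by the lift. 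There is nothing to flag beyond noting that your reconstruction matches the cited argument.
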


Let us say few words about how the definable analytic space structure on a mixed Hodge variety is constructed.

Suppose that $\Omega$ is a definable manifold (or a definable smooth complex analytic space) and $\Gamma$ a finitely generated group which acts on $\Omega$  by definable (holomorphic) transformations, properly discontinuously, and with finite stabilisers. Let $\pi \colon \Omega \to S:=\Gamma \backslash \Omega$ be the quotient.

A choice of a definable $\Gamma$-fundamental domain $\Xi \subseteq \Omega$ determines a  unique structure of a definable space (resp. definable complex analytic space) on $S$ such that $\pi|_{\Xi} \colon \Xi \to S$ is definable (\cite[Proposition 2.3]{BBKT}. It is characterised by the following property.
\begin{prop}\label{definable criterion}
A subset $A \subset S$ is definable if and only if $\pi^{-1}(A) \cap \Xi$ is definable in $\Xi$.
\end{prop}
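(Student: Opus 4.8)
The plan is to derive the criterion directly from two standard facts of o-minimal geometry — that \emph{preimages} and \emph{images} of definable sets under definable maps of definable spaces are again definable — together with the single defining feature of the structure on $S$, namely that the restriction $\pi|_{\Xi} \colon \Xi \to S$ is definable (this is exactly how the structure $\xi$ is produced in \cite[Proposition 2.3]{BBKT}). Both implications then reduce to unwinding two elementary set-theoretic identities, so the proof is essentially formal.

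For the forward implication, suppose $A \subset S$ is definable. Since $\pi|_{\Xi}$ is definable and preimages of definable subsets under definable maps are definable, the set
\[
(\pi|_{\Xi})^{-1}(A) = \pi^{-1}(A) \cap \Xi
\]
is definable in $\Xi$, as required. For the reverse implication, suppose $B := \pi^{-1}(A) \cap \Xi$ is definable in $\Xi$. Because $\Xi$ is a fundamental domain we have $\Gamma \cdot \Xi = \Omega$, hence $\pi(\Xi) = S$, and therefore
\[
\pi(B) = \pi\bigl(\pi^{-1}(A) \cap \Xi\bigr) = A \cap \pi(\Xi) = A.
\]
Thus $A$ is the image of the definable set $B$ under the definable map $\pi|_{\Xi}$, and since images of definable sets under definable maps of definable spaces are definable, we conclude that $A$ is definable.

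The only points requiring care are (i) the verification that $\pi(\Xi) = S$, which is precisely the surjectivity built into the notion of a fundamental domain, and (ii) the fact that the image of a definable set under a definable map of \emph{definable spaces} (as opposed to subsets of $\R^n$) is definable. For (ii), one unwinds the image as the projection onto the $S$-factor of the intersection of the (definable) graph of $\pi|_{\Xi}$ with $B \times S$; definability of this projection is checked chartwise on the finite definable atlas of $S$, where it reduces to the o-minimal projection axiom. This chartwise reduction is where the finiteness of the atlas of $S$ enters, and it is the step I expect to be the main (if modest) obstacle, as it is the only place the argument leaves the level of purely formal manipulation of definable sets.

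It is worth emphasising that the proper discontinuity and finite-stabiliser hypotheses, as well as the finiteness property $\{\gamma \in \Gamma : \gamma\Xi \cap \Xi \neq \emptyset\}$ of $\Xi$, play no role in the criterion itself; they are used upstream in \cite[Proposition 2.3]{BBKT} to guarantee that a definable structure $\xi$ with $\pi|_{\Xi}$ definable exists and is unique. The same two identities and the same o-minimal stability properties apply verbatim in the category of definable complex analytic spaces, giving the analytic version of the statement.
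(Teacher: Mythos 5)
Your proof is correct and takes essentially the same route as the paper: both implications come down to the definability of preimages and images under the definable map $\pi|_{\Xi}$, exactly as in the paper's own two-line argument. The one divergence is that you invoke $\Gamma \cdot \Xi = \Omega$ to conclude $\pi(\Xi) = S$, whereas the paper only uses that $\pi(\Xi)$ is a definable dense open subset of $S$ and recovers $A$ as the closure $\overline{\pi\left(\pi^{-1}(A) \cap \Xi\right)}$ --- a hedge that also accommodates fundamental domains whose $\Gamma$-translates cover $\Omega$ only up to a thin set, though under the BBKT fundamental-set convention you adopt (where $\Gamma\cdot\Xi=\Omega$) your cleaner identity $\pi(B)=A$ is perfectly legitimate.
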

\begin{proof}
Set $\widetilde{A}:=\pi^{-1}(A)$ and $\widetilde{A}_{\Xi}:=\widetilde{A} \cap \Xi$.

If $\widetilde{A}_{\Xi} \subseteq \Xi$ is definable, then $\pi(\widetilde{A}_{\Xi})\subseteq \pi(\Xi) \subset S$ is definable. Notice that $\pi(\Xi)$ is a definable dense open subset of $S$ and $\pi(\widetilde{A}_{\Xi})=\pi(\Xi) \cap A$ is dense and open in $A$. Thus, $A=\overline{\pi(\widetilde{A}_{\Xi})}$ is definable.

Vice versa, if $A$ is definable, then $A \cap \pi(\Xi)$ is definable and $\widetilde{A}_{\Xi}=(\pi|_{\Xi})^{-1}\left ( A \cap \pi(\Xi) \right )$ is definable.
\end{proof}

If $S$ is compact, the resulting definable space structure on $S$ does not depend on $\Xi$. In the non-compact case the situation changes drastically, and the definable geometry of $S$ becomes very sensible to the choice of $\Xi$. It is sometimes then useful to chose a $\Gamma$-invariant closed definable subset $\Omega_0 \subset \Omega$ such that $S_0:=\Gamma \backslash \Omega_0$ is compact, and a definable retraction $r \colon \Omega \to \Omega_0$. Then the fundamental domain $\Xi$ is constructed as $r^{-1}(\Xi_0)$, where $\Xi_0$ is a fundamental domain for the action of $\Gamma$ on $\Omega_0$.

The construction of \cite{BBKT} follows a similar idea. For a mixed Hodge variety $M=\Gamma \backslash \mathcal{D}$ they construct a certain definable fundamental domain $\Xi_{\R}$ for the action of $\Gamma$ on $\mathcal{D}_{\R}$. Then the structure of a definable manifold on $M$ is determined by the fundamental domain $\Xi:=r^{-1}(\Xi_{\R}) \subset \mathcal{D}$, where $r \colon \mathcal{D} \to \mathcal{D}_{\R}$ is the $\operatorname{sl}_2$-splitting (see subsection \ref{about splitting subsec}). Although the action of $\Gamma$ on $\mathcal{D}_{\R}$  might be not cocompact in general, it necessarily becomes so after restricting on the fibres of the purification map.

Combining Theorem \ref{period maps o-minimal} with Theorem \ref{algebraisation proper}, Bakker, Brunebarbe and Tsimerman proved (the mixed case of) a long-standing conjecture of Griffiths:
\begin{thrm}[Bakker - Brunebarbe - Tsimerman, \cite{BBT23Griff}]\label{griffiths}
Let $\mathfrak{X}$ be a smooth quasi-projective variety, $X=\mathfrak{X}^{\odef}$, and $M$ a mixed Hodge variety. Let $\Phi \colon X \to M$ be the period map of an admissible variation of mixed $\Z$-Hodge structures. Denote by $Y$ the reduced image of $\Phi$. Then there exist a quasi-projective variety $\mathfrak{Y}$ and a morphism $\mathfrak{F} \colon \mathfrak{X} \to \mathfrak{Y}$ such that $Y=\mathfrak{Y}^{\odef}$ and $\Phi=\mathfrak{F}^{\odef}$.
\end{thrm}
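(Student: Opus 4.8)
The plan is to reduce the statement to the algebraisation criterion for proper maps, Theorem \ref{algebraisation proper}, feeding into it the definability of the period map supplied by Theorem \ref{period maps o-minimal}. First I would record that $\Phi \colon X \to M$ is $\R_{\an, \exp}$-definable by Theorem \ref{period maps o-minimal}, so that its image is a definable subset of $M$; the reduced image $Y$ is then a closed definable complex analytic subspace of $M$, and $\Phi$ factors as $X \xrightarrow{\Phi'} Y \hookrightarrow M$, where $\Phi'$ is a dominant definable holomorphic map out of $X=\mathfrak{X}^{\odef}$. Since $\mathfrak{X}$ is quasi-projective, hence an object of $\operatorname{AlgSp}_{\C}$, every hypothesis of Theorem \ref{algebraisation proper} is in place for $\Phi'$ except one.

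That missing hypothesis — and the genuine heart of the argument — is the properness of $\Phi'$. Period maps are notoriously non-proper onto their images, so this cannot be taken for granted and is where essentially all of the Hodge-theoretic work lies. The plan to establish it is to exploit the construction of the definable structure on $M$ recalled in subsection \ref{o-minimal hodge}: the fundamental domain $\Xi=r^{-1}(\Xi_{\R})$ built from the $\operatorname{sl}_2$-splitting $r \colon \mathcal{D} \to \mathcal{D}_{\R}$ of subsection \ref{about splitting subsec} becomes cocompact once restricted to the fibres of the purification map, which is precisely the mechanism controlling the escape of mass to infinity. Combining this with the admissibility of $\V$, which bounds the degeneration of the Hodge filtration near the boundary of a log-smooth compactification $\mathfrak{X} \subseteq \overline{\mathfrak{X}}$, one aims to show that $\Phi'$ is \emph{definably proper}, i.e.\ that preimages of definably compact subsets of $Y$ are definably compact. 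A clean way to organise the verification is through a definable Stein-type factorisation of $\Phi'$, separating the contraction of the fibres of $\Phi$ from the quasi-finite part onto $Y$ and checking definable properness factor by factor. I expect this properness reduction to be the main obstacle, both because it is the step that genuinely uses Hodge theory rather than formal o-minimal manipulation, and because the boundary analysis must be carried out uniformly in the definable sense.

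Once $\Phi'$ is known to be proper, Theorem \ref{algebraisation proper} applies and yields a unique (up to isomorphism) algebraic space $\mathfrak{Y}$ with $\mathfrak{Y}^{\odef}=Y$, together with a morphism $\mathfrak{F} \colon \mathfrak{X} \to \mathfrak{Y}$ such that $\mathfrak{F}^{\odef}=\Phi'$; composing with $Y \hookrightarrow M$ recovers $\Phi$ as the analytification of $\mathfrak{F}$. It remains only to upgrade the algebraic space $\mathfrak{Y}$ to a quasi-projective \emph{variety}. For this I would use that $\mathfrak{Y}$ is a finite-type normal algebraic space arising as the image of the quasi-projective $\mathfrak{X}$ and carrying the descended Hodge (Griffiths) line bundle; ampleness of this line bundle on period images then forces $\mathfrak{Y}$ to be a scheme, and in fact quasi-projective, completing the proof.
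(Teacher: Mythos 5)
Your proposal follows the same route the paper itself takes for Theorem \ref{griffiths}: the paper does not prove this statement but quotes it from \cite{BBT23Griff}, observing that it is obtained by combining the definability of mixed period maps (Theorem \ref{period maps o-minimal}) with the algebraisation theorem for proper definable maps (Theorem \ref{algebraisation proper}) — exactly your skeleton, including factoring $\Phi$ through its reduced image. The two ingredients you identify as the real work, namely definable properness of $\Phi$ onto its image and the upgrade from an algebraic space to a quasi-projective variety via ampleness of the Griffiths line bundle, are precisely what the paper delegates to \cite{BBT23Griff} (cf.\ the Remark following the theorem), and your sketches of both are consistent with that reference.
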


\begin{rmk}.
 A priori one would expect from Theorem \ref{algebraisation proper} that $\mathfrak{Y}$ is merely an algebraic space. The authors in \cite{BBT23Griff} show that is actually a quasi-projective variety.
\end{rmk}

\begin{rmk}\label{definable criterion remark}
Suppose we are in the situation of Proposition \ref{definable criterion} and $\widetilde{B} \subseteq \Omega$ is a definable subset. There exists a simple criterion of definability of its image $B:=\pi(\widetilde{B}) \subseteq S$.  Let $\Gamma_B \subseteq \Gamma$ be the image of $\pi_1(B)$ under the composition $\pi_1(B) \to \pi_1(S) \to \Gamma$. Then $\pi^{-1}(B)=\bigsqcup_{[\gamma] \in \Gamma_B \backslash \Gamma} (\gamma \cdot \widetilde{B})$, where $\gamma$ runs through representatives of the classes in the quotient $\Gamma_B \backslash \Gamma$. Each $\gamma \cdot \widetilde{B}$ is definable, and the disjoint union of a collection of definable sets is definable if and only if only finitely many of them is nonempty. Therefore, $\pi^{-1}(B) \cap \Xi = \bigsqcup (\gamma \cdot \widetilde{B} \cap \Xi) $ is definable if and only if there is only finitely many classes $[\gamma] \in \Gamma_B \backslash \Gamma$ such that $\gamma \cdot \widetilde{B} \cap \Xi$ is nonempty.

\end{rmk}

\section{Higher Albanese manifolds}\label{higher albanese basic}

\subsection{Malcev completions}\label{malcev}

We make a brief algebraic interlude to recall the theory of Maltsev completions. For more detailed introduction into the subject see \cite[Appendix A]{ABCKT}, \cite[Section 2]{Merk} or \cite[Appendix A]{Quil}.

Let $\Gamma$ be a group. We denote its \emph{lower central series} by $\Gamma_s$, that is, $\Gamma_0=\Gamma$ and $\Gamma_{s+1}=[\Gamma_s, \Gamma]$. We also denote $\Gamma^s:=\Gamma/{\Gamma_s}$. In particular, $\Gamma^1=\Gamma^{\operatorname{ab}}$ is the abelinisation. 

We denote the canonical projections $\Gamma \to \Gamma^s$ by $\varpi_{\Gamma}^s$.

Recall that $\Gamma$ is said to be \emph{nilpotent} if there exists  $s$, such that $\Gamma_k=\{e\}$ for every $k >s$ (equivalently, $\Gamma^k=\Gamma$ for $k>s$). The minimal such $s$ is call  the \emph{nilpotency class} of $\Gamma$ and is denoted by $\nilp(\Gamma)$ (in this case we also say that $\Gamma$ is \emph{$s$-step nilpotent}).

If $\Gamma \to \Gamma'$ is an epimorphism of groups and $\Gamma$ is nilpotent, then $\Gamma'$ is nilpotent and $\nilp(\Gamma') \le \nilp(\Gamma)$.

The following proposition is classical.

\begin{prop}\label{nilpotency of  extensions}
Let $\Gamma$ be a torsion-free $s$-step nilpotent group, $A$ an abelian group and 
\[
1 \to A \to \Delta \to \Gamma \to 1
\]
a central extension.
Then $\Delta$ is nilpotent and $s \le \nilp (\Delta) \le s+1$. More precisely, $\nilp(\Delta)=s$ if and only if the class $[\Delta]\in H^2(\Gamma, A)$ pullbacks along some epimorphism $\Gamma \to \Gamma'$ with $\nilp(\Gamma')< \nilp(\Gamma)$ (otherwise, $\nilp(\Delta)=s+1$).
\end{prop}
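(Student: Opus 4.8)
The plan is to obtain the numerical bounds from the functoriality of the lower central series together with the centrality of $A$, and then to read off the refined dichotomy from a single cohomological splitting. First I would record the elementary fact that a surjection $f\colon\Delta\twoheadrightarrow\Gamma$ satisfies $f(\Delta_k)=\Gamma_k$ for all $k$, which follows by induction from $f([\Delta_k,\Delta])=[f(\Delta_k),f(\Delta)]$. Applied to $\pi\colon\Delta\to\Gamma$ this gives $\pi(\Delta_{s+1})=\Gamma_{s+1}=\{e\}$, so $\Delta_{s+1}\subseteq\ker\pi=A$; since $A$ is central, $\Delta_{s+2}=[\Delta_{s+1},\Delta]\subseteq[A,\Delta]=\{e\}$ and hence $\nilp(\Delta)\le s+1$. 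As $\Gamma$ is a quotient of $\Delta$ we also have $\nilp(\Delta)\ge\nilp(\Gamma)=s$, so $s\le\nilp(\Delta)\le s+1$. Because the lower central series is decreasing and $\Delta_{s+1}\subseteq A$, the entire refinement comes down to deciding whether $\Delta_{s+1}=\{e\}$, i.e. $\nilp(\Delta)=s$, or $\Delta_{s+1}\neq\{e\}$, i.e. $\nilp(\Delta)=s+1$.

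For one direction of the refined statement, suppose $[\Delta]$ is pulled back along an epimorphism $q\colon\Gamma\twoheadrightarrow\Gamma'$ with $\nilp(\Gamma')<s$, so that $\Delta$ is the fibre product $\Gamma\times_{\Gamma'}\Delta'$ of some central extension $1\to A\to\Delta'\to\Gamma'\to1$. The bound already proved gives $\nilp(\Delta')\le\nilp(\Gamma')+1\le s$, and the fibre product is a subgroup of $\Gamma\times\Delta'$, whose nilpotency class is $\max(\nilp(\Gamma),\nilp(\Delta'))=s$; since nilpotency class does not grow on subgroups, $\nilp(\Delta)\le s$, hence $\nilp(\Delta)=s$.

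For the converse I would argue as follows. Assume $\nilp(\Delta)=s$ and set $E:=\pi^{-1}(\Gamma_s)=\Delta_s A$. Because $A$ is central one has $[E,\Delta]=[\Delta_s A,\Delta]=[\Delta_s,\Delta]=\Delta_{s+1}=\{e\}$, so $E$ is a central, in particular abelian, subgroup of $\Delta$ fitting into an extension $0\to A\to E\to\Gamma_s\to0$ of abelian groups. If this extension splits, a splitting yields a subgroup $B\subseteq E$ with $B\cap A=\{e\}$ and $\pi(B)=\Gamma_s$; being contained in the central subgroup $E$, the subgroup $B$ is normal in $\Delta$, so $\Delta':=\Delta/B$ is a central extension $1\to A\to\Delta'\to\Gamma/\Gamma_s=\Gamma^s\to1$, and a direct check identifies $\Delta$ with the fibre product $\Gamma\times_{\Gamma^s}\Delta'$. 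Thus $[\Delta]$ is pulled back along $\Gamma\twoheadrightarrow\Gamma^s$, and $\nilp(\Gamma^s)\le s-1<s$ since $(\Gamma^s)_s=\Gamma_s/\Gamma_s=\{e\}$.

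The main obstacle is exactly the splitting of $0\to A\to E\to\Gamma_s\to0$: this is where the hypotheses on $\Gamma$ must be used. The group $\Gamma_s$ is central in $\Gamma$ (as $[\Gamma_s,\Gamma]=\Gamma_{s+1}=\{e\}$) and torsion-free, so --- being finitely generated in the situations to which the statement is applied --- it is free abelian and $\Ext^1_{\Z}(\Gamma_s,A)=0$, which forces the extension to split. Without free-abelianness of $\Gamma_s$ the centrality of $E$ (equivalently $\Delta_{s+1}=\{e\}$) would persist while $[\Delta]$ might fail to be a genuine pullback, so the equivalence really hinges on this vanishing of $\Ext^1$.
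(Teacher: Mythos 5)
Your proof is correct, but there is nothing in the paper to compare it with: the paper states Proposition \ref{nilpotency of  extensions} as ``classical'' and gives no proof at all. So let me simply assess your argument on its own terms.

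The numerical bounds and the ``if'' direction are complete and correct: $\pi(\Delta_{s+1})=\Gamma_{s+1}=\{e\}$ forces $\Delta_{s+1}\subseteq A$, centrality kills $\Delta_{s+2}$, and for a pulled-back class the fibre-product realisation $\Delta\cong\Gamma\times_{\Gamma'}\Delta'$ inside $\Gamma\times\Delta'$ (together with $\nilp(\Delta')\le\nilp(\Gamma')+1\le s$, which needs no torsion-freeness) gives $\nilp(\Delta)\le s$. Your converse is also correctly structured: when $\Delta_{s+1}=\{e\}$ the subgroup $E=\pi^{-1}(\Gamma_s)=\Delta_s A$ is central in $\Delta$, so any complement $B$ of $A$ in $E$ is automatically normal, and your verification that $\Delta\cong\Gamma\times_{\Gamma^s}\Delta'$ for $\Delta'=\Delta/B$ is right; one could add that any epimorphism $q\colon\Gamma\to\Gamma'$ with $\nilp(\Gamma')<s$ satisfies $q(\Gamma_s)=\Gamma'_s=\{e\}$ and hence factors through $\Gamma\to\Gamma^s$, so your reduction to the single quotient $\Gamma^s$ loses no generality in either direction.

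The point you flag at the end is genuine and worth emphasising: as literally printed, without a finite-generation hypothesis, the ``only if'' direction of the Proposition is \emph{false}. Take $\Gamma=\Q$ (abelian, so $s$ is minimal), $A=\Z$, and a non-split extension with class in $\Ext^1_{\Z}(\Q,\Z)\neq 0$, viewed inside $H^2(\Q,\Z)$: the middle group is abelian, so $\nilp(\Delta)=\nilp(\Gamma)$, yet the class is nonzero and cannot be inflated from the trivial group. So the splitting of $0\to A\to E\to\Gamma_s\to 0$ is not a removable convenience of your proof but exactly where the hypotheses must carry weight: torsion-freeness of $\Gamma$ gives torsion-freeness of $\Gamma_s$, and finite generation (which holds in every application in the paper --- fundamental groups of quasi-projective varieties and the lattices $\G^s_{\Z}$, whose subgroups are finitely generated) makes $\Gamma_s$ free abelian, whence $\Ext^1_{\Z}(\Gamma_s,A)=0$ and the complement exists. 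Your proof is therefore correct in the generality in which the paper actually uses the Proposition, and your closing remark correctly identifies the implicit hypothesis the paper's statement omits.
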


For arbitrary $\Gamma$ the groups $\Gamma^s$ are always nilpotent. Moreover they are \emph{universal $s$-step nilpotent quotients of $\Gamma$} (i.e. every homomorphism from $\Gamma$ to an $s$-step nilpotent group factorises through  $\varpi_{\Gamma}^s \colon \Gamma \twoheadrightarrow \Gamma^s$).

\begin{thrm}[Malcev, Quillen]\label{malcev exists}
Let $\Gamma$ be a finiely presented group and $\Bbbk$ a field of characteristic zero. There exists a unique up to a canonical isomorphism pro-unipotent pro-algebraic group $\G_{\Bbbk}(\Gamma)$ over $\Bbbk$ and a homomorphism $\mu_{\Gamma} \colon \Gamma \to \G_{\Bbbk}(\Gamma)(\Bbbk)$ with the following properties:
\begin{itemize}
\item[(i)] if $\rho \colon \Gamma \to U(\Bbbk)$ is a Zariski dense representation to a unipotent geometrically connected $\Bbbk$-algebraic group, there exists a surjective morphism of $\Bbbk$-groups $\nu \colon \G \to U$ such that $\rho=\nu \circ \mu_{\Gamma}$;
\item[(ii)] it is functorial in the natural sense: if $\phi \colon \Gamma \to \Gamma'$ is a homomorphism of groups, there exists a morphism of pro-$\Bbbk$-groups $\G(\phi) \colon \G_{\Bbbk}(\Gamma) \to \G_{\Bbbk}(\Gamma')$ such that the diagram
\[
\xymatrix{
\Gamma \ar[r]^{\phi} \ar[d]^{\mu_{\Gamma}} & \Gamma' \ar[d]_{\mu_{\Gamma'}}\\
\G_{\Bbbk}(\Gamma) \ar[r]_{\G(\phi)}& \G_{\Bbbk}(\Gamma')
}
\]
commutes.
\end{itemize}
\end{thrm}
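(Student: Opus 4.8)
The plan is to build $\G_{\Bbbk}(\Gamma)$ as the inverse limit of the unipotent completions of the nilpotent quotients $\Gamma^s$, so that the whole theorem reduces to the classical Malcev theory of finitely generated torsion-free nilpotent groups. The technical heart --- and the step I expect to be the main obstacle --- is the nilpotent case: if $N$ is a finitely generated torsion-free nilpotent group, one must produce a unique unipotent $\Bbbk$-algebraic group $N_{\Bbbk}$ with a Zariski-dense homomorphism $N \to N_{\Bbbk}(\Bbbk)$ and the universal property that every homomorphism $N \to U(\Bbbk)$ into a unipotent $\Bbbk$-group extends uniquely to $N_{\Bbbk} \to U$. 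Concretely I would form the Malcev $\Bbbk$-Lie algebra $\mathfrak{n}_{\Bbbk}$ by adjoining formal logarithms of elements of $N$ and defining the bracket through the Baker--Campbell--Hausdorff formula, then set $N_{\Bbbk} := \exp(\mathfrak{n}_{\Bbbk})$ with BCH as multiplication; finite generation forces $\mathfrak{n}_{\Bbbk}$ to be finite-dimensional, and the mutually inverse $\exp$/$\log$ maps (which are polynomial in characteristic zero) identify $N_{\Bbbk}$ with the unipotent algebraic group attached to $\mathfrak{n}_{\Bbbk}$.

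With the nilpotent case in hand, assembling the pro-object is formal. Since $\Gamma$ is finitely presented it is finitely generated, so each $\Gamma^s = \Gamma/\Gamma_s$ is a finitely generated nilpotent group and $N^s := \Gamma^s/(\text{torsion})$ is finitely generated torsion-free nilpotent. Setting $\G^s := N^s_{\Bbbk}$ and noting that the projections $\Gamma^{s+1} \twoheadrightarrow \Gamma^s$ induce $\G^{s+1} \to \G^s$, I define $\G_{\Bbbk}(\Gamma) := \varprojlim_s \G^s$, a pro-unipotent pro-algebraic group, and let $\mu_{\Gamma} \colon \Gamma \to \G_{\Bbbk}(\Gamma)(\Bbbk)$ be induced by the compatible maps $\Gamma \twoheadrightarrow \Gamma^s \twoheadrightarrow N^s \hookrightarrow \G^s(\Bbbk)$.

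For property \textit{(i)}, given a Zariski-dense $\rho \colon \Gamma \to U(\Bbbk)$ with $U$ unipotent and connected: $U$ is $s$-step nilpotent for some $s$, so by the universal property of the quotient $\Gamma^s$ the map $\rho$ factors through $\Gamma^s$, and since $U(\Bbbk)$ is torsion-free (as $\Bbbk$ has characteristic zero) it factors further through $N^s$. The image of $N^s$ equals $\rho(\Gamma)$ and is therefore Zariski dense, so the nilpotent universal property extends it uniquely to $\G^s \to U$; precomposing with the projection $\G_{\Bbbk}(\Gamma) \to \G^s$ yields $\nu$, which is surjective because its image is closed and contains the dense set $\rho(\Gamma)$. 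For property \textit{(ii)}, functoriality of the lower central series gives $\phi(\Gamma_s) \subseteq \Gamma'_s$, hence homomorphisms $\Gamma^s \to (\Gamma')^s$ carrying torsion to torsion, hence $N^s \to (N')^s$; functoriality of the nilpotent completion then produces a compatible system $\G^s \to (\G')^s$ whose limit is $\G(\phi)$, and the required square commutes directly from the constructions.

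Finally, uniqueness follows from the universal property in the standard way. The image $\mu_{\Gamma}(\Gamma)$ is Zariski dense in every finite-dimensional unipotent quotient of $\G_{\Bbbk}(\Gamma)$, which upgrades \textit{(i)} to the full representing statement $\Hom_{\Bbbk}(\G_{\Bbbk}(\Gamma), U) \cong \Hom(\Gamma, U(\Bbbk))$ for all unipotent $U$; taking the limit over finite-dimensional unipotent quotients, any other pair $(\G', \mu')$ satisfying \textit{(i)} yields comparison morphisms $\G_{\Bbbk}(\Gamma) \to \G'$ and $\G' \to \G_{\Bbbk}(\Gamma)$, and the density of the images together with the uniqueness of factorizations forces both composites to be the identity. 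This gives the canonical isomorphism and completes the proof.
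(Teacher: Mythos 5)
Your proof is sound, but note that the paper does not actually prove this theorem: it is quoted as a classical result of Malcev and Quillen, with pointers to \cite[Appendix A]{ABCKT}, \cite[Section 2]{Merk} and \cite[Appendix A]{Quil}, and the text only indicates two abstract constructions --- the Tannakian fundamental group of the category of unipotent $\Gamma$-modules over $\Bbbk$, and Quillen's explicit description via the (completed) group ring. Your route is instead the original Malcev-style one: reduce to finitely generated torsion-free nilpotent groups via the tower $\Gamma \twoheadrightarrow \Gamma^s \twoheadrightarrow \Gamma^s/(\mathrm{torsion})$, treat the nilpotent case by the $\exp/\log$ correspondence with the Baker--Campbell--Hausdorff multiplication, and assemble $\G_{\Bbbk}(\Gamma)$ as $\varprojlim_s \G^s$. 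This is a genuinely different and perfectly valid approach; it has the advantage of making the structure the paper actually uses visible by construction (the quotients $\G^s_{\Bbbk}(\Gamma)$, the Zariski density and torsion kernel of $\mu_{\Gamma^s}$, i.e.\ items \textit{(ii)}, \textit{(iii)}, \textit{(v)} of Proposition \ref{malcev properties}), whereas the Tannakian/group-ring descriptions give functoriality and base change (\textit{(iv)} of that proposition) essentially for free. Two small points in your write-up deserve reinforcement. First, ``adjoining formal logarithms of elements of $N$'' becomes rigorous only after, say, embedding $N$ into the group of upper-triangular unipotent integer matrices (Jennings--Hall) or fixing a Mal'cev basis, so that $\log$ is defined and BCH is a polynomial map by nilpotency; as written this step is a sketch of a standard construction rather than a construction. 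Second, in the uniqueness step you pass from property \textit{(i)}, which concerns Zariski-dense representations into \emph{connected} unipotent groups, to arbitrary homomorphisms $\Gamma \to U(\Bbbk)$ by corestricting to the Zariski closure of the image; there you should record that in characteristic zero this closure is automatically geometrically connected (unipotent groups over a field of characteristic zero are connected, since their point groups are torsion-free, so there is no nontrivial finite component group). With these remarks supplied, your argument is complete and consistent with how the theorem is used in the paper.
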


There group $\G_{\Bbbk}(\Gamma)$ is called the \emph{Malcev completion of $\Gamma$ over $\Bbbk$}. It can be viewed abstractly as the Tannakian fundamental group of the category of unipotent $\Gamma$-modules with coefficients in $\Bbbk$, but it also admits an explicit description in the terms of the group ring, see \cite{Quil}.

We collect the basic properties of Malcev completions below:

\begin{prop}\label{malcev properties}
\begin{itemize}
\item[(i)] If $\Gamma$ is commutative, then $\G_{\Bbbk}(\Gamma)=\Gamma \o_{\Z} \Bbbk$;
\item[(ii)] Let $\G^s_{\Bbbk}(\Gamma)$ be the  universal $s$-step nilpotent quotient of $\G_{\Bbbk}(\Gamma)$, that is, $\G^s_{\Bbbk}(\Gamma)=\G_{\Bbbk}(\Gamma)/\left(\G_{\Bbbk}(\Gamma)\right)_s$. Then $\G^s_{\Bbbk}(\Gamma)$ is an algebraic $s$-step unipotent group over $\Bbbk$ and $\varpi^s_{\Gamma} \colon \Gamma \to \Gamma^s$ induces an isomorphism $\G^s_{\Bbbk}(\Gamma) \xrightarrow{\sim} \G_{\Bbbk}(\Gamma^s)$. There is a commutative diagram
\[
\xymatrix{
\Gamma \ar[d]_{\varpi^s_{\Gamma}} \ar[r]^{\mu_{\Gamma}}& \G_{\Bbbk}(\Gamma) \ar[d]^{\varpi^s_{\G_{\Bbbk}(\Gamma)}}\\
\Gamma^s \ar[r]_{\mu_{\Gamma^s}} & \G^s_{\Bbbk}(\Gamma).
}
\]
\item[(iii)] the homomorphism $\mu_{\Gamma^s} \colon \Gamma^s \to \G_{\Bbbk}(\Gamma^s)(\Bbbk)$ has Zariski dense image and its kernel equals the torsion subgroup of $\Gamma^s$;
\item[(iv)] If $\Bbbk_1 \hookrightarrow \Bbbk_2$ is a field extension, then $\G_{\Bbbk_2}(\Gamma) = \G_{\Bbbk_1}(\Gamma) \o \Bbbk_2$;
\item[(v)] if $\Bbbk=\R$, the image of $\mu_{\Gamma^s}$ is a cocompact lattice inside the connected simply connected unipotent Lie group $\G_{\R}^s(\Gamma)$. The quotient is a smooth manifold.
\end{itemize}
\end{prop}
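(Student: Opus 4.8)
The plan is to deduce every item from the universal property of Theorem~\ref{malcev exists}, together with two classical inputs: the structure theory of connected unipotent groups in characteristic zero (where $\exp$ identifies such a group with its nilpotent Lie algebra, so that it is torsion-free and, over $\R$, connected and simply connected), and Malcev's rigidity theorem, which realises a finitely generated torsion-free nilpotent group as a cocompact lattice in such a real Lie group (see \cite{ABCKT}). With these in hand, \textit{(i)}, \textit{(iii)} and \textit{(iv)} are essentially formal, and the structural content is concentrated in \textit{(ii)} and in the lattice assertion of \textit{(v)}.

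For \textit{(i)}, I would verify the universal property directly. Regard the finite-dimensional space $\Gamma \o_\Z \Bbbk$ as a vector group, with $\mu \colon \gamma \mapsto \gamma \o 1$ of Zariski-dense image. If $\rho \colon \Gamma \to U(\Bbbk)$ is Zariski dense into a connected unipotent group, then $U$ is abelian, since its Zariski-dense subgroup $\rho(\Gamma)$ is; hence $U$ is a vector group and $\rho$ takes values in an additive group. The universal property of $\o_\Z$ then factors $\rho$ uniquely and $\Bbbk$-linearly through $\Gamma \o_\Z \Bbbk$, and the uniqueness clause of Theorem~\ref{malcev exists} yields the identification.

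For \textit{(ii)}, I would first establish finite-dimensionality. Since $\Gamma$ is finitely generated, the pro-nilpotent Lie algebra of $\G_\Bbbk(\Gamma)$ is topologically generated in degree one by the finite-dimensional space $\Gamma^{\mathrm{ab}} \o \Bbbk$ (see \cite{Quil}), so the successive quotients of its lower central series are finite-dimensional and $\G^s_\Bbbk(\Gamma)$ is a genuine finite-dimensional $s$-step unipotent group. For the isomorphism I use functoriality in both directions. Applying $\G(-)$ to $\varpi^s_\Gamma$ gives $\G_\Bbbk(\Gamma) \to \G_\Bbbk(\Gamma^s)$; since $(\Gamma^s)_s = \{e\}$ and $\mu_{\Gamma^s}$ has Zariski-dense image, we get $(\G_\Bbbk(\Gamma^s))_s = \{e\}$ --- the $s$-th term of the lower central series of a connected group in characteristic zero is the Zariski closure of that of any Zariski-dense subgroup --- so this morphism kills $(\G_\Bbbk(\Gamma))_s$ and factors through $\G^s_\Bbbk(\Gamma)$. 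Conversely, the composite $\Gamma \to \G^s_\Bbbk(\Gamma)(\Bbbk)$ has nilpotent image with $(\cdot)_s = \{e\}$, hence factors through $\Gamma^s$, and the universal property of $\G_\Bbbk(\Gamma^s)$ produces a morphism in the opposite direction. These two morphisms are mutually inverse by Zariski density, and the required commutative square is part of the construction.

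Items \textit{(iii)} and \textit{(iv)} I would treat briefly. Zariski density of $\mu_{\Gamma^s}$ is built into the completion; since a unipotent group over a characteristic-zero field is torsion-free, the torsion of $\Gamma^s$ lies in $\ker \mu_{\Gamma^s}$, while the reverse inclusion is precisely Malcev's embedding theorem applied to the finitely generated torsion-free nilpotent group $\Gamma^s$ modulo torsion. Compatibility with base change in \textit{(iv)} is transparent from the explicit group-ring (equivalently, Lie-algebra) description of \cite{Quil}, in which the defining Lie algebra is already rational and $\G_{\Bbbk_2}(\Gamma)$ is obtained by extending scalars; alternatively it follows formally from the universal property. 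Finally, \textit{(v)} combines \textit{(iii)} with the classical theory: over $\R$ the group $\G^s_\R(\Gamma)$ is connected and simply connected via $\exp$, the image of $\mu_{\Gamma^s}$ is $\Gamma^s$ modulo torsion, and Malcev's theorem makes this a cocompact lattice, so the quotient is a compact nilmanifold, in particular a smooth manifold. The only genuinely non-formal ingredients are the preservation of nilpotency class under Zariski closure used in \textit{(ii)} and the classical Malcev embedding and lattice theorems behind \textit{(iii)} and \textit{(v)}; these I would invoke by citation, and I expect the bookkeeping in \textit{(ii)} to be the main point requiring care.
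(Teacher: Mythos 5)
Your proof is correct and takes essentially the same route as the paper, whose own proof simply notes that items \textit{(i)--(iii)} follow from the universal property, cites \cite{Hain93} for the base-change statement \textit{(iv)}, and cites Malcev/Raghunathan for \textit{(v)}; you have filled in exactly the details the paper leaves implicit (including the standard fact that Zariski closure preserves the lower central series, which underlies the reduction in \textit{(ii)}). One small caveat: your aside that \textit{(iv)} ``follows formally from the universal property'' is too quick, since Zariski density of a representation does not obviously interact well with extension of scalars --- which is presumably why the paper cites \cite{Hain93} --- but your primary argument via Quillen's group-ring description, where the Malcev Lie algebra is already defined over the smaller field, is sound.
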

\begin{proof}
Items \textit{(i) - (iii)} follow from the universal property. 

Item \textit{(iv)} is proved in \cite{Hain93}.

The last item is a classical result on lattices in nilpotent Lie groups due  to Malcev, see \cite{Mal} or \cite[Theorem 2.1.]{Ragh}.

\end{proof}

Items \textit{(ii)} and \textit{(iii)} of Proposition \ref{malcev properties} imply that if $\Gamma$ is torsion-free and nilpotent, the homomorphism $\mu_{\Gamma}$ is injective.

Each group $\G^s_{\Bbbk}(\Gamma)$ is a central extension of $\G^{s-1}_{\Bbbk}(\Gamma)$. The groups $\G^j_{\Bbbk}(\Gamma)$ form an inverse system
\begin{equation}\label{tower of groups}
\ldots \to \G^s_{\Bbbk}(\Gamma) \to \G^{s-1}_{\Bbbk}(\Gamma) \to \ldots \to \G^1_{\Bbbk}(\Gamma).
\end{equation}
and $\G_{\Bbbk}(\Gamma)=\varprojlim \G^s_{\Bbbk}(\Gamma)$. 

In what follows, we denote by $\g_{\Bbbk}(\Gamma)$ (resp. $\g^s_{\Bbbk}(\Gamma)$) the Lie algebra of $\G_{\Bbbk}(\Gamma)$ (resp. $\G^s_{\Bbbk}(\Gamma)$). We sometimes omit the field from the notation, when $\Bbbk=\Q$, which is natural in the light of item \textit{(iv)} of Proposition \ref{malcev properties}. We also denote $\z^s_{\Bbbk}(\Gamma):=\ker(\g^s_{\Bbbk}(\Gamma) \to \g^{s-1}_{\Bbbk}(\Gamma))$ and $\mathcal{Z}^s_{\Bbbk}(\Gamma):=\ker(\G^s_{\Bbbk}(\Gamma) \to \G^{s-1}_{\Bbbk}(\Gamma))$. Thus, $\mathcal{Z}^s_{\Bbbk}(\Gamma)=(\G^s_{\Bbbk}(\Gamma))_{s-1}$ is isomorphic to the additive group of a finite-dimensional $\Bbbk$-vector space and $\z^s_{\Bbbk}(\Gamma)$ is its Lie algebra.



\subsection{Mixed Hodge theory of $\pi_1(X;x)$}\label{morgan hain subsec}

Let $X$ be a normal complex algebraic variety and $x \in X$ a base point. The Maltsev completion of $\pi_1(X; x)$ carries a functorial mixed Hodge structure. There are at least three different constructions of it: one is due to Morgan and is based on rational homotopy theory \cite{Morg}; another is due to Hain and is based on Chen's iterated integrals \cite{Hain87}; and the last is due to Simpson and is based on the $\mathbb{C}^{\times}$-action on the category of Higgs bundles \cite{Simp}. We follow Hain's approach, as the construction of higher Albanese manifolds is most natural in it.\footnote{Morgan's approach has an unfortunate disadvantage: the dependence of the mixed Hodge structure on $\mathcal{G}_{\mathbb{Q}}(\pi_1(X;x))$ on the base point $x \in X$ is very implicit; this dependence, however, plays a key role in Hain's theory and the construction of higher Albanese maps. This weakness in Morgan's approach can be fixed using Halperin's augmented version of rational homotopy theory; see \cite{Halp}. The equivalence of the three constructions is folklore and is not present in the literature. A closely related yet different statement about the equality of Morgan's and Hain's mixed Hodge structures on higher rational homotopy groups can be found in the unpublished manuscript \cite{BigRed}. The equivalence of Simpson's and Hain's constructions is claimed in \cite{Simp}.}

In what follows, we denote $\G^s_{\Bbbk}(X;x):=\G^s_{\Bbbk}(\pi_1(X;x))$. Similarly, we write $ \g_{\Bbbk}(X;x):=\g_{\Bbbk}(\pi_1(X;x))$, and so on.

\begin{thrm}[Hain, Morgan]\label{hain morgan}
Let $X$ be a normal complex algebraic variety and $x \in X$ a fixed point. For every $s$ there exists a mixed Hodge structure on the Lie algebra $\g^s(X;x)$ such that:
\begin{itemize}
\item[(i)] the Lie bracket $[-,-] \colon \Lambda^2\g^s_{\Q}(X;x) \to \g^s_{\Q}(X;x)$ is a morphism of mixed Hodge structures;
\item[(ii)] $W_{-1}\g^s_{\Q}(X;x)=\g^s_{\Q}(X;x)$;
\item[(iii)] If $f \colon X \to Y$ is a morphism of algebraic varieties with $f(x) = y$, it induces a morphism of mixed Hodge structures $f_* \colon \g^s_{\Q}(X;x) \to \g^s_{\Q}(Y;y)$;
\item[(iv)] the resulting mixed Hodge structure on $\g^1_{\Q}(X;x)=H_1(X, \Q)$ coincides with the one coming from the Deligne's mixed Hodge structure on $H^1(X, \Q)$ and the isomorphism $H_1(X, \Q)\simeq H^1(X, \Q)^*$.
\end{itemize}
\end{thrm}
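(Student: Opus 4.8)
The plan is to construct the mixed Hodge structures on the tower $\{\g^s_{\Q}(X;x)\}_s$ inductively, following Hain's iterated-integral construction, and then verify the four listed properties. The base case is property \textit{(iv)}: for $s=1$ one has $\g^1_{\Q}(X;x)=H_1(X,\Q)$, and the mixed Hodge structure is the one dual to Deligne's mixed Hodge structure on $H^1(X,\Q)$. The inductive step exploits the central extension
\[
0 \to \z^s_{\Q}(X;x) \to \g^s_{\Q}(X;x) \to \g^{s-1}_{\Q}(X;x) \to 0
\]
recorded at the end of subsection \ref{malcev}. Assuming a mixed Hodge structure has been placed on $\g^{s-1}_{\Q}(X;x)$ compatibly with the bracket, I would produce a mixed Hodge structure on the central kernel $\z^s_{\Q}(X;x)$ and show that the extension is an extension in the category of mixed Hodge structures, so that the middle term inherits a canonical mixed Hodge structure.

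\emph{First} I would make the Hodge-theoretic input explicit. The graded pieces $\z^s_{\Q}(X;x)$ are subquotients of the degree-$s$ part of the free Lie algebra on $H_1(X,\Q)$; more precisely, the bar construction identifies $\g^s_{\Q}(X;x)$ with the dual of the $s$-th truncation of the $0$-th cohomology of Chen's reduced bar complex $B(A^{\bullet}(X))$, where $A^{\bullet}(X)$ is a mixed-Hodge complex computing $H^{\bullet}(X)$ (for instance Deligne's logarithmic complex on a good compactification $\overline{X}$ with normal-crossing boundary). The functoriality of the mixed-Hodge-complex formalism endows the bar complex, and hence each truncation, with a functorial mixed Hodge structure; dualising yields the structure on $\g^s_{\Q}(X;x)$. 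Property \textit{(iii)}, functoriality in $X$, is then immediate from the functoriality of the whole mixed-Hodge-complex and bar-construction machinery, since a morphism $f\colon X\to Y$ with $f(x)=y$ induces a morphism of the corresponding mixed Hodge complexes and hence of all the derived objects.

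\emph{Next}, for property \textit{(i)}, the Lie bracket on $\g^s_{\Q}(X;x)$ is dual to the coproduct (shuffle-and-deconcatenation structure) on the bar complex, and this coproduct is a morphism of mixed Hodge complexes by construction; dualising, the bracket $\Lambda^2\g^s_{\Q}\to\g^s_{\Q}$ is a morphism of mixed Hodge structures, and strictness of morphisms of mixed Hodge structures guarantees compatibility with $W_\bullet$ and $F^\bullet$. Property \textit{(ii)}, the negativity of weights $W_{-1}\g^s_{\Q}=\g^s_{\Q}$, follows from the fact that $H_1(X,\Q)=\g^1_{\Q}$ has weights $-1$ and $-2$ only (dual to weights $1$ and $2$ on $H^1$), together with the fact that the bracket is a weight-preserving morphism, so iterated brackets land in strictly negative weights; a clean induction on $s$ using the extension above and the compatibility of $W_\bullet$ with the bracket closes this.

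\emph{The main obstacle} I anticipate is not the existence of the structure but the verification that the central extension displayed above is genuinely an extension of mixed Hodge structures — i.e. that the mixed Hodge structure on the kernel $\z^s_{\Q}$ inherited as a sub-object agrees with the one coming independently from the bar construction, and that the weight and Hodge filtrations are strictly compatible across the sequence. This is exactly where Hain's theorem requires care: one must check that Chen's iterated integrals respect the Hodge and weight filtrations (the regularity and the correct weight bookkeeping of iterated integrals of logarithmic forms), which is the technical heart of \cite{Hain87}. Equivalently, in Morgan's language, one must control the mixed Hodge structure on the minimal model and its dependence on the base point $x$; as the footnote in the excerpt notes, making the base-point dependence transparent is precisely the delicate point, and it is what ultimately feeds into the construction of the higher Albanese maps.
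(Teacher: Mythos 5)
The paper does not prove Theorem \ref{hain morgan} at all: it is imported as background, with the three known constructions attributed to Morgan \cite{Morg}, Hain \cite{Hain87} and Simpson \cite{Simp}, and the paper simply declares that it follows Hain's iterated-integral approach. So there is no in-paper argument to compare you against; what you have written is a reconstruction of Hain's bar-construction proof, and in outline it is the right one. Two points, however, need repair. The first is logical: your opening plan --- put mixed Hodge structures on $\z^s_{\Q}(X;x)$ and $\g^{s-1}_{\Q}(X;x)$ and let the middle term of the central extension ``inherit'' one --- cannot serve as a construction. An extension of mixed Hodge structures presupposes filtrations on the middle term; the outer terms do not determine them, and indeed the set of such extensions is a nontrivial Jacobian $J^0\Hom$ (Carlson), so there is no canonical choice to inherit. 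Your proposal survives only because you immediately abandon the induction in favour of the bar construction, which is where the structure actually comes from; the correct logical order (and the one the paper tacitly uses) is that the MHS on $\g^s$ is constructed directly, and the statement that the lower central series, hence each $\z^s$, consists of Hodge substructures is a \emph{consequence} of item (i), as the paper notes immediately after the theorem. Relatedly, $\g^s_{\Q}(X;x)$ is not the plain dual of the truncated $H^0$ of the bar complex: that $H^0$ is the Hopf algebra of functions on $\G^s_{\Q}(X;x)$, and one must pass to primitives of the (completed) dual, the bracket being the commutator dual to the deconcatenation coproduct --- your ``shuffle-and-deconcatenation'' phrasing conflates the product and the coproduct.

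The second point is a genuine gap in scope: the theorem is stated for \emph{normal} varieties, while your construction (Deligne's logarithmic complex on a good compactification with snc boundary) presupposes $X$ smooth, which is the setting of both \cite{Hain87} and \cite{Morg}. The normal case requires a further reduction: e.g.\ take a resolution $\phi \colon Y \to X$ (or the smooth locus $X^{\circ} \subseteq X$), use that the induced map on $\pi_1$ is surjective because $X$ is normal, so that $\g^s_{\Q}(X;x)$ is a quotient of $\g^s_{\Q}(Y;y)$, and then verify that the kernel is a Hodge substructure and that the quotient MHS is independent of the choices. That this extension from smooth to normal is not automatic is visible in the paper itself, where even the transfer of the higher Albanese \emph{maps} to normal $X$ is treated in a separate remark credited to a private communication of Hain. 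Your weight argument for (ii) is fine once the construction is in place: $H_1(X,\Q)$ has weights $-1$ and $-2$, the weight-graded pieces of $\g^s$ are subquotients of tensor powers of $H_1$, so $W_{-1}\g^s_{\Q}=\g^s_{\Q}$ follows.
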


Item \textit{(i)} of Theorem \ref{hain morgan} has two important implications.

First, the Hodge filtration $F^{\bullet}\g^s(X;x)$ on $\g^s_{\C}(X;x)$ satisfies $[F^p\g^s(X;x), F^q\g^s(X;x)] \subseteq F^{p+q}\g^s(X;x)$. In particular,  $F^0\g^s(X;x) \subseteq \g^s_{\C}(X;x)$ is a Lie subalgebra.

Second, the lower central series of $\g^s(X;x)$ is a filtration by $\Q$ -Hodge substructures. Thus, $\z^s(X;x)\subseteq \g^s(X;x)$ is a Hodge substructure and $\g^s(X;x) \to \g^s(X;x)/\z^s(X;x)=\g^{s-1}(X;x)$ is a morphism of mixed Hodge substructures.

\subsection{Higher Albanese manifolds}\label{higher albanese subsec}
Theory of higher Albanese manifolds was developed by Hain in Zucker in \cite{HZ}, see also \cite{Hain85}. 

Recall that  $\G_{\Z}^s(X;x)$ denotes he image of the map 
\[
\mu_{\pi_1(X;x)^s} \colon \pi_1(X;x)^s \to \G^s_{\Q}(X;x).
\]
It is a discrete Zariski dense  subgroup of $\G^s_{\R}(X;x)$ (Proposition \ref{malcev properties}, \textit{(v)}).

We denote by $F^0\G \subseteq \G^s_{\C}(X;x)$ the exponent of $F^0\g^s \subseteq \g^s_{\C}(X;x)$.

\begin{df}
The $s$-th Albanese manifold of $(X;x)$ is defined as:
\[
\Alb^s(X;x):=\G^s_{\Z}(X;x) \backslash \G^s_{\C}(X;x)/F^0\G^s(X;x).
\]
\end{df}

If $s=1$, the definition recovers the classical Albanese manifold $\Alb(X)=\Alb^1(X)$ (see Example \ref{albanese is jacobian}).

The tower of central extensions (\ref{tower of groups}) descends to a  holomorphic tower of complex manifolds
\begin{equation}\label{albanese tower}
\ldots \to \Alb^s(X;x) \xrightarrow{p^s} \Alb^{s-1}(X;x) \to \ldots \to \Alb^1(X;x)
\end{equation}
with each $p^s$ being a holomorphic principal $C^s$-bundle for a complex commutative Lie group 
\[
C^s=\left( \G^s_{\Z}(X;x) \cap \mathcal{Z}_{\Q}^s(X;x) \right ) \backslash \mathcal{Z}^s_{\C}(X;x)/ \left ( \mathcal{Z}^s_{\C}(X;x) \cap F^0 \G^s(X;x) \right )
\]

As mentioned above, $\z^s(X;x) \subseteq \g^s(X;x)$ is a sub-$
\Q$-Hodge structure and the exponential map identifies $\mathcal{Z}^s(X;x)$ with the additive group of the underlying vector space $\z^s$. The intersection with the lattice $\G_{\Z}^s$ endows it with a $\Z$-structure and $C^s = J^0\z^s(X;x)$ is the $0$-th Jacobian of the resulting mixed $\Z$-Hodge structure (see Definition \ref{jacobian def}).

From Theorem \ref{hain morgan}, \textit{(iii)}, it follows that every morphism of algebraic varieties $f \colon X \to Y$  with $f(x) =y$ induces a holomorphic map 
\[
\Alb^s(f) \colon \Alb^s(X;x) \to \Alb^s(Y;y).
\]

 In what follows, we will omit the base point from the notation of the higher Albanese manifolds, writing simply $\Alb^s(X)=\Alb^s(X;x)$. This is natural in the light of the following Proposition (\cite[Corollary 5.20]{HZ}). 

\begin{prop}
If $x_1$ and $x_2$ are two points in $X$, there is a canonical biholomorphism $\Alb^s(X;x_1) \xrightarrow{\sim} \Alb^s(X;x_2)$.
\end{prop}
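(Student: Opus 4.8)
The plan is to produce the biholomorphism from a choice of path and then show it is independent of that choice. First I would fix a path $\gamma$ from $x_1$ to $x_2$. It induces an isomorphism $\pi_1(X;x_1)\xrightarrow{\sim}\pi_1(X;x_2)$, hence, passing to Malcev completions and their nilpotent quotients, an isomorphism of complex Lie groups $\theta_\gamma\colon \G^s_\C(X;x_1)\xrightarrow{\sim}\G^s_\C(X;x_2)$ which is defined over $\Q$, carries the lattice $\G^s_\Z(X;x_1)$ onto $\G^s_\Z(X;x_2)$, and respects the weight filtration (the lower central series is intrinsic). The only datum not transported by $\theta_\gamma$ is the Hodge filtration: the mixed Hodge structure on $\g^s(X;x)$ genuinely depends on $x$, so in general $\theta_\gamma\big(F^0\G^s(X;x_1)\big)\neq F^0\G^s(X;x_2)$. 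The whole problem is thus to compare the two Hodge filtrations after transport.

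The key step, and the main obstacle, is to control this discrepancy. By Theorem \ref{hain morgan} the associated graded of the tautological variation $x\mapsto \g^s(X;x)$ is constant: the adjoint action of $\G^s$ preserves the lower central series and acts trivially on its graded, so the variation is unipotent and the Hodge structures induced on $\Gr^W\g^s$ are independent of the base point. Consequently the two filtrations $\theta_\gamma\big(F^\bullet\g^s(X;x_1)\big)$ and $F^\bullet\g^s(X;x_2)$ on $\g^s_\C(X;x_2)$ share the same weight filtration and the same associated graded Hodge filtration; they differ by a unipotent, weight-lowering Lie-algebra automorphism. The essential point is that this automorphism can be taken to be \emph{inner}: there exists $\tau_\gamma\in\G^s_\C(X;x_2)$ with
\[
\theta_\gamma\big(F^0\G^s(X;x_1)\big)=\tau_\gamma\, F^0\G^s(X;x_2)\,\tau_\gamma^{-1}.
\]
This is exactly Hain's description of how the mixed Hodge structure varies with the base point: the transport element $\tau_\gamma$ furnished by Chen's iterated integrals conjugates the Hodge filtration, and its class is precisely the value $\alb^s_{x_2}(x_1)$. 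Establishing this inner-conjugacy is where the substance of the argument lies; everything else is formal.

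Granting the element $\tau_\gamma$, I would define
\[
m_\gamma\colon \Alb^s(X;x_1)\to \Alb^s(X;x_2),\qquad [h]\mapsto[\theta_\gamma(h)\,\tau_\gamma].
\]
To see this descends to the double cosets, take $h'=\lambda h f$ with $\lambda\in\G^s_\Z(X;x_1)$ and $f\in F^0\G^s(X;x_1)$; writing $\theta_\gamma(f)=\tau_\gamma\tilde f\tau_\gamma^{-1}$ with $\tilde f\in F^0\G^s(X;x_2)$ gives $\theta_\gamma(h')\tau_\gamma=\theta_\gamma(\lambda)\,\theta_\gamma(h)\,\tau_\gamma\,\tilde f$, and $\theta_\gamma(\lambda)\in\G^s_\Z(X;x_2)$, $\tilde f\in F^0\G^s(X;x_2)$, so the class is unchanged. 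As $\theta_\gamma$ is a holomorphic (indeed algebraic) group isomorphism and right translation by $\tau_\gamma$ is holomorphic, the descended map $m_\gamma$ is holomorphic; running the construction with the reversed path produces a two-sided inverse, so $m_\gamma$ is a biholomorphism. When $s=1$ the mixed Hodge structure on $\g^1=H_1(X)$ is base-point independent, $\tau_\gamma$ may be taken trivial, and $m_\gamma$ reduces to the standard identification of Albanese varieties.

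Finally I would check canonicity, i.e. independence of $\gamma$. Two paths differ by a loop $\delta\in\pi_1(X;x_2)$, so $\theta_{\gamma'}=c_\delta\circ\theta_\gamma$, where $c_\delta$ is conjugation by the image of $\delta$ in $\G^s_\Z(X;x_2)$, and correspondingly one may take $\tau_{\gamma'}=\delta\,\tau_\gamma$. Then $m_{\gamma'}([h])=[\delta\,\theta_\gamma(h)\,\delta^{-1}\,\delta\,\tau_\gamma]=[\delta\,\theta_\gamma(h)\,\tau_\gamma]=[\theta_\gamma(h)\,\tau_\gamma]=m_\gamma([h])$, the left factor $\delta$ being absorbed by the quotient by $\G^s_\Z(X;x_2)$. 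Hence $m:=m_\gamma$ is canonical. The same bookkeeping shows that $m$ is compatible with the projections $p^s$ and intertwines the higher Albanese maps, which is the form in which the statement will be used later.
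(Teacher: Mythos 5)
The paper offers no proof of this Proposition at all: it is imported wholesale from Hain--Zucker (\cite[Corollary 5.20]{HZ}), so the only meaningful comparison is with the HZ mechanism itself, and your reconstruction matches it. The entire substance is the existence of Chen's transport element $\tau_\gamma$ conjugating the transported Hodge filtration to the intrinsic one, with $[\tau_\gamma]=\alb^s_{x_2}(x_1)$; you correctly isolate this as the imported input, and the descent, holomorphy, and path-independence computations you carry out are exactly the formal shell around it, and they are correct. Two cautions. First, the conjugacy condition $\theta_\gamma\big(F^0\G^s(X;x_1)\big)=\tau_\gamma F^0\G^s(X;x_2)\tau_\gamma^{-1}$ alone does \emph{not} determine $\tau_\gamma$, not even up to right $F^0$-multiplication: for $s=1$ the group is abelian, the condition is vacuous, every $\tau$ satisfies it, and different choices give genuinely different translations of the Albanese torus. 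So the canonicity of $m_\gamma$ is not a formal consequence of the conjugation property; it requires the \emph{specific} iterated-integral transport element together with its monodromy equivariance $\tau_{\delta\ast\gamma}=\mu^s(\delta)\,\tau_\gamma$ --- your ``one may take $\tau_{\gamma'}=\delta\tau_\gamma$'' is a genuine property of Hain's $\tau$, not a free choice, and the argument would collapse if $\tau_\gamma$ were merely ``some conjugator.'' Second, your parenthetical reason that $\theta_\gamma$ respects $W_\bullet$ (``the lower central series is intrinsic'') conflates two filtrations: in the quasi-projective case $W_\bullet\g^s$ is strictly finer than the lower central series (already $W_{-2}\g^1$ can be nonzero); the correct reason is that $W_\bullet$ is a flat sub-local-system of the canonical admissible variation, hence preserved by parallel transport. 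With these two points made precise, your argument is a faithful rendering of the proof the paper delegates to \cite{HZ}.
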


An extremely important feature of Hain's construction is the existence of liftings of  the classical Albanese map $\alb \colon X \to \Alb(X)$ to the upper levels of the Albanese tower (\ref{albanese tower}).

\begin{thrm}[\cite{HZ}]
Let $X$ be a normal quasiprojective variety, and $x \in X$ a base point. There exists a sequence of holomorphic maps $\alb^s \colon X^{\an} \to \Alb^s(X)$ , such that the diagram
\[
\xymatrix{
&\Alb^s(X) \ar[d] \\
& \vdots \ar[d]\\
& \Alb^2(X) \ar[d]\\
X^{\an} \ar[r]_{\alb} \ar[ru]^{\alb^2} \ar[ruuu]^{\alb^s}& \Alb(X)
}
\]
commutes and  $\alb^1=\alb$ coincides with the classical Albanese map. If $f \colon X \to Y$ is a morphism of algebraic varieties with $f(x)=y$, the diagram
\[
\xymatrix{
X \ar[r]^{f} \ar[d]_{\alb_X^s} & Y \ar[d]^{\alb_Y^s}\\
\Alb^s(X) \ar[r]_{\Alb^s(f)} & \Alb^s(Y)
}
\]
also commutes.
\end{thrm}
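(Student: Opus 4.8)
The plan is to construct $\alb^s$ as the descent of a $\pi_1(X;x)$-equivariant developing map on the universal cover, built from Chen's theory of iterated integrals enriched with the mixed Hodge structure of Theorem \ref{hain morgan}. Write $\Gamma := \pi_1(X;x)$ and let $\pi \colon \widetilde{X} \to X$ be the universal cover with a chosen lift $\widetilde{x}$ of $x$, so that $\Gamma$ acts on $\widetilde{X}$ by deck transformations. Since the target $\Alb^s(X) = \G^s_{\Z}(X;x)\backslash \G^s_{\C}(X;x)/F^0\G^s(X;x)$ has universal cover $\G^s_{\C}(X;x)/F^0\G^s(X;x)$, it suffices to produce a holomorphic map $\widetilde{\alb}^s \colon \widetilde{X} \to \G^s_{\C}(X;x)/F^0\G^s(X;x)$ that is equivariant for the monodromy homomorphism $\mu^s \colon \Gamma \to \G^s_{\Z}(X;x)$; such a map descends to the quotients.

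First I would produce the developing map to the group itself. Chen's $1$-minimal model of $X$ (equivalently, the bar construction on the de Rham complex) furnishes a canonical $\g^s_{\C}(X;x)$-valued smooth $1$-form $\omega$ satisfying the Maurer--Cartan equation $d\omega + \tfrac{1}{2}[\omega,\omega]=0$. This makes $\omega$ an integrable flat connection form on the trivial $\G^s_{\C}(X;x)$-bundle over $X$; pulling it back to $\widetilde{X}$ and solving the resulting ODE with initial value $e$ at $\widetilde{x}$ yields a smooth map $D \colon \widetilde{X} \to \G^s_{\C}(X;x)$ given by iterated integration along any path from $\widetilde{x}$, well defined because $\widetilde{X}$ is simply connected. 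The holonomy of $\omega$ around loops reproduces $\mu^s$, so $D$ is equivariant: $D(\gamma\cdot \widetilde{y}) = \mu^s(\gamma)\, D(\widetilde{y})$.

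The decisive point is that composing $D$ with the quotient $\G^s_{\C}(X;x) \to \G^s_{\C}(X;x)/F^0\G^s(X;x)$ yields a \emph{holomorphic} map. Here Theorem \ref{hain morgan} enters twice: the compatibility $[F^p\g^s, F^q\g^s] \subseteq F^{p+q}\g^s$ makes $F^0\g^s(X;x)$ a Lie subalgebra, so $F^0\G^s(X;x)$ is a genuine subgroup and the quotient makes sense; and the Hodge type of $\omega$ (arising from a Hodge-theoretic refinement of the $1$-minimal model) forces the antiholomorphic part of $\omega$ to land in $F^0\g^s(X;x)$, so it is killed in the quotient and the resulting developing map $\widetilde{\alb}^s$ is holomorphic. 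Equivariance is inherited, and descent is legitimate because the action of $\G^s_{\Z}(X;x)$ on $\G^s_{\C}(X;x)/F^0\G^s(X;x)$ is properly discontinuous; at each central layer this is exactly the Jacobian statement of Proposition \ref{carlson theorem} applied to the weight-negative Hodge structure $\z^s(X;x)$. Thus $\alb^s \colon X^{\an} \to \Alb^s(X)$ exists and is holomorphic.

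It then remains to verify the three compatibilities, each formal once $\omega$ is canonical and functorial: the projection $\g^s(X;x)\to\g^{s-1}(X;x)$ is a morphism of mixed Hodge structures carrying the level-$s$ form to the level-$(s-1)$ one, whence $p^s\circ\alb^s=\alb^{s-1}$; for $s=1$ the Lie algebra is abelian and $\omega$ reduces to a basis of holomorphic $1$-forms, so $D$ becomes ordinary integration and $\alb^1$ is the classical Albanese map (Example \ref{albanese is jacobian}); and a morphism $f\colon X\to Y$ with $f(x)=y$ induces the MHS morphism $f_*\colon\g^s(X;x)\to\g^s(Y;y)$ of Theorem \ref{hain morgan}\,\textit{(iii)} intertwining the two connection forms, so uniqueness of solutions to the Maurer--Cartan ODE gives $\Alb^s(f)\circ\alb^s_X=\alb^s_Y\circ f$. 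The hard part is everything upstream of these formalities: constructing the canonical Maurer--Cartan form $\omega$ together with its precise Hodge type, so that its antiholomorphic component lies in $F^0\g^s(X;x)$ — a functorial Hodge-theoretic $1$-minimal model — is the technical heart of Hain's theory and the one place where Theorem \ref{hain morgan} is genuinely used.
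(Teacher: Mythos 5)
Your developing-map construction is, in outline, exactly the Hain--Zucker argument for \emph{smooth} $X$: canonical Maurer--Cartan form from Chen's $1$-minimal model, monodromy equal to $\mu^s$, holomorphy of the descended map because $\omega^{0,1}$ takes values in $F^0\g^s(X;x)$, and proper discontinuity of the $\G^s_{\Z}$-action via the weight-negativity of the central layers. The paper does not reprove any of this --- it cites \cite{HZ} wholesale for the smooth case --- so on that portion your sketch and the paper's source agree, modulo the technical heart you correctly flag (the functorial Hodge-theoretic refinement of the minimal model, and the check that the map is independent of the choices in $\omega$).

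The genuine gap is that the statement is for $X$ \emph{normal}, and your argument as written only works for $X$ smooth: Chen's iterated integrals, the bar construction on the de Rham complex, and the Maurer--Cartan ODE all presuppose a smooth variety, and \cite{HZ} itself only treats that case. The paper's own contribution here (attributed to a communication of Hain) is precisely the reduction of the normal case to the smooth one, and it goes differently from anything in your proposal: take a resolution $\phi \colon Y \to X$, which is an isomorphism over the smooth locus $X^{\circ}$; use functoriality of higher Albanese manifolds to form $\alpha = \Alb^s(\phi) \circ \alb^s_Y \circ (\phi|_{Y^{\circ}})^{-1} \colon X^{\circ} \to \Alb^s(X)$; and then extend $\alpha$ across the singular locus using normality (the singular set has codimension $\ge 2$, and locally one lifts to the universal cover $\C^d$ of $\Alb^s(X)$ and applies the Riemann--Hartogs extension theorem for normal complex spaces coordinatewise). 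Without this step your construction does not even get started on a singular $X$ --- there is no de Rham complex computing the relevant $1$-minimal model, and no smooth universal cover on which to solve the ODE --- so you should either restrict the claim to smooth $X$ or add the resolution-and-extension argument.
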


The explicit construction of the maps $\alb^s$ is rather delicate and uses Chen's iterated integrals. 

\begin{rmk}\label{dependence on a point}
Although the higher Albanese manifolds do not depend on the choice of the base point $x \in X$, the higher Albanese maps do depend on it, as can  already be observed at  $s=1$. Nevertheless, we usually omit the base point from the notation. 
\end{rmk}

\begin{rmk}
The original construction of higher Albanese maps was given by Hain and Zucker in \cite{HZ} under the assumption that $X$ is smooth. Let us sketch the construction for $X$ normal\footnote{ This is a private communication by Richard Hain.}. Let $X$ be a normal algebraic variety and $X^{\circ} \subseteq X$ be the set of its smooth points. Let $\phi \colon Y \to X$ be a resolution of singularities and $Y^{\circ}:=\phi^{-1}(X^{\circ})$, so that $\phi|_{Y^{\circ}} \colon Y^{\circ} \to X^{\circ}$ is an isomorphism. Choose a point $x \in X^{\circ}$ and let $y=\phi^{-1}(x)\in Y^{\circ}$. The map $\phi$ induces a holomorphic map $\Alb^s(\phi) \colon \Alb^s(Y) \to \Alb^s(X)$. Let $\alpha \colon X^{\circ} \to \Alb^s(X)$ be the composition map $\alpha=\Alb^s(\phi) \circ \alb^s_Y \circ \phi^{-1}$ as in the diagram
\[
\xymatrix{
Y^{\circ} \ar[r]& Y \ar[r]^{\alb^s_Y}& \Alb^s(Y) \ar[d]^{\Alb^s(\phi)}\\
X^{\circ} \ar[u]^{\phi^{-1}} \ar[rr]_{\alpha}&&\Alb^s(X)
}
\]
Since $X$ is normal, the holomorphic map $\alpha$ defined on its smooth part $X^{\circ}$ extends globally as $\alb^s \colon X \to \Alb^s(X)$.
\end{rmk}

\section{Nil-Jacobians}\label{niljacobians section}
\subsection{Nil-Jacobians}\label{niljacobians basic}

In this section we introduce the notion of a \emph{nil-Jacobian}, which interpolates between the notion of a Jacobian of a mixed Hodge structure and the notion of a mixed Hodge variety.

\begin{df}
Let $\mathbf{W}$ be a connected simply connected unipotent group over $\Q$ and $\mathfrak{w}$ its Lie algebra. Under a \emph{Hodge structure on $\mathbf{W}$} we understand a graded polarisable mixed $\Q$-Hodge structure $(W_{\bullet}\mathfrak{w}, F^{\bullet}\mathfrak{w})$ on $\mathfrak{w}$ such that
\begin{itemize}
\item the Lie bracket is a morphism of mixed Hodge structures $\Lambda^2\mathfrak{w} \to \mathfrak{w}$;
\item $W_{-1}\mathfrak{w}=\mathfrak{w}$.
\end{itemize}
Let $\Gamma \subseteq \mathbf{W}(\Q)$ be a discrete Zariski dense subgroup. A \emph{nil-Jacobian} is the double quotient 
\[
N_{\mathbf{W}}=\Gamma \backslash \mathbf{W}(\C)/F^0\mathbf{W},
\]
where $F^0\mathbf{W}=\exp(F^0\mathfrak{w})$. A \emph{morphism of nil-Jacobians} is a continuous map 
\[
 \Gamma \backslash \mathbf{W}(\C)/F^0\mathbf{W} =N_{\mathbf{W}}\xrightarrow{f} N_{\mathbf{W}'}=\Gamma'\backslash \mathbf{W}'(\C)/F^{0}\mathbf{W}'
\]
which lifts to a homomorphism $\widetilde{f} \colon \mathbf{W} \to \mathbf{W}'$ of algebraic groups over $\Q$ such that $\widetilde{f}(\Gamma)\subseteq \Gamma'$ and $\operatorname{Lie}(\widetilde{f}) \colon \mathfrak{w} \to \mathfrak{w}'$ is a morphism of mixed Hodge structures.
\end{df}

The condition $W_{-1}\mathfrak{w}=\mathfrak{w}$ guarantees that a nil-Jacobian $\Gamma \backslash \mathbf{W}(\C)/F^0\mathbf{W}$ is a smooth complex manifold (cf. Proposition \ref{carlson theorem}). A morphism of nil-Jacobians induces a holomorphic map between underlying complex manifolds.

The main examples of nil-Jacobians are the following:
\begin{itemize}
\item if $V=(V_{\Z}, W_{\bullet}V_{\Q}, F^{\bullet}V_{\C})$ is a mixed $\Z$-Hodge structure with $W_{-1}V=V$, then its $0$-th Jacobian $J^0V$ is a nil-Jacobian for $\mathbf{W}$ being the additive group of $V_{\Q}$ and $\Gamma=V_{\Z}$ ;
\item  a higher Albanese manifold $\Alb^s(X)$ is a nil-Jacobian for $\mathbf{W}=\G^s_{\Q}(X;x)$ and $\Gamma=\G^s_{\Z}(X;x)$;
\item let $M=\Gamma_G \backslash \mathcal{D}$ be a mixed Hodge variety that underlies a mixed Hodge datum $(\mathbf{G}, X_{\mathbf{G}}, \mathcal{D})$. Let $\sigma \colon M \to M_{\sigma}$ be the purification map (see subsection \ref{purification subsec}). Let $x \in M_{\sigma}$ and $N_{x}:=\sigma^{-1}(x)$. Then $N_{x}$ is a nil-Jacobian for the group $\mathbf{W}=\mathbf{U}$ (the unipotent radical of $\mathbf{G}$) and $\Gamma=\Gamma_G \cap \mathbf{U}(\Q)$.
\end{itemize}

We say that $N_{\mathbf{W}} \subseteq N_{\mathbf{W'}}$ is a \emph{sub-nil-Jacobian} if it is the image of an injective morphism of nil-Jacobians. A connected finite cover of a nil-Jacobian is again a nil-Jacobian. 

Our discussion of the Albanese tower applies in the abstract setting of nil-Jacobians. Namely, if $(W_{\bullet}\mathfrak{w}, F^{\bullet}\mathfrak{w})$ is a mixed Hodge structure on a unipotent group $\mathbf{W}$, then the lower central series $\{0\} \subset \mathfrak{w}_{1} \subset \ldots \subset \mathfrak{w}_{s-1} \subset \mathfrak{w}_s=\mathfrak{w}$ give a filtration by Hodge substructures and the groups $\mathbf{W}^j:=\mathbf{W}/\mathbf{W}_{j+1}$ inherit Hodge structures (we still have $W_{-1}\mathfrak{w}^j=\mathfrak{w}^j$, as this property is preserved under taking quotients of mixed Hodge substructures). The lower central series filtration on $\mathbf{W}(\Q)$ restricts to the lower central series filtration on $\Gamma$ and the projections $\mathbf{W}^j \to \mathbf{W}^{j-1}$ descend to morphisms of nil-Jacobians
\[
N^j_{\mathbf{W}}:=\Gamma^j \backslash \mathbf{W}^j(\C)/F^0\mathbf{W}^j \xrightarrow{p^j_{\mathbf{W}}} \Gamma^{j-1}\backslash \mathbf{W}^{j-1}(\C)/F^0\mathbf{W}^{j-1}=: N^{j-1}_{\mathbf{W}}.
\]

To summarise, we get the following.

\begin{prop}\label{central towers exist}
 Let $N_{\mathbf{W}}=\Gamma \backslash \mathbf{W}(\C)/F^0\mathbf{W}$ be a nil-Jacobian. Then there exist a sequence of connected commutative complex Le groups $C^j_{\mathbf{W}}, j = 1, \ldots s,$ and a diagram
\begin{equation}\label{central tower}
N_{\mathbf{W}}=N^s_{\mathbf{W}} \xrightarrow{p^s_{\mathbf{W}}} N^{s-1}_{\mathbf{W}} \to \ldots \to N^2_{\mathbf{W}} \xrightarrow{p^2_{\mathbf{W}}} N^1_{\mathbf{W}} \xrightarrow{p^1_{\mathbf{W}}} N^0_{\mathbf{W}}=\{\operatorname{pt}\},
\end{equation}
where $N^j_{\mathbf{W}}$ are nil-Jacobians and $p^j_{\mathbf{W}}$ are morphisms of nil-Jacobians which are holomorphic principal $C^j_{\mathbf{W}}$-bundles. Moreover, each $C^j_{\mathbf{W}}$ is isomorphic to the Jacobian $J^0Z^j_{\mathbf{W}}$, where $Z^j_{\mathbf{W}}$ is a non-zero mixed $\Z$-Hodge structure with $W_{-1}Z^j_{\mathbf{W}}=Z^j_{\mathbf{W}}$. Moreover, the diagram \ref{central tower}  is functorial in the following sense: if $f \colon N_{\mathbf{W}} \to N_{\mathbf{W}'}$ is a morphism of  nil-Jacobians, then there exist morphisms of mixed Hodge structures $g^j \colon Z^j_{\mathbf{W}} \to Z^j_{\mathbf{W}'}$ and $J^0g^j$-equivariant morphisms of nil-Jacobians $f^j \colon N^j_{\mathbf{W}} \to N^j_{\mathbf{W}'}$ such that the diagram
\[
\xymatrix{
N_{\mathbf{W}}^s \ar[d]_{f^s} \ar[r]^{p^s_{\mathbf{W}}} & N_{\mathbf{W}}^{s-1} \ar[d]_{f^{s-1}} \ar[r]^{p^{s-1}_{\mathbf{W}}} & \ldots \ar[r] & N^2_{\mathbf{W}} \ar[d]_{f^2} \ar[r]^{p^2_{\mathbf{W}}}& N^1_{\mathbf{W}} \ar[d]_{f^1}\\
N_{\mathbf{W}'}^s  \ar[r]_{p^s_{\mathbf{W}'}} & N_{\mathbf{W}'}^{s-1}  \ar[r]_{p^{s-1}_{\mathbf{W}'}} & \ldots \ar[r] & N^2_{\mathbf{W}'} \ar[r]_{p^2_{\mathbf{W}'}}& N^1_{\mathbf{W}'}
}
\]
commutes.
\end{prop}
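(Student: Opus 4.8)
The plan is to build the tower from the lower central series (LCS) of the Hodge Lie algebra $\mathfrak{w}$ and to verify at each step that the relevant subquotients inherit the structure needed to form nil-Jacobians and principal bundles. First I would observe that since the Lie bracket is a morphism of mixed Hodge structures, each term $\mathfrak{w}_j$ of the LCS is a sub-mixed-Hodge-structure of $\mathfrak{w}$: indeed $\mathfrak{w}_{j+1}=[\mathfrak{w}_j,\mathfrak{w}]$ is the image under the bracket morphism $\mathfrak{w}_j\otimes\mathfrak{w}\to\mathfrak{w}$, hence a Hodge substructure by strictness of morphisms of MHS. Setting $\mathfrak{w}^j:=\mathfrak{w}/\mathfrak{w}_{j+1}$, each quotient carries a functorial MHS, the induced bracket remains a morphism of MHS, and the condition $W_{-1}\mathfrak{w}^j=\mathfrak{w}^j$ is inherited because it is preserved under quotients of negatively-weighted structures. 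Thus $(W_\bullet\mathfrak{w}^j,F^\bullet\mathfrak{w}^j)$ is a Hodge structure on the unipotent group $\mathbf{W}^j$, and the image $\Gamma^j$ of $\Gamma$ under $\mathbf{W}(\Q)\to\mathbf{W}^j(\Q)$ is a discrete Zariski dense subgroup (Zariski density is preserved by surjections, discreteness follows since $\Gamma^j$ is the image of a lattice in a unipotent group). This defines the nil-Jacobians $N^j_{\mathbf{W}}=\Gamma^j\backslash\mathbf{W}^j(\C)/F^0\mathbf{W}^j$ and the morphisms $p^j_{\mathbf{W}}$ as the maps induced by the projections $\mathbf{W}^j\to\mathbf{W}^{j-1}$, which are morphisms of nil-Jacobians by construction.

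Next I would identify the fibres of $p^j_{\mathbf{W}}$. The kernel $\mathcal{Z}^j:=\ker(\mathbf{W}^j\to\mathbf{W}^{j-1})$ is the central subgroup $\mathbf{W}_j/\mathbf{W}_{j+1}$, whose Lie algebra $\mathfrak{z}^j=\mathfrak{w}_j/\mathfrak{w}_{j+1}$ is a nonzero sub-MHS with $W_{-1}\mathfrak{z}^j=\mathfrak{z}^j$; since it is central the group $\mathcal{Z}^j$ is abelian and the exponential identifies it with the additive group underlying $\mathfrak{z}^j$. Intersecting with $\Gamma$ endows $\mathfrak{z}^j$ with a $\Z$-structure, giving a mixed $\Z$-Hodge structure $Z^j_{\mathbf{W}}$ with negative weights. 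The commutative complex Lie group $C^j_{\mathbf{W}}$ acting on the fibres is then exactly the quotient $(\Gamma\cap\mathcal{Z}^j(\Q))\backslash\mathcal{Z}^j(\C)/(\mathcal{Z}^j(\C)\cap F^0\mathbf{W}^j)$, which by the definition of the Jacobian and Proposition \ref{carlson theorem} is $J^0Z^j_{\mathbf{W}}$ (in particular a connected commutative complex Lie group). To see that $p^j_{\mathbf{W}}$ is a \emph{principal} $C^j_{\mathbf{W}}$-bundle I would use that $\mathcal{Z}^j$ is central in $\mathbf{W}^j$, so its left and right translation actions agree and descend to a free, properly discontinuous, fibrewise-transitive action of $C^j_{\mathbf{W}}$ on $N^j_{\mathbf{W}}$; local triviality follows from the local triviality of the underlying central extension $1\to\mathcal{Z}^j(\C)\to\mathbf{W}^j(\C)\to\mathbf{W}^{j-1}(\C)\to 1$ upon passing to the quotients, exactly as in the higher Albanese case of subsection \ref{higher albanese subsec}.

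Finally I would establish functoriality. Given a morphism of nil-Jacobians $f\colon N_{\mathbf{W}}\to N_{\mathbf{W}'}$ lifting to $\widetilde{f}\colon\mathbf{W}\to\mathbf{W}'$ with $\widetilde{f}(\Gamma)\subseteq\Gamma'$ and $\mathrm{Lie}(\widetilde{f})$ a morphism of MHS, I would note that $\widetilde{f}$ respects lower central series, so it induces compatible maps $\widetilde{f}^j\colon\mathbf{W}^j\to\mathbf{W}'^j$ and, by restriction, maps on the central kernels $g^j\colon Z^j_{\mathbf{W}}\to Z^j_{\mathbf{W}'}$; since $\mathrm{Lie}(\widetilde{f})$ is a morphism of MHS, so are all induced maps on the subquotients $\mathfrak{z}^j$, whence each $g^j$ is a morphism of mixed Hodge structures and each $\widetilde{f}^j$ sends $\Gamma^j$ to $\Gamma'^j$ and $F^0$ to $F^0$. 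The induced maps $f^j\colon N^j_{\mathbf{W}}\to N^j_{\mathbf{W}'}$ are then morphisms of nil-Jacobians making the asserted ladder commute, and they intertwine the two principal-bundle structures via $J^0g^j$, i.e. they are $J^0g^j$-equivariant.

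I expect the main obstacle to be the verification that $p^j_{\mathbf{W}}$ is genuinely a principal bundle rather than merely a fibration with homogeneous fibres: one must confirm that the fibrewise $C^j_{\mathbf{W}}$-action is free and proper (which uses $W_{-1}\mathfrak{z}^j=\mathfrak{z}^j$ through Proposition \ref{carlson theorem} to rule out that nontrivial lattice elements or $F^0$ directions collapse the action) and that the bundle is locally trivial in the holomorphic definable sense. Everything else is a bookkeeping exercise in transporting the mixed Hodge structure along the lower central series, and the strictness of morphisms of mixed Hodge structures is the one technical fact that makes the Hodge substructure claims clean.
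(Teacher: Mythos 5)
Your proposal is correct and follows essentially the same route as the paper: the paper states this proposition as a summary of the immediately preceding discussion, which likewise transports the mixed Hodge structure along the lower central series of $\mathfrak{w}$ (each term being a Hodge substructure since the bracket is a morphism of MHS), passes to the quotients $\mathbf{W}^j=\mathbf{W}/\mathbf{W}_{j+1}$ with the induced lattices, and identifies the central kernels with Jacobians $J^0Z^j_{\mathbf{W}}$ exactly as in the higher Albanese case. If anything, you supply details the paper leaves implicit (discreteness of $\Gamma^j$, the principal-bundle verification, and the functoriality bookkeeping), all of which check out.
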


Here, as before, $Z^j_{\mathbf{W}}=\ker(\mathbf{W}^j \to \mathbf{W}^{j-1})$ are mixed $\Q$-Hodge structures with the $\Z$-structure $\Gamma^j_{Z}=\Gamma_Z \cap Z^j_{\mathbf{W}}=\ker(\Gamma^j \to \Gamma^{j-1})$. 

We refer to the diagram (\ref{central tower}) as the \emph{central tower} of a nil-Jacobian. The higher Albanese tower $\Alb^s(X) \xrightarrow{p^s} \Alb^{s-1}(X) \to \ldots \to \Alb(X)$ is  the central tower of the nil-Jacobian $\Alb^s(X)$.

\subsection{The Embedding Theorem}\label{embedding subsec}

As we mentioned above, a fibre of the purification map of a mixed Hodge variety is a nil-Jacobian. In this subsection we show that, up to a finite cover, every nil-Jacobian can be realised as a sub-nil-Jacobian of a fibre of the purification map on some mixed Hodge variety.

\begin{thrm}\label{embedding theorem}
Let $N_{\mathbf{W}}=\Gamma \backslash \mathbf{W}(\C)/F^0\mathbf{W}$ be a nil-Jacobian. Then there exists a nil-Jacobian $\widehat{N}_{\mathbf{W}}$ which is a finite cover of $N_{\mathbf{W}}$, a mixed Hodge variety $M$, and a fibre $N_x=\sigma^{-1}_M(x)$ of its purification map $\sigma_M \colon M \to M_{\sigma}$ such that $\widehat{N}_{\mathbf{W}}$ admits an injective morphism of nil-Jacobians $j \colon \widehat{N}_{\mathbf{W}} \hookrightarrow N_x$.
\end{thrm}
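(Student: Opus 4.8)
The plan is to realise $\mathbf{W}$ as a sub-object of the unipotent radical of a connected mixed Hodge datum, and then to read off the embedding from the description of the fibres of the purification map (the third example of subsection \ref{niljacobians basic}). I would first produce the reductive part. Let $\mathbf{H} := \operatorname{MT}(\Gr^W\mathfrak{w})$ be the Mumford--Tate group of the pure polarisable Hodge structure $\Gr^W\mathfrak{w} = \bigoplus_k \Gr^W_k\mathfrak{w}$. Since the bracket descends to a morphism of pure Hodge structures $\Lambda^2\Gr^W\mathfrak{w}\to\Gr^W\mathfrak{w}$, it is a Hodge tensor, so $\mathbf{H}$ preserves the graded bracket and acts on $\Gr^W\mathfrak{w}$ through graded Lie algebra automorphisms; as $\Gr^W\mathfrak{w}$ is polarisable, $\mathbf{H}$ is reductive and carries its canonical pure Hodge cocharacter $h_\sigma\colon\mathbb{S}\to\mathbf{H}_{\R}$ recovering the Hodge structures on the graded pieces.

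The two genuinely delicate points are (a) to make $\mathbf{H}$ act on an actual (ungraded) nilpotent Lie algebra over $\Q$, and (b) to realise the given, possibly non-split, Hodge filtration through an element of a unipotent radical. For (a) I would choose a $\Q$-rational splitting of the weight filtration $W_\bullet\mathfrak{w}$ as graded Lie algebras, identifying $\mathfrak{w}\cong\Gr^W\mathfrak{w}$: such a splitting exists over $\C$ by the Deligne splitting (Lemma \ref{Deligne}), which is compatible with the bracket, and the set of splittings is a torsor under the unipotent group of filtered automorphisms inducing the identity on $\Gr^W$; torsors under unipotent $\Q$-groups in characteristic zero are trivial, so a $\Q$-point exists. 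This realises $\mathbf{H}$ as a subgroup of $\operatorname{Aut}(\mathfrak{w})$. For (b), the mixing part of the Hodge cocharacter $\widetilde{h}$ attached to the given structure on $\mathfrak{w}$ is a weight-lowering derivation $\delta\in\operatorname{Der}^{<0}(\mathfrak{w})$ which need not be inner; to make it inner I would enlarge $\mathbf{W}$ to the unipotent group $\mathbf{U} := \exp(\mathfrak{w}\rtimes\operatorname{Der}^{<0}\mathfrak{w})$ (or the subalgebra generated by $\mathfrak{w}$ and the relevant derivations), which carries a natural mixed Hodge structure with negative weights into which $\mathfrak{w}\hookrightarrow\mathfrak{u}$ is a morphism. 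It is exactly this enlargement that makes $N_{\mathbf{W}}$ a \emph{sub}-nil-Jacobian of the fibre, rather than the whole fibre.

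With these in hand I would set $\mathbf{G} := \mathbf{U}\rtimes\mathbf{H}$, a connected $\Q$-group with unipotent radical $\mathbf{U}$ and reductive quotient $\mathbf{H}$, and define the Hodge cocharacter $h$ as the $\mathbf{U}(\C)$-conjugate of $\iota(h_\sigma)$ by $\exp(\delta)$, encoding the mixing. I would then verify Pink's conditions (Theorem \ref{pink}): conditions (i) and (ii) depend only on the projection $h_\sigma$, which is defined over $\R$ with $w$-part defined over $\Q$, while (iii) follows from weight bookkeeping, which also gives $W_{-1}\mathfrak{g} = \mathfrak{u}$. Graded polarisability is inherited from $\Gr^W\mathfrak{w}$. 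Taking $X_{\mathbf{G}}$ to be the $\widetilde{G}$-conjugacy class of $h$ and $\mathcal{D}$ the component through $h$ produces a connected mixed Hodge datum $(\mathbf{G}, X_{\mathbf{G}}, \mathcal{D})$.

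Finally I would pass to an arithmetic quotient. As $\Gamma\subset\mathbf{W}(\Q)$ is discrete and Zariski dense in the unipotent group $\mathbf{W}$, it is a lattice (Proposition \ref{malcev properties}(v)), hence commensurable with $\mathbf{W}(\Z)$ for a suitable $\Z$-structure; after passing to a finite-index subgroup $\Gamma'\subseteq\Gamma$ --- this is the source of the finite cover $\widehat{N}_{\mathbf{W}}$ --- I can find an arithmetic $\Gamma_H\subset\mathbf{H}(\Q)$ normalising $\Gamma'$ and set $\Gamma_G := \Gamma'\rtimes\Gamma_H$, arithmetic in $\mathbf{G}$ with $\Gamma_G\cap\mathbf{W}(\Q)=\Gamma'$. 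For $M := \Gamma_G\backslash\mathcal{D}$, the fibre of the purification map through the image of $h$ is, by the third example of subsection \ref{niljacobians basic}, $N_x = (\Gamma_G\cap\mathbf{U}(\Q))\backslash\mathbf{U}(\C)/F^0_x\mathbf{U}$ with $F^0_x\mathbf{U} = \mathbf{U}(\C)\cap\exp(F^0\mathfrak{g})$. The inclusion $\mathbf{W}\hookrightarrow\mathbf{U}$ is a morphism of nil-Jacobians, and since $F^0_x\mathbf{U}\cap\mathbf{W}(\C) = \exp(F^0\mathfrak{w}) = F^0\mathbf{W}$ and $\Gamma_G\cap\mathbf{W}(\Q)=\Gamma'$, it induces an injection $j\colon\widehat{N}_{\mathbf{W}} = \Gamma'\backslash\mathbf{W}(\C)/F^0\mathbf{W}\hookrightarrow N_x$, as required. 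The main obstacle is step (b): controlling the outer mixing and checking that the enlargement $\mathbf{U}$ still carries a negative-weight Hodge Lie algebra structure compatible with Pink's criterion, so that $\mathbf{G}$ is a genuine mixed Hodge datum.
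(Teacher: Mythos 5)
Your construction of the ambient mixed Hodge datum takes a genuinely different route from the paper's. You build the reductive part as $\mathbf{H}=\operatorname{MT}(\Gr^W\mathfrak{w})$, choose a $\Q$-rational Lie-algebra splitting of $W_{\bullet}\mathfrak{w}$ (your torsor argument for its existence --- trivial $H^1$ of a unipotent $\Q$-group, with a $\C$-point supplied by the Deligne splitting of Lemma \ref{Deligne} --- is correct), and then reinstate the mixing by conjugating the split cocharacter with $\exp(\delta)$ inside the enlarged unipotent group $\mathbf{U}=\exp(\mathfrak{w}\rtimes W_{-1}\operatorname{Der}(\mathfrak{w}))$. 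The paper avoids all three of these steps at once: it takes $\mathbf{P}$ to be the Mumford--Tate group of the \emph{mixed} Hodge structure on $\mathfrak{w}$ itself, i.e.\ the $\Q$-Zariski closure of $h(\mathbb{S})$ in $\GL(\mathfrak{w})$, and sets $\mathbf{G}=\mathbf{W}\rtimes\mathbf{P}$. Since $h$ lands in $\mathbf{P}_{\C}$ by construction, the Hodge cocharacter lifts canonically to $\mathbf{G}_{\C}$, Pink's conditions (Theorem \ref{pink}) need no separate bookkeeping, and the mixing you place in $\operatorname{Der}^{<0}$ is simply absorbed into the unipotent radical of $\mathbf{P}$. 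Both constructions produce an ambient unipotent radical strictly larger than $\mathbf{W}$ in general, which is why in both cases one only gets a sub-nil-Jacobian of the fibre.

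There is, however, a genuine gap at your last step: $\Gamma_G:=\Gamma'\rtimes\Gamma_H$ is \emph{not} arithmetic in $\mathbf{G}$. An arithmetic subgroup of $\mathbf{G}$ meets the unipotent radical $\mathbf{U}$ in a lattice of $\mathbf{U}$, in particular in a Zariski-dense subgroup; but your $\Gamma'$ is Zariski dense only in $\mathbf{W}$, and your own enlargement forces $\mathbf{W}\subsetneq\mathbf{U}$ whenever $W_{-1}\operatorname{Der}(\mathfrak{w})\neq 0$, i.e.\ in every genuinely mixed case --- exactly the cases where the theorem has content. Since the paper's definition of a mixed Hodge variety requires the quotient of $\mathcal{D}$ to be by an \emph{arithmetic} subgroup of $\mathbf{G}^{+}(\Q)$ (and the definable structure of \cite{BBKT} invoked downstream is constructed for such quotients), your $M=\Gamma_G\backslash\mathcal{D}$ is not a mixed Hodge variety, so the statement you are proving is not actually reached. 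The repair is the paper's Step 3: note $\mathbf{G}$ is linear, fix a faithful rational representation $\rho\colon\mathbf{G}\to\GL(\Q^r)$, set $\Delta:=\rho^{-1}(\rho(\mathbf{G}^{+}(\Q))\cap\SL_r(\Z))$, and take $\Gamma_1:=\Delta\cap\Gamma$, which has finite index in $\Gamma$ because $\Gamma$ is arithmetic in $\mathbf{W}$ (discrete Zariski-dense subgroups of solvable groups are arithmetic) and any two arithmetic subgroups of a $\Q$-group are commensurable. One then embeds $\widehat{N}_{\mathbf{W}}=\Gamma_1\backslash\mathbf{W}(\C)/F^0\mathbf{W}$ into $N_x=\Delta_U\backslash\mathbf{U}(\C)/F^0\mathbf{U}$ with $\Delta_U=\Delta\cap\mathbf{U}(\Q)$; no normalisation of $\Gamma'$ by $\Gamma_H$ and no equality $\Gamma_G\cap\mathbf{W}(\Q)=\Gamma'$ are needed, only the finite-index statement, which is all the theorem asserts. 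With that replacement, the rest of your argument --- in particular $F^0_x\mathbf{U}\cap\mathbf{W}(\C)=F^0\mathbf{W}$, which holds because $\mathfrak{w}\hookrightarrow\g$ is a morphism of mixed Hodge structures --- goes through as in the paper.
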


Theorem \ref{embedding theorem} can be seen as a Hodge-theoretic analogue of Ado's Theorem, which says that every connected unipotent Lie group can be embedded as a closed subgroup to the group of upper-triangular matrices.

\begin{proof}[Proof of Theorem \ref{embedding theorem}]
\textbf{Step 1.} \textit{Constructing the algebraic group.} Let $\mathfrak{w}$ be the Lie algebra of $\mathbf{W}$ and $\mathbf{P}$ its Mumford-Tate group. Recall, that this means that $\mathbf{P}$ is the $\Q$-Zariski closure of $h(\mathbb{S})$ inside $\GL(\mathfrak{w})$, where $h \colon \mathbb{S} \to \GL(\mathfrak{w})$ is the Hodge cocharacter determing the mixed Hodge structure on $\mathfrak{w}$ (see subsection \ref{hodge structures subsec}). Since $\mathfrak{w}$ is a Lie algebra in the category of mixed Hodge structures, the Lie bracket is preserved by the action of $h(\mathbb{S})$ and $\mathbf{P} \subseteq \GL(\mathfrak{w})$ acts on $\mathfrak{w}$ by Lie automorphisms. This action induces an action of $\mathbf{P}$ on $\mathbf{W}$ by group automorphisms. Set
\[
\mathbf{G}:= \mathbf{W} \rtimes \mathbf{P}.
\]
This is a connected algerbaic group over $\Q$ and $\mathbf{W}$ is contained inside its unipotent radical $\mathbf{U} \subset \mathbf{G}$. The group $\mathbf{G}$ admits no non-constant homomorphism to an abelian variety, and therefore is linear by Chevalley - Barotti - Rosenlicht Theorem (\cite{Con}).
\\

\textbf{Step 2.} \textit{Constructing the mixed Hodge datum.} The Hodge cocharacter $h \colon \mathbb{S_{\C}} \to \mathbf{P}_{\C}$ admits a canonical lift to a Hodge cocharacter $\widehat{h} \colon \mathbb{S}_{\C} \to \mathbf{G}_{\C}$.  Let $X_{\mathbf{G}}$ be the conjugacy class of $\widehat{h}$ and $\mathcal{D}$ the corresponding connected component of $\mathcal{D}_{\mathbf{G}, X_{\mathbf{G}}}$. Thus, $(\mathbf{G}, X_{\mathbf{G}}, \mathcal{D})$ is a mixed Hodge datum. The cocharacter $\widehat{h} \in X_{\mathbf{G}}$ determines a mixed Hodge structure on $\g=\operatorname{Lie}(\mathbf{G})$ with $W_{-1}\g=\g$ and the natural embedding $\mathfrak{w} \hookrightarrow \g$ is a morphism of mixed Hodge structures with respect to it.
\\

\textbf{Step 3.} \textit{Constructing the arithmetic subgroup.} Every discrete  Zariski dense subgroup of a solvable group is arithmetic (\cite[Theorem 4.34]{Ragh}), thus $\Gamma \subset \mathbf{W}$ is arithmetic. As we explained in \textbf{Step 1}, the group $\mathbf{G}$  is linear. Let $\mathbf{G}^{+}(\R)$ be the connected component of $\mathbf{G}(\R)$ corresponding to $\mathcal{D}$. Choose a faithful representation $\rho \colon \mathbf{G}^{+}(\Q) \to \GL(\Q^r)$. Choose any $\Z$-structure $\Z^r \subset \Q^r$ and set 
\[
\Delta:=\rho^{-1}(\rho(\mathbf{G^{+}(\Q)}) \cap \SL_r(\Z)).
\]
This is an arithmetic subgroup of $\mathbf{G}$.

We claim that $\Gamma_{1}:=\Delta \cap \Gamma$ is a finite index subgroup in $\Gamma$. Indeed, 
\[
\Gamma_1=(\rho|_{\mathbf{W}})^{-1}(\im \rho|_{\mathbf{W}} \cap \SL_r(\Z))
\]
is arithmetic in $\mathbf{W}$ and every two arithmetic subgroups in a $\Q$-group are commensurable.
\\

\textbf{Step 4.} \textit{Constructing the mixed Hodge variety and the embedding}. Now we can set $M:=\Delta \backslash \mathcal{D}$. This is a mixed Hodge variety. Let $\sigma \colon M \to M_{\sigma}$ be its purification map and $x \in M_{\sigma}$ be the image of $[\widehat{h}] \in X_{\mathbf{G}}$ under the composition $X_{\mathbf{G}} \to \mathcal{D} \to M \to M_{\sigma}$. The fibre $N_x:=\sigma^{-1}(\{x\})$ is of the form
\[
N_x=\Delta_U \backslash \mathbf{U}(\C)/F^0_{\widehat{h}}\mathbf{U},
\]
where $\Delta_U = \Delta \cap \mathbf{U}(\Q)$ and $F^{0}_{\widehat{h}}\mathbf{U}=\exp(F^0_{\widehat{h}}\g)\cap \mathbf{U}(\C)$ for the Hodge structure $F^{\bullet}_{\widehat{h}}\g$ on $\g$ induced by $\operatorname{ad} \circ \widehat{h}$.

Observe that $\mathbf{W}$ is a closed subgroup of $\mathbf{U}$ and $\mathbf{W} \cap \Delta_U=\Gamma_1$, which is a finite index subgroup of $\Gamma$ (see \textbf{Step 3}).  

Since $(W_{\bullet}\mathfrak{w}, F^{\bullet}\mathfrak{w})$ is a mixed sub-Hodge-structure of $(W_{\bullet}, F^{\bullet}_{\widehat{h}}\g)$, we have 
\[
F^0\mathbf{W}=\mathbf{W}(\C)\cap F^0_{\widehat{h}}\mathbf{G} = \mathbf{W}(\C) \cap F^{0}_{\widehat{h}}\mathbf{U}.
\]
Let $\widehat{N}_{\mathbf{W}}:=\Gamma_1 \backslash \mathbf{W}(\C)/F^0\mathbf{W}$. This is a nil-Jacobian that is a finite cover of $N_{\mathbf{W}}$ and the embedding $\mathbf{W} \hookrightarrow \mathbf{U}$ extends to an injective morphism of nil-Jacobians
\[
N'_{\mathbf{W}}=\Gamma_1 \backslash \mathbf{W}(\C)/F^0\mathbf{W} \hookrightarrow \Delta_U\backslash \mathbf{U}(\C)/F^0_{\widehat{h}}\mathbf{U}=N_x.
\]
\end{proof}

\subsection{o-minimal geometry of nil-Jacobians}\label{definability of nil-Jacobians}
We always view mixed Hodge varieties as definable complex analytic spaces with the definable complex analytic space structure of \cite{BBKT}, see Theorem \ref{period maps o-minimal}. If $M$ is a mixed Hodge variety, its purification map $\sigma \colon M \to M_{\sigma}$ is definable and a fibre $N_x :=\sigma^{-1}(\{x\})$ inherits a definable complex manifold structure.

In this subsection, we realise the category of nil-Jacobians as a subcategory of the category of $\R_{\alg}$-definable complex analytic spaces. More precisely, we  prove the following Theorem.

\begin{thrm}\label{nil-jacobians o-min}
Every nil-Jacobian can be endowed with a structure of $\R_{\alg}$-definable complex manifold in such a way that:
\begin{itemize}
\item[(i)] morphisms of nil-Jacobians are definable;
\item[(ii)]  for each $j, \ 1 \le j \le s$ the morphisms $p^j_{\mathbf{W}} \colon N^j_{\mathbf{W}}\to N^{j-1}_{\mathbf{W}}$ in the central tower (\ref{central tower}) are definable;
\item[(iii)] if $N_{\mathbf{W}} =N_x \subset M$ is a fibre of the purification map of a mixed Hodge variety, this structure coincides with the one inherited from $M$;
\item[(iv)] for each $j$ the  group $C^j_{\mathbf{W}}$ acting on fibres of $p^j_{\mathbf{W}}$ can be endowed with a  definable complex Lie group structure in such a way that the action $C^j_{\mathbf{W}} \times N^j_{\mathbf{W}} \to N^j_{\mathbf{W}}$ is definable. 
\end{itemize}
\end{thrm}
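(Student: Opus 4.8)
The plan is to build the definable structure by transporting it from a mixed Hodge variety along the Embedding Theorem (Theorem \ref{embedding theorem}), and then to bootstrap all the compatibility statements from the definability of morphisms of mixed Hodge varieties (Theorem \ref{period maps o-minimal}\,(i)) together with the descent properties of finite covers (Proposition \ref{finite covers}). Concretely, given a nil-Jacobian $N_{\mathbf W}$, Theorem \ref{embedding theorem} produces a finite cover $\widehat N_{\mathbf W}$, a mixed Hodge variety $M$ with purification $\sigma\colon M\to M_\sigma$, and an injective morphism $j\colon\widehat N_{\mathbf W}\hookrightarrow N_x=\sigma^{-1}(x)$. The fibre $N_x$ already carries the \cite{BBKT} definable complex manifold structure, so once we know that the image $j(\widehat N_{\mathbf W})$ is a \emph{definable} closed submanifold of $N_x$, we endow $\widehat N_{\mathbf W}$ with the induced subspace structure and descend it to $N_{\mathbf W}$ by Proposition \ref{finite covers}\,(i). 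Property (iii) then holds by construction: when $N_{\mathbf W}=N_x$ is itself a purification fibre we may take the trivial cover and the identity embedding.

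The main obstacle is exactly the definability of the image $j(\widehat N_{\mathbf W})\subseteq N_x$. Writing $N_x=\Delta_U\backslash\mathbf U(\C)/F^0\mathbf U$ and $\widetilde B=\mathbf W(\C)/F^0\mathbf W$, the inclusion $\mathbf W\hookrightarrow\mathbf U$ realises $\widetilde B$ as a closed algebraic — hence $\R_{\alg}$-definable — submanifold of the cover $\mathbf U(\C)/F^0\mathbf U$. By Proposition \ref{definable criterion} applied to the \cite{BBKT} fundamental domain $\Xi$ of $N_x$, definability of $j(\widehat N_{\mathbf W})$ is equivalent to definability of $\pi^{-1}(j(\widehat N_{\mathbf W}))\cap\Xi=\bigsqcup_{[\gamma]}(\gamma\widetilde B\cap\Xi)$, the union running over $[\gamma]\in\Gamma_1\backslash\Delta_U$ with $\Gamma_1=\Delta_U\cap\mathbf W(\Q)$. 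Each translate $\gamma\widetilde B\cap\Xi$ is definable, so by the finiteness criterion of Remark \ref{definable criterion remark} it suffices to show that only finitely many translates $\gamma\widetilde B$ meet $\Xi$. This is where the geometry of the $\operatorname{sl}_2$-fundamental domain $\Xi=r^{-1}(\Xi_\R)$ enters: since $\mathbf W$ is a rational sub-Hodge structure, the embedding $j$ is proper and $\Delta_U$ acts properly discontinuously, and one checks — using that $r$ is the functorial $\operatorname{sl}_2$-retraction and that $\Xi_\R$ is a genuine fundamental domain for the split locus — that the set of translates of $\widetilde B$ meeting $\Xi$ is finite. I expect this finiteness estimate to be the technical heart of the argument.

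With the structure in hand, well-definedness and property (i) are obtained by a single device: the product of two mixed Hodge varieties is again a mixed Hodge variety, and the two projections are morphisms of mixed Hodge varieties, hence definable by Theorem \ref{period maps o-minimal}\,(i). For independence of choices, given two embeddings $j_1\colon\widehat N_{\mathbf W}\hookrightarrow M_1$ and $j_2\colon\widehat N_{\mathbf W}\hookrightarrow M_2$, embed $\widehat N_{\mathbf W}$ diagonally into a purification fibre of $M_1\times M_2$ via $(\widetilde{j_1},\widetilde{j_2})$; restricting the two definable projections shows that the $j_1$- and $j_2$-structures both coincide with the diagonal one, hence with each other, while passing to a common refinement of covers (again via Proposition \ref{finite covers}) removes the dependence on the chosen cover. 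For a general morphism $f\colon N_{\mathbf W}\to N_{\mathbf W'}$ with lift $\widetilde f\colon\mathbf W\to\mathbf W'$, consider the graph $\mathbf W_f=\{(w,\widetilde f(w))\}\subseteq\mathbf W\times\mathbf W'$, which is a sub-Hodge-structure Lie subalgebra and thus defines a nil-Jacobian $N_{\mathbf W_f}$. The first projection $N_{\mathbf W_f}\to N_{\mathbf W}$ is a bijective morphism of nil-Jacobians and the second is $f$; realising $N_{\mathbf W_f}$ inside a purification fibre of $M\times M'$ makes both projections restrictions of definable projections of mixed Hodge varieties. Since a definable holomorphic bijection has definable inverse (its graph is the coordinate flip of a definable set), $f$ is definable.

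Finally, properties (ii) and (iv) follow formally. The central-tower maps $p^j_{\mathbf W}\colon N^j_{\mathbf W}\to N^{j-1}_{\mathbf W}$ of Proposition \ref{central towers exist} are morphisms of nil-Jacobians, so (ii) is immediate from (i). For (iv), each $C^j_{\mathbf W}=J^0Z^j_{\mathbf W}$ is itself a nil-Jacobian for the abelian group $Z^j_{\mathbf W}$, so it acquires a definable complex manifold structure by the construction above; its group law and the action $C^j_{\mathbf W}\times N^j_{\mathbf W}\to N^j_{\mathbf W}$ lift on the universal covers to the algebraic translation action of $\mathcal Z^j_{\C}$ on $\mathbf W^j(\C)/F^0\mathbf W^j$, which is $\R_{\alg}$-definable and descends through the fundamental domains. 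The definability of the group structure on $C^j_{\mathbf W}$ then follows from Proposition \ref{lifting of action}\,(i), since the action is free.
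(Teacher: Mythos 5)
Your skeleton coincides with the paper's own proof: Embedding Theorem, induced subspace structure on a purification fibre, descent along finite covers via Proposition \ref{finite covers}, the graph-in-a-product trick for definability of morphisms and for independence of the chosen embedding, item \textit{(ii)} as a formal consequence of \textit{(i)}, item \textit{(iii)} by construction, and Proposition \ref{lifting of action} for the group structure on $C^j_{\mathbf W}$; all of this matches. But at the two places you yourself flag as the technical heart, you do not have an argument, and the reasons you sketch would not suffice. For the finiteness of the set of translates $\gamma\cdot\widetilde B$ meeting $\Xi$, properness of $j$ and proper discontinuity of $\Delta_U$ prove nothing: $\Xi=r^{-1}(\Xi_{\R})$ is noncompact in general (the paper notes that $\Gamma$ need not act cocompactly on $\mathcal D_{\R}$), and a properly discontinuous action can meet a noncompact fundamental domain in infinitely many translates. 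The paper's proof of Lemma \ref{subniljac} rests on two specific facts you omit. First, Proposition \ref{splitting preserves subgroups}: the $\operatorname{sl}_2$-retraction satisfies $r(\mathcal D_{\mathbf W})\subseteq\mathcal D_{\mathbf W,\R}$, proved by the computation that for $[h]=\mathbf w\cdot[h_0]$ one has $e^{-2\i\delta_{[h]}}=\overline{\mathbf w}\,\mathbf w^{-1}\in\mathbf W(\C)$, hence $\delta_{[h]}\in\mathfrak w$, and hence $\zeta_{[h]}\in\mathfrak w$ because $\zeta$ is a Lie polynomial in $\delta$. Your appeal to functoriality of the $\operatorname{sl}_2$-splitting does not yield this, since $\mathcal D_{\mathbf W}$ is merely a $\mathbf W(\C)$-orbit, not the Hodge domain of a sub-Hodge-datum (the datum $\mathbf W\rtimes\mathbf P$ from the Embedding Theorem has a strictly larger unipotent radical in general). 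Second, Proposition \ref{compactness}: $N_{\mathbf W,\R}$ is compact, by Malcev's cocompactness of $\Gamma_W$ in $\mathbf W(\R)$. Together these convert ``$\gamma\cdot\mathcal D_{\mathbf W}$ meets $\Xi$'' into ``$\gamma\cdot\mathcal D_{\mathbf W,\R}$ meets $\Xi_{\R}$'', and then compactness makes $N_{\mathbf W,\R}\cap\pi(\Xi_{\R})$ an $\R_{\an}$-definable set with finitely many connected components, which is what actually bounds the number of translates.

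The second gap is item \textit{(iv)}. You assert that the translation action of $\mathcal Z^j_{\C}$ on $\mathbf W^j(\C)/F^0\mathbf W^j$ ``is $\R_{\alg}$-definable and descends through the fundamental domains''; but descent through the fundamental domain is exactly the statement to be proved, not a device — a translation by a general element of $\mathcal Z^j_{\C}$ a priori moves $\Xi$ across infinitely many $\Gamma$-translates, and nothing in your proposal rules this out. The paper handles this with a decomposition you are missing entirely: Lemma \ref{linear algebra} produces a rational subspace $L$ with $Z_{\C}/F^0Z=q(Z_{\R})\oplus q(\i L_{\R})$, splitting $C^j_{\mathbf W}=T\times L$ where $T=\Gamma_Z\backslash\mathbf Z(\R)$ is a compact torus and $L=\i\mathbf L_{\R}$ is a real vector group. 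The $T$-action is then definable only in $\R_{\an}$ (compactness plus analyticity), so your blanket $\R_{\alg}$ claim is also too strong; and the $L$-action is tamed by showing its orbits lie in finitely many translates of $\Xi$ — a single one after replacing $\Gamma$ by a finite-index subgroup, which is harmless by Proposition \ref{lifting of action}\,\textit{(ii)} — where the action is real semialgebraic, the graph of the action on $M$ being recovered as the closure of the graph over the dense definable open set $\pi(\Xi)$. Without Lemma \ref{linear algebra} and this $T\times L$ splitting, your item \textit{(iv)} has no proof.
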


Observe that \textit{(ii)} is just a special case of \textit{(i)}. Items \textit{(i)-(iii)} of the Theorem above follow from  the Embedding Theorem (Theorem \ref{embedding theorem}) and the following Lemma.

\begin{lemma}\label{subniljac}
Let $M$ be a mixed Hodge variety, $N_x=\sigma^{-1}(\{x\})\subset M$ a fibre of the purification map and $N_{\mathbf{W}} \subseteq N_x$ a sub-nil-Jacobian. Then $N_{\mathbf{W}}$ is a definable subset of $M$.
\end{lemma}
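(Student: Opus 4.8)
The plan is to lift the whole configuration to the Hodge domain $\mathcal{D}$ and apply the definability criterion of Remark~\ref{definable criterion remark}. Write $M=\Gamma\backslash\mathcal{D}$ for the mixed Hodge variety, $\pi\colon\mathcal{D}\to M$ for the projection, and fix a Hodge cocharacter $h\in X_{\mathbf{G}}$ lying over $x$, chosen so that $h\in\mathcal{D}_{\R}$ (the $\R$-split locus of the fibre is a nonempty $\mathbf{W}(\R)$-orbit); let $h_{\sigma}$ be its image in $\mathcal{D}_{\sigma}$. The fibre $N_x$ is the image of the orbit $\widetilde{N}_x:=\mathbf{U}(\C)\cdot h\cong\mathbf{U}(\C)/F^0_x\mathbf{U}$, and $N_{\mathbf{W}}$ is the image of $\widetilde{N}_{\mathbf{W}}:=\mathbf{W}(\C)\cdot h\cong\mathbf{W}(\C)/F^0\mathbf{W}\subseteq\widetilde{N}_x$, which is simply connected and hence the universal cover of $N_{\mathbf{W}}$ with deck group $\Gamma_W:=\Gamma\cap\mathbf{W}(\Q)$. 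Since the $\widetilde{G}$-action on $\mathcal{D}$ is real semialgebraic and $\mathbf{W}(\C)$ is an algebraic subgroup, the orbit $\widetilde{N}_{\mathbf{W}}$ is semialgebraic, hence $\R_{\alg}$-definable in $\mathcal{D}$. By Remark~\ref{definable criterion remark} the claim therefore reduces to the finiteness statement: only finitely many cosets $[\gamma]\in\Gamma_W\backslash\Gamma$ satisfy $\gamma\widetilde{N}_{\mathbf{W}}\cap\Xi\neq\emptyset$, where $\Xi=r^{-1}(\Xi_{\R})$ and $r\colon\mathcal{D}\to\mathcal{D}_{\R}$ is the $\operatorname{sl}_2$-splitting.

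First I would push everything onto the real locus $\mathcal{D}_{\R}$. The $\operatorname{sl}_2$-splitting is $\mathbf{G}(\R)$-equivariant, so $r(\gamma\widetilde{N}_{\mathbf{W}})=\gamma\,r(\widetilde{N}_{\mathbf{W}})$ for $\gamma\in\Gamma\subseteq\mathbf{G}(\R)$. The key compatibility is that $r$ maps $\widetilde{N}_{\mathbf{W}}$ into $\widetilde{N}_{\mathbf{W}}\cap\mathcal{D}_{\R}$: for $T=w\cdot h\in\widetilde{N}_{\mathbf{W}}$ with $w\in\mathbf{W}(\C)$ and $h$ already $\R$-split, the canonical elements $\delta_T$ and $\zeta_T$ lie in $\mathfrak{w}_{\C}$, because $\mathfrak{w}\subseteq\g$ is a sub-mixed-Hodge-structure and both are defined functorially from the Deligne splitting (equivalently, $\overline{T}$ and $T$ differ by an element of $\mathbf{W}(\C)$). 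Hence $r(T)=e^{-\i\zeta_T}e^{-\i\delta_T}\cdot T\in\mathbf{W}(\C)\cdot h=\widetilde{N}_{\mathbf{W}}$, and $\gamma\widetilde{N}_{\mathbf{W}}\cap\Xi\neq\emptyset$ forces $\gamma(\widetilde{N}_{\mathbf{W}}\cap\mathcal{D}_{\R})\cap\Xi_{\R}\neq\emptyset$. So it suffices to bound the number of such cosets on the real locus.

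Now I would exploit two cocompactness facts. On the one hand, $\widetilde{N}_{\mathbf{W}}\cap\mathcal{D}_{\R}$ is a single $\mathbf{W}(\R)$-orbit, and by Proposition~\ref{malcev properties}\,\textit{(v)} the lattice $\Gamma_W$ is cocompact in $\mathbf{W}(\R)$; hence $\Gamma_W$ acts cocompactly on $\widetilde{N}_{\mathbf{W}}\cap\mathcal{D}_{\R}$ and there is a relatively compact $C$ with $\Gamma_W\cdot C=\widetilde{N}_{\mathbf{W}}\cap\mathcal{D}_{\R}$. On the other hand, although $\Gamma$ need not act cocompactly on $\mathcal{D}_{\R}$, its action becomes cocompact after restricting to the fibres of the purification map (precisely the feature of the Bakker--Brunebarbe--Klingler--Tsimerman fundamental domain recalled after Theorem~\ref{period maps o-minimal}); thus $\Xi_{\R}$ is fibred over a fundamental domain $\Xi_{\R}^{\sigma}$ for the pure part, and $\Xi_{\R}\cap\widetilde{N}_x$ is a relatively compact fundamental domain for $\Gamma_U:=\Gamma\cap\mathbf{U}(\Q)$ on $\widetilde{N}_x\cap\mathcal{D}_{\R}$. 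Since $\gamma(\widetilde{N}_{\mathbf{W}}\cap\mathcal{D}_{\R})$ lies in the fibre over $\gamma h_{\sigma}$, the intersection condition forces $\gamma h_{\sigma}\in\Xi_{\R}^{\sigma}$; as the stabiliser of $h_{\sigma}$ in the pure arithmetic group is finite, this confines $\gamma$ to $\Gamma_U$ up to finitely many classes. Finally, writing an offending $\gamma$ as $\gamma\gamma_W$ with $\gamma_W\in\Gamma_W$ so that $\gamma\gamma_W C$ meets the compact set $\overline{\Xi_{\R}\cap\widetilde{N}_x}$, proper discontinuity of $\Gamma_U$ on $\widetilde{N}_x\cap\mathcal{D}_{\R}$ bounds $\gamma\gamma_W$ to a finite subset of $\Gamma_U$; since $[\gamma]=[\gamma\gamma_W]$ in $\Gamma_W\backslash\Gamma$, only finitely many cosets survive, as required.

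I expect the main obstacle to be the compatibility of the $\operatorname{sl}_2$-splitting with the sub-nil-Jacobian, namely establishing $\delta_T,\zeta_T\in\mathfrak{w}_{\C}$ so that $r$ preserves $\widetilde{N}_{\mathbf{W}}$, together with the bookkeeping needed to descend from the non-cocompact action on all of $\mathcal{D}_{\R}$ to the cocompact action on a single purification fibre. Both rest on the functoriality of the $\operatorname{sl}_2$-splitting under morphisms of mixed Hodge data and on the fibrewise structure of the BBKT fundamental domain; once these are in place, the finiteness, and hence the definability of $N_{\mathbf{W}}$, follows from elementary proper-discontinuity arguments.
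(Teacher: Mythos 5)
Your proposal is correct, and its first half is exactly the paper's argument: the reduction via Proposition \ref{definable criterion} and Remark \ref{definable criterion remark} to showing that only finitely many translates $\gamma\cdot\mathcal{D}_{\mathbf{W}}$ meet $\Xi=r^{-1}(\Xi_{\R})$, and the key compatibility that the $\operatorname{sl}_2$-splitting preserves $\mathcal{D}_{\mathbf{W}}$ --- proved, as you do, by observing that $\overline{T}$ and $T$ differ by an element of $\mathbf{W}(\C)$, so that $\delta_T$ (hence $\zeta_T$, a Lie polynomial in it) lies in $\mathfrak{w}$ --- is precisely Proposition \ref{splitting preserves subgroups}. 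Where you genuinely diverge is the terminal finiteness on the real locus. The paper descends to $M$: by Proposition \ref{compactness} (Malcev cocompactness of $\Gamma_W$ in $\mathbf{W}(\R)$) the image $N_{\mathbf{W},\R}\subseteq M_{\R}$ is a \emph{compact} analytic submanifold, so its intersection with the definable set $\pi(\Xi_{\R})$ is $\R_{\an}$-definable and therefore has finitely many connected components; since $\pi|_{\Xi_{\R}}$ restricts to a definable homeomorphism on $\pi^{-1}(N_{\mathbf{W},\R})\cap\Xi_{\R}$ and each offending translate $\gamma\cdot\mathcal{D}_{\mathbf{W},\R}$ contributes a distinct component there, finiteness follows with no input about the internal structure of $\Xi_{\R}$. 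You instead stay upstairs and argue by reduction theory: cocompactness of $\Gamma_W$ on $\mathcal{D}_{\mathbf{W},\R}$, fibrewise relative compactness of the BBKT fundamental domain over a pure fundamental set, Siegel-type finiteness of the orbit of $h_{\sigma}$ together with finiteness of its stabiliser, and proper discontinuity of $\Gamma_U$ on a purification fibre. This is sound --- the same Malcev cocompactness underlies both proofs --- but it imports unverified (though true and standard) structural facts about the BBKT domain that the paper only recalls informally in subsection \ref{o-minimal hodge}, namely that $\Xi_{\R}$ fibres over a pure fundamental set with the orbit-finiteness property; the paper's route is shorter, uses only the bare definability of $\Xi_{\R}$ and $\pi(\Xi_{\R})$ plus o-minimal finiteness of connected components, and is robust to the choice of fundamental domain, while yours is more hands-on and makes the group-theoretic mechanism explicit. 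Two small points of bookkeeping: $\delta_T$ is a real element, so it lies in $\mathfrak{w}_{\R}$ rather than merely $\mathfrak{w}_{\C}$ as you write (which is in fact what your argument needs); and your identification $[\gamma]=[\gamma\gamma_W]$ presumes classes of the form $\gamma\Gamma_W$ --- harmless, since $\gamma_W$ stabilises $\mathcal{D}_{\mathbf{W}}$ and what is actually being counted is the number of distinct translates $\gamma\cdot\mathcal{D}_{\mathbf{W}}$ meeting $\Xi$.
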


We prove Lemma \ref{subniljac} and Theorem \ref{nil-jacobians o-min} in few steps. First, let us recall the notations. 

We fix a connected mixed Hodge datum $(\mathbf{G}, X_{\mathbf{G}}, \mathcal{D})$ and a mixed Hodge variety $M=\Gamma \backslash \mathcal{D}$.  We denote by $\pi \colon \mathcal{D} \to M$ the projection. As before, we denote by $\mathbf{U} \subset \mathbf{G}$ the unipotent radical and  by $\mathbf{H}:=\mathbf{G}/\mathbf{U}$ the reductive quotient. We denote by $\widetilde{G}$ the preimage of $\mathbf{H}(\R)$ in $\mathbf{G}(\C)$ so that $X_{\mathbf{G}}$ is the $\widetilde{G}$-conjugacy class of a reference Hodge cocharacter $h_0 \colon \mathbb{S}_{\C} \to \mathbf{G}_{\C}$. Without loss of generality we may assume that $h_0$ is defined over $\R$.

For $h \in X_{\mathbf{G}}$ we denote by $[h]$ its image in $\mathcal{D}$. By $\mathcal{D}_{\R}$ we denote the $\mathbf{G}(\R)$-orbit of $[h_0]$ in $\mathcal{D}$. It is precisely the set of mixed Hodge structures in $\mathcal{D}$ that split over $\R$ (see subsection \ref{about splitting subsec}). We also denote $M_{\R}:=\pi(\mathcal{D}_{\R})$.

Let $N_x \subseteq M$ be a fibre of a purification map and $N_{\mathbf{W}} \subset N_x$ a sub-nil-Jacobian. This means that $N_{\mathbf{W}}=\Gamma_W \backslash \mathbf{W}(\C)/F^0\mathbf{W}$, where 
\begin{itemize}
\item $\mathbf{W} \subseteq \mathbf{U}$ is a closed connected subgroup over $\Q$ and its Lie algebra $\mathfrak{w}$ is a  sub-$\Q$-Hodge structure of $\u=\operatorname{Lie}(\mathbf{U})$ with respect to the mixed Hodge structure on $\u$ induced by a Hodge cocharacter $h, \ [h] \in N_x$;
\item $\Gamma_W=\Gamma \cap \mathbf{W}(\Q)$;
\item $F^0\mathbf{W}=\exp(F^0\mathfrak{w}) =\mathbf{W}(\C) \cap F^0\widetilde{G}$.
\end{itemize}

 If  $\mathcal{D}_{\mathbf{W}}$  denotes the orbit of $[h_0] \in \mathcal{D}$ over the action of $\mathbf{W}(\C) \subset \widetilde{G}$, then $N_{\mathbf{W}}=\pi(\mathcal{D}_{\mathbf{W}})$. We also denote $\mathcal{D}_{\mathbf{W},\R}:=\mathbf{W}(\R) \cdot[h_0]$ and $N_{\mathbf{W}, \R}:=\pi(\mathcal{D}_{\mathbf{W},\R})$. Observe that $\mathcal{D}_{\mathbf{W},\R}=\mathcal{D}_{\mathbf{W}} \cap \mathcal{D}_{\R}$ and $N_{\mathbf{W},\R}=N_{\mathbf{W}} \cap M_{\R}$.

\begin{prop}\label{compactness}
$N_{\mathbf{W}, \R}$ is compact.
\end{prop}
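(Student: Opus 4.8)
The plan is to realise $N_{\mathbf{W}, \R}$ as the continuous image of a compact homogeneous space, thereby reducing the statement to the cocompactness of the lattice $\Gamma_W$ inside the real unipotent group $\mathbf{W}(\R)$. First I would consider the orbit map $o \colon \mathbf{W}(\R) \to \mathcal{D}$, $\mathbf{w} \mapsto \mathbf{w} \cdot [h_0]$, which is continuous and, by the very definition of $\mathcal{D}_{\mathbf{W}, \R}$, has image exactly $\mathcal{D}_{\mathbf{W}, \R} = \mathbf{W}(\R) \cdot [h_0]$. Post-composing with the projection $\pi \colon \mathcal{D} \to M$ then yields a continuous surjection $\pi \circ o \colon \mathbf{W}(\R) \twoheadrightarrow N_{\mathbf{W}, \R}$.

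The key observation is that this surjection is invariant under left translation by $\Gamma_W = \Gamma \cap \mathbf{W}(\Q)$: for $\gamma \in \Gamma_W \subseteq \Gamma$ and $\mathbf{w} \in \mathbf{W}(\R)$ one has $\pi(\gamma \mathbf{w} \cdot [h_0]) = \pi(\mathbf{w} \cdot [h_0])$, because $\pi$ is precisely the quotient by the $\Gamma$-action. Hence $\pi \circ o$ descends to a continuous surjection $\overline{o} \colon \Gamma_W \backslash \mathbf{W}(\R) \twoheadrightarrow N_{\mathbf{W}, \R}$. Since the continuous image of a compact space is compact, it then suffices to show that $\Gamma_W \backslash \mathbf{W}(\R)$ is compact.

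For this I would invoke the theory of lattices in nilpotent Lie groups. The group $\mathbf{W}$ is a connected simply connected unipotent $\Q$-group, so $\mathbf{W}(\R)$ is a simply connected nilpotent real Lie group, and $\Gamma_W$ is a discrete Zariski-dense subgroup of $\mathbf{W}(\Q)$. As in Step 3 of the proof of Theorem \ref{embedding theorem}, such a subgroup is arithmetic (\cite[Theorem 4.34]{Ragh}), and an arithmetic subgroup of a unipotent $\Q$-group is a cocompact lattice in its real points (Malcev; cf. Proposition \ref{malcev properties}(v) and \cite[Theorem 2.1]{Ragh}). Consequently $\Gamma_W \backslash \mathbf{W}(\R)$ is compact, and so is its continuous image $N_{\mathbf{W}, \R}$.

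I expect the only genuinely delicate point to be the passage from the Zariski-density hypothesis built into the definition of a nil-Jacobian to the statement that $\Gamma_W$ is a \emph{full} lattice (so that $\Gamma_W \backslash \mathbf{W}(\R)$ is compact, not merely that $\Gamma_W$ is discrete); everything else is formal. I would also record, although it is not needed for compactness, that the stabiliser of $[h_0]$ in $\mathbf{W}(\R)$ is trivial: the condition $W_{-1}\mathfrak{w} = \mathfrak{w}$ forces $\mathfrak{w}_{\R} \cap F^0\mathfrak{w} = 0$, exactly as in Proposition \ref{carlson theorem}. This makes the orbit map a homeomorphism onto $\mathcal{D}_{\mathbf{W}, \R}$ and upgrades $\overline{o}$ to a homeomorphism identifying $N_{\mathbf{W}, \R}$ with the compact nilmanifold $\Gamma_W \backslash \mathbf{W}(\R)$.
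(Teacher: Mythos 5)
Your proof is correct and follows the paper's argument exactly: the paper's proof likewise factors the surjection $\mathbf{W}(\R) \twoheadrightarrow N_{\mathbf{W},\R}$ through $\Gamma_W \backslash \mathbf{W}(\R)$ and cites Malcev for the compactness of that quotient, which is precisely your lattice argument (the Zariski-density-to-cocompactness step you fill in via \cite[Theorem 4.34]{Ragh} and Proposition \ref{malcev properties}(v) is the content the paper leaves to the citation of \cite{Mal}). One caveat on your closing (admittedly unneeded) remark: triviality of the stabiliser does make the orbit map a bijection onto $\mathcal{D}_{\mathbf{W},\R}$, but the descended map $\overline{o}$ need not be injective, since $\pi$ quotients by all of $\Gamma$ and an element of $\Gamma \setminus \Gamma_W$ could a priori identify two distinct $\Gamma_W$-orbits inside $\mathcal{D}_{\mathbf{W},\R}$, so $N_{\mathbf{W},\R}$ is a continuous image of, but not obviously homeomorphic to, the nilmanifold $\Gamma_W \backslash \mathbf{W}(\R)$.
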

\begin{proof}
The surjective map $\mathbf{W}(\R) \to N_{\mathbf{W}, \R}$ factorises through $\mathbf{W}(\R) \to \Gamma_{W}\backslash \mathbf{W}(\R)$. At the same time, $\Gamma_W \backslash \mathbf{W}(\R)$ is compact (\cite{Mal}).
\end{proof}

\begin{prop}\label{splitting preserves subgroups}
Let $r \colon \mathcal{D} \to \mathcal{D_{\R}}$ be the $\operatorname{sl_2}$-splitting. Then $r(\mathcal{D}_{\mathbf{W}})\subseteq \mathcal{D}_{\mathbf{W}, \R}$.
\end{prop}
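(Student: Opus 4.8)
The plan is to show that the $\operatorname{sl}_2$-retraction $r$ preserves the orbit $\mathcal{D}_{\mathbf{W}}$; since $r(\mathcal{D})\subseteq\mathcal{D}_{\R}$ always and $\mathcal{D}_{\mathbf{W},\R}=\mathcal{D}_{\mathbf{W}}\cap\mathcal{D}_{\R}$ (noted just before the statement), this is exactly what is required. Recall that $r(T)=e^{-\i\zeta_T}e^{-\i\delta_T}\cdot T$, so it suffices to prove that for every $T\in\mathcal{D}_{\mathbf{W}}$ both elements $\delta_T,\zeta_T$ lie in $\mathfrak{w}_{\C}$ (with $\delta_T$ real): then $e^{-\i\zeta_T}e^{-\i\delta_T}\in\mathbf{W}(\C)$ and hence $r(T)\in\mathbf{W}(\C)\cdot o=\mathcal{D}_{\mathbf{W}}$, where $o=[h_0]$.

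The key geometric input is that $h_0$ may be taken defined over $\R$, so that $o$ splits over $\R$, and that $\mathfrak{w}$ is a sub-mixed-Hodge-structure of $\g$ at $o$ (this is forced by the identity $F^0\mathbf{W}=\exp F^0\mathfrak{w}=\mathbf{W}(\C)\cap F^0_o\widetilde{G}$ in the definition of the sub-nil-Jacobian). First I would construct an auxiliary sub-Hodge-datum through which the orbit factors. Let $\mathbf{M}\subseteq\mathbf{G}$ be the Mumford-Tate group of the mixed Hodge structure at $o$, i.e.\ the $\Q$-Zariski closure of $h_0(\mathbb{S})$. Since the datum is graded polarisable and $o$ splits over $\R$, the structure at $o$ is a direct sum of pure polarisable Hodge structures, so $\mathbf{M}$ is reductive and $h_0(\mathbb{S})$ preserves the Deligne bigrading of $\mathfrak{w}$; in particular $\mathbf{M}$ normalises $\mathbf{W}$. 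Setting $\mathbf{G}':=\mathbf{W}\rtimes\mathbf{M}$, the cocharacter $h_0$ factors through $\mathbf{G}'$, the inclusion $\mathbf{G}'\hookrightarrow\mathbf{G}$ is a morphism of mixed Hodge data, and its Hodge domain $\mathcal{D}'$ contains $\mathcal{D}_{\mathbf{W}}=\mathbf{W}(\C)\cdot o$ because $\mathbf{W}(\C)\subseteq\widetilde{G'}$.

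The point of passing to $\mathbf{G}'$ is a weight estimate. As $\mathbf{M}$ is reductive, the unipotent radical of $\mathbf{G}'$ is exactly $\mathbf{W}$, so by Pink's description (Theorem \ref{pink}) one has $W_{-1}\g'=\mathfrak{w}$ and hence $W_{-2}\g'\subseteq\mathfrak{w}$. The $\delta$- and $\operatorname{sl}_2$-splittings are functorial under morphisms of mixed Hodge data, so the elements $\delta_T,\zeta_T$ computed in the ambient $\g$ coincide with those computed in $\g'$ under $\g'\hookrightarrow\g$. Since the Deligne element decomposes as $\delta_T=\sum_{p,q\le -1}\delta_T^{p,q}$ with each component in $\g'^{\,p,q}$, it lies in $W_{-2}\g'\subseteq\mathfrak{w}$, and being real it lies in $\mathfrak{w}_{\R}$. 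The correction $\zeta_T$ is a universal Lie polynomial in the bigraded components of $\delta_T$, all of which lie in the Lie subalgebra $\mathfrak{w}_{\C}$, whence $\zeta_T\in\mathfrak{w}_{\C}$. This yields $\delta_T,\zeta_T\in\mathfrak{w}_{\C}$ with $\delta_T$ real and closes the reduction.

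I expect the main obstacle to be precisely this containment $\delta_T\in\mathfrak{w}$, as opposed to the weaker statement that $\operatorname{ad}\delta_T$ merely normalises $\mathfrak{w}$, which is immediate from functoriality of the Deligne splitting but insufficient. The crucial and slightly delicate ingredient that makes it work is the choice of the $\R$-split base point $o$: only there is the Mumford-Tate group reductive, which is what collapses the unipotent radical of $\mathbf{G}'$ onto $\mathbf{W}$ and produces the bound $W_{-2}\g'\subseteq\mathfrak{w}$. In writing up I would be careful to verify that $\mathbf{M}$ genuinely normalises $\mathbf{W}$ (so that $\mathbf{G}'=\mathbf{W}\rtimes\mathbf{M}$ has unipotent radical $\mathbf{W}$) and that the inclusion of data carries the chosen connected component of $\mathcal{D}'$ into $\mathcal{D}$, these being the routine compatibilities under which the functoriality of the $\operatorname{sl}_2$-splitting is applied.
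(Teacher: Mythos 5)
Your reduction is the right one, and it is the same final assembly as the paper's: it suffices to show $\delta_T,\zeta_T\in\mathfrak{w}$ for $T\in\mathcal{D}_{\mathbf{W}}$, since then $r(T)=e^{-\i\zeta_T}e^{-\i\delta_T}\cdot T\in\mathcal{D}_{\mathbf{W}}\cap\mathcal{D}_{\R}=\mathcal{D}_{\mathbf{W},\R}$. But the mechanism you use to obtain $\delta_T\in\mathfrak{w}$ has a genuine gap: the claim that the Mumford--Tate group $\mathbf{M}$ at the $\R$-split point $o$ is reductive is false. Splitting over $\R$ does not imply splitting over $\Q$, and the Mumford--Tate group (a $\Q$-group) of a graded polarisable mixed Hodge structure is reductive precisely when the structure is a direct sum of pure $\Q$-Hodge structures. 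Concretely, let $V_{\Q}=\Q e\oplus \Q f$ with $W_{-2}V=V$, $W_{-4}V=\Q f$, and $F^{-1}V_{\C}=\C(e+sf)$, $F^{-2}V_{\C}=V_{\C}$, where $s$ is a real transcendental number: this is a graded polarisable extension of $\Q(1)$ by $\Q(2)$, it splits over $\R$ (one has $I^{-1,-1}=\C(e+sf)=\overline{I^{-1,-1}}$), yet the $\Q$-Zariski closure of the image of its Hodge cocharacter contains the full unipotent group $e\mapsto e+tf$, because the line $\C(e+sf)$ is not defined over $\Q$. In such a situation the unipotent radical of your $\mathbf{G}'=\mathbf{W}\rtimes\mathbf{M}$ strictly contains $\mathbf{W}$ (and even the semidirect-product structure needs care, since $\mathbf{M}\cap\mathbf{W}$ may be nontrivial), so Pink's criterion gives only $W_{-1}\g'\supsetneq\mathfrak{w}$, and the containment $\delta_T\in W_{-2}\g'$ no longer forces $\delta_T\in\mathfrak{w}$ --- it recovers only $\delta_T\in\mathfrak{u}$, which you yourself correctly flagged as insufficient. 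This is why the paper's Embedding Theorem, which performs the analogous construction $\mathbf{W}\rtimes\mathbf{P}$, asserts only that $\mathbf{W}$ is \emph{contained in} the unipotent radical, never that it equals it.

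The paper instead gets $\delta_T\in\mathfrak{w}$ by a short conjugation computation requiring no auxiliary Hodge datum. Write $T=\mathbf{w}\cdot[h_0]$ with $\mathbf{w}\in\mathbf{W}(\C)$. The compatibility $\overline{\mathbf{u}\cdot T'}=\overline{\mathbf{u}}\cdot\overline{T'}$ of complex conjugation with the action of the unipotent radical, together with $\overline{[h_0]}=[h_0]$ (here the $\R$-splitness of the base point is used, and nothing more), gives $\overline{T}=\overline{\mathbf{w}}\cdot[h_0]$; comparing with the Cattani--Kaplan--Schmid identity $\overline{T}=e^{-2\i\delta_T}\cdot T$ and invoking the uniqueness of $\delta_T$ yields $e^{-2\i\delta_T}=\overline{\mathbf{w}}\,\mathbf{w}^{-1}\in\mathbf{W}(\C)$, hence $\delta_T\in\mathfrak{w}$; then $\zeta_T\in\mathfrak{w}$ since it is a universal Lie polynomial in $\delta_T$, and your concluding step goes through verbatim. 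So the repair is to replace the Mumford--Tate detour by this direct conjugation argument.
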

\begin{proof}
Suppose that $[h] \in \mathcal{D}_{\mathbf{W}}$. This means that $[h]$ can be written as $\mathbf{w}\cdot [h_0]$ for the reference Hodge cocharacter $[h_0] \in \mathcal{D}_{\R}$ and an element $\mathbf{w}\in\mathbf{W}(\C)$. Then the complex conjugate mixed Hodge structure corresponds to the class of the Hodge cocharacter 
\[
\overline{[h]}=\overline{\mathbf{w} \cdot [h_0]}=\overline{\mathbf{w}} \cdot [h_0],
\]
where $\mathbf{w} \mapsto \overline{\mathbf{w}}$ is the standard complex conjugation on $\mathbf{W}(\C)=\mathbf{W}(\R) \o \C$.

At the same time,
\[
\overline{\mathbf{w}\cdot [h]}=e^{-2\i \delta_{[h]}}\mathbf{w}\cdot[h],
\]
thus $e^{-2\i \delta_{[h]}}=\overline{\mathbf{w}\mathbf{w}^{-1}}$ belongs to $\mathbf{W}(\C)$ and $\delta_{[h]}$ is an element of  $\mathfrak{w}=\operatorname{Lie}(\mathbf{W})$. (See subsection \ref{about splitting subsec} for a remainder on $\operatorname{sl}_2$-splitting and the elements $\delta_{[h]}$ and $\zeta_{[h]}$). Since $\zeta_{[h]}$ is given by Lie polynomials in $\delta_{[h]}$, it is also contained in  $\mathfrak{w}$. 

It follows, that $r(\mathbf{w}\cdot h_0)=(e^{-\i\zeta_{[h]}}e^{-\i\delta_{[h]}}\mathbf{w})\cdot [h_0]$ is in $\mathcal{D}_{\mathbf{W}}$. Since, moreover, $r(\mathbf{w}\cdot h_0) \in \mathcal{D}_{\R}$, it belongs to $\mathcal{D}_{\mathbf{W}, \R}=\mathcal{D}_{\mathbf{W}} \cap \mathcal{D}_{\R}$. 
\end{proof}

\begin{proof}[Proof of Lemma \ref{subniljac}]

Let $\Xi_{\R} \subseteq \mathcal{D}_{\R}$ be a definable fundamental domain for the $\Gamma$-action. Let $\Xi=r^{-1}(\Xi_{\R})$. By Proposition \ref{definable criterion}, $N_{\mathbf{W}}$ is definable in $M$ if and only if $\pi^{-1}(N_{\mathbf{W}}) \cap \Xi$ is definable in $\mathcal{D}$.

Recall that $\pi$ denotes the projection from the mixed Hodge domain $\mathcal{D}$ to the mixed Hodge variety $M$. Then
\[
\pi^{-1}(N_{\mathbf{W}})=\bigsqcup_{[\gamma]\in \Gamma_W \backslash \Gamma} \gamma \cdot \mathcal{D}_{\mathbf{W}},
\]
where $\gamma$ runs through representatives of the classes of the quotient $\Gamma_{W} \backslash \Gamma$, and by Remark \ref{definable criterion remark} it is sufficient to show that the set of classes 
\[\{[\gamma]\in \Gamma_W \backslash \Gamma \ | \ \gamma \cdot \mathcal{D}_{\mathbf{W}} \cap \Xi \neq \varnothing\}
\]
is finite. 

Suppose $\gamma \cdot \mathcal{D}_{\mathbf{W}} \cap \Xi$ is non-empty. Then $r(\gamma \cdot \mathcal{D}_{\mathbf{W}})\cap \Xi_{\R}$ is non-empty. 

Recall that $r \colon \mathcal{D} \to \mathcal{D}_{\R}$ is $\mathbf{G}(\R)$-equivariant. In particular, it is $\Gamma$-equivariant, as $\Gamma \subset \mathbf{G}^{+}(\Q) \subset \mathbf{G}(\R)$. Combining this observation with  Propositon \ref{splitting preserves subgroups}, we deduce that $r(\gamma \cdot \mathcal{D}_{\mathbf{W}})\subseteq  \gamma \cdot \mathcal{D}_{\mathbf{W}, \R}$. Therefore, if $\gamma \cdot \mathcal{D}_{\mathbf{W}}$ intersects $\Xi$, then $\gamma \cdot \mathcal{D}_{\mathbf{W}, \R}$ intersects $\Xi_{\R}$.

We reduced the problem to showing that finitely many classes $[\gamma]$ in $\Gamma_W \backslash \Gamma$ such that $\gamma \cdot \mathcal{D}_{\mathbf{W}, \R} \cap \Xi_{\R}$ is non-empty. For every such class $[\gamma]$ the intersection $\gamma \cdot \mathcal{D}_{\mathbf{W}, \R} \cap \Xi_{\R}$ gives another connected component in $\pi^{-1}(N_{\mathbf{W}, \R}) \cap \Xi_{\R}$. The projection 
\[
\pi|_{\Xi_{\R}} \colon \pi^{-1}(N_{\mathbf{W}, \R}) \cap \Xi_{\R} \to N_{\mathbf{W}, \R} \cap \pi(\Xi_{\R})
\]
is a definable homeomorphism. The image $\pi(\Xi_{\R})\subseteq M_{\R}$ is $\R_{\alg}$-definable.

At the same time, $N_{\mathbf{W}, \R} \subseteq M_{\R}$ is a compact analytic submanifold by Proposition \ref{compactness} and its intersection with $\pi(\Xi_{\R})$ is an $\R_{\an}$-definable subset of $M_{\R}$. In particular, it has only finitely many connected components. We deduce that $\pi^{-1}(N_{\mathbf{W}, \R}) \cap \Xi_{\R}$ has finitely many connected components as well.
\end{proof}

For the proof of the last item of Theorem \ref{nil-jacobians o-min} we will also need the following linear algebraic lemma.

\begin{lemma}\label{linear algebra}
Let $V$ be a finite-dimensional $\Q$-vector space. Let $F \subset V_{\C}$ be a complex subspace of its complexification such that $V_{\R} \cap F= \{0\}$. Denote by $q$ the projection $q \colon V \to V/F$. Then there exists a rational linear subspace $L \subset V$ such that $V/F=q(V_{\R}) \oplus q(\i L_{\R})$.
\end{lemma}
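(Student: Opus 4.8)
The plan is to reduce the statement to a transversality-and-density argument for rational subspaces in a real Grassmannian. Write $n = \dim_{\Q} V$ and $f = \dim_{\C} F$, and regard $q$ as the complex-linear projection $V_{\C} \to V_{\C}/F$ (restricted to the real subspaces occurring in the statement). First I would record the dimension bookkeeping. Since $V_{\R} \cap F = \{0\}$, the restriction $q|_{V_{\R}}$ is injective, so $q(V_{\R})$ has real dimension $n$. As $F$ is a complex subspace, $\i F = F$, whence $\i V_{\R} \cap F = \{0\}$ and $q|_{\i V_{\R}}$ is injective as well. Because $q$ is surjective and $V_{\C} = V_{\R} \oplus \i V_{\R}$ over $\R$, one has $q(V_{\R}) + q(\i V_{\R}) = V_{\C}/F$; in particular $2f \le n$, which is exactly what makes the sought decomposition possible on dimension grounds.

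The formal heart of the argument is the following observation. Introduce the real subspace
\[
K := \{v \in V_{\R} : \i v \in V_{\R} + F\} = \{v \in V_{\R} : q(\i v) \in q(V_{\R})\},
\]
which records precisely the overlap of $q(V_{\R})$ and $q(\i V_{\R})$. I claim that \emph{any} rational subspace $L \subset V$ with $V_{\R} = K \oplus L_{\R}$ does the job. Indeed, for $v \in L_{\R}$ we have $q(\i v) \in q(V_{\R})$ if and only if $v \in K \cap L_{\R} = \{0\}$, so $q(\i L_{\R}) \cap q(V_{\R}) = \{0\}$; and from $\i V_{\R} = \i K + \i L_{\R}$ together with $q(\i K) \subseteq q(V_{\R})$ we get $q(V_{\R}) + q(\i L_{\R}) = q(V_{\R}) + q(\i V_{\R}) = V_{\C}/F$. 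Hence $V_{\C}/F = q(V_{\R}) \oplus q(\i L_{\R})$, as required; note that the dimension of $L$ comes out automatically and need not be computed in advance.

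It therefore remains to produce a rational $L$ with $V_{\R} = K \oplus L_{\R}$, and this is the only genuinely non-formal step. The subspace $K$ need not be rational, since $F$ is merely complex; nevertheless the set of subspaces of the complementary dimension $n - \dim K$ that are transverse to $K$ is a nonempty Euclidean-open subset of the real Grassmannian $\operatorname{Gr}(n - \dim K, n)(\R)$ (it is the complement of the closed incidence condition $\dim(L \cap K) \ge 1$). Since this Grassmannian is a homogeneous space for $\GL_n$ and $\GL_n(\Q)$ is dense in $\GL_n(\R)$, its $\Q$-points are dense in its $\R$-points in the Euclidean topology; I would pick a rational $L$ inside the transversal open set and conclude by the previous paragraph. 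The main obstacle is exactly this density step — the fact that an irrational real subspace always admits a rational complement — whereas everything else is linear-algebra bookkeeping.
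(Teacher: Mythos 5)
Your proof is correct, and structurally it is the paper's proof seen from a different angle. The paper sets $F_{r}:=(F\oplus\overline{F})\cap V_{\R}$ and takes a rational complement $L$ of $F_{r}$ in $V_{\R}$; a two-line computation shows that your subspace $K$ is exactly $F_{r}$ (if $\i v=w+f$ with $w\in V_{\R}$, $f\in F$, then conjugating and subtracting gives $2\i v=f-\overline{f}$, so $v\in (F\oplus\overline{F})\cap V_{\R}$; conversely, any $v\in F_{r}$ can be written as $h+\overline{h}$ with $h\in F$, using $V_{\R}\cap F=\{0\}$, and then $\i v=(\i\overline{h}-\i h)+2\i h\in V_{\R}+F$). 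The differences are in execution, and both work in your favour: where the paper checks $q(\i L_{\R})\cap q(V_{\R})=\{0\}$ by the conjugation computation above and then closes the argument with a dimension count, your intrinsic description of $K$ as the overlap locus makes directness \emph{and} spanning immediate, with no count and no conjugation needed; and you explicitly justify the existence of a rational complement to the (generally irrational) real subspace $K$ via density of $\Q$-points in $\Gr(n-\dim K, n)(\R)$, a step the paper uses silently when it says ``let $L\subset V$ be any rational complement of $F_{r}$.'' One microscopic remark: your parenthetical ``in particular $2f\le n$'' follows from the injectivity of $q|_{V_{\R}}$ (which forces $n\le\dim_{\R}(V_{\C}/F)=2(n-f)$) rather than from the surjectivity statement it is attached to, but nothing in your argument depends on this inequality, so the slip is harmless.
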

\begin{proof}
Since $V_{\R} \cap F=\{0\}$, we know that $F \cap \overline{F}=\{0\}$. Let $F_{r}:=(F\oplus \overline{F})\cap V_{\R}$. This is a real linear subspace and $(F \oplus \overline{F})=F_{r}\o \C$, in particular, its real dimension equals the complex dimension of $F$.

Let $L \subset V$ be any rational complement of $F_{r}$, i.e. $V_{\R}=L_{\R} \oplus F_{r}$. The restriction of $q$ on $V_{\R}$ is injective. Therefore the restriction of $q$ on $\i L_{\R}$ is injective as well: the map $q$ is $\C$-linear and 
\[
\ker q|_{\i L_{\R}}=\i L_{\R} \cap F=\i(L_{\R}\cap  F)=L_{\R} \cap F=\{0\}.
\]
(recall that $F \subset V_{\C}$ is complex linear, thus $\i F=F$). We claim that $q(\i L_{\R}) \cap q(V_{\R})=0$. Indeed, suppose $l \in L_{\R}$ is such that $q(\i l)=q(v)$ for some $v \in V_{\R}$. This means that
\[
\i l + v= f
\]
for some $f \in F$. Hence,
\[
f-\overline{f}=\i l + v - \overline{(\i l +v)}=2\i l.
\]
It follows that $2 \i l \in F \oplus \overline{F}$, thus $l \in F_{\R}$ and $l=0$.

It is left to count the dimensions. Let $n=\dim_{\Q} V$ and $k=\dim_{\C}F$. Then $\dim_{\R} V/F=2(n-k)$. Since
\[
\dim_{\R} q(\i L_{\R}) =\dim_{\R}L_{\R} =\dim V_{\R}-\dim F_{r} = n-2k,
\]
we have 
\[
\dim_{\R} q(V_{\R})+\dim_{\R}q(\i L_{\R})=n+n-2k=2(n-k)=\dim_{\R}(V/F)
\]
as required.
\end{proof}

\begin{proof}[Proof of Theorem \ref{nil-jacobians o-min}]
Let $N_{\mathbf{W}}$ be a nil-Jacobian. By Theorem \ref{embedding theorem} there exists a finite cover $\widehat{N}_{\mathbf{W}}$ of $N_{\mathbf{W}}$ that admits an embedding  $j \colon \widehat{N}_{\mathbf{W}} \hookrightarrow N_x$ where $N_x$ is a fibre of the purification map of a mixed Hodge variety. By Lemma \ref{subniljac} the image of this embedding is definable in $M$, so $\widehat{N}_{\mathbf{W}}$ inherits a definable complex manifold structure. By item \textit{(i)} of Proposition \ref{finite covers} it defines a definable complex manifold structure on $N_{\mathbf{W}}$.

Let us check that morphisms of nil-Jacobians are definable. As a consequence, we will see that the constructed definable manifold structure does not depend on the choice of the cover $\widehat{N}_{\mathbf{W}}$ and the embedding $j$. 

Let $f \colon N_{\mathbf{W}} \to N_{\mathbf{W}'}$ be a morphism of nil-Jacobians. Suppose first that both $N_{\mathbf{W}}$ and $N_{\mathbf{W}'}$ admit embeddings to fibres of purification maps of mixed Hodge varieties $j \colon N_{\mathbf{W}} \hookrightarrow N_x \subset M$ and $j' \colon N_{\mathbf{W'}} \hookrightarrow N_{x'} \subset M'$. Observe that if $M''=M \times M'$ is a product of mixed Hodge varieties, then $M''_{\sigma} = M_{\sigma} \times M'_{\sigma}$ and  purification map $\sigma''$  is the product of purification maps: $\sigma''=\sigma \times \sigma'$. The nil-Jacobian $N_{\mathbf{W}''}:=N_{\mathbf{W}} \times N_{\mathbf{W}'}$ embeds to $N_x \times N_{x'}=(\sigma'')^{-1}(x,x')$. We claim that the induced definable space structure on $N_{\mathbf{W}''}$ is the same as the definable space structure of the product. This follows from the fact that if $r \colon \mathcal{D} \to \mathcal{D}_{\R}$ and $r' \colon \mathcal{D}' \to \mathcal{D}'_{\R}$ are $\operatorname{sl}_2$-splittings of two connected mixed Hodge domains, then $r \times r'$ is the $\operatorname{sl}_2$-splitting of $\mathcal{D} \times \mathcal{D}'$ by the uniqueness of $\operatorname{sl}_2$-splitting (\cite[Propostion 2.20]{CKS}).

The graph of $f \colon N_{\mathbf{W}} \to N_{\mathbf{W}'}$ is a sub-nil-Jacobian $N_f \subset N_{\mathbf{W}} \times N_{\mathbf{W}'}$ and $j \times j'$ embeds it as a sub-nil-Jacobian of $N_{x} \times N_{x'}\subset M''$. We get a chain of definable subsets
\[
N_f \subset N_{\mathbf{W}''} \subset N_x \times N_{x'} \subset M''.
\]
Therefore, $N_f$ is definable in $N_{\mathbf{W}} \times N_{\mathbf{W}'}$.

Suppose now $N_{\mathbf{W}}$ and $N_{\mathbf{W}'}$ are arbitrary. Choose finite covers $\tau \colon \widehat{N}_{\mathbf{W}} \to N_{\mathbf{W}}$ and $\tau' \colon \widehat{N}_{\mathbf{W}'} \to N_{\mathbf{W'}}$ such that $\widehat{N}_{\mathbf{W}}$ and $\widehat{N}_{\mathbf{W}'}$ embed to fibres of purification maps of some mixed Hodge varieties. The map $f \colon N_{\mathbf{W}} \to N_{\mathbf{W}'}$ lifts to a correspondence between $\widehat{N}_{\mathbf{W}}$ and $\widehat{N}_{\mathbf{W}'}$ whose graph $\widehat{N}_f$ is a sub-nil-Jacobian of $\widehat{N}_{\mathbf{W}} \times \widehat{N}_{\mathbf{W}'}$. By the same argument as above, it is definable, and the graph $N_f=\tau \times \tau'(\widehat{N}_f)$  of $f$ is definable in $N_{\mathbf{W}}\times N_{\mathbf{W}'}$ (cf. Proposition \ref{finite covers}, \textit{(iii)}). This proves \textit{(i)}.

Notice that if $j_1 \colon N_{\mathbf{W}} \hookrightarrow N_{x_1} \subset M_1$ and $j_2 \colon N_{\mathbf{W}} \hookrightarrow N_{x_2} \subset M_2$ are two different embeddings, they induce the same definable manifold structure by the definability of the identity map $\operatorname{id} \colon N_{\mathbf{W}} \to N_{\mathbf{W}}$. Combining this with Proposition \ref{finite covers}, we deduce that the constructed definable manifold structure does not depend on any choices. 
\\

Item \textit{(ii)}  follows from \textit{(i)} and the fact that  the central tower of a nil-Jacobian is a diagram in the category of nil-Jacobians.
\\

Item \textit{(iii)} is immediate from the construction.
\\

We are ready to proof \textit{(iv)}. It is sufficient to prove the statement for $j=s$ and then argue by induction on $s$. 

By Proposition \ref{lifting of action} it is enough to check the definability of the action map $C^s_{\mathbf{W}} \times N^s_{\mathbf{W}} \to N^s_{\mathbf{W}}$ without worrying about the definability of the group structure on $C^s_{\mathbf{W}}$.

The idea is to find a suitable decomposition of $C^s_{\mathbf{W}}$ into a product of two groups, a compact one (which we below denote by $T$) and a copy of $\R^k$ (which will be denoted by $L$). The action of the first factor is analytic, hence automatically definable in $\R_{\an}$. The definability of the action of the second factor follows from Proposition \ref{definable criterion} and Lemma \ref{linear algebra} above. 

For convenience, we break the proof into several steps.
\\

\textbf{Step 1.} Let $\mathbf{Z} \subseteq \mathbf{W}$ be the lowest term of the lower central series filtration and $\Gamma_Z:=\mathbf{Z} \cap \Gamma$, so that $C^s_{\mathbf{W}}=\Gamma_Z \backslash \mathbf{Z}(\C)/F^0\mathbf{Z}$. The group $\mathbf{Z}$ is the additive group of a $\Q$-Hodge structure $Z$ and $\Gamma_Z\subset Z$ is a $\Z$-structure on it. Applying Lemma \ref{linear algebra} to $V=Z$ and $F=F^0Z$, we construct a rational subspace $L \subset \mathbf{Z}$ such that $Z_{\C}/F^0Z=q(Z_{\R}) \oplus q(\i L_{\R})$, where $q \colon Z_{\C} \to Z_{\C}/F^0Z$ is the projection. Let $\mathbf{L}\subseteq \mathbf{Z} \subseteq \mathbf{W}$ be the corresponding $\Q$-subgroup.
\\

\textbf{Step 2.} The group $\mathbf{Z}/F^0\mathbf{Z}$ splits into the product of two real algebraic groups, $\mathbf{Z}/F^0\mathbf{Z}=\mathbf{Z}_{\R} \times \i\mathbf{L}_{\R}$ (we omit the projection $q$ from the notation since it is injective both on $\mathbf{Z}_{\R}$ and $\i \mathbf{L}_{\R}$). The image of $\Gamma_Z$ under the map $q$ is contained inside $\mathbf{Z}(\R)$, so $J^0Z$ can be written as $J^0Z=(\Gamma_Z \backslash\mathbf{Z}(\R)) \times \i\mathbf{L}_{\R}$.  Observe that $\i \mathbf{L}_{\R}$ acts freely by real semialgebraic transformations on $\mathcal{D}$ and on $\mathcal{D}_{\mathbf{W}}$ and freely on $M$ (and on $N_{\mathbf{W}}$).
\\

\textbf{Step 3.} Let us first check that the action of $\i\mathbf{L}_{\R}$ on $N_{\mathbf{W}}$ is definable. Let $h_0 \in \Xi_{\R}$ be a reference Hodge cocharacter which is split over $\R$. Then, as it follows from the proof of Lemma \ref{subniljac}, $\i\mathbf{L}_{\R} \cdot h_0$ is contained in a finite number of translates of the fundamental domain $\Xi=r^{-1}(\Xi_{\R})$. After replacing $\Gamma$ with a finite index subgroup, one may assume that each orbit of $\i\mathbf{L}_{\R}$ is contained in a single fundamental domain $\Xi$ (this does not affect the definability of the action by Proposition \ref{lifting of action}, \textit{(ii)}). The action of $\i\mathbf{L}_{\R}$ on $\Xi$ is real semialgebraic. Since the projection $\pi \colon \mathcal{D} \to M$ is a definable $\i\mathbf{L}_{\R}$-equivariant diffeomorphism, the action of $\i \mathbf{L}_{\R}$ on $\pi(\Xi)$ is definable. The latter is a definable dense open subset of $M$ and the graph of the action of $\i\mathbf{L}_{\R}$ on $M$ is the closure of the graph of its action on $\Xi$, hence also definable.
\\

\textbf{Step 4.} Denote $T:=\Gamma_Z \backslash \mathbf{Z}(\R)$. This is an analytic Lie group isomorphic to a compact torus, hence it is a $\R_{\an}$-definable Lie group which definably acts on $N_{\mathbf{W}}$. We also denote $L:=\i\mathbf{L}_{\R}$. This is the additive group of a real vector space. The orbit $C_{\mathbf{W}}^s \cdot x$ splits as 
\[
C^s_{\mathbf{W}} \cdot x= (T \times L)x= T \cdot x \times L \cdot x
\]
The action of $J$ on $M$, and hence on its definable $J$-invariant subset $N_{\mathbf{W}}$, is definable by \textbf{Step 3}. The projections $C_{\mathbf{W}}^s \cdot x \to L \cdot x$ and $C_{\mathbf{W}}^s \cdot x \to T \cdot x$ are definable, since they are quotients by  closed definable equivalence relations (\cite[Chapter 10]{VdD}). Therefore the definability of the actions of $T$ and $L$ implies the definability of action of $C_{\mathbf{W}}^s$.
\end{proof}

\section{o-minimal geometry of higher Albanese manifolds}\label{main sec}

\subsection{o-minimal geometry of higher Albanese manifolds}\label{main subsec}

Higher Albanese maps are closely related to  period maps of certain (\emph{canonical})  admissible unipotent variations of mixed $\Z$-Hodge structures, introduced by Hain and Zucker in \cite{HZ}. The following Theorem is essentially \cite[Corollary 5.20]{HZ}.

\begin{thrm}[Hain - Zucker]\label{albanese are periods}
Let $X$ be a smooth complex quasi-projective variety. For each $s$ there exists an admissible unipotent graded polarised variation of mixed $\Z$-Hodge structures $\V^s$ such that its period map $\Phi_s \colon X^{\an} \to M$ factorises as
\[
\xymatrix{
X^{\an} \ar[rr]^{\Phi_s} \ar[rd]_{\alb^s} & & M\\
& \Alb^s(X) \ar[ru]_{\Psi^{s}}
}
\]
where $\Psi^{s}$ is a cover on its image. Moreover, the image of the map $\Psi^{s}$ is contained in a  fibre of the purification map $M \to M_{\sigma}$ and it is a morphism of nil-Jacobians. In particular, it lifts to a homomorphism of unipotent $\Q$-algebraic groups $\widetilde{\Psi^{s}} \colon \G^s(X;x) \to \mathbf{U}$, where $\mathbf{U}$ is the unipotent radical of $\mathbf{G}$ and $M=\Gamma \backslash \mathcal{D}_{\mathbf{G}}$. The induced homomorphism of Lie algebras $\operatorname{Lie}(\widetilde{\Psi^{s}}) \colon \g^s(X;x) \to \operatorname{Lie}(\mathbf{U})$ is a moprhism of mixed Hodge structures.
\end{thrm}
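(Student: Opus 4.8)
The plan is to realise $\V^s$ as the canonical (tautological) $s$-step unipotent variation attached to the Malcev completion, to identify the target of its period map with a fibre of a purification map, and then to read off the factorisation and the nil-Jacobian structure from the universal properties already recorded in Section \ref{higher albanese basic}. Most of the analytic content is Hain--Zucker's, so the task is to package it in the language of nil-Jacobians.

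First I would recall Hain--Zucker's canonical variation. Its underlying local system is the one associated with the truncation of the group algebra of $\pi_1(X;x)$ along the powers of the augmentation ideal (equivalently, the local system carrying the tautological action of $\G^s_{\Z}(X;x)$); by Theorem \ref{hain morgan} its fibre over $x$ is endowed with Hain's mixed Hodge structure, and by \cite{HZ} these fit into an admissible, graded-polarised variation $\V^s$. Its monodromy representation factors through the canonical map $\mu^s\colon\pi_1(X;x)\to\G^s_{\Z}(X;x)$, so the associated graded $\Gr^W\V^s$ is constant; that is, $\V^s$ is unipotent. Passing to its period map $\Phi_s\colon X^{\an}\to M=\Gamma\backslash\mathcal{D}$, unipotency forces the image into a single fibre $N_x=\sigma^{-1}(x)$ of the purification map $M\to M_\sigma$, by the equivalent characterisations of unipotency recalled in subsection \ref{purification subsec}. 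By the description of such fibres, $N_x$ is the nil-Jacobian $\Gamma_U\backslash\mathbf{U}(\C)/F^0\mathbf{U}$ attached to the unipotent radical $\mathbf{U}$ of $\mathbf{G}$.

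Next I would produce the lift. The developing map of $\Phi_s$ is equivariant for the monodromy homomorphism $\pi_1(X;x)\to\Gamma_U\subset\mathbf{U}(\Q)$, which is $s$-step unipotent and hence factors through $\mu^s$; by the universal property of the Malcev completion (Theorem \ref{malcev exists}) it extends uniquely to a homomorphism of unipotent $\Q$-groups $\widetilde{\Psi^s}\colon\G^s(X;x)\to\mathbf{U}$ carrying $\G^s_{\Z}(X;x)$ into $\Gamma_U$. Because the Hodge filtration defining $F^0\g^s(X;x)$ and the one defining $F^0\operatorname{Lie}(\mathbf{U})$ are induced by one and the same Hain mixed Hodge structure, $\operatorname{Lie}(\widetilde{\Psi^s})$ is a morphism of mixed Hodge structures; strictness with respect to $F$ then gives $F^0\G^s(X;x)=\G^s(\C)\cap F^0\mathbf{U}$, exactly as in Step 4 of the proof of Theorem \ref{embedding theorem}. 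Thus $\widetilde{\Psi^s}$ descends to a morphism of nil-Jacobians $\Psi^s\colon\Alb^s(X)\to N_x$, and the identity $\Psi^s\circ\alb^s=\Phi_s$ is forced by the universal property of the higher Albanese map. The covering statement follows because the canonical variation is faithful, so $\widetilde{\Psi^s}$ is injective; combined with the $F^0$-compatibility just noted, $\Psi^s$ is injective on universal covers and is therefore a finite cover onto the sub-nil-Jacobian cut out by $\widetilde{\Psi^s}(\G^s(X;x))$, the finiteness of the fibres reflecting the commensurability of the two lattices $\widetilde{\Psi^s}(\G^s_{\Z}(X;x))$ and $\Gamma_U\cap\widetilde{\Psi^s}(\G^s)(\Q)$.

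The main obstacle is the identification underlying the third step: verifying that the mixed Hodge structure carried by the fibre of $\V^s$ is precisely Hain's structure, so that the two Hodge filtrations genuinely match. This is the technical heart of the iterated-integral construction, and I would import it from \cite[Corollary 5.20]{HZ} rather than reprove it. Once it is granted, the remaining assertions---the passage to the purification fibre, the extension of the monodromy across the Malcev completion, and the descent to a morphism of nil-Jacobians whose Lie algebra map is a morphism of mixed Hodge structures---are formal consequences of the functoriality of Hain's mixed Hodge structure (Theorem \ref{hain morgan}) and of the universal properties of $\mu^s$ and $\alb^s$ already in hand.
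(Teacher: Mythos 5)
Your proposal is correct and takes essentially the same route as the paper: the paper gives no independent proof of this statement, treating it as a citation of \cite[Corollary 5.20]{HZ} exactly as you do, with the remaining packaging in the nil-Jacobian language being formal consequences of Theorems \ref{hain morgan} and \ref{malcev exists}. Your closing argument that the fibres of $\Psi^s$ are finite via commensurability of two discrete Zariski-dense subgroups of a unipotent $\Q$-group is precisely the paper's proof of Proposition \ref{albanese are periods up to finite cover}, which the paper separates out as a strengthening of the bare covering statement in the theorem.
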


Without going into details, let us mention that the variation $\V^s$ captures  the dependence of the mixed Hodge structure on $\g^{s+1}(X;x)$ on the base point $x \in X$ (cf. Remark \ref{dependence on a point}).

\begin{prop}\label{albanese are periods up to finite cover}
 The map $\Psi^s$ is a finite cover on its image.
\end{prop}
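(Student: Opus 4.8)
The plan is to extract the covering structure of $\Psi^s$ directly from the group homomorphism $\widetilde{\Psi^{s}}$ furnished by Theorem \ref{albanese are periods}, and then to identify the number of sheets with an index of two arithmetic lattices in a unipotent group, which is automatically finite.

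First I would pin down the image of $\Psi^s$ as a sub-nil-Jacobian. Write $\mathbf{U}$ for the unipotent radical of $\mathbf{G}$, put $\Gamma_U := \Gamma \cap \mathbf{U}(\Q)$, and set $\mathbf{W}_s := \im(\widetilde{\Psi^{s}}) \subseteq \mathbf{U}$. Since $\operatorname{Lie}(\widetilde{\Psi^{s}})$ is a morphism of mixed Hodge structures, $\operatorname{Lie}(\mathbf{W}_s)$ is a sub-Hodge-structure of $\operatorname{Lie}(\mathbf{U})$ with $W_{-1}$ everything, so $\mathbf{W}_s$ inherits a Hodge structure and $N_{\mathbf{W}_s} := \Gamma_{W_s} \backslash \mathbf{W}_s(\C)/F^0\mathbf{W}_s$ is a nil-Jacobian, where $\Gamma_{W_s} := \Gamma_U \cap \mathbf{W}_s(\Q)$ and $F^0\mathbf{W}_s = \mathbf{W}_s(\C) \cap F^0\mathbf{U}$. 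The canonical map $N_{\mathbf{W}_s} \hookrightarrow N_x$ is an injective morphism of nil-Jacobians whose image is exactly $\Psi^s(\Alb^s(X))$, so $\Psi^s$ factors as the surjection $\Alb^s(X) \to N_{\mathbf{W}_s}$ induced by $\widetilde{\Psi^{s}} \colon \G^s(X;x) \twoheadrightarrow \mathbf{W}_s$, followed by this embedding; the first map is the covering in the statement.

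Next I would count its sheets. Both $\Alb^s(X)$ and $N_{\mathbf{W}_s}$ have contractible universal cover, with deck groups $\G^s_{\Z}(X;x)$ and $\Gamma_{W_s}$, and the map induced by $\Psi^s$ on fundamental groups is the restriction of $\widetilde{\Psi^{s}}$. Hence the number of sheets equals the index $[\Gamma_{W_s} : \widetilde{\Psi^{s}}(\G^s_{\Z}(X;x))]$, and everything reduces to showing this is finite. The decisive point is that both subgroups are arithmetic in $\mathbf{W}_s$: the group $\Gamma_{W_s} = \Gamma_U \cap \mathbf{W}_s(\Q)$ is the intersection of the arithmetic group $\Gamma_U \subset \mathbf{U}$ with a $\Q$-subgroup, hence arithmetic, while $\widetilde{\Psi^{s}}(\G^s_{\Z}(X;x))$ is finitely generated, discrete (it sits inside $\Gamma_U$) and Zariski dense in $\mathbf{W}_s$, hence arithmetic by \cite[Theorem 4.34]{Ragh} (exactly as in Step 3 of the proof of Theorem \ref{embedding theorem}). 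Since any two arithmetic subgroups of a $\Q$-group are commensurable and here $\widetilde{\Psi^{s}}(\G^s_{\Z}(X;x)) \subseteq \Gamma_{W_s}$, the index is finite.

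The main obstacle is the bookkeeping of the first step rather than any estimate: one has to check carefully that $\Psi^s(\Alb^s(X))$ is genuinely the nil-Jacobian $N_{\mathbf{W}_s}$ (so that the base of the covering has the expected fundamental group $\Gamma_{W_s}$) and that $\Psi^s$ induces precisely $\widetilde{\Psi^{s}}$ on $\pi_1$. I note that one can even see $\widetilde{\Psi^{s}}$ is injective: since $\Psi^s$ is a cover, its lift to universal covers is a biholomorphism, which forces $\ker(\widetilde{\Psi^{s}})(\C) \subseteq F^0\G^s(X;x) \cap \overline{F^0\G^s(X;x)} = \{1\}$ because $W_{-1}\g^s(X;x)=\g^s(X;x)$; thus $\widetilde{\Psi^{s}}(\G^s_{\Z}(X;x))$ is literally the image of a lattice under an isomorphism. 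This refinement is, however, not needed, as commensurability already yields finiteness.
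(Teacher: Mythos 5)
Your proposal is correct and follows essentially the same route as the paper: both reduce the claim to showing that $\widetilde{\Psi^s}(\G^s_{\Z})$ has finite index in $\Gamma_W=\Gamma\cap\mathbf{W}(\Q)$, using that both are Zariski dense in the image group $\mathbf{W}=\im\widetilde{\Psi^s}$ (the covering property itself being supplied by Theorem \ref{albanese are periods}). The only cosmetic difference is the final finiteness citation: the paper invokes the general fact that nested discrete Zariski dense subgroups of a unipotent $\Q$-group have finite index (by induction on the lower central series), whereas you argue via Raghunathan's arithmeticity theorem plus commensurability of arithmetic subgroups --- exactly the mechanism the paper itself uses in Step 3 of the proof of Theorem \ref{embedding theorem}, so the arguments are interchangeable.
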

\begin{proof}
It is shown in \cite{HZ} that this map is a non-ramified cover on its image. Let $\mathbf{W}$ be the image of the map $\widetilde{\Psi^s} \colon \G^s(X;x) \to \mathbf{U}$. This is a closed  algebraic subgroup of $\mathbf{U}$ defined over $\Q$. In fact, \cite{HZ} observe that the homomorphism $\widetilde{\Psi^s}$ is injective.

It  suffices to check that the image of $(\Psi^s)_* \colon \pi_1(\Alb^s(X)) \to \pi_1(\im \Psi^s)$ is of finite index. In other words, we need to check that $\widetilde{\Psi^s}(\G^s_{\Z}) \subseteq \Gamma_W:=\Gamma \cap \mathbf{W}(\Q)$ is of finite index. Notice that both $\Gamma_W \subset \mathbf{W}(\Q)$ and $\G^s_{\Z} \subset \G^s_{\Q}(X;x)$ are Zariski dense. Since $\mathbf{W}=\im \widetilde{\Psi^s}$, the group $\widetilde{\Psi^s}(\G^s_{\Z}) \subset \mathbf{W}(\Q)$ is Zariski dense as well.

This is a general fact about lattices in unipotent groups: if $\Gamma_1 \subseteq \Gamma_2$ are two discrete Zariski dense subgroups of a  unipotent group over $\Q$, then $\Gamma_2/\Gamma_1$ is finite (this can be checked, for example, by induction in the length of lower central series).
\end{proof}

Now we are ready to proof Theorem \ref{definabilisation main} mentioned in the introduction.

Recall that the classical Albanese manifold $\Alb(X)$ of a normal projective variety $X$ is canonically a semiabelian variety, in particular it is quasi-projective (\cite[Lemma 3.8]{Fuj}).

\begin{thrm}\label{definabilisation}
Let $X$ be a complex normal quasi-projective variety. For every $s \ge 1$ the higher Albanese manifold $\Alb^s(X)$ can be endowed with a structure of an $\R_{\alg}$-definable complex manifold in such a way that
\begin{itemize}
\item[(i)] the projections $p_s \colon \Alb^s(X) \to \Alb^{s-1}(X)$ are definable;
\item[(ii)]for each $s$ there exists a definable commutative connected complex Lie group $C^s$ such that $p_s \colon \Alb^s(X) \to \Alb^{s-1}(X)$ is a definable holomorphic principal $C^s$-bundle, in particular, the action $C^s \times \Alb^s(X) \to \Alb^s(X)$ is definable. Each $C^s$ is abstractly isomorphic (as a complex Lie group) to the Jacobian of a mixed Hodge structure.
\item[(iii)] the higher Albanese maps  $\alb^s \colon X^{\odef} \to \Alb^s(X)$ are  $\R_{\an, \exp}$-definable;
\item[(iv)] if $s=1$, the resulting $\R_{\alg}$-definable structure on $\Alb^1(X)=\Alb(X)$ is the same as the one determined by the canonical algebraic structure on the $\Alb(X)$;
\item[(v)] if $f \colon X \to Y$ is a morphism of normal  varieties, $\Alb(f) \colon \Alb^s(X) \to \Alb^s(Y)$ is definable;
\end{itemize}
Moreover,
\begin{itemize}
\item[(vi)] the reduced image $\alb^s(X)^{\operatorname{red}}$ is the definable analytification of a quasi-projective variety and $\alb^s \colon X \to \alb^s(X)^{\operatorname{red}}$ is the analytification of an algebraic morphism.
\end{itemize}
\end{thrm}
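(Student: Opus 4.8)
The plan is to obtain items \textit{(i)}, \textit{(ii)}, \textit{(iv)} and \textit{(v)} formally from the nil-Jacobian machinery of Section \ref{niljacobians section}, to derive the definability of $\alb^s$ in \textit{(iii)} from the Hain--Zucker comparison with period maps, and to prove the algebraicity statement \textit{(vi)} by combining the definable Griffiths theorem with the fact that finite covers of quasi-projective varieties are quasi-projective. First I would record that $\Alb^s(X)=\G^s_\Z(X;x)\backslash\G^s_\C(X;x)/F^0\G^s(X;x)$ is a nil-Jacobian, with $\mathbf{W}=\G^s_\Q(X;x)$, $\Gamma=\G^s_\Z(X;x)$, and the Hodge structure on $\mathfrak{w}=\g^s(X;x)$ supplied by Theorem \ref{hain morgan} (which guarantees precisely the two required conditions $W_{-1}\mathfrak{w}=\mathfrak{w}$ and that the bracket is a morphism of mixed Hodge structures). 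Theorem \ref{nil-jacobians o-min} then endows $\Alb^s(X)$ with a canonical $\R_{\alg}$-definable complex manifold structure.

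Item \textit{(i)} is immediate, since the projections $p_s$ are exactly the transition maps of the central tower \eqref{central tower} of this nil-Jacobian, hence definable by Theorem \ref{nil-jacobians o-min}\textit{(ii)}; item \textit{(ii)} follows from Proposition \ref{central towers exist} together with Theorem \ref{nil-jacobians o-min}\textit{(iv)}, which gives the definable commutative Lie group structure on each $C^s=C^s_{\mathbf{W}}\cong J^0Z^s_{\mathbf{W}}$ and the definability of its action. For \textit{(v)}, a morphism $f\colon X\to Y$ induces $\Alb^s(f)$ through the morphism of mixed Hodge structures $\g^s(X;x)\to\g^s(Y;y)$ of Theorem \ref{hain morgan}\textit{(iii)}; this is a morphism of nil-Jacobians, so definability follows from Theorem \ref{nil-jacobians o-min}\textit{(i)}. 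For \textit{(iv)}, the case $s=1$ is $\Alb(X)=J^0H_1(X,\Z)$ (Example \ref{albanese is jacobian}); I would unwind the construction of Theorem \ref{nil-jacobians o-min}\textit{(iv)} and compare it with the semiabelian structure weight by weight: the weight $-1$ graded piece yields a compact complex torus, on which the definable structure is unique and coincides with the algebraic one, while the weight $-2$ Tate part yields the standard $\R_{\alg}$-structure on $(\C^\times)^k$, so the two definable structures agree.

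For \textit{(iii)}, assume first that $X$ is smooth. By Theorem \ref{albanese are periods} the higher Albanese map fits into $\Phi_s=\Psi^s\circ\alb^s$, where $\Phi_s\colon X^{\odef}\to M$ is the period map of an admissible variation of mixed $\Z$-Hodge structures and $\Psi^s\colon\Alb^s(X)\to M$ is a morphism of nil-Jacobians that is a finite cover onto its image (Proposition \ref{albanese are periods up to finite cover}). The map $\Phi_s$ is $\R_{\an,\exp}$-definable by Theorem \ref{period maps o-minimal}\textit{(ii)}, and $\Psi^s$ is $\R_{\alg}$-definable by Theorem \ref{nil-jacobians o-min}\textit{(i)}. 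Regarding $\Psi^s$ as a finite cover of $\im\Psi^s$ (the definable structures being compatible by Theorem \ref{nil-jacobians o-min}\textit{(iii)} and Proposition \ref{finite covers}\textit{(i)}), I would apply Proposition \ref{finite covers}\textit{(ii)} with $\Phi_s$ as the base map and $\alb^s$ as its continuous lift, concluding that $\alb^s$ is $\R_{\an,\exp}$-definable. For general normal $X$ I would use the resolution $\phi\colon Y\to X$ of the final Remark of subsection \ref{higher albanese subsec}: on the smooth locus $X^{\circ}$ the map $\alb^s$ agrees with $\Alb^s(\phi)\circ\alb^s_Y\circ(\phi|_{Y^{\circ}})^{-1}$, which is definable by the smooth case together with \textit{(v)}, and the graph of $\alb^s$ is the closure of this definable graph over the dense definable open $X^{\circ}$, hence definable.

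Finally, \textit{(vi)} is the step I expect to require the most care. The definable Griffiths theorem (Theorem \ref{griffiths}) applied to $\Phi_s$ shows that $\Phi_s(X)^{\operatorname{red}}$ is the definabilisation of a quasi-projective variety $\mathfrak{Y}$. Since $\Psi^s$ is a finite cover onto its image, its restriction realises $\alb^s(X)^{\operatorname{red}}$ as a finite topological covering of $\Phi_s(X)^{\operatorname{red}}=\mathfrak{Y}^{\odef}$; by the Riemann existence theorem such a cover is algebraic, and definable GAGA produces a quasi-projective $\mathfrak{Z}$ with $\alb^s(X)^{\operatorname{red}}=\mathfrak{Z}^{\odef}$. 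The map $\alb^s\colon X^{\odef}\to\mathfrak{Z}^{\odef}$ is then a definable holomorphic map between definabilisations of algebraic varieties, whose graph is a closed definable analytic subset, so definable Chow (Theorem \ref{definable chow}, via Corollary \ref{algebraisation basic}\textit{(i)}) algebraises it. The main obstacle is controlling the finite cover $\Psi^s$: one must verify that $\alb^s(X)^{\operatorname{red}}\to\Phi_s(X)^{\operatorname{red}}$ really is a finite covering (using properness of finite covers, so that images of closures are closures) and that passing to this cover preserves quasi-projectivity; the normal case additionally requires transporting algebraicity along the resolution $\phi$, since Theorem \ref{griffiths} is available only for smooth $X$.
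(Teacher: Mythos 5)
Your proposal follows the paper's own proof almost verbatim on items \textit{(i)}, \textit{(ii)}, \textit{(iii)}, \textit{(v)} and \textit{(vi)}: the paper likewise gets \textit{(i)}--\textit{(ii)} by observing that $\Alb^s(X)$ is a nil-Jacobian and invoking Theorem \ref{nil-jacobians o-min} (with Proposition \ref{central towers exist} supplying the central tower and the groups $C^s\simeq J^0Z^s$), gets \textit{(iii)} from Theorem \ref{period maps o-minimal} together with Theorem \ref{albanese are periods}, Proposition \ref{albanese are periods up to finite cover} and the lifting statement of Proposition \ref{finite covers}, and gets \textit{(vi)} from exactly your chain: Theorem \ref{griffiths} applied to $\Phi_s$, then the finite cover $\Psi^s$, then the Riemann existence theorem to keep quasi-projectivity. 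Your explicit treatment of the normal case of \textit{(iii)} (pushing $\alb^s_Y$ through the resolution and taking the closure of the definable graph over the dense definable open $X^{\circ}$) is in fact more detailed than the paper, which leaves this reduction implicit; that part of your argument is sound.

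The one genuine gap is item \textit{(iv)}. Your weight-by-weight comparison does not work as stated: $\Alb(X)$ is a semiabelian variety, i.e.\ a (generally nontrivial) extension of an abelian variety $A$ by $(\C^{\times})^k$, not a product, and agreement of definable structures on the fibre $(\C^{\times})^k$ and on the compact quotient $A$ does not determine, let alone identify, the definable structure on the non-compact total space --- as the discussion after Proposition \ref{definable criterion} emphasizes, definable structures on non-compact quotients are highly sensitive to the choice of fundamental domain, and even your assertion that ``the definable structure on a compact complex torus is unique and coincides with the algebraic one'' is not a quotable fact but something that itself needs an argument when comparing the nil-Jacobian structure with the algebraic one. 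The paper closes this with a short trick that sidesteps all of it: since $\mathfrak{A}=\Alb(\mathfrak{X})$ is itself a normal quasi-projective variety, one may form its higher Albanese map $\alb^1\colon \mathfrak{A}^{\odef}\to \Alb^1(\mathfrak{A})=\Alb^1(\mathfrak{X})$; this map is definable by item \textit{(iii)} already proved, and it is a biholomorphism by the universal property of the Albanese map, so $\Alb^1(X)$ is definably biholomorphic to $\mathfrak{A}^{\odef}$. You should replace your graded-pieces comparison by this argument (or supply a genuine proof that the extension data, not just the pieces, match definably).
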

\begin{proof}
Higher Albanese manifolds are nil-Jacobians, therefore they can be endowed with $\R_{\alg}$-definabe manifold structures such that \textit{(i)} and \textit{(ii)} hold (Theorem \ref{nil-jacobians o-min}).
\\

\textit{(iii)}. By Theorem \ref{period maps o-minimal} the period map of the $s$-th canonical variation is $\R_{\an, \exp}$-definable. The higher Albanese map is a lift of the period map along a finite cover $\Psi^s$ (Theorem \ref{albanese are periods}, Proposition \ref{albanese are periods up to finite cover}), hence it is also definable by Proposition \ref{finite covers}.
\\

\textit{(iv)}. Let $\mathfrak{X}$ be a normal algebraic variety and $\mathfrak{A} = \Alb(\mathfrak{X})$ its Albanese variety. Since $\mathfrak{A}$ is again a normal algebraic variety, we can  consider its Albanese variety $\Alb(\mathfrak{A})$. On one hand, the  map $\alb^1 \colon \mathfrak{A}^{\odef} \to \Alb^1(\mathfrak{A}^{\odef}) = \Alb^1(\mathfrak{X})$ is definable by \textit{(ii)}. On the other hand, it is a biholomorphism by the universal property of the Albanese map. Thus, $\Alb^1(X)$ is definably biholomorphic to the definabilisation of the algebraic variety $\mathfrak{A}$.
\\

Item \textit{(vi)} follows from Theorem \ref{griffiths}, Proposition \ref{albanese are periods up to finite cover} and the fact that a finite cover of a quasi-projective variety is quasi-projective (this is known as \emph{Riemann existence theorem}, see e.g. \cite[Th\'eor\`eme 5.1]{SGA1}).
\end{proof}

\begin{cor}\label{when tower is algebraic}
Let $X$ be a normal quasi-projective variety and $s$ a natural number. The following conditions are equivalent:
\begin{itemize}
\item[(i)] $\Alb^s(X)$ is definably biholomorphic to $(\mathfrak{A}^s)^{\odef}$ for some normal quasi-projective variety $\mathfrak{A}^s$;
\item[(ii)] there exists a normal quasi-projective variety  $X'$ and a morphism of algebraic varieties $f \colon X \to X'$ such that $\Alb^s(f) \colon \Alb^s(X) \to \Alb^s(X')$ is a biholomorphism and $\alb_{X'}^s \colon X' \to \Alb^s(X')$ is surjective;
\item[(iii)] there exists a normal quasi-projective variety  $X'$ and a morphism of algebraic varieties $f\colon X \to X'$, such that $\Alb^s(f) \colon \Alb^s(X) \to \Alb^s(X')$ is a biholomorphism and $\alb_{X'}^s \colon X' \to \Alb^s(X')$ is dominant;
\item[(iv)] $\Alb^s(X)$ is definably biholomorphic to $(\mathfrak{A}^s)^{\odef}$ for some normal quasi-projective variety $\mathfrak{A}^s$ and the truncated higher Albanese tower
\[
\Alb^s(X) \xrightarrow{p^s} \Alb^{s-1}(X) \to \ldots \xrightarrow{p^2} \Alb^1(X)
\]
admits an algebraisation. Moreover the actions $C^j \times \Alb^j(X) \to \Alb^j(X)$ admits algebraisation for each $j \le s$ (namely, $C^j=(\mathfrak{C}^j)^{\odef}$ for an algebraic group $\mathfrak{C}^j$ and the action is algebraic).
\end{itemize}
\end{cor}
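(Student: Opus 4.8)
The plan is to run the cycle of implications (i)$\Rightarrow$(ii)$\Rightarrow$(iii)$\Rightarrow$(iv)$\Rightarrow$(i). Two of these are formal: (ii)$\Rightarrow$(iii) holds because a surjection is dominant, and (iv)$\Rightarrow$(i) holds because (iv) contains (i) as its first clause. So the real content is concentrated in (i)$\Rightarrow$(ii) and in (iii)$\Rightarrow$(iv), and I would organise everything around the central tower (\ref{central tower}) of the nil-Jacobian $\Alb^s(X)$.

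For (i)$\Rightarrow$(ii) I would take $X' = \mathfrak{A}^s$ to be the algebraic model of $\Alb^s(X)$ itself, and $f = \alb^s_X \colon X \to \Alb^s(X) = X'$. The graph of $\alb^s_X$ is a closed definable analytic subset of $X \times \Alb^s(X)$, so by the definable Chow theorem (Theorem \ref{definable chow}) $f$ is a morphism of algebraic varieties. It then remains to invoke the \emph{idempotency} of the higher Albanese construction: since $\pi_1(\Alb^s(X)) = \G^s_{\Z}(X)$ is torsion-free and $s$-step nilpotent, $\mu^s$ is an isomorphism on it and the canonical unipotent variation carried by $\Alb^s(X)$ has period map equal (up to the finite cover $\Psi^s$ of Proposition \ref{albanese are periods up to finite cover}) to the identity; by the universal property of $\alb^s$ (Theorem \ref{albanese are periods}) this forces $\alb^s_{X'} = \operatorname{id}$, which is surjective, and $\Alb^s(f)$ a biholomorphism. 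This is the higher analogue of the classical identity $\Alb(\Alb(X)) = \Alb(X)$.

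For (iii)$\Rightarrow$(iv): using the biholomorphism $\Alb^s(f)$ and the functoriality of the central tower (Proposition \ref{central towers exist}), it suffices to algebraise the tower of $X'$, so I rename $X' = X$ and assume $\alb^s_X$ dominant; then every $\alb^j_X$ is dominant. I would induct upward on $j$, the base $\Alb^1(X) = \Alb(X)$ being a semiabelian variety. Assuming $\Alb^{j-1}(X)$ algebraic, Theorem \ref{definabilisation}(vi) provides a quasi-projective $Y_j := \alb^j(X)^{\operatorname{red}}$, dense in $\Alb^j(X)$ by dominance, with $X \to Y_j$ algebraic; the restriction $q := p^j|_{Y_j} \colon Y_j \to \Alb^{j-1}(X)$ is then algebraic and dominant by Corollary \ref{algebraisation basic}(i). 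For generic $b$ the fibre $Y_j \cap (p^j)^{-1}(b)$ is quasi-projective and dense in $(p^j)^{-1}(b) \cong C^j$. The crucial point is that a connected commutative complex Lie group containing an analytically dense quasi-projective subvariety must be algebraic (holomorphic maps from quasi-projective varieties to compact complex tori factor through abelian subvarieties, ruling out any non-algebraic toroidal quotient of $C^j$); hence $C^j = (\mathfrak{C}^j)^{\odef}$ is a commutative algebraic group and its action algebraises by Corollary \ref{algebraisation basic}. Finally $p^j$ is itself algebraic: the difference map $Y_j \times_{\Alb^{j-1}(X)} Y_j \to C^j$ is definable, hence algebraic, and after a finite base change over which $q$ admits a rational section the bundle becomes generically trivial and extends equivariantly, so $\Alb^j(X)$ is an algebraic $\mathfrak{C}^j$-bundle over $\Alb^{j-1}(X)$ (quasi-projectivity of the total space following from Riemann existence, as in the proof of Theorem \ref{definabilisation}(vi)). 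This completes the induction and yields (iv). As a cross-check, the reverse (i)$\Rightarrow$(iv) also admits a direct \emph{downward} induction: the free definable $C^s$-action exhibits each orbit as a locally closed definable subvariety of the algebraic variety $\Alb^s(X)$, so $C^s$ is algebraic by definable Chow, and $\Alb^{s-1}(X) = \Alb^s(X)/C^s$ is an algebraic quotient.

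I expect the main obstacle to be precisely the implication ``dense quasi-projective fibre $\Rightarrow$ $C^j$ and $p^j$ algebraic'': one must upgrade the merely analytic density of $\alb^j(X)$ into genuine algebraicity of the commutative Lie group $C^j$ and of the bundle $p^j$. This is exactly the phenomenon that separates $\C^\times$-factors from non-algebraic toroidal Jacobians, and that ultimately drives Theorem \ref{no algebraic main}; making the step fully rigorous requires care with the structure theory of connected commutative complex Lie groups. The two formal endpoints — the idempotency used in (i)$\Rightarrow$(ii) and the compatibility of $\Alb^s(f)$ with the entire tower — are routine but demand attention to the underlying mixed Hodge structures.
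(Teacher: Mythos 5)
Your skeleton and the two formal endpoints are fine, and your (i)$\Rightarrow$(ii) is the paper's own move (take $X'=\mathfrak{A}^s$, $f=\alb^s_X$; the paper is just as terse about the idempotency as you are). The genuine gap is in (iii)$\Rightarrow$(iv), exactly at the step you yourself flag as crucial: the claim that a connected commutative complex Lie group containing a dense quasi-projective subvariety must be algebraic is unproven, and the justification you offer is false as stated. Holomorphic maps from \emph{quasi-projective} varieties to compact complex tori do not factor through abelian subvarieties: $z \mapsto zv \bmod \Lambda$ maps $\C=(\mathbb{A}^1)^{\an}$ holomorphically with dense image into any compact torus $V/\Lambda$, including tori containing no nonzero abelian subvariety; the factorisation you invoke holds only for compact (projective) sources. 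And the groups $C^j=J^0Z^j_{\mathbf{W}}$ can a priori be non-algebraic, intermediate-Jacobian-type tori, so this is precisely the case your lemma must handle. (Definability of the dense fibre does force it to have full dimension, but that is far from algebraicity of $C^j$, and your ``for generic $b$ the fibre is dense'' is itself not argued.) Moreover, even granting the lemma, your induction only equips $Y_j=\alb^j(X)^{\operatorname{red}}$ and generic fibres with algebraic structures; condition (iv) demands algebraising \emph{all} of $\Alb^j(X)$, including the locus outside the Albanese image, and ``extends equivariantly after a finite base change'' is not an argument for that.

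The paper circumvents all of this by first proving (iii)$\Rightarrow$(ii), i.e.\ by upgrading dominance to surjectivity, which is the idea your proposal is missing. One replaces $X$ by the maximal partial compactification $\iota \colon X \hookrightarrow X'$ to which the local system defined by $\mu^s$ extends (\cite[Proposition 3.5]{Brun}); the universal property of Malcev completions shows $\G^s_{\Q}(\iota_*)$ is an isomorphism compatible with the mixed Hodge structures, so $\Alb^s(\iota)$ is a biholomorphism, and the infinite-order monodromy at every boundary point of $X'$ forces the image of $\alb^s_{X'}$ to be closed (a boundary point of the image would produce a loop of infinite order in $\pi_1$ that bounds a disk in $\Alb^s(X')$), hence $\alb^s_{X'}$ is surjective. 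Once surjectivity is in hand, (ii)$\Rightarrow$(iv) is exactly the easy downward argument you relegated to a ``cross-check'': each $\Alb^j(X)$ equals $\alb^j(X)^{\operatorname{red}}$ and is quasi-projective by Theorem \ref{definabilisation}(vi), the fibres of $p^j$ are closed definable analytic subsets of it, so $C^j$, its action, and $p^j$ all algebraise by definable Chow (Corollary \ref{algebraisation basic}). Without the boundary-monodromy compactification, your fibrewise density route does not close.
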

\begin{proof}
\textit{(ii)} $\implies$ \textit{(i)} by item \textit{(v)} of Theorem \ref{definabilisation}. On the other hand, \textit{(i)} $\implies$ \textit{(ii)} if one takes $X':=\mathfrak{A}^s$ and $f:=\alb_{X}^s$.
\\

Clearly, \textit{(ii)} $\implies$ \textit{(iii)}. Let us show that $\textit{(iii)} \implies \textit{(ii)}$. 

Suppose  that $X$ is a quasi-projective variety such that $\alb^s \colon X \to \Alb^s(X)$ is dominant. The homomorphism $\mu:=\mu^s_{\pi_1(X)} \colon \pi_1(X) \to \G^s_{\Q}(X)$ defines a $\Q$-local system on $X$. Its monodromy is torsion-free, therefore by \cite[Proposition 3.5]{Brun} there exists a maximal partial compactification $\iota \colon X \to X'$ to which this local system extends. This means that $\iota \colon X \to X'$ is an open embedding of algebraic varieties, and there exists a representation $\mu' \colon \pi_1(X') \to \G^s_{\Q}(X)$ that shares the following properties:
\begin{itemize}
\item[(a)] the composition $\pi_1(X) \xrightarrow{\iota_*} \pi_1(X') \xrightarrow{\mu'} \G^s_{\Q}(X)$ equals $\mu$;
\item[(b)] for any smooth projective compactification $j \colon X' \to \overline{X}$ with  snc boundary divisor $D=\overline{X} \setminus X$ and any holomorphic map from a disc $v \colon \Delta \to \overline{X}$ for which $v(\Delta) \cap D = \{v(0)\}$, the monodromy around the image of the generator of $\pi_1(\Delta \setminus \{0\})=\Z$ is of infinite order.
\end{itemize}
Since $\mu'$ is an $s$-step nilpotent representation of $\pi_1(X')$, it factorises through 
\[
\mu^s_{\pi_1(X')} \colon \pi_1(X') \to \G^s_{\Q}(X').
\]
We obtain a morphism of algebraic groups $\nu \colon \G^s_{\Q}(X') \to \G^s_{\Q}(X)$. A morphism in the reverse direction is induced by the embedding $\iota$.
\[
\xymatrix{
\pi_1(X) \ar[dd]_{\iota_*} \ar[rr]^{\mu^s_{\pi_1(X)}} && \G^s_{\Q}(X) \ar@/^1pc/[dd]^{\G^s_{\Q}(\iota_*)}\\
\\
\pi_1(X') \ar[rruu]_{\mu'} \ar[rr]_{\mu^s_{\pi_1(X')}} && \G^s_{\Q}(X') \ar@/^1pc/[uu]^{\nu}
}
\]

The maps $\nu$ and $\G^s_{\Q}(\iota_{*})$ are mutually inverse on Zariski dense subgroups $\G^s_{\Z}(X) \subset \G^s_{\Q}(X)$ and $\G^s_{\Z}(X') \subset \G^s_{\Q}(X')$, therefore they establish isomoprhisms of algebraic groups. In particular, $\mu'=\mu^s_{\pi_1(X')}$. Moreover, $\G^s_{\Q}(\iota_*)$ induces an isomorphism of mixed Hodge structure on the Lie algebras $\g^s(X) \to \g^s(X')$. Thus, the embedding $\iota$ induces biholomorphism of higher Albanese manifolds $\Alb(\iota) \colon \Alb^s(X) \to \Alb^{s}(X')$. 

The higher Albanese map $\alb^s \colon X' \to \Alb^s(X')$ is dominant, because it is dominant afer restriction on a dense open subset $\iota(X)$. To show surjectivity, it is  enough to check that the image $Y:=\alb^s(X')$ is closed. Choose a point $y \in \overline{Y} \setminus Y$ and a holomorphic map from a disk $v_0 \colon \Delta \to \overline{Y}$ such that $v_0(0)=y$ and the image of $\Delta^{\times}=\Delta\setminus \{0\}$ is contained in $Y$. The map $v_0|_{\Delta^{\times}}$ admits a lift to a holomorphic map $v|_{\Delta^{\times}} \colon \Delta^{\times} \to X'$ that extends to a map $v$ from $\Delta$ to a compactification $\overline{X}$ of $X'$.  By the item \textit{(b)} above, $\mu'(v(\gamma))=\mu^s_{\pi_1(X')}(v(\gamma))$ is of infinite order, where $\gamma$ is a loop generating $\pi_1(\Delta^{\times})$. On the other hand, $\mu'(v(\gamma))=\alb^s_{*}(v(\gamma))$. The latter is the same as the class of $v_{0}(\gamma)$ in $\pi_1(\Alb^s(X'))$, but by the construction this loop bounds a disk in $\Alb^s(X')$.
\\

 Notice that \textit{(ii)} $\implies$ \textit{(iv)}. Since $\alb^{j-1}=\alb^j \circ p^j$ and $p^j$ are surjective, the maps $\alb^j \colon X \to \Alb^j(X)$ are surjective for every $j \le s$.  By \textit{(vi)} of Theorem \ref{definabilisation} each $\Alb^j(X)$ is the definable analytification of a quasi-projective variety $\mathfrak{A}^j$. The morphisms $p^j$  and the actions of $C_{\mathbf{W}}^j$ are algebraisable by Corollary \ref{algebraisation basic}.
 \\

 Finally, \textit{(iv)} $\implies$ \textit{(i)}.
\end{proof}

The last section of this paper (Section \ref{algebraic sec}) deals with the situation when one of the equivalent coniditions of the Corollary \ref{when tower is algebraic} is satisfied. It turns out, that this makes the situation very restrictive. Essentially, we show that this might happen either if $s \le 2$ or if the higher Albanese tower stabilises, i.e. $p^j \colon \Alb^j(X) \to \Alb^{j-1}(X)$ are isomorphisms for $j \ge 3$.

\subsection{Application: partial higher Albanese manifolds.}\label{shafarevich subsection}

Let $X$ be a normal quasi-projective variety and $\theta \colon \pi_1(X) \to \C$ and additive $\C$-valued character. Then it can be written uniquely as $\alb^*\theta_0$, where $\theta_0 \in H^1(\Alb(X), \C)$. The \emph{$\theta$-Albanese manifold} $\Alb_{\theta}(X)$ is defined as the quotient of $\Alb(X)$ by the maximal connected algebraic subgroup $B_{\theta} \subseteq \Alb(X)$ for which $\theta_0|_{B_{\theta}} = 0 \in H^1(B_{\theta}, \C)$. 

In this subsection, we construct higher analogues of the same construction. Let $X$ be a normal quasi-projective variety, $\mathbf{U}$ a connected simply connected unipotent group over a field $\Bbbk$ of characteristic zero and $\rho \colon \pi_1(X) \to \mathbf{U}(\Bbbk)$ a representation.

\begin{thrm}\label{partial higher albanese exists}
There exists a nil-Jacobian $\Alb^s_{\rho}(X)$, a representation $\rho_0 \colon \pi_1(\Alb^s_{\rho}(X)) \to \mathbf{U}(\Bbbk)$ and a definable map $\alb^s_{\rho} \colon X \to \Alb^s_{\rho}(X)$, such that
\begin{itemize}
\item[(i)] $\rho=\rho_0 \circ (\alb^s_{\rho})_*$;
\item[(ii)] for any sub-nil-Jacobian $N \subseteq \Alb^s_{\rho}$ of positive dimension the restriction $\rho_0|_{\pi_1(N)}$ is non-trivial;
\item[(iii)] If  $X \xrightarrow{\phi} S \xrightarrow{\psi} \Alb^s_{\rho}(X)$ is the Stein factorisation of $\alb^s_{\rho}$, then $(S, \phi)$ is the Shafarevich reduction of $\rho$.
\end{itemize}
\end{thrm}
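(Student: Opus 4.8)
The plan is to realise $\Alb^s_\rho(X)$ as the Hodge-theoretic quotient of $\Alb^s(X)$ that is ``cut out'' by $\rho$, and then to identify it with the Shafarevich reduction using the algebraicity supplied by Theorem \ref{griffiths}. For the construction, replace $\mathbf{U}$ by the Zariski closure of $\rho(\pi_1(X))$, so that $\rho$ is Zariski dense; as its image is $s$-step nilpotent, the universal property of the Malcev completion (Theorem \ref{malcev exists}) factors $\rho$ as $\rho=\nu\circ\mu^s$ for a surjection $\nu\colon\G^s_{\Bbbk}(X)\to\mathbf{U}$. I would then let $\mathfrak{k}\subseteq\g^s(X)$ be the largest $\Q$-sub-mixed-Hodge-structure Lie ideal with $\mathfrak{k}\otimes\Bbbk\subseteq\ker d\nu$ (it exists as the sum of all such, inside the finite-dimensional $\g^s(X)$). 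The Morgan--Hain mixed Hodge structure (Theorem \ref{hain morgan}) descends to $\mathfrak{w}:=\g^s(X)/\mathfrak{k}$, which still satisfies $W_{-1}\mathfrak{w}=\mathfrak{w}$; writing $\mathbf{W}=\exp\mathfrak{w}$ and $\Gamma_W$ for the image of $\G^s_{\Z}(X)$, I set $\Alb^s_\rho(X):=\Gamma_W\backslash\mathbf{W}(\C)/F^0\mathbf{W}$, a nil-Jacobian. The projection $\G^s(X)\to\mathbf{W}$ is a morphism of nil-Jacobians $\Alb^s(X)\to\Alb^s_\rho(X)$, and $\alb^s_\rho$ is its composition with $\alb^s$, hence $\R_{\an,\exp}$-definable by Theorems \ref{nil-jacobians o-min} and \ref{definabilisation}. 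Since $\mathfrak{k}\otimes\Bbbk\subseteq\ker d\nu$, the map $\nu$ descends to $\bar\nu\colon\mathbf{W}_\Bbbk\to\mathbf{U}$, and taking $\rho_0:=\bar\nu|_{\Gamma_W}$ and chasing definitions gives $\rho=\rho_0\circ(\alb^s_\rho)_*$, which is (i).

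For (ii), a positive-dimensional sub-nil-Jacobian of $\Alb^s_\rho(X)$ corresponds to a nonzero $\Q$-sub-mixed-Hodge-structure Lie subalgebra $\mathfrak{v}\subseteq\mathfrak{w}$ with lattice $\Gamma_V=\Gamma_W\cap\mathbf{V}(\Q)$ Zariski dense in $\mathbf{V}$, so that $\rho_0|_{\Gamma_V}$ is trivial if and only if $\mathfrak{v}\otimes\Bbbk\subseteq\ker d\bar\nu$. The key point I would isolate is that $\ker d\bar\nu$ contains no nonzero $\Q$-sub-mixed-Hodge-structure Lie ideal: such an ideal pulls back to a sub-mixed-Hodge-structure ideal of $\g^s(X)$ strictly larger than $\mathfrak{k}$ and still inside $\ker d\nu$ after $\otimes\Bbbk$, contradicting maximality of $\mathfrak{k}$. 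If now $\mathfrak{v}\subseteq\ker d\bar\nu$ were a nonzero sub-mixed-Hodge-structure subalgebra, the ideal of $\mathfrak{w}$ it generates, namely $\mathfrak{v}+[\mathfrak{w},\mathfrak{v}]+[\mathfrak{w},[\mathfrak{w},\mathfrak{v}]]+\dots$, is again a sub-mixed-Hodge-structure (each term is the image of a morphism of mixed Hodge structures, the sum terminating by nilpotency) and remains inside the ideal $\ker d\bar\nu$ -- a nonzero sub-mixed-Hodge-structure ideal there, which is impossible. Hence $\rho_0|_{\Gamma_V}$ is nontrivial.

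For (iii), I would first observe that $\alb^s_\rho$ is the period map of the admissible unipotent variation obtained by pushing the canonical variation $\V^s$ of Theorem \ref{albanese are periods} along $\mathbf{W}$; thus by Theorem \ref{griffiths} the reduced image $\alb^s_\rho(X)^{\mathrm{red}}$ is quasi-projective and $\alb^s_\rho\colon X\to\alb^s_\rho(X)^{\mathrm{red}}$ is an algebraic morphism. Taking its Stein factorisation $X\xrightarrow{\phi}S\xrightarrow{\psi}\Alb^s_\rho(X)$, I would prove that $\phi$ contracts a connected subvariety $Z\subseteq X$ exactly when $\rho(\im(\pi_1(Z)\to\pi_1(X)))$ is finite -- equivalently trivial, since $\mathbf{U}$ is unipotent over a field of characteristic zero. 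One direction is formal: if $\alb^s_\rho(Z)$ is a point then $(\alb^s_\rho)_*(\im\pi_1(Z))=1$, so $\rho|_{\im\pi_1(Z)}=\rho_0\circ(\alb^s_\rho)_*|_{\im\pi_1(Z)}$ is trivial. For the converse, if $\rho|_{\im\pi_1(Z)}$ is trivial, let $\mathbf{V}\subseteq\mathbf{W}$ be the Zariski closure of the monodromy $(\alb^s_\rho)_*(\im\pi_1(Z))$; then $\alb^s_\rho(Z)$ lies in a translate of the sub-nil-Jacobian $N_{\mathbf{V}}$, on which $\bar\nu=\rho_0$ is trivial, so (ii) forces $\dim N_{\mathbf{V}}=0$ and, $Z$ being connected, $\alb^s_\rho(Z)$ is a single point. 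Thus $\phi$ is a proper morphism with connected fibres contracting precisely the $\rho$-finite connected subvarieties, which is the defining property of the Shafarevich reduction of $\rho$, and by its uniqueness $(S,\phi)=\sh_\rho$.

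I expect the main obstacle to be exactly this converse direction in (iii): promoting ``$\rho$ trivial on $Z$'' to ``$\alb^s_\rho$ contracts $Z$'' requires the identification of $\alb^s_\rho(Z)$ with a sub-nil-Jacobian governed by its monodromy and the application of (ii), and it also requires careful bookkeeping of properness so that the Stein factorisation genuinely computes the Shafarevich reduction (in particular handling the non-properness of $\alb^s_\rho$ itself by working with the algebraic map onto $\alb^s_\rho(X)^{\mathrm{red}}$ and with the finite map $\psi$). By contrast, the construction and properties (i)--(ii) are comparatively mechanical once the maximal sub-mixed-Hodge-structure ideal $\mathfrak{k}$ is set up correctly.
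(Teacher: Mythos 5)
Your construction (the maximal sub-mixed-Hodge-structure Lie ideal inside $\ker d\nu$, the quotient nil-Jacobian, the factorisation of $\rho$ through it) and your verification of \textit{(ii)}--\textit{(iii)} via Zariski density of monodromy and Hodge-substructure images are essentially identical to the paper's proof, which takes the maximal sub-mixed-Hodge-structure Lie subalgebra $\mathfrak{p}\subseteq \ker\operatorname{Lie}(\nu)$ over $\Q$ (deducing that it is an ideal from maximality), sets $\Alb^s_{\rho}(X)$ to be the nil-Jacobian of $\mathbf{P}\backslash\G^s_{\Q}(X)$, and checks the Shafarevich universal property by showing $f_*(\G^s_{\Q}(Y))\subseteq\mathbf{P}$ for any $f\colon Y\to X$ with $\rho\circ f_*$ finite, using functoriality of the Morgan--Hain mixed Hodge structure. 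The step you flag as the main obstacle --- confining $\alb^s_{\rho}(Z)$ to a translate of $N_{\mathbf{V}}$ --- is supplied in the paper exactly as you sketch, by the functorial map $\Alb^s(f)$ for normal connected $Y\to X$ (so quantifying over morphisms from normal varieties rather than subvarieties), after which your route through property \textit{(ii)} and the paper's direct appeal to maximality of $\mathfrak{p}$ amount to the same argument.
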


Recall that if $X$ is a normal quasi-projective variety, $G$ a group and $\rho \colon \pi_1(X) \to G$ the \emph{Shafarevich reduction} of $\rho$ is a pair $(\Sh_{\rho}, \sh_{\rho})$, where $\Sh_{\rho}$ is a normal quasi-projective variety and $\sh_{\rho} \colon X \to \Sh_{\rho}$ is a dominant morphism with a connected general fibre satisfying the following universal property: for any normal connected algebraic variety $Y$ and a morphism $f \colon Y \to X$ the composition $Y \xrightarrow{f} X \xrightarrow{\sh_{\rho}} \Sh_{\rho}$ is constant if and only if $\pi_1(Y) \xrightarrow{f_*} \pi_1(X) \xrightarrow{\rho} G$ has finite image.

Shafarevich reductions play crucial role in the modern approach to Shafarevich Conjecture on holomorphic convexity of universal covers of algebraic varieties (see \cite{Eys}, \cite{EKPR}, \cite{BBT24}, \cite{DY+K}), see also \cite{BM} and \cite{CDY} for applications of Shafarevich reductions in over topics. 

The existence and essential uniqueness of Shafarevich reductions in the case where $G$ is an algebraic group over a field $\Bbbk$ of characteristic zero was proven by Bakker, Brunebarbe and Tsimerman in \cite{BBT24} (the work \cite{BBT24} is based on earlier  results of \cite{Eys}, \cite{Brun}, \cite{DY+K}; see also \cite{DY} for a similar statement when $\operatorname{char} \Bbbk>0$.). The construction in \cite{BBT24} is not very explicit, as it uses the $\C^{\times}$-action on the moduli stack of Higgs bundles and abstract existence theorems for complex variations of Hodge structures. Therefore, we find Theorem \ref{partial higher albanese exists} useful, as its item \textit{(iii)} gives explicit description of the Shafarevich reduction in the nilpotent case.

\begin{proof}[Proof of Theorem \ref{partial higher albanese exists}]

 Without loss of generality we may assume that $\mathbf{U}$ is defined over a subfield $\Bbbk' \subseteq \Bbbk$ of at most countable transcendence degree. Choosing an embedding $\Bbbk' \hookrightarrow \C$ we reduce everything to the case $\Bbbk=\C$.

 Let $s$ be the nilpotency of $\mathbf{U}$. By the universal property of Malcev completions (Theorem \ref{malcev exists}) there exists a factorisation
\[
\xymatrix{
\pi_1(X) \ar[rd]_{\mu^s} \ar[rr]^{\rho} && \mathbf{U}(\C)\\
&\G^s_{\C}(X) \ar[ur]_{\nu}&
}
\]
The homomorphis $\nu$ induces a morphism of Lie aglebras $\operatorname{Lie}(\nu) \colon \g^s_{\C}(X) \to \mathfrak{u}$, where $\mathfrak{u}$ is the Lie algebra of $\mathbf{U}$.

Let $\mathfrak{p} \subset \g^s_{\Q}(X)$ be the maximal Lie subalgebra over $\Q$ such that the following holds:
\begin{itemize}
\item $\mathfrak{p}_{\C} \subseteq \ker \operatorname{Lie}(\nu)$;
\item $\mathfrak{p}$ is a Hodge substructure.
\end{itemize}
We claim that $\mathfrak{p}$ is a Lie ideal. Indeed, a span of a collection of Hodge substructures is a Hodge substructure, therefore
\[
\mathfrak{p}_1:=\operatorname{Span} \left ( \bigcup_{x \in \g^s_{\Q}(X)} [x, \mathfrak{p}] \right)
\]
again satisfies the two properties above. By maximality, $\mathfrak{p}_1=\mathfrak{p}$.

Let $\mathbf{P}:=\exp \mathfrak{p}$ be the corresponding subgroup of $\G^s_{\Q}(X)$. This is a closed normal subgroup and $\Gamma_P:=\G^s_{\Z}(X) \cap \mathbf{P}$ is a normal subgroup of $\G^s_{\Z}(X)$ that is discrete and Zariski dense in $\mathbf{P}$.

Let $\mathbf{Q}:=\mathbf{P} \backslash \G^s_{\Q}(X)$. Then $\Gamma_Q:=\Gamma_P \backslash \G^s_{\Z}(X)$ is a discrete Zariski dense subgroup in $\mathbf{Q}$. Moreover, the Lie aglebra $\mathfrak{q}$ of $\mathbf{Q}$ inherits a mixed Hodge structure with negative weights and we can consider the nil-Jacobian
\[
\Alb^s_{\rho}(X):= \Gamma_Q \backslash \mathbf{Q}(\C)/F^0\mathbf{Q}.
\]
Clearly, the projection $\G^s_{\Q}(X) \to \mathbf{Q}$ descends to a surjective morphism of nil-Jacobians $\alpha \colon \Alb^s(X) \to \Alb^s_{\rho}(X)$. The composition with $\alb^s$ produces a definable map $\alb^s_{\rho} \colon X \to \Alb^s_{\rho}(X)$.

Since $\Gamma_P=\ker[\pi_1(\Alb^s(X)) \to \pi_1(\Alb^s_{\rho}(X))]$ is contained in the kernel of $\nu \colon \G^s_{\C}(X) \to \mathbf{U}(\C)$, the representation $\rho$ factorises as 
\[
\pi_1(X) \xrightarrow{(\alb^s_{\rho})_*} \Gamma_Q=\pi_1(\Alb^s_{\rho}(X)) \xrightarrow{\rho_0} \mathbf{U}(\C)
\]
for some representation $\rho_0$. For every nil-Jacobian $N \subseteq \Alb^s_{\rho}(X)$ either $N$ is a point, or $\rho_0|_{\pi_1(N)}$ is non-trivial. Indeed, if it is trivial, the preimage of $N$ in $\Alb^s(X)$ is contained in a fibre of $\alpha$ which is $\Gamma_P \backslash \mathbf{P}(\C)/F^0\mathbf{P}$.

Let us check that the Stein factorisation $X \xrightarrow{\phi} S \xrightarrow{\psi} \Alb^s_{\rho}(X)$ is indeed the Shafarevich reduction.

Let $f \colon Y \to X$ be a morphism from a normal connected algebraic variety $Y$ such that $f^*\rho \colon \pi_1(Y) \to \mathbf{U}(\C)$ has finite image. Consider the induced map $\alb^s(f) \colon \Alb^s(Y) \to \Alb^s(X)$. We obtain a diagram
\[
\xymatrix{
Y \ar[rr]^{\alb^s_Y} \ar[d]_{f} & &\Alb^s(Y) \ar[d]^{\alb^s(f)}\\
X \ar[rr] ^{\alb^s_X} \ar[d]_{\phi} \ar[rrd]_{\alb^s_{\rho}} && \Alb^s(X) \ar[d]^{\alpha}\\
S \ar[rr]_{\psi} && \Alb^s_{\rho}(X)
}
\]
Since the image of $\pi_1(Y)$ in $\mathbf{U}(\C)$ is finite, it is trivial (a connected unipotent group over $\C$ contains no torsion elements). Therefore the image of $f_* \colon \G^s_{\Q}(Y) \to \G^s_{\Q}(X)$ is a closed connected $\Q$-subgroup of $\G^s_{\Q}(X)$ whose Lie algebra is a Hodge substructure and whose complexification is contained in $\ker\nu$. Therefore, $f_*(\G^s_{\Q}(Y)) \subseteq \mathbf{K}$ and $\im \alb^s(f)$ lies in a fibre of $\alpha$. It follows from the diagram above that $\alb^s_{\rho} \circ f \colon Y \to \Alb^s_{\rho}(X)$ is constant. Since $Y$ is connected, $f(Y)$ is contained in a fibre of $\phi$.

Vice versa, if $\phi \circ f \colon Y \to S$ is constant, then the composition $\pi_1(Y) \xrightarrow{f_*} \pi_1(X) \xrightarrow{(\alb^s_{\rho})_*} \pi_1(\Alb^s_{\rho}(X))$ is trivial. Since $\rho$ factorises through $\pi_1(\Alb^s_{\rho}(X))$, we deduce that $\rho \circ f_*$ is trivial as well.

\end{proof}

\section{Algebraic geometry of higher Albanese manifolds}\label{algebraic sec}

In this section, we prove the second Main Theorem mentioned in he introduction (Theorem \ref{no algebraic main}) and discuss some of its consequences.

\subsection{Commutative algebraic groups}\label{commutative groups subsec}
We recall a well-known structure result on commutative complex algebraic groups, which is a special case of a more general Chevalley - Barsotti - Rosenlicht structure theorem (see e.g. \cite{Con}).
\begin{thrm}\label{commutative groups}
Let $\mathfrak{C}$ be a connected commutative algebraic group over $\C$. Then there exists a short exact sequence of complex algebraic groups 
\begin{equation}\label{algebraic presentation}
0 \to \mathfrak{T} \times \mathfrak{U} \to \mathfrak{C} \to \mathfrak{A} \to 0,
\end{equation}
where $\mathfrak{A}$ is an abelian variety, $\mathfrak{T}=\mathbb{G}_m^r$ an algebraic torus and $\mathfrak{U}=\mathbb{G}_a^k$.

Moreover, such a decomposition is essentially unique.
\end{thrm}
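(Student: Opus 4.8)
The plan is to deduce the statement from the Chevalley--Barsotti--Rosenlicht theorem together with the classical structure theory of commutative linear algebraic groups in characteristic zero. First I would invoke Chevalley's theorem in the form cited in \cite{Con}: since $\mathfrak{C}$ is a connected algebraic group over the perfect field $\C$, there is a canonical short exact sequence
\[
0 \to \mathfrak{L} \to \mathfrak{C} \to \mathfrak{A} \to 0,
\]
where $\mathfrak{A}$ is an abelian variety and $\mathfrak{L} \subseteq \mathfrak{C}$ is the largest connected linear (affine) algebraic subgroup. As $\mathfrak{C}$ is commutative, so is $\mathfrak{L}$.

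Next I would analyse the connected commutative linear group $\mathfrak{L}$. Using the Jordan decomposition in linear algebraic groups, every element $g \in \mathfrak{L}$ factors uniquely as $g = g_s g_u$ with $g_s$ semisimple and $g_u$ unipotent, these parts being functorial in $g$. Because $\mathfrak{L}$ is commutative, the sets of semisimple and of unipotent elements form closed subgroups $\mathfrak{T}$ and $\mathfrak{U}$, and the multiplication map $\mathfrak{T} \times \mathfrak{U} \to \mathfrak{L}$ is an isomorphism, since the Jordan decomposition is multiplicative and unique. Here $\mathfrak{T}$ is a torus, hence $\mathfrak{T} \cong \mathbb{G}_m^r$ as every torus over the algebraically closed field $\C$ splits, and $\mathfrak{U}$ is a connected commutative unipotent group. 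In characteristic zero the exponential map is a polynomial isomorphism from the abelian Lie algebra $\operatorname{Lie}(\mathfrak{U})$ onto $\mathfrak{U}$, whence $\mathfrak{U} \cong \mathbb{G}_a^k$. Substituting $\mathfrak{L} \cong \mathfrak{T} \times \mathfrak{U}$ into the Chevalley sequence produces the desired presentation (\ref{algebraic presentation}).

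For the essential uniqueness I would argue that every piece is intrinsic. The subgroup $\mathfrak{L}$ is the unique maximal connected affine subgroup of $\mathfrak{C}$, so the quotient $\mathfrak{A} = \mathfrak{C}/\mathfrak{L}$ and the surjection $\mathfrak{C} \to \mathfrak{A}$ are canonical. Inside $\mathfrak{L}$, the torus $\mathfrak{T}$ is precisely the subgroup of semisimple elements and $\mathfrak{U}$ precisely the subgroup of unipotent elements, both characterised by the Jordan decomposition; hence the integers $r, k$ and the subgroups $\mathfrak{T}, \mathfrak{U}$ are uniquely determined by $\mathfrak{C}$.

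The genuine obstacle is the Chevalley--Barsotti--Rosenlicht theorem itself, but as that is the deep input being cited, the remaining work reduces to the standard decomposition of a commutative linear group into its torus and unipotent parts. The only subtlety there is verifying that commutativity forces the semisimple and unipotent elements to form subgroups that multiply to a direct product; everything else is routine bookkeeping.
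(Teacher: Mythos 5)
Your proposal is correct, and it is precisely the argument the paper has in mind: the paper states Theorem \ref{commutative groups} without proof, recalling it as a special case of the Chevalley--Barsotti--Rosenlicht theorem with a citation to \cite{Con}, and your deduction (Chevalley's exact sequence, then the splitting of the connected commutative linear part into its semisimple and unipotent subgroups via Jordan decomposition, with $\mathfrak{T}\cong\mathbb{G}_m^r$ by splitness of tori over $\C$ and $\mathfrak{U}\cong\mathbb{G}_a^k$ via the exponential in characteristic zero) is the standard way to fill in that citation. Your uniqueness argument is also the right one, since $\mathfrak{L}$ is the unique maximal connected linear subgroup and $\mathfrak{T}$, $\mathfrak{U}$ are intrinsically characterised inside it as the loci of semisimple and unipotent elements.
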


We refer to $\mathfrak{A}$ as the \emph{maximal compact quotient} of $\mathfrak{C}$ and to $\mathfrak{T}$ and $\mathfrak{U}$ as \emph{multiplicative} and \emph{unipotent parts} of $\mathfrak{C}$ respectively.

Let $\mathfrak{C}$ be a connected commutative algebraic group and $C=\mathfrak{C}^{\an}$. Then $C$ is a connected commutative complex Lie group that can be uniquely written as $C=V/\Lambda$, where $V$ is a finite-dimensional complex vector space and $\Lambda=\pi_1(C) \subset V$ is a finitely generated discrete subgroup.  In particular, $C$ is a $K(\Lambda, 1)$-space for $\Lambda \simeq \Z^r$.

\begin{prop}\label{no unipotent}
Let $\mathfrak{C}$ be a connected commutative algebraic group over $\C$ and $C=\mathfrak{C}^{\an}$. Suppose that
\begin{itemize}
\item[(i)] the maximal compact quotient of $\mathfrak{C}$ is trivial;
\item[(ii)] $C \simeq J^0H$ for some mixed Hodge structure $H$ with $W_{-1}H=H$.
\end{itemize}
Then $\mathfrak{C}$ is an algebraic torus.
\end{prop}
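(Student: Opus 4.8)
The plan is to reduce the whole statement to a comparison of two elementary invariants of a connected commutative complex Lie group: its complex dimension and the rank of its fundamental group. First I would invoke the structure theorem (Theorem \ref{commutative groups}) together with hypothesis \textit{(i)}: since the maximal compact quotient $\mathfrak{A}$ is trivial, the exact sequence \eqref{algebraic presentation} degenerates to an isomorphism of algebraic groups $\mathfrak{C} \simeq \mathbb{G}_m^r \times \mathbb{G}_a^k$ for some integers $r, k \ge 0$. The claim then amounts to showing that the unipotent part $\mathfrak{U} = \mathbb{G}_a^k$ is absent, i.e. that $k = 0$.

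Next I would record the relevant invariants on the two sides of the isomorphism $C \simeq J^0H$. Writing $C = \mathfrak{C}^{\an} = (\C^{\times})^r \times \C^k$, one has $\dim_{\C} C = r + k$, while $\pi_1(C) \simeq \Z^r$ gives $\rk \pi_1(C) = r$ (the additive factors are simply connected). On the Jacobian side, set $n := \rk H_{\Z}$ and $f := \dim_{\C} F^0H$. Then $\dim_{\C} J^0H = \dim_{\C}(H_{\C}/F^0H) = n - f$. The key input is Proposition \ref{carlson theorem}: because $W_{-1}H = H$ we have $F^0H \cap \overline{F^0H} = 0$, so $H_{\Z}$ embeds as a discrete subgroup of $H_{\C}/F^0H$; writing $J^0H = V/\Lambda$ with $V = H_{\C}/F^0H$ contractible and $\Lambda = H_{\Z}$, this yields $\rk \pi_1(J^0H) = n$.

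Finally I would match the invariants across the isomorphism of complex Lie groups. Equality of complex dimensions gives $r + k = n - f$, and equality of the fundamental-group ranks gives $r = n$. Subtracting, $k = -f$; since both $k$ and $f$ are nonnegative, this forces $k = 0$ (and, as a byproduct, $F^0H = 0$). Hence $\mathfrak{U} = \mathbb{G}_a^k$ is trivial and $\mathfrak{C} \simeq \mathbb{G}_m^r$ is an algebraic torus.

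I do not anticipate a genuine obstacle: the argument is pure dimension-versus-rank bookkeeping, and its only substantial ingredient — that $H_{\Z}$ sits inside $H_{\C}/F^0H$ as a lattice of full rank $\dim_{\Q} H_{\Q}$ — is precisely Proposition \ref{carlson theorem}, which is where the hypothesis $W_{-1}H = H$ enters. The one point worth double-checking is that hypothesis \textit{(i)} is truly necessary: were an abelian-variety quotient of complex dimension $a$ present, it would contribute $2a$ to $\rk \pi_1$ but only $a$ to $\dim_{\C}$, so the invariant $\dim_{\C} - \rk \pi_1$ would equal $k - a$ rather than $k$, and the comparison would only give $a = k + f$ instead of the desired $k = 0$.
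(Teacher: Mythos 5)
Your proof is correct, but it takes a different route from the paper's. The paper also reduces to $\mathfrak{C}=\mathfrak{T}\times\mathfrak{U}$ and passes to the universal cover $V=H_{\C}/F^0H=\widetilde{T}\oplus\widetilde{U}$, but then it argues by \emph{location} rather than by counting: since $\widetilde{U}\to U$ is an isomorphism (the unipotent factor is simply connected), the lattice $\Lambda=\pi_1(C)$ must sit inside the proper complex subspace $\widetilde{T}$; on the other hand $\Lambda$ is the image of $H_{\Z}$ under the $\C$-linear surjection $H_{\C}\to V$, and $H_{\Z}$ is Zariski dense in $H_{\C}$, so $\Lambda$ is Zariski dense in $V$ --- contradiction unless $\widetilde{U}=0$. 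Note the complementary use of the integral structure: the paper needs only that $H_{\Z}$ \emph{spans} $H_{\C}$ (rationality), and does not need injectivity or discreteness of $H_{\Z}\to V$; your argument needs exactly the opposite half, namely that $H_{\Z}$ \emph{injects} into $V$ as a discrete subgroup of full rank $n=\rk H_{\Z}$, which you correctly extract from Proposition \ref{carlson theorem} (via $H_{\R}\cap F^0H=0$, a consequence of $F^0H\cap\overline{F^0H}=0$). What your bookkeeping buys: it is purely arithmetic, it yields the extra conclusion $F^0H=0$ for free, and your closing remark quantifies precisely how hypothesis \textit{(i)} enters (an abelian quotient of dimension $a$ would only give $a=k+f$, since it contributes $2a$ to the rank but $a$ to the dimension) --- the paper's density argument uses \textit{(i)} more implicitly, through the product decomposition $C=T\times U$. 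What the paper's argument buys in exchange is that it pinpoints the geometric obstruction (the lattice trapped in a proper complex subspace) without computing any dimensions, and it survives even when one only knows that $\Lambda$ spans, not that it is a full-rank lattice.
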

\begin{proof}
It is sufficient to prove that the unipotent part of $\mathfrak{C}$ is trivial. The decomposition $\mathfrak{C}=\mathfrak{T} \times \mathfrak{U}$ yields a decomposition of complex Lie groups $C=T \times U$. Let $V=H_{\C}/F^0H$ be the universal cover of $C$. This is a complex vector space which splits $V=\widetilde{T} \oplus \widetilde{U}$, where $\widetilde{T}$ is the universal cover of $T$ and $\widetilde{U}$ is the universal cover of $U$. The projection $\widetilde{U} \to U$ is an isomorphism and $\Lambda=\pi_1(C)$ is contained inside $\widetilde{T}$ (as a subgroup of $V$). At the same time, $\Lambda$ is the image of $H_{\Z}$ under the projection $H_{\C} \to H_{\C}/F^0H=V$. The lattice $H_{\Z} \subseteq H_{\C}$ is complex Zariski dense. Therefore $\Lambda$ is Zariski dense in $V$, which leads to a contradiction.
\end{proof}

\subsection{Topology of commutative principal bundles}\label{chern hoefer subsec}

We fix a connected  commutative complex algebraic group $\mathfrak{C}$ and denote $C=\mathfrak{C}^{\an}$.

Let $S$ be a complex manifold. Denote by $\O(S, C)$ the sheaf of holomorphic $C$-valued functions on $S$. This is a sheaf of groups  on $S$ and the holomorphic principal $C$-bundles over $S$ are classified by its \v{C}ech cohomology group $H^1(S, \O(S, C))$.  In the case $\mathfrak{C}=\mathbb{G}_m$, this group is nothing but $H^1(S, \O^{\times}_S)=\Pic(S)$. If $\mathfrak{C}=\mathbb{G}_m^{r}$, there are canonical isomorphisms $H^1(S, \O(S, C)) \simeq H^1(S, \O^{\times}_S)^{r} \simeq \Pic^{\times r}(S)$.

Write $C = V/\Lambda$ for a vector space $V$ and a discrete group $\Lambda \subset V$. By $\Lambda_S$ we denote the constant local system of abelian groups $\Lambda \o \underline{\Z}_{S}$. 

There is an analogue of the exponential short exact sequence:
\begin{equation}\label{short exponential}
0 \to \Lambda_S \to \O_S \o V \to \O(S, C) \to 0
\end{equation}
that induces:
\begin{equation}\label{long exponential}
\ldots \to H^1(S, \Lambda) \to H^1(S, \O_S\o V) \to H^1(S, \O(S, C)) \xrightarrow{\mathbf{c}} H^2(S, \Lambda) \to \ldots
\end{equation}
We refer to the map $\mathbf{c} \colon H^1(S, \O(S, C)) \to H^2(S, \Lambda)$ as the \emph{Chern-H\"ofer class}. It coincides with the first Chern class in the case where $C=\C^{\times}$ and was studied by H\"ofer  in the case of  $C$ compact (\cite{Hoef}).

\begin{prop}\label{topologically trivial}
Let $p \colon X \to S$ be a holomorphic principal $C$-bundle. Assume that $\mathbf{c}(p)=0$. Then $p$ is smoothly trivial, that is, $X$ is diffeomorphic to $S \times C$.
\end{prop}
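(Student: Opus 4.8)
The plan is to show that the Chern--H\"ofer class $\mathbf{c}(p)$ is in fact a purely \emph{topological} invariant of $p$, namely the obstruction to triviality of the underlying smooth principal $C$-bundle, and that this obstruction is complete. First I would record the shape of $C$. Since $\Lambda = \pi_1(C)$ is a finitely generated free abelian group, write $\Lambda \simeq \Z^r$ and $V \simeq \C^n$; then $C = V/\Lambda$ is isomorphic as a real Lie group to $(S^1)^r \times \R^{2n-r}$, so $C$ is homotopy equivalent to the torus $T^r$ and its classifying space satisfies $BC \simeq K(\Lambda,2)$. Consequently smooth principal $C$-bundles over (the paracompact manifold) $S$ are classified by $[S, BC] = H^2(S,\Lambda)$, and such a bundle is smoothly trivial precisely when its classifying class vanishes. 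The heart of the argument is to identify $\mathbf{c}(p)$ with this class.

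I would implement the classifying-space statement sheaf-theoretically, so that it lines up directly with the definition of $\mathbf{c}$. Let $\mathcal{A}_S(V)$ and $\mathcal{A}_S(C)$ denote the sheaves of smooth $V$-valued and smooth $C$-valued functions on $S$. Because the covering projection $V \to C$ admits local smooth sections, one has a smooth analogue of the exponential sequence
\[
0 \to \Lambda_S \to \mathcal{A}_S(V) \to \mathcal{A}_S(C) \to 0.
\]
The sheaf $\mathcal{A}_S(V) \simeq (\mathcal{A}_S)^{\oplus 2n}$ is a module over the sheaf of smooth functions, hence fine, hence soft; on a paracompact base this gives $H^i(S, \mathcal{A}_S(V)) = 0$ for $i \ge 1$. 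Therefore the connecting map $\partial \colon H^1(S, \mathcal{A}_S(C)) \xrightarrow{\ \sim\ } H^2(S, \Lambda)$ is an isomorphism, and $H^1(S, \mathcal{A}_S(C))$ is exactly the group of isomorphism classes of smooth principal $C$-bundles. The inclusion of holomorphic functions into smooth functions yields a morphism from the holomorphic exponential sequence (\ref{short exponential}) to the smooth one which is the identity on $\Lambda_S$, and hence a commutative square
\[
\xymatrix{
H^1(S, \O(S,C)) \ar[r]^-{\mathbf{c}} \ar[d] & H^2(S, \Lambda) \ar@{=}[d] \\
H^1(S, \mathcal{A}_S(C)) \ar[r]^-{\partial}_-{\sim} & H^2(S, \Lambda),
}
\]
where the left vertical arrow sends a holomorphic principal $C$-bundle to its underlying smooth bundle. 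Reading this square off, $\mathbf{c}(p) = \partial([p]_{\mathrm{sm}})$, i.e.\ $\mathbf{c}(p)$ is the image of the topological classifying class of $p$ under the isomorphism $\partial$.

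With this identification the conclusion is immediate: if $\mathbf{c}(p) = 0$ then $\partial([p]_{\mathrm{sm}}) = 0$, and since $\partial$ is an isomorphism the smooth class $[p]_{\mathrm{sm}} \in H^1(S,\mathcal{A}_S(C))$ vanishes, so the underlying smooth principal bundle is trivial; it therefore admits a global smooth section and $X$ is $C$-equivariantly diffeomorphic to $S \times C$. I expect the only genuinely delicate point to be the identification step, that is, verifying that $\mathbf{c}$ as defined by the holomorphic sequence really coincides with the smooth obstruction; this is handled entirely by the softness/acyclicity of $\mathcal{A}_S(V)$ together with naturality of connecting homomorphisms, and uses paracompactness of $S$ (which holds for the bases occurring here). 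Everything else — the homotopy type of $C$ and the classification of smooth bundles — is standard.
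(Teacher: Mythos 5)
Your proposal is correct and follows essentially the same route as the paper: the paper's proof likewise passes to the $\mathcal{C}^{\infty}$-version of the exponential sequence, uses acyclicity of the sheaf of smooth $V$-valued functions to turn the connecting map into an isomorphism $H^1(S,\mathcal{C}^{\infty}(S,C)) \xrightarrow{\sim} H^2(S,\Lambda)$, and identifies $\mathbf{c}(p)$ with the smooth classifying class via the forgetful map from holomorphic to smooth cochains. Your additional details (fineness/softness of the sheaf, paracompactness, the homotopy identification $\operatorname{B}C \simeq K(\Lambda,2)$) merely make explicit what the paper leaves implicit, the last point matching its Remark \ref{characteristic classes}.
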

\begin{proof}
The short exact sequence (\ref{short exponential}) has a $\mathcal{C}^{\infty}$-version:
\begin{equation}\label{short exponential smooth}
0 \to  \Lambda_S \to \mathcal{C}^{\infty}(S, \C) \o V \to \mathcal{C}^{\infty}(S, C) \to 0,
\end{equation}
where $\mathcal{C}^{\infty}(S, C)$ is the sheaf of smooth $C$-valued functions on $S$. The isomorphism classes of smooth principal $C$-bundles over $S$ are parametrised by $H^1(S, \mathcal{C}^{\infty}(S, C))$. The sheaf $\mathcal{C}^{\infty}(S, \C) \o V$ is acyclic and from the $\mathcal{C}^{\infty}$-version of the long exact sequence (\ref{long exponential}) one gets an isomorphism $\mathbf{c} \colon H^1(S, \mathcal{C}^{\infty}(S, C)) \xrightarrow{\sim} H^2(S, \Lambda)$. This isomorphism is, of course, the same Chern-H\"ofer class, up to the forgetful map $H^1(S, \mathcal{O}(S, C)) \to H^1(S, \mathcal{C}^{\infty}(S, C))$. Thus, $\mathbf{c}(p)=0$ if and only if it is trivial as a $\mathcal{C}^{\infty}$-principal bundle.
\end{proof}

\begin{rmk}\label{characteristic classes}
Another equivalent definition of the ($\mathcal{C}^{\infty}$-) Chern-H\"ofer class is the following. The universal cover of $C$ is contractible, thus $C$ has the homotopy type of the classifying space $\operatorname{B}\Lambda$, where $\Lambda$ is viewed as a commutative topological group with discrete topology. Principal $C$-bundles over a manifold $S$ are thus parametrised by the homotopy classes of maps $S \to \operatorname{B}C=\operatorname{B}(\operatorname{B}\Lambda)$. There are functorial isomorphisms $[S, \operatorname{B}(\operatorname{B}\Lambda)] \xrightarrow{\sim} [S, K(\Lambda, 2)] \xrightarrow{\sim}H^2(S, \Lambda)$.
\end{rmk}

The following lemma is classical in algebraic topology.

\begin{lemma}\label{topology}
Let $p \colon X \to S$ be a principal $C$-bundle. Assume that $S$ is aspherical. Then
\begin{itemize}
\item[(i)] There is an exact sequence
\begin{equation}\label{homotopy short}
1 \to \pi_1(C) = \Lambda \to \pi_1(X) \to \pi_1(S) \to 1;
\end{equation}
\item[(ii)] (\ref{homotopy short}) is a central extension;
\item[(iii)] the corresponding class  of the central extension 
\[
[\pi_1(X)] \in \operatorname{Ext}^1(\pi_1(S), \Lambda)=H^2(\pi_1(S), \Lambda)
\]
is mapped to the Chern-H\"ofer class $\mathbf{c}(p)$ under the  isomorphism  $H^2(\pi_1(S), \Lambda) \xrightarrow{\sim} H^2(S, \Lambda)$.
\end{itemize}
\end{lemma}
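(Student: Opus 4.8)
The plan is to prove all three items by elementary homotopy theory, exploiting that every space in sight is aspherical. Since $C = V/\Lambda$ with $V$ a contractible complex vector space, $C$ is a $K(\Lambda,1)$, and by hypothesis $S$ is a $K(\pi_1(S),1)$. For (i) and (ii) I would write down the long exact sequence of homotopy groups of the fibration $C \to X \xrightarrow{p} S$. Since $\pi_2(S)=0$ by asphericity and $\pi_0(C)=0$ by connectedness, the relevant tail
\[
\pi_2(S) \to \pi_1(C) \to \pi_1(X) \to \pi_1(S) \to \pi_0(C)
\]
collapses to the short exact sequence (\ref{homotopy short}), with $\pi_1(C)=\Lambda$; the same argument shows $\pi_n(X)=0$ for $n\ge 2$, so $X$ is itself aspherical, a fact I will reuse in (iii). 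Centrality holds because the conjugation action of $\pi_1(S)$ on the normal subgroup $\pi_1(C)\subseteq\pi_1(X)$ is the monodromy of the fibration; for a principal bundle the structure group $C$ acts on the fibre by translations, and since $C$ is connected every translation is homotopic to the identity, so this monodromy acts trivially on $\pi_1(C)=\Lambda$.

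For (iii) I would use the classifying-space description of Remark \ref{characteristic classes}. Because $C \simeq K(\Lambda,1)$ one has $\operatorname{B}C \simeq K(\Lambda,2)$, and $p$ is classified by a map $f \colon S \to \operatorname{B}C = K(\Lambda,2)$ with $\mathbf{c}(p) = [f] \in [S, K(\Lambda,2)] = H^2(S,\Lambda)$. As the universal total space $\operatorname{E}C$ is contractible and $X = f^{*}\operatorname{E}C$, the space $X$ is (homotopy equivalent to) the homotopy fibre of $f$, and the homotopy long exact sequence of $X \to S \xrightarrow{f} K(\Lambda,2)$ recovers the very same extension (\ref{homotopy short}), with connecting map $\pi_2(K(\Lambda,2)) = \Lambda \to \pi_1(X)$. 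It then remains to match the class of this extension with $[f]$ under the canonical isomorphism $H^2(S,\Lambda) \cong H^2(\pi_1(S),\Lambda)$ coming from asphericity of $S$: this is precisely the standard dictionary between maps $K(\pi_1(S),1) \to K(\Lambda,2)$, central extensions of $\pi_1(S)$ by $\Lambda$, and classes in $H^2(\pi_1(S),\Lambda)$.

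Items (i)–(ii) are routine; the only point requiring care is this last matching step in (iii). The cleanest route I see is to invoke the $K(\pi,1)$-to-group dictionary directly: the homotopy fibre of a map representing $c \in H^2(\pi_1(S),\Lambda)$ is aspherical with fundamental group equal to the central extension classified by $c$, and this identification is functorial and compatible with $H^2(S,\Lambda) \cong H^2(\pi_1(S),\Lambda)$. Alternatively one can verify the agreement by a cocycle computation, choosing a set-theoretic section of $p$ over an open cover realizing $f$ and comparing the resulting $\Lambda$-valued group $2$-cocycle on $\pi_1(S)$ with the \v{C}ech transition cocycle of the bundle. Either way, the essential (and essentially only) obstacle is bookkeeping these identifications carefully, so that the two a priori distinct $H^2$-classes coincide on the nose rather than merely corresponding up to a sign or an automorphism of $\Lambda$.
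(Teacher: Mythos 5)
Your proposal is correct, and for items (i) and (iii) it is essentially the paper's argument: (i) is exactly the long homotopy sequence of the fibration, and your treatment of (iii) is the paper's dictionary from Remark \ref{characteristic classes} read in the opposite direction --- the paper starts from the central extension, forms the homotopy fibration $\operatorname{B}\Lambda \to \operatorname{B}\pi_1(X) \to \operatorname{B}\pi_1(S)$ and observes that the resulting classifying map $S \to \operatorname{B}(\operatorname{B}\Lambda)=\operatorname{B}C$ is by definition the Chern--H\"ofer class, whereas you start from the classifying map $f\colon S \to K(\Lambda,2)$ of the bundle, identify $X$ with its homotopy fibre, and recover the extension from the homotopy long exact sequence; both hinge on the same correspondence between maps $K(\pi,1)\to K(\Lambda,2)$, central extensions, and $H^2(\pi,\Lambda)$, and your explicit acknowledgment of the sign/normalization bookkeeping is, if anything, more careful than the paper's one-line invocation. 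The only genuinely different step is (ii): you argue that the monodromy of the fibration acts on the fibre by $C$-equivariant maps, i.e.\ translations, which are homotopic to the identity since $C$ is connected; the paper instead notes that (because $\Lambda$ is abelian) centrality reduces to triviality of the local system $R^1p_*\Z_X$, which is pulled back via the classifying map from $R^1P_*\Z_{\operatorname{E}C}$ over the simply connected space $\operatorname{B}C$. Your translation argument is more elementary and self-contained, while the paper's version has the advantage of already introducing the classifying-space machinery that it reuses in (iii). One small point worth making explicit in your write-up of (ii): conjugation is a priori an action of all of $\pi_1(X)$ on the normal subgroup $\Lambda$, and it is only because $\Lambda$ is abelian that this action descends to a $\pi_1(S)$-action identifiable with the monodromy --- the paper flags this, and you use it tacitly.
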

\begin{proof}[Proof]
\textit{(i)} is the long homotopy sequence of the fibration $C \to X \to S$.
\\

\textit{(ii)}. It is sufficient to show that the action of $\pi_1(S)$ on the fundamental group of the fibre is trivial. Since $\pi_1(C)$ is abelian, this would follow from the triviality of the local system $R^1p_*\Z_X$ on $S$.

Let $\operatorname{B}C$ be the classifying space of the group $C$ and $P \colon \operatorname{E}C \to \operatorname{B}C$ the universal principal $C$-bundle.  Since $C$ is connected, $\operatorname{B}C$ is simply connected and $R^1P_{*}\Z_{\operatorname{E}C}$ is trivial. At the same time, $R^1p_{*}\Z_X=\phi^*R^1P_{*}\Z_{\operatorname{E}C}$ for the classifying map $\phi \colon S \to \operatorname{B}C$.
\\

\textit{(iii)}. Since both $S$ and $C$ are aspherical, $X$ is also aspherical. Every central extension sequence
\[
1 \to \Lambda \to \pi_1(X) \to \pi_1(S) \to 1
\]
yields a homotopy fibration
\[
\operatorname{B}\Lambda \to \operatorname{B}\pi_1(X) \to \operatorname{B}\pi_1(S),
\]
and thus, a classifying map $\operatorname{B}\pi_1(S) =S \to \operatorname{B}(\operatorname{B}\Lambda)=\operatorname{B}C$.

This is precisely the homotopy definition of the Chern-H\"ofer class (see Remark \ref{characteristic classes} above).
\end{proof}

\subsection{Blanchard’s theorem}\label{blanchard subsec}
We recall the theorem of Blanchard on holomorphic prinicipal torus bundles with K\"ahler  total space (see Theorem \ref{blanchard} below).

Let $S$ be a complex manifold and $A=V/\Lambda$ a compact complex torus. As before, isomorphism classes of holomorphic principal $A$-bundles over $S$ correspond to the elements of  $H^1(S, \O(S, A))$ and are topologically classified by the Chern-H\"ofer class $\mathbf{c} \colon H^1(S, \O(S, A)) \to H^2(S, \Lambda)$.

The following result is essentially due to Blanchard (\cite{Blanch})
\begin{thrm}[Blanchard]\label{blanchard}
Let $S$ be a complex manifold and $p \colon X \to S$ is a holomorphic principal $A$-bundle. Suppose that $X$ is K\"ahler and  $H_1(S, \Z)$ is torsion-free. Then $\mathbf{c}(p)=0$. In particular, $\pi_1(X)=\pi_1(S) \times \pi_1(A)$.
\end{thrm}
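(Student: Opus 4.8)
The plan is to prove first that the Chern--H\"ofer class $\mathbf{c}(p) \in H^2(S, \Lambda)$ vanishes, and then to deduce the statement about fundamental groups from Lemma \ref{topology}. Write $A = V/\Lambda$ with $V$ the universal cover, so that $\Lambda = \pi_1(A)$. Since $A$ is connected, the local systems $R^q p_* \R$ are constant (this is exactly the input of Lemma \ref{topology}, (ii)), and the Leray spectral sequence of $p$ takes the form $E_2^{a,b} = H^a(S, \R) \otimes H^b(A, \R) \Rightarrow H^{a+b}(X, \R)$. Under the identification $H^1(A, \R) = \Hom(\Lambda, \R)$, the transgression $d_2 \colon E_2^{0,1} = H^1(A, \R) \to E_2^{2,0} = H^2(S, \R)$ is, up to sign, the real Chern--H\"ofer class $\mathbf{c}(p)_{\R} \in H^2(S, \R) \otimes \Lambda_{\R} = H^2(S, \Lambda_{\R})$ --- this is the standard identification of the transgression in a principal bundle with its characteristic class. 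Hence it suffices to show that this transgression vanishes.

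The next step is to invoke the K\"ahler hypothesis on the \emph{total} space. The map $p$ is a proper smooth holomorphic submersion whose fibres are compact K\"ahler manifolds (complex tori), and a K\"ahler form on $X$ restricts to a K\"ahler form on every fibre, so relative hard Lefschetz holds. By Blanchard's degeneration theorem for K\"ahler fibre bundles (equivalently, by Deligne's degeneration criterion) the Leray spectral sequence degenerates at $E_2$; in particular $d_2 = 0$, equivalently the restriction $H^1(X, \R) \to H^1(A, \R)$ is surjective. Therefore $\mathbf{c}(p)_{\R} = 0$, i.e.\ $\mathbf{c}(p)$ is a torsion class in $H^2(S, \Lambda)$. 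It is precisely here that the K\"ahler geometry of $X$ enters in an essential (and not merely formal) way, and this degeneration is the main obstacle of the argument: for a non-K\"ahler total space the transgression, and hence $\mathbf{c}(p)$, can be nonzero.

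To remove the remaining torsion I would use the hypothesis on $H_1(S, \Z)$. By the universal coefficient theorem the torsion subgroup of $H^2(S, \Z)$ is isomorphic to that of $H_1(S, \Z)$, which is zero by assumption; hence $H^2(S, \Z)$ is torsion-free. Since $\Lambda \cong \Z^{2g}$ is free, $H^2(S, \Lambda) \cong H^2(S, \Z) \otimes_{\Z} \Lambda$ is torsion-free as well, so the natural map $H^2(S, \Lambda) \to H^2(S, \Lambda_{\R})$ is injective. As $\mathbf{c}(p)_{\R} = 0$, this forces $\mathbf{c}(p) = 0$, as claimed.

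Finally, the statement on fundamental groups follows from Lemma \ref{topology}: the central extension $1 \to \pi_1(A) = \Lambda \to \pi_1(X) \to \pi_1(S) \to 1$ has class equal to the image of $\mathbf{c}(p) = 0$ under the isomorphism $H^2(\pi_1(S), \Lambda) \cong H^2(S, \Lambda)$ of Lemma \ref{topology}, (iii). A central extension with trivial class splits with central kernel, so $\pi_1(X) \cong \pi_1(S) \times \Lambda = \pi_1(S) \times \pi_1(A)$.
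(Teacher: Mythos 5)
Your proof is correct, and it rests on the same two pillars as the paper's: the Deligne--Blanchard degeneration of the Leray spectral sequence for the K\"ahler total space, and the torsion-freeness of $H_1(S,\Z)$ to upgrade rational vanishing of $\mathbf{c}(p)$ to integral vanishing. Where you diverge is in how you extract the rational vanishing from degeneration. You identify $\mathbf{c}(p)_{\R}$ with the transgression $d_2 \colon H^1(A,\R) \to H^2(S,\R)$ and kill it because $d_2=0$ at $E_2$; this is a standard identification (componentwise it is the circle-bundle statement that the transgression hits the Euler class), but it is an extra input you must justify. The paper sidesteps it entirely with a slicker observation: the pullback $X\times_S X \to X$ of the bundle to its own total space has the diagonal as a tautological holomorphic section, hence is trivial, so $p^*\mathbf{c}(p)=0$; degeneration is then used only through its consequence that $p^*\colon H^2(S,\Q)\to H^2(X,\Q)$ is injective, giving that $\mathbf{c}(p)$ is torsion. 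The two mechanisms are of course two faces of the same spectral-sequence fact (the kernel of $p^*$ on $H^2$ is exactly the image of $d_2$), so the content is equivalent; your version makes the role of the differential explicit, while the paper's version needs no identification of the transgression with the characteristic class and works verbatim for the Chern--H\"ofer class as defined via the exponential sequence. Your concluding steps (UCT to see $H^2(S,\Lambda)$ is torsion-free, injectivity of $H^2(S,\Lambda)\to H^2(S,\Lambda_{\R})$, and splitness of a central extension with trivial class giving the direct product decomposition of $\pi_1(X)$) match the paper's reasoning, including the same implicit finiteness hypotheses on the topology of $S$, which are harmless in the setting where the theorem is applied.
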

\begin{proof}[Sketch of a proof]
Consider the pullback of a principal bundle to its total space
\[
p' \colon X'=X \times_S X \to X.
\]
This is again a holomorphic principal $A$-bundle. It admits a holomorphic section (namely, the diagonal $X \to X \times_S X$), hence trivial. The class $[p']$ is the image of $[p]$ under the natural map $H^1(S, \O(S,C)) \to H^1(X, p^*\O(S,C)) \to H^1(X, \O(X, C))$, so
\[
p^*\mathbf{c}(p)= \mathbf{c}(p')=0.
\]
Therefore, $\mathbf{c}(p)$ lies in the kernel of the map $H^2(S, \Lambda) \to H^2(X, \Lambda)$.

The Leray spectral sequence
\[
H^n(S, R^kp_*\Q_X) \implies H^{n+k}(X,\Q)
\]
degenerates on the second step by the Deligne-Blanchard Degeneration Theorem (\cite{Del71}). This implies that the map $p^* \colon H^2(S, \Q) \to H^2(X, \Q)$ is injective. Therefore, $H^2(S, \Lambda) \o \Q \to H^2(X, \Lambda) \o \Q$ is also injective and $\mathbf{c}(p)$ is a torsion class.

At the same time,
\[
\operatorname{Tors}(H^2(S, \Lambda))=\operatorname{Tors}(H^2(S, \Z)) \o \Lambda = \operatorname{Tors}(H_1(S, \Z)) \o \Lambda=0.
\]
We conclude that $\mathbf{c}(p)=0$. By Proposition \ref{topologically trivial}, $X$ is diffeomorphic to $S \times A$.
\end{proof}

\subsection{Toric bundles}\label{toric subsec}

First, we prove the following algebraisation result.

\begin{prop}\label{algebraisation of toric actions}
Let $\mathfrak{X}$ be a complex algebraic variety and $X=\mathfrak{X}^{\an}$. Let $p \colon Y \to X$ be a holomorphic principal $T$-bundle, where $T=(\C^{\times})^k$. Suppose that this bundle is algebraic in the following sense: $Y=\mathfrak{Y}^{\an}$ for some algebraic variety $\mathfrak{Y}$, the map $p$ is the analytification of an algebraic morphism $\mathfrak{p}$ and the action $T \times Y \to Y$ is the analytification of an algebraic action $\mathfrak{T} \times \mathfrak{Y} \to \mathfrak{Y}$, where $\mathfrak{T}=\mathbb{G}_m^{k}$. Then $\mathfrak{p} \colon \mathfrak{Y} \to \mathfrak{X}$ is a Zariski locally trivial principal $\mathfrak{T}$-bindle.
\end{prop}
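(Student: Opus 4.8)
The plan is to first upgrade the analytic principal bundle structure to a genuine algebraic $\mathfrak{T}$-torsor over $\mathfrak{X}$, and then to invoke the classical fact that torsors under a split torus are Zariski-locally trivial.

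First I would record that $\mathfrak{p}$ is faithfully flat. Its analytification $p$ is a holomorphic fibre bundle, hence a surjective submersion; therefore $\mathfrak{p}$ is smooth and surjective, and in particular faithfully flat. Next I would promote the torsor condition from the analytic to the algebraic category. The algebraic action $\mathfrak{T} \times \mathfrak{Y} \to \mathfrak{Y}$ over $\mathfrak{X}$ produces an algebraic morphism
\[
\Psi \colon \mathfrak{T} \times \mathfrak{Y} \to \mathfrak{Y} \times_{\mathfrak{X}} \mathfrak{Y}, \qquad (t, y) \mapsto (t \cdot y, \, y).
\]
Its analytification $\Psi^{\an} \colon T \times Y \to Y \times_X Y$ is exactly the map $(t,y) \mapsto (t \cdot y, y)$, which is an isomorphism precisely because $p$ is a principal $T$-bundle. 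Since the analytification functor on finite-type $\C$-schemes is conservative — a morphism is an isomorphism if and only if its analytification is — it follows that $\Psi$ is an isomorphism. Together with the faithful flatness of $\mathfrak{p}$, this exhibits $\mathfrak{Y}$ as an fppf $\mathfrak{T}$-torsor over $\mathfrak{X}$.

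Finally I would invoke Hilbert's Theorem 90. Since $\mathfrak{T} = \mathbb{G}_m^k$ is a split torus, a $\mathfrak{T}$-torsor is the same datum as a $k$-tuple of $\mathbb{G}_m$-torsors, that is, a $k$-tuple of line bundles on $\mathfrak{X}$; and for $\mathbb{G}_m$ the fppf, étale and Zariski classifications all coincide, $H^1_{\mathrm{fppf}}(\mathfrak{X}, \mathbb{G}_m) = H^1_{\mathrm{et}}(\mathfrak{X}, \mathbb{G}_m) = H^1_{\mathrm{Zar}}(\mathfrak{X}, \mathbb{G}_m) = \Pic(\mathfrak{X})$. (No smoothness of $\mathfrak{X}$ is needed here, as Hilbert 90 holds over an arbitrary scheme.) Hence $\mathfrak{Y}$ is Zariski-locally trivial, being trivial over the common refinement of the trivialising Zariski opens of these $k$ line bundles.

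The main obstacle is the second step: one must ensure that the comparison between the analytic and algebraic worlds really yields an algebraic isomorphism and not merely an analytic one. The crucial input is the conservativity of analytification on finite-type $\C$-schemes, which forces $\Psi$ to be an isomorphism once $\Psi^{\an}$ is; granting this, the passage to fppf torsors and then to Zariski-local triviality via Hilbert 90 is formal.
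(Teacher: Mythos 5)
Your proposal is correct, but it takes a genuinely different route from the paper. The paper establishes \'etale local triviality by a geometric construction: it takes a generic iterated hyperplane section of $\mathfrak{Y}$ transverse to a fibre, obtaining a multisection $Z \subset \mathfrak{Y}$ with $p|_{Z^{\circ}}$ \'etale onto a neighbourhood of a given point, and then observes that the pullback $\mathfrak{Y} \times_{\mathfrak{X}} Z^{\circ} \to Z^{\circ}$ has a tautological section and is hence trivial; it concludes by quoting Serre's theorem that $\mathbb{G}_m$ (and hence $\mathbb{G}_m^k$) is special, so \'etale locally trivial torsors are Zariski locally trivial. You instead verify the torsor condition directly and globally: flatness and smoothness of $\mathfrak{p}$ are tested on the analytification, and the algebraic map $\Psi \colon \mathfrak{T} \times \mathfrak{Y} \to \mathfrak{Y} \times_{\mathfrak{X}} \mathfrak{Y}$ is an isomorphism because $\Psi^{\an}$ is and analytification is conservative on finite-type $\C$-schemes; then Hilbert 90 finishes. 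Note that the closing ingredients are essentially the same classical fact in two guises --- Serre's specialness of $\mathbb{G}_m$ is proved via Hilbert 90 --- so the real divergence is in how local triviality in a finer-than-analytic algebraic topology is obtained. Your route is arguably cleaner: it needs no projective embedding (the hyperplane-section argument implicitly requires quasi-projectivity of $\mathfrak{Y}$ and a genericity/transversality discussion), and it isolates exactly where the analytic hypothesis enters (conservativity of $(-)^{\an}$); in fact the paper's trivialisation over $Z^{\circ}$ tacitly uses the same conservativity, since the section-plus-action map is an algebraic morphism known a priori only to be an analytic isomorphism. What the paper's argument buys is concreteness and independence from descent-theoretic language. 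One phrasing caveat: since $X$ may be singular, ``surjective submersion'' is not quite the right notion; you should say that $p$ is analytically locally a product $U \times T$, hence flat with smooth fibres, and that flatness and smoothness of $\mathfrak{p}$ can be checked after analytification because $\O_{\mathfrak{X},x} \to \O_{X^{\an},x}$ is faithfully flat. This is cosmetic, not a gap.
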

\begin{proof}
The map $p \colon Y \to X$ is a holomorphic principal $T$-bundle which is locally trivial in the analytic topology. First, we claim that $p$ is locally trivial in the \'etale topology. Indeed, let $x \in X$. Taking a generic iterated hyperplane section of $Y$ transverse to the fibre $p^{-1}(x)$, we obtain a rational multisection of $p$, i.e. a subvariety $Z \subset Y$, such that $p|_Z \colon Z \to X$ is dominant and \'etale on a dense open subset $Z^{\circ} \subset Z$ with $x \in p(Z^{\circ})$. Thus, $p|_{Z^{\circ}} \colon Z^{\circ} \to X$ is an \'etale neighbourhood of $x$ and the restriction $Y \times_{X} Z^{\circ} \to Z^{\circ}$ is a trivial $T$-bundle.

Recall that an algebraic group $\mathfrak{G}$ is said to be \emph{special in the sense of Serre} if every \'etale locally trivial $\mathfrak{G}$-torsor is Zariski locally trivial. The group $\mathbb{G}_m$ is special  (\cite{Serre}) and the product of special groups is special. Hence the claim.
\end{proof}

\begin{lemma}\label{toric tower topology}
Let $Y_1$ be the analytification of a smooth quasi-projective variety. Let $p_2 \colon Y_2 \to Y_1$ be the total space of an algebraic Zariski locally trivial principal $(\C^{\times})^{r_2}$-bundle. Let $p_3 \colon Y_3 \to Y_2$ be the total space of an algebraic Zariski locally trivial principal $(\C^{\times})^{r_3}$-bundle.
\[
\xymatrix{
Y_3 \ar[d]_{p_3}^{\acts (\C^{\times})^{r_3}}\\
Y_2 \ar[d]_{p_2}^{\acts (\C^{\times})^{r_2}}\\
Y_1
}
\]
Suppose also that $Y_1$ is aspherical and $\pi_1(Y_1)$ is abelian and torsion-free. Then $\pi_1(Y_3)$ is nilpotent and $\operatorname{nilp}(\pi_1(Y_3)) \le 2$.
\end{lemma}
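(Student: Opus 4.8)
The plan is to reduce everything to the two central extensions produced by Lemma \ref{topology} and then to pin down the extension class of the top one using the algebraicity of the toric bundle $p_3$. First I would record the homotopy-theoretic input. Each fibre $(\C^\times)^{r_i}$ is aspherical (a product of circles), and $Y_1$ is aspherical by hypothesis; running the long exact sequence of homotopy groups for the fibrations $p_2$ and $p_3$ shows that $Y_2$ and $Y_3$ are aspherical as well. Lemma \ref{topology} then yields two central extensions
\[
1 \to \Z^{r_2} \to \pi_1(Y_2) \xrightarrow{\phi} \pi_1(Y_1) \to 1, \qquad 1 \to \Z^{r_3} \to \pi_1(Y_3) \to \pi_1(Y_2) \to 1.
\]
Since $\pi_1(Y_1)$ is finitely generated, abelian and torsion-free, it is isomorphic to $\Z^n$; applying Proposition \ref{nilpotency of extensions} to the first extension shows $\pi_1(Y_2)$ is nilpotent of class at most $2$. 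Moreover $\pi_1(Y_2)$ is torsion-free: any torsion element maps to a torsion element of $\Z^n$, hence lies in the central $\Z^{r_2}$, which is torsion-free. This puts us in position to apply Proposition \ref{nilpotency of extensions} to the second extension, with $\pi_1(Y_2)$ playing the role of the torsion-free base.

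The heart of the argument is to show that the extension class $[\pi_1(Y_3)] \in H^2(\pi_1(Y_2), \Z^{r_3})$ is pulled back from $\pi_1(Y_1)$ along $\phi$. Here I would use that $p_3$ is an algebraic Zariski locally trivial $(\C^\times)^{r_3}$-bundle, so it is classified by an element of $\Pic(Y_2)^{r_3}$ (algebraic Picard group), together with the surjectivity of the restriction $\Pic(Y_1) \to \Pic(Y_2)$. The latter follows because $p_2$ is a Zariski locally trivial $\mathbb{G}_m^{r_2}$-torsor: realising it as an iterated complement of zero sections of line bundles and combining the $\mathbb{A}^1$-invariance of the Picard group with the localisation (Gysin) sequence shows that every algebraic line bundle on $Y_2$ is a pullback from $Y_1$. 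Consequently $p_3 \cong p_2^* q$ for some algebraic $(\C^\times)^{r_3}$-bundle $q$ on $Y_1$, and by functoriality of the Chern--H\"ofer class (Remark \ref{characteristic classes}) we get $\mathbf{c}(p_3) = p_2^* \mathbf{c}(q)$ in $H^2(Y_2, \Z^{r_3})$. Passing through the asphericity isomorphisms $H^2(Y_i, \Z^{r_3}) \simeq H^2(\pi_1(Y_i), \Z^{r_3})$, under which $p_2^*$ becomes $\phi^*$ and $\mathbf{c}(p_3)$ becomes $[\pi_1(Y_3)]$ by item \textit{(iii)} of Lemma \ref{topology}, this says precisely that $[\pi_1(Y_3)]$ is pulled back along $\phi$.

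To finish I would apply Proposition \ref{nilpotency of extensions} to the second extension. If $\pi_1(Y_2)$ is already abelian there is nothing to prove; otherwise $\operatorname{nilp}(\pi_1(Y_2)) = 2$, and since $[\pi_1(Y_3)]$ is pulled back along the epimorphism $\phi \colon \pi_1(Y_2) \twoheadrightarrow \pi_1(Y_1)$ with $\operatorname{nilp}(\pi_1(Y_1)) = 1 < 2$, the criterion in Proposition \ref{nilpotency of extensions} gives $\operatorname{nilp}(\pi_1(Y_3)) = 2$. Either way $\pi_1(Y_3)$ is nilpotent of class at most $2$, as claimed. I expect the main obstacle to be the clean verification of the surjectivity $\Pic(Y_1) \twoheadrightarrow \Pic(Y_2)$ for iterated algebraic $\mathbb{G}_m$-torsors, and — relatedly — making the translation between the geometric Chern--H\"ofer class and the group-cohomological extension class fully compatible with the two base-change maps $p_2^*$ and $\phi^*$, since it is this compatibility that transports the algebraic-geometric statement ``$p_3$ comes from $Y_1$'' into the group-theoretic statement needed to invoke Proposition \ref{nilpotency of extensions}.
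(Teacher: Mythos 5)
Your proof is correct and follows essentially the same route as the paper's: asphericity plus Lemma \ref{topology} and Proposition \ref{nilpotency of extensions} reduce the problem to showing that $[p_3]$ is pulled back from an algebraic $(\C^{\times})^{r_3}$-bundle on $Y_1$, which both arguments deduce from the surjectivity of $\Pic_{\alg}(Y_1) \to \Pic_{\alg}(Y_2)$. The only divergence is that the paper cites the Fossum--Iversen exact sequence $\Pic_{\alg}(Y_1) \to \Pic_{\alg}(Y_2) \to \Pic_{\alg}((\C^{\times})^{r_2}) \to 0$, whereas you reprove this surjectivity directly by writing the torsor as an iterated complement of zero sections and using $\mathbb{A}^1$-invariance of $\Pic$ together with the localisation sequence --- a valid, self-contained substitute; your explicit verification that $\pi_1(Y_2)$ is torsion-free (needed to apply Proposition \ref{nilpotency of extensions} to the second extension) is a detail the paper leaves implicit.
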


\begin{proof}[Proof of Lemma \ref{toric tower topology}]

By the homotopy exact sequence of a fibration, $Y_2$ and $Y_3$ are also aspherical. From Lemma \ref{topology} and Proposition \ref{nilpotency of extensions} the group  $\pi_1(Y_2)$ is nilpotent of nilpotency class at most $2$ and $\pi_1(Y_3)$ is nilpotent of nilpotency class at most $3$. The only non-trivial assertion is that $\nilp(\pi_1(Y_3)) \le 2$. 

By Proposition \ref{nilpotency of extensions}, it is enough to show that the class of the central extension
\[
1 \to \pi_1((\C^{\times})^{r_3})=\Z^{r_3} \to \pi_1(Y_3) \to \pi_1(Y_2) \to 1
\]
lies in the image of a map $H^2(\Gamma, \Z^{r_3}) \to H^2(\pi_1(Y_2), \Z^{r_3})$ induced by some epimorphism $\pi_1(Y_2) \to \Gamma$ onto an abelian group $\Gamma$. We will show that this is precisely the case for $\Gamma = \pi_1(Y_1)$ and the group homomorphism induced by $p_2$.

The fibration $p_2$  gives a class 
\[
[p_2] \in H^1(Y_1, \O(Y_1,(\C^{\times})^{r_2})).
\]
Recall, that
\[
H^1(Y_1, \O(Y_1, (\C^{\times})^{r_2}))=H^1(Y_1, \O_{Y_1}^{\times})^{\times r_2}=\Pic^{\times r_2}(Y).
\]
Similarly, $[p_3] \in \Pic^{\times r_3}(Y_2)$. Their Chern-H\"offer classes are $\mathbf{c}(p_2) \in H^2(Y_1, \Z^{r_2})$ and $\mathbf{c}(p_3) \in H^2(Y_2, \Z^{r_3})$ respectively. By Lemma \ref{topology} it is enough to check that the class $\mathbf{c}(p_3)$ lies in the image of
\[
p_1^{*} \colon H^2(Y_1, \Z^{r_3}) \to H^2(Y_2, \Z^{r_3}).
\]

Let us prove a stronger statement, namely, that the principal bundle $[p_3] \in \Pic^{\times r_3}(Y_2)$ is a pull-back of a principal $(\C^{\times})^{r_3}$-bundle on $Y_1$.
For an algebraic variety $\mathfrak{Y}$  with $Y=\mathfrak{Y}^{\an}$ we denote by $\Pic_{\alg}(Y)$  the image of the analytification map 
\[
\Ppic(\mathfrak{Y}) \xrightarrow{(-)^{\an}} \Pic(Y),
\]
where $\Ppic(\mathfrak{Y})$ is the algebraic Picard group, i.e. the group of Zariski locally trivial principal $\mathbb{G}_m$-bundles on $\mathfrak{Y}$.

Denote also $\Pic^{\times r}_{\alg}(Y):=\operatorname{im}[\Ppic^{\times r}(\mathfrak{Y)} \xrightarrow{(-)^{\an}}\Pic^{\times r}(Y)]=(\Pic_{\alg}(Y))^{\times r}$. Proposition \ref{algebraisation of toric actions} implies that $[p_2]$ is contained in $\Pic_{\alg}^{\times r_2}(Y_1)$ and, similarly, $[p_3] \in \Pic_{\alg}^{\times r_3}(Y_2)$.

 By   \cite[Proposition 3.1]{FI}, the sequence
 \[
 \Pic_{\alg}(Y_1) \xrightarrow{p_2^*} \Pic_{\alg}(Y_2) \to \Pic_{\alg}((\C^{\times})^{r_2}) \to 0
 \]
 is exact. The same is true for
 \[
 \Pic^{\times r_3}_{\alg}(Y_1) \to \Pic^{\times r_3}_{\alg}(Y_2) \to \Pic_{\alg}^{\times r_3}((\C^{\times})^{r_2}) \to 0.
 \]
The group $\Pic_{\alg}^{r_3}((\C^{\times})^{r_2})$ is trivial, thus we conclude that $[p_3]=p_2^*[q]$ for some $[q] \in \Pic^{r_3}_{\alg}(Y_1)$. 
 
\end{proof}

\subsection{Proof of Theorem \ref{no algebraic main}}\label{algebraic proof subsec}

Now we are ready to prove Theorem \ref{no algebraic main}. First, we reduce the situation to the case $s=3$.

\begin{prop}\label{3 implies r}
Let $X$ be a normal quasi-projective variety. Suppose that the projection $p^s \colon \Alb^s(X) \to \Alb^{s-1}(X)$ is a biholomorphism. Then $p^r \colon \Alb^r(X) \to \Alb^{r-1}(X)$ is a biholomorphism for every $r \ge s$.
\end{prop}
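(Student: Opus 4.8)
The plan is to reduce the statement to an elementary property of the lower central series of the Malcev Lie algebra $\g=\g_{\Q}(X;x)$. Recall from subsection~\ref{higher albanese subsec} that the fibre of $p^r\colon\Alb^r(X)\to\Alb^{r-1}(X)$ is the commutative complex Lie group $C^r=J^0\z^r$, where $\z^r=\ker(\g^r\to\g^{r-1})=\g_{r-1}/\g_r$ is the $(r-1)$-st graded piece of the lower central series, endowed with its natural mixed Hodge structure satisfying $W_{-1}\z^r=\z^r$. Since $p^r$ is a holomorphic principal $C^r$-bundle, it is a biholomorphism exactly when the fibre $C^r$ is a single point. Thus the whole proposition is equivalent to the purely algebraic implication
\[
\z^s=0\ \Longrightarrow\ \z^r=0\quad\text{for all }r\ge s.
\]

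First I would record the dictionary $C^r=\{\operatorname{pt}\}\iff\z^r=0$. The only nonformal direction is that $C^r$ can be a point only if $\z^r$ vanishes, and this is where the negativity of the weights enters. As in the proof of Proposition~\ref{carlson theorem}, the condition $W_{-1}\z^r=\z^r$ forces $F^0\z^r\cap\overline{F^0\z^r}=0$, hence $\dim_{\C}F^0\z^r\le\tfrac12\dim_{\C}\z^r$ and therefore
\[
\dim_{\C}C^r=\dim_{\C}(\z^r)_{\C}-\dim_{\C}F^0\z^r\ \ge\ \tfrac12\dim_{\C}(\z^r)_{\C}.
\]
As $C^r$ is connected, it is a single point precisely when $\z^r=0$, i.e. when $\g_{r-1}=\g_r$. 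The hypothesis that $p^s$ is a biholomorphism thus translates into $\g_{s-1}=\g_s$.

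Finally I would carry out the induction on $r$, which is immediate from the defining relation $\g_{r+1}=[\g_r,\g]$ of the lower central series. Assuming $\g_{r-1}=\g_r$ for some $r\ge s$ (with the base case $r=s$ given), one computes
\[
\g_{r+1}=[\g_r,\g]=[\g_{r-1},\g]=\g_r,
\]
so $\g_r=\g_{r+1}$, i.e. $\z^{r+1}=0$. Hence $\z^r=0$ for every $r\ge s$, and by the dictionary of the previous paragraph each $p^r$ with $r\ge s$ is a biholomorphism. There is no serious obstacle here; the only point requiring care is the first step, where one must use the negative-weight condition to ensure that triviality of the principal bundle $p^r$ is genuinely equivalent to the vanishing of the Hodge structure $\z^r$, rather than merely to $F^0\z^r=(\z^r)_{\C}$. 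I would also emphasise that the induction is a statement about the graded pieces of the lower central series of the pro-nilpotent Lie algebra $\g$ (equivalently, about the tower of groups $\mathcal{G}^r$), so it applies verbatim without any assumption that $\pi_1(X)$ be rationally nilpotent of finite class.
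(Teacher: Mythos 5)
Your proof is correct, but it takes a genuinely different route from the paper's. The paper argues entirely at the level of fundamental groups: taking the minimal $r_0>s$ for which $p^{r_0}$ fails to be a biholomorphism, it notes that the lattices stabilise, $\G^{r_0-1}_{\Z}(X)=\cdots=\G^{s-1}_{\Z}(X)$, bounds the nilpotency class of the central extension $\G^{r_0}_{\Z}(X)$ via Proposition \ref{nilpotency of extensions}, and then uses the universal property of nilpotent quotients to factor $\alb^{r_0}_{*}$ and manufacture an inverse to $\G^{r_0}_{\Z}(X)\to\G^{r_0-1}_{\Z}(X)$; Zariski density of the lattices upgrades this to an isomorphism of the $\Q$-groups, hence to a biholomorphism. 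You instead work with the Malcev Lie algebra: you translate ``$p^r$ is a biholomorphism'' into ``$\z^r=\g_{r-1}/\g_r=0$'' through the fibre group $C^r=J^0\z^r$, using $W_{-1}\z^r=\z^r$ (exactly as in the proof of Proposition \ref{carlson theorem}) to exclude the degenerate possibility $F^0\z^r=(\z^r)_{\C}$, and then propagate by the one-line identity $\g_{r+1}=[\g_r,\g]=[\g_{r-1},\g]=\g_r$. Both arguments implement the same underlying fact, that a lower central series which stabilises once stays stable forever; yours is more elementary and self-contained (no torsion or Zariski-density bookkeeping, and it isolates precisely where the negative-weight hypothesis enters), while the paper's version converts the hypothesis using only the topological fact that a biholomorphism induces an isomorphism on $\pi_1$, and stays within the group-theoretic toolkit (nilpotency classes, Proposition \ref{nilpotency of extensions}) that the rest of Section \ref{algebraic sec} relies on. The one point in your write-up deserving a word of care is that $\g$ is pro-nilpotent, so the relation $\g_{r+1}=[\g_r,\g]$ should be read in the finite-dimensional quotients $\g^{r+1}$ (equivalently, with closures); since each graded piece $\g_{r-1}/\g_r$ is finite-dimensional this changes nothing, as your closing remark in effect acknowledges.
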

\begin{proof}
We argue by induction on $r$. Let $r_0$ be the minimal integer such that $r_0 > s$ and $p^{r_0}$ is not a biholomorphism.  The higher Albanese tower looks like
\[
\ldots \to \Alb^{r_0}(X)  \to \Alb^{r_0-1}(X) \xrightarrow{\sim}\Alb^{r_0-2}(X) \xrightarrow{\sim} \ldots \xrightarrow{\sim} \Alb^{s}(X) \to \ldots
\]

Recall that $\pi_1(\Alb^r(X))=\G^r_{\Z}(X)$ and $(\alb^r)_* \colon \pi_1(X) \to \G^r_{\Z}(X)$ is the universal $r$-step nilpotent torsion-free quotient of $\pi_1(X)$.

In our case, $\G^{r_0-1}_{\Z}(X)=\ldots = \G^{s-1}_{\Z}(X) = \G^{s}_{\Z}(X)$ and $\G^{r_0}_{\Z}(X)$ is a central extension of $\G^{r_0-1}_{\Z}(X)$. By Proposition \ref{nilpotency of extensions}, $s \le \nilp(\G^{r_0}_{\Z}(X)) \le s+1$. 

Suppose $\nilp(\G^{r_0}_{\Z}(X))=s+1$ (the case $\nilp(\G^{r_0}_{\Z}(X))=s$ is analogous). The homomorphism $\alb^{r_0}_* \colon \pi_1(X) \to \G^{r_0}_{\Z}(X)$ factorises through $\G^{s+1}_{\Z}(X)$, which gives us an inverse to the map $\G^{r_0}_{\Z}(X) \to \G^{r_0-1}_{\Z}(X)=\G^{s+1}_{\Z}(X)$. Therefore, $\G^{r_0}_{\Q}(X) \to \G^{r_0-1}_{\Q}(X)$ is an isomorphism, and $p^{r_0}$ is a biholomorphism.
\end{proof}

\begin{thrm}\label{no algebraic}
Let $X$ be a normal quasi-projective variety and $s>2$ a natural number. Suppose that one of the following holds:
\begin{itemize}
\item[(i)] $\alb^s \colon X \to \Alb^s(X)$ is dominant;
\item[(ii)] $\Alb^s(X)$ is definably biholomorphic to the definable analytification of a quasi-projective variety.
\end{itemize}
Then the map $p^r \colon \Alb^r(X) \to \Alb^{r-1}(X)$ is a principal $(\C^{\times})^k$-bundle if $r=2$ and is an isomorphism for $r>2$.
\end{thrm}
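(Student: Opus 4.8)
The plan is to descend to the bottom three floors of the tower and to show that, under either hypothesis, the structure groups of $p^2$ and $p^3$ are forced to be algebraic tori, after which a nilpotency count produces a contradiction. By Corollary \ref{when tower is algebraic} the hypotheses (i) and (ii) are equivalent and imply that the truncated tower $\Alb^s(X)\to\cdots\to\Alb^1(X)$ is algebraic: for each $j\le s$ one has $\Alb^j(X)=(\mathfrak A^j)^{\odef}$ with $\mathfrak A^j$ smooth quasi-projective (hence K\"ahler), the $p^j$ are algebraic, and $C^j=(\mathfrak C^j)^{\an}$ for algebraic commutative groups $\mathfrak C^j$ acting algebraically. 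Since $s>2$, the three floors $\Alb^3(X)\xrightarrow{p^3}\Alb^2(X)\xrightarrow{p^2}\Alb^1(X)$ are algebraic principal bundles. By Proposition \ref{3 implies r} it suffices to prove that $p^3$ is an isomorphism and that $\mathfrak C^2$ is a torus; if $C^2=0$ then $p^2$ is an isomorphism (a principal $(\C^\times)^0$-bundle) and the whole tower collapses, so I may assume $C^2\ne 0$, and I argue by contradiction assuming also $C^3\ne 0$, i.e. $\z^3(X)=(\g^3)_2\ne 0$ and hence $\nilp(\G^3_{\Z}(X))=3$.

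First I would pin down the structure of $\mathfrak C^2$ and $\mathfrak C^3$. By Proposition \ref{central towers exist} each $C^j\simeq J^0Z^j$ with $Z^j=\z^j(X)$ a mixed Hodge structure of negative weights, and by Theorem \ref{commutative groups} each $\mathfrak C^j$ is an extension of an abelian variety $\mathfrak A_j$ by a linear group $\mathfrak T_j\times\mathfrak U_j$. The heart of the argument is to kill the abelian parts $\mathfrak A_j$ via Blanchard's Theorem \ref{blanchard}. Pushing the structure group of $p^j$ along $\mathfrak C^j\to\mathfrak A_j$ produces an algebraic principal $\mathfrak A_j$-bundle $B_j\to\Alb^{j-1}(X)$ whose total space is quasi-projective, hence K\"ahler. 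The Deligne--Blanchard degeneration step in the proof of Theorem \ref{blanchard} forces $p^*\colon H^2(\Alb^{j-1}(X),\Q)\hookrightarrow H^2(B_j,\Q)$ to be injective, so the Chern--H\"ofer class $\mathbf c(B_j)$ is torsion and therefore \emph{vanishes rationally}. I emphasise that only this rational vanishing is needed, so the possible torsion of $H_1(\Alb^2(X),\Z)$ is irrelevant, which is what lets me run the argument at the upper floor $j=3$ over $\Alb^2(X)$ as well. Via Lemma \ref{topology}(iii) this means the pushed-out central extension $1\to\pi_1(\mathfrak A_j)\to\pi_1(B_j)\to\G^{j-1}_{\Z}(X)\to 1$ is rationally split, i.e. in the Malcev Lie algebra the line $\mathfrak a_j:=\pi_1(\mathfrak A_j)\otimes\Q$ is a central direct summand and so meets the derived subalgebra trivially. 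But $\mathfrak a_j$ is exactly the image of $\z^j(X)=(\g^j)_{j-1}\subseteq[\g^j,\g^j]$, which lies in the derived subalgebra; hence $\mathfrak a_j=0$ and $\mathfrak A_j=0$. With the compact parts gone, Proposition \ref{no unipotent} (applicable since $C^j\simeq J^0Z^j$ with $W_{-1}Z^j=Z^j$) forces $\mathfrak U_j=0$, so both $\mathfrak C^2$ and $\mathfrak C^3$ are algebraic tori.

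It remains to extract the contradiction. By Proposition \ref{algebraisation of toric actions} the algebraic toric bundles $p^2$ and $p^3$ are Zariski-locally-trivial, and $\Alb^1(X)$ is the analytification of a semiabelian variety, hence aspherical with $\pi_1$ abelian and torsion-free. Thus Lemma \ref{toric tower topology} applies to $\Alb^3(X)\to\Alb^2(X)\to\Alb^1(X)$ and gives $\nilp(\G^3_{\Z}(X))\le 2$, contradicting $\nilp(\G^3_{\Z}(X))=3$. Therefore $C^3=0$ and $p^3$ is an isomorphism; Proposition \ref{3 implies r} then yields that $p^r$ is an isomorphism for every $r>2$, while $p^2$ is a principal $(\C^\times)^k$-bundle by the torus structure of $\mathfrak C^2$ (with $k=0$ in the degenerate case $C^2=0$).

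The step I expect to be the main obstacle is the passage from Blanchard's topological triviality to the algebraic conclusion $\mathfrak A_j=0$. One must (a) make precise that only the rational vanishing of the Chern--H\"ofer class is required, so that the possible torsion in $H_1(\Alb^2(X),\Z)$ does not obstruct applying Theorem \ref{blanchard} one floor up, and (b) match correctly, through the Malcev correspondence and Lemma \ref{topology}(iii), the rational splitting of the pushed-out extension with the statement that $\z^j(X)$ --- being generated by iterated brackets and hence contained in the derived subalgebra --- cannot contribute a nonzero central abelian direct summand. Getting these two Lie-theoretic bookkeeping points exactly right, rather than invoking any single deeper theorem, is where the real work lies.
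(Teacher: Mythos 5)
Your proposal reproduces the paper's proof almost step for step: the reduction of (i)/(ii) via Corollary \ref{when tower is algebraic}, the restriction to the bottom three floors, the use of Blanchard-type arguments to kill the abelian-variety quotient of $\mathfrak{C}^j$, Proposition \ref{no unipotent} to kill the unipotent part, and the final contradiction via Proposition \ref{algebraisation of toric actions}, Lemma \ref{toric tower topology} and Proposition \ref{3 implies r} are exactly the paper's steps, applied to the same quotient bundle $B_j=\mathfrak{Y}^j/\mathfrak{B}_j\to\Alb^{j-1}(X)$.

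The one substep where you genuinely diverge is the deduction $\mathfrak{A}_j=0$. The paper invokes Theorem \ref{blanchard} in full strength: the bundle $M\to\Alb^2(X)$ is topologically trivial, so $\pi_1(M)=\pi_1(\Alb^2(X))\times\pi_1(A)$ integrally; since $\pi_1(M)$ is torsion-free and $2$-step nilpotent, the surjection from $\pi_1(X)$ factors through $\G^2_{\Z}(X)$, yielding a surjective left inverse to $v_*$ and hence $\pi_1(A)=0$ (finitely generated nilpotent groups being Hopfian). You instead stop at the \emph{rational} vanishing of the Chern--H\"ofer class (torsionness, via the Deligne--Blanchard degeneration plus the tautological section) and conclude in the Malcev Lie algebra: a rationally split central extension exhibits $\mathfrak{a}_j=\pi_1(A)\otimes\Q$ as a central direct summand of $\G(\pi_1(B_j))$, while $\mathfrak{a}_j$ is the image of $\z^j(X)=(\g^j)_{j-1}\subseteq[\g^j,\g^j]$ and so lies in the derived subalgebra, forcing $\mathfrak{a}_j=0$ and then $A=0$ since $\pi_1(A)\simeq\Z^{2\dim A}$. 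Both arguments are correct, and your Lie-theoretic bookkeeping checks out (push out along $\Z^m\hookrightarrow\Q^m$, the pushed-out extension splits, and $\pi_1(B_j)$ has the same Malcev completion as the split product). Your variant even buys something: it does not need the hypothesis of Theorem \ref{blanchard} that $H_1$ of the base be torsion-free, which the paper's application at the floor $j=3$ quietly assumes for $\Alb^2(X)$ --- and abelianisations of torsion-free $2$-step nilpotent lattices such as $\G^2_{\Z}(X)$ can in fact have torsion (e.g.\ $\langle x,y,z \mid z \text{ central},\ [x,y]=z^k\rangle$ has abelianisation $\Z^2\oplus\Z/k$). So your reworking is a legitimate and slightly more robust version of the same proof.
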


\begin{proof}

 The conditions \textit{(i)} and \textit{(ii)} are equivalent by Corollary \ref{when tower is algebraic}. Moreover, it follows from Corollary \ref{when tower is algebraic} that $\Alb^3(X)$ is algebraic and the diagram 
\[
\Alb^3(X) \xrightarrow{p^3} \Alb^2(X) \xrightarrow{p^2} \Alb^1(X)
\]
admits an algebraisation. Recall, that this means that there exist algebraic spaces (in our case, quasi-projective varieties) $\mathfrak{Y}^j, \ j=1, \ 2, \ 3$ and morphisms $\mathfrak{p}^j \colon \mathfrak{Y}^j \to \mathfrak{Y}^{j-1}$ such that $\Alb^j(X) =(\mathfrak{Y}^{j})^{\odef}$ and $\mathfrak{p}^j=(p^j)^{\odef}$. Moreover, $\mathfrak{p}^j$ are algebraic principal $\mathfrak{C}^j$-bundles for commutative connected algebraic groups $\mathfrak{C}^j, \ j=2, \ 3$. Definable complex Lie groups $C^j=(\mathfrak{C}^j)^{\odef}$ are Jacobians of some mixed Hodge structures.

We claim that the groups $\mathfrak{C}^2$ and $\mathfrak{C}^3$ are algebraic tori. By Proposition \ref{no unipotent} it is enough to show that their maximal abelian quotients are trivial.

Let us proof this claim for $j=3$ (the argument for $j=2$ is the same).

Let $\mathfrak{C}^3 \to \mathfrak{A}$ be the maximal abelian quotient and $\mathfrak{B}$ its kernel (see Theorem \ref{commutative groups}). Set $A:=\mathfrak{A}^{\odef}$ and $B:=\mathfrak{B}^{\odef}$. We get an exact sequence of definable commutative Lie groups
\[
0 \to B \to C^3 \to A \to 0.
\]

The action of $\mathfrak{C}^3$ on $\mathfrak{Y}^3$ restricts to a free algebraic action of $\mathfrak{B}$. There exists a quasi-projective quotient $\mathfrak{M}=\mathfrak{Y}^3/\mathfrak{B}$ and $M=\mathfrak{M}^{\odef}$ is the definable holomorphic quotient $\Alb^3(X)/B$. The map $p^3 \colon \Alb^3(X) \to \Alb^2(X)$ factorises as
\[
\xymatrix{
\Alb^2(X)\ar[dd]_{p^3} \ar[rd]^{u} & \\
& M \ar[ld]^{v}\\
\Alb^2(X) &\\
}
\]
 
 The manifold $M$ is the analytification of a smooth quasi-projective variety, hence it admis a K\"ahler metric. Therefore, $M \xrightarrow{v} \Alb^2(X)$ is a holomorphic principal $A$-bundle with K\"ahler total space. Blanchard's Theorem (Theorem \ref{blanchard}) implies that this bundle is topologically trivial and $\pi_1(M)=\pi_1(\Alb^2(X)) \times \pi_1(A)$. In particular, $\pi_1(M)$ is torsion-free and $2$-step nilpotent. The surjective homomorphism 
 \[
 \pi_1(X) \xrightarrow{\alb^3_{*}} \pi_1(\Alb^3(X)) \xrightarrow{u_*} \pi_1(M)
 \]
 factorises through $\pi_1(X) \xrightarrow{\alb^2_*} \pi_1(\Alb^2(X))=\G^2_{\Z}(X)$. We obtain a surjective homomorphism $\pi_1(\Alb^2(X)) \to \pi_1(M)=\pi_1(\Alb^2(X))\times \pi_1(A)$ which is left inverse to $v_* \colon \pi_1(M) \to \pi_1(\Alb^2(X))$. Since the latter map is surjective, we conclude that it is an isomorphism. Thus, $\pi_1(A) = 0$ and $A$ is trivial.

 Now, we are in the situation of Lemma \ref{toric tower topology}. We deduce that $\nilp(\Alb^3(X)) \le 2$ and the statement of the Theorem follows from Proposition \ref{3 implies r}.

\end{proof}

\begin{cor}\label{nilpotent cor}
Let $X$ be a normal quasi-projective variety. Suppose that  $\alb^s \colon X^{\an} \to \Alb^s(X)$ is dominant for some $s \ge 3$. Then the pro-unipotent completion of $\pi_1(X)$ is $2$-step nilpotent. In particular, if $\pi_1(X)$ is nilpotent, then $\nilp(X) \le 2$.
\end{cor}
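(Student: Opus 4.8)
The final statement to prove is Corollary~\ref{nilpotent cor}: if $X$ is a normal quasi-projective variety and $\alb^s \colon X^{\an} \to \Alb^s(X)$ is dominant for some $s \ge 3$, then the pro-unipotent completion of $\pi_1(X)$ is at most $2$-step nilpotent, and in particular if $\pi_1(X)$ is nilpotent then $\nilp(X) \le 2$.

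The plan is to derive this directly from Theorem~\ref{no algebraic} (cited in the excerpt as Theorem~\ref{no algebraic main}), which is the substantive result; the corollary is essentially an unwinding of what ``the tower stabilises at the second step'' means for the Malcev completion. First I would invoke Theorem~\ref{no algebraic} under its hypothesis~\textit{(i)}, namely that $\alb^s$ is dominant for some $s \ge 3$. The conclusion of that theorem is that $p^r \colon \Alb^r(X) \to \Alb^{r-1}(X)$ is an isomorphism for all $r > 2$. Since $\pi_1(\Alb^r(X)) = \G^r_{\Z}(X)$ and these are the universal $r$-step nilpotent torsion-free quotients of $\pi_1(X)$, the fact that $p^r$ is a biholomorphism for $r > 2$ forces $\G^r_{\Q}(X) \xrightarrow{\sim} \G^{r-1}_{\Q}(X)$ for all $r > 2$; this is exactly the reasoning already carried out inside the proof of Proposition~\ref{3 implies r}.

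Next I would pass to the pro-unipotent completion itself. By Proposition~\ref{malcev properties}, the Malcev completion $\G_{\Q}(X) = \varprojlim \G^s_{\Q}(X)$ is the inverse limit of the tower~(\ref{tower of groups}) of the $\G^s_{\Q}(X)$, with $\G^s_{\Q}(X)$ being $s$-step unipotent. Once all the transition maps $\G^r_{\Q}(X) \to \G^{r-1}_{\Q}(X)$ are isomorphisms for $r > 2$, the inverse limit coincides with $\G^2_{\Q}(X)$, so $\G_{\Q}(X) = \G^2_{\Q}(X)$ is at most $2$-step nilpotent. This proves the first assertion. For the final sentence, if $\pi_1(X)$ is itself nilpotent, then $\pi_1(X)$ embeds (modulo torsion) into its Malcev completion via $\mu_{\Gamma}$ —here one uses that for a finitely generated nilpotent group the map to the unipotent completion has kernel exactly the torsion (Proposition~\ref{malcev properties}~\textit{(iii)} together with the remark following it, that $\mu_{\Gamma}$ is injective when $\Gamma$ is torsion-free nilpotent). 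The nilpotency class of $\pi_1(X)$ modulo torsion therefore equals that of $\G_{\Q}(X)$, hence is at most $2$, giving $\nilp(X) \le 2$.

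I do not expect a genuine obstacle here, since all the hard work lives in Theorem~\ref{no algebraic}; the only point requiring a little care is the passage from ``each transition map is an isomorphism'' to a statement about the full pro-unipotent completion, and from there to the honest fundamental group. The subtle issue is torsion: the identification $\pi_1(\Alb^s(X)) = \G^s_{\Z}(X)$ records $\pi_1(X)$ only up to torsion, so the cleanest formulation is in terms of the pro-unipotent (equivalently, Malcev) completion, and the statement about $\nilp(X)$ should be read as nilpotency class modulo torsion—which is the standard convention and is already implicit in the way $\Gamma^s$ and $\G^s_{\Z}$ are set up in Subsection~\ref{malcev}. Thus the corollary follows by combining Theorem~\ref{no algebraic}, Proposition~\ref{3 implies r}, and Proposition~\ref{malcev properties}.
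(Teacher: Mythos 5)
Your proposal is correct and follows essentially the same route as the paper, which states the corollary without separate proof precisely because it is the immediate unwinding of Theorem \ref{no algebraic} (via Proposition \ref{3 implies r}) through the identification $\G_{\Q}(X)=\varprojlim \G^s_{\Q}(X)$ from Proposition \ref{malcev properties}. Your closing remark on torsion is also the right reading: the statement $\nilp(X)\le 2$ concerns $\pi_1(X)$ modulo torsion, consistent with the identification $\pi_1(\Alb^s(X))=\G^s_{\Z}(X)=\pi_1(X)^s/(\operatorname{torsion})$ used throughout the paper.
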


\section{Conclusion}\label{conclusion}

We finish with some open questions motivated by our results.
\\

\textbf{1. Hodge structures and definable Lie groups.}
Theorem \ref{nil-jacobians o-min} implies that if $H$ is a graded polarisbale mixed $\Z$-Hodge structure with negative weihgts ($W_{-1}H=H$), then $J^0H$ carries a canonical structure of a definable commutative complex Lie group.

The restriction $W_{-1}H=H$ is always satisfies after replacing $H$ with an appropriate Tate twist. Indeed, $F^pH(n)=F^{p+n}H$, therefore, if $q$ is the maximal integer such that $F^qH \neq 0$, then all the non-zero parts of the Hodge filtration of $H(q+1)$ are in negative rank and $W_{-1}H=H$. 

The operation $H \mapsto J^0(H(q+1))$ defines a functor
\[
\mathcal{J} \colon \{\text{graded polarised } \Z-\text{MHS} \} \to \{ \text{ commutative definable complex Lie groups } \}
\]

\begin{q}\label{jacobian question}
How far is the functor $\mathcal{J}$ from being fully faithful? Is it true that $\mathcal{J}H \simeq \mathcal{J}H'$ if and only if $H$ is isomorphic to $H'$ up to a shift of gradings?
\end{q}

Shift of gradings might be still necessary, as can be seen in the following example. Let $H$ be a pure polarised $\Z$-Hodge structure of weight $-1$. Then $J^0H$ is an abelian variety. Consider a weight $-2$ Hodge structure $H'$ whith $H'_{\Z}=H_{\Z}$ and the pieces of the Hodge decomposition
\[
\begin{cases}
(H')^{-2,0}=H^{-1,0};\\
(H')^{-1,-1}=0;\\
(H')^{0,-2}=H^{0,-1}
\end{cases}
\]

Then $J^0H'$ is the same abelian variety and two biholomorphic abelian varieties are isomorphic algebraically (and hence, definably).

Observe that the answer to Question \ref{jacobian question} is automatically positive if $\mathcal{J}H$ and $\mathcal{J}H'$ are isomorphic as nil-Jacobians. This motivates the following question:

\begin{q}
Let $f \colon N_{\mathbf{W}} \to N_{\mathbf{W}'}$ be a a holomorphic definable map of nil-Jacobians. Is it true, that $f$ is a morphism of nil-Jacobians?
\end{q}

This leads to a more general philosophical question: to what extend does o-minimal geometry preserves the Hodge-theoretic information?
\\

\textbf{2. $\alb^2$ and the Malcev completion of $\pi_1$}.
As we mentioned in the introduction, Corollary \ref{nilpotent cor} can be viewed as a non-proper version of a result of Aguillar Aguillar and Campana \cite{AC} that says that if $\alb \colon X \to \Alb(X)$ is surjective and proper, then the Malcev completion of $\pi_1(X)$ is abelian. This result follows from a more general theorem due to the same authors that says that if $\alb \colon X \to \Alb(X)$ is proper, then the map to the normalisation of the Albanese image $ X \to \alb(X)^{\nu}$ induces isomorphism on Malcev completions of $\pi_1$'s.

\begin{q}
Let $X$ be a normal quasi-projective variety and $\alb^2(X)^{\nu}$ the normalisation of the image of the second Albanese map. Is it true that $X \to \alb^2(X)^{\nu}$ induces an isomorphism of Malcev completions $\G(\pi_1(X)) \xrightarrow{\sim} \G(\pi_1(\alb^2(X)^{\nu}))$?
\end{q}

We do not know any example, where the answer is negative, but perhaps this is only because of our lack of explicit understanding of higher Albanese maps.
\\

\textbf{3. Varieties with surjective higher Albanese map.} Finally, it would be interesting to find a geometric criterion for surjectivity of higher Albanese maps.

It is known, that if $X$ is a weakly special or $h$-special quasi-projective variety, then the classical Albanese map $\alb \colon X \to \Alb(X)$ is dominant (\cite[Lemma 11.5]{CDY}; see \textit{ibid.} for the definition of weakly special and $h$-special varieties). At the same time, it is conjectured that the fundamental group of such variety is virtually nilpotent (\cite[Conjecture 11.4]{CDY}). This Conjecture is known to hold if $\pi_1(X)$ is linear (\cite{CDY}) or if it is virtually solvable (\cite[Corollary 7.2]{Rog}).

We propose the following two conjectures.

\begin{conj}\label{special dominant}
Let $X$ be a normal quasi-projective variety which is either weakly special or $h$-special. Then $\alb^s \colon X \to \Alb^s(X)$ is dominant for every $s$.
\end{conj}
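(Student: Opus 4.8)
The plan is to leverage the two main theorems of the paper to collapse the conjecture onto a single level of the tower, and then to attack that level by a fibration induction whose Hodge-theoretic core is the genuine difficulty.

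First I would record the elementary downward propagation. Since $\alb^{s-1}=p^s\circ\alb^s$ and each $p^s\colon\Alb^s(X)\to\Alb^{s-1}(X)$ is a surjective principal bundle (Theorem \ref{definabilisation}), dominance of $\alb^s$ forces dominance of $\alb^{s-1}$; the base case $s=1$ is exactly \cite[Lemma 11.5]{CDY}. The key observation is that the whole statement collapses to the single case $s=3$: if $\alb^3$ is dominant, then by Corollary \ref{nilpotent cor} the pro-unipotent completion of $\pi_1(X)$ is $2$-step nilpotent, and by Theorem \ref{no algebraic} the maps $p^r$ are isomorphisms for $r>2$. Hence $\Alb^r(X)\xrightarrow{\sim}\Alb^2(X)$ for all $r\ge 2$, so $\alb^2=p^3\circ\alb^3$ is dominant (a dominant map composed with an isomorphism), and by the isomorphisms $p^r$ each $\alb^r$ with $r\ge 3$ is dominant as well, while $\alb^1$ is dominant by propagation. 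Thus it suffices to prove that $\alb^3$ is dominant; by propagation this rests on first knowing $\alb^2$ is dominant, so in practice one establishes the two cases $s=2$ and $s=3$.

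To treat the passage from level $s-1$ to level $s$ I would argue fibrewise over $\Alb^{s-1}(X)$. Replacing $\alb^{s-1}$ by its Stein factorization and using that it is dominant by induction, $\alb^s$ is a lift of $\alb^{s-1}$ to the total space of the principal $C^s$-bundle $p^s$ of the central tower (\ref{central tower}), where $C^s\simeq J^0Z^s$ is the Jacobian of a mixed Hodge structure. Then $\alb^s$ is dominant if and only if, for a general fibre $F$ of $\alb^{s-1}$, the restricted map $\alb^s|_F\colon F\to C^s$ (which lands in a single $C^s$-torsor) is dominant. Here one needs two inputs: that the general fibre $F$ is again weakly special (resp. $h$-special), which should follow from the stability of these classes under passage to general fibres of a fibration out of a (weakly/$h$-)special variety — a point requiring care, especially under the finite \'etale covers hidden in the definition of weak specialness — and that $\alb^s|_F$ may be organized, through the partial higher Albanese construction of Theorem \ref{partial higher albanese exists}, as a map into a nil-Jacobian attached to $F$.

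The Hodge-theoretic heart is then to show that $\alb^s|_F$ is dominant onto $C^s$: decomposing $C^s$ via the structure theorem for commutative algebraic groups (Theorem \ref{commutative groups}) into its compact, multiplicative and additive parts, one would reduce to the surjectivity of an induced homomorphism from the generalized Albanese of $F$ onto $C^s$, feeding in \cite[Lemma 11.5]{CDY} for $F$. I expect this last point to be the main obstacle. By the collapse above it is essentially equivalent to proving that the Malcev completion of $\pi_1(X)$ of a weakly (or $h$-)special variety is at most $2$-step nilpotent (together with dominance of $\alb^2$) — a strong form of Campana's Conjecture \ref{campana conjecture} — so one is forced to produce a geometric mechanism that makes the degree $\ge 3$ Malcev obstruction classes vanish. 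As the authors observe, the lack of a sufficiently explicit description of the higher Albanese maps in terms of iterated integrals is precisely what makes this delicate, and it is plausible that the $h$-special case should be approached through Nevanlinna-theoretic or hyperbolicity inputs rather than by purely Hodge-theoretic means.
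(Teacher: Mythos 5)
The statement you were given is not proved in the paper at all: it is Conjecture \ref{special dominant}, posed as an open problem in Section \ref{conclusion}, where the authors record only that it would imply Conjecture \ref{special nilpotent} via Corollary \ref{nilpotent cor}. So there is no proof in the paper to compare yours against, and your text, read as a proof attempt, contains a genuine and self-acknowledged gap: the entire argument funnels into the dominance of $\alb^3$, equivalently into the surjectivity of the fibrewise maps onto the $C^s$-torsors, and at exactly that point you stop, correctly observing that what is needed amounts to a strong form of Campana's Conjecture \ref{campana conjecture} for weakly special and $h$-special varieties. A strategy whose decisive step is the conjecture itself in lightly disguised form is a research plan, not a proof.

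On the parts you do argue: the downward propagation along the surjective bundle maps $p^s$ and the collapse to the single case $s=3$ are sound consequences of the paper's results --- if $\alb^3$ is dominant, Theorem \ref{no algebraic} makes $p^r$ an isomorphism for $r>2$, and since $\alb^r=(p^r)^{-1}\circ \alb^{r-1}$ one climbs the tower, with the base case $s=1$ supplied by \cite[Lemma 11.5]{CDY} as you say. Two intermediate steps, however, are asserted without justification and are not covered by anything in the paper: (a) that the general fibre $F$ of (the Stein factorisation of) $\alb^{s-1}$ is again weakly special or $h$-special --- you flag this yourself, and it is genuinely delicate, since weak specialness is defined through finite \'etale covers and orbifold-base conditions that do not obviously pass to fibres; and (b) that $\alb^s|_F$ can be organised as a partial higher Albanese map of $F$ via Theorem \ref{partial higher albanese exists} --- this is plausible, since the image of $\pi_1(F)$ in $\G^s_{\Z}(X)$ does land in the central subgroup (the composition $F \to X \to \Alb^{s-1}(X)$ is constant), but the required compatibility between the restriction of the canonical variation to $F$ and the intrinsic Hodge theory of $\pi_1(F)$ is not carried out, and the dominance of the resulting map onto $C^s$ is precisely the unproven heart. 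In short: your reductions are correct as far as they go and consistent with how the paper itself frames the problem, but the conjecture remains exactly as open after your argument as before it.
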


\begin{conj}\label{special nilpotent}
Let $X$ be a normal quasi-projective variety which is either weakly special or $h$-special. Then $\pi_1(X)$ is virtually at most two step nilpotent.
\end{conj}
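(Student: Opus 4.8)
The plan is to read the Corollary off Theorem \ref{no algebraic} almost immediately: the geometric statement that the higher Albanese tower stabilises at the second step must be converted into the algebraic statement that the Malcev completion tower stabilises there as well. Since the hypothesis is that $\alb^s \colon X^{\an} \to \Alb^s(X)$ is dominant for some $s \ge 3$, and $3 > 2$, I would first apply Theorem \ref{no algebraic} in case \textit{(i)} to conclude that $p^r \colon \Alb^r(X) \to \Alb^{r-1}(X)$ is a biholomorphism for every $r > 2$.

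Next I would transfer this to the groups. Recall $\Alb^r(X) = \G^r_{\Z}(X) \backslash \G^r_{\C}(X)/F^0\G^r(X)$ with $\pi_1(\Alb^r(X)) = \G^r_{\Z}(X)$. Arguing exactly as in Proposition \ref{3 implies r}, a biholomorphism $p^r$ induces an isomorphism $\G^r_{\Z}(X) \xrightarrow{\sim} \G^{r-1}_{\Z}(X)$ on fundamental groups; as these are discrete Zariski dense subgroups of the unipotent $\Q$-groups $\G^r_{\Q}(X)$ and $\G^{r-1}_{\Q}(X)$, the projection $\G^r_{\Q}(X) \to \G^{r-1}_{\Q}(X)$ is then an isomorphism of algebraic groups for all $r > 2$. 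Since the pro-unipotent completion is the inverse limit $\G_{\Q}(X) = \varprojlim_r \G^r_{\Q}(X)$ of the tower \eqref{tower of groups}, in which every transition map beyond the second is an isomorphism, I would obtain $\G_{\Q}(X) \cong \G^2_{\Q}(X)$. By construction $\G^2_{\Q}(X) = \G_{\Q}(X)/(\G_{\Q}(X))_2$ is the universal $2$-step nilpotent quotient, so its nilpotency class is at most $2$; this gives the first assertion, namely that the pro-unipotent completion of $\pi_1(X)$ is $2$-step nilpotent.

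For the final assertion I would use that, when $\pi_1(X)$ is finitely generated nilpotent and torsion-free, the Malcev map $\mu \colon \pi_1(X) \hookrightarrow \G_{\Q}(X)(\Q)$ is injective by Proposition \ref{malcev properties}, so $\pi_1(X)$ embeds into the $2$-step nilpotent group just produced and hence $\nilp(\pi_1(X)) \le 2$. I expect the only delicate point to be the treatment of torsion here: the Malcev completion only sees $\pi_1(X)$ modulo its (finite) torsion subgroup, and torsion can in principle raise the nilpotency class of the ambient group above that of its completion. Thus the clean implication $\nilp(\pi_1(X)) \le 2$ should be understood for torsion-free $\pi_1(X)$ (equivalently, as controlling the class of $\pi_1(X)/\mathrm{torsion}$), and I would make this hypothesis explicit rather than leave it implicit, since everything preceding it follows formally from Theorem \ref{no algebraic}.
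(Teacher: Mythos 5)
There is a genuine gap, and it is foundational: the statement you were asked to address is Conjecture \ref{special nilpotent} — an \emph{open conjecture} which the paper does not prove — and your argument does not prove it either, because you have silently replaced its hypothesis with a different, unavailable one. The conjecture assumes only that $X$ is weakly special or $h$-special; your very first step assumes that $\alb^s \colon X^{\an} \to \Alb^s(X)$ is dominant for some $s \ge 3$, so that Theorem \ref{no algebraic}(i) applies. But for weakly special or $h$-special varieties, dominance is known only for the \emph{classical} Albanese map $\alb^1$ (\cite[Lemma 11.5]{CDY}); dominance of the higher Albanese maps is precisely Conjecture \ref{special dominant}, which the paper states as open. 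What you have written is, in substance, a correct re-derivation of Corollary \ref{nilpotent cor} from Theorem \ref{no algebraic} and Proposition \ref{3 implies r} (and the paper indeed obtains that corollary this way), but Corollary \ref{nilpotent cor} is a different statement, and the paper's only remark on Conjecture \ref{special nilpotent} is that it would follow either from Conjecture \ref{special dominant} plus Corollary \ref{nilpotent cor}, or from \cite[Conjecture 11.4]{CDY} plus Conjecture \ref{campana conjecture}.

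There is a second gap in your final paragraph, which you partly notice but resolve in the wrong direction. Even granting dominance of $\alb^s$, the conclusion of Corollary \ref{nilpotent cor} only constrains the pro-unipotent completion $\G_{\Q}(X)$, whereas the conjecture asserts that $\pi_1(X)$ itself is \emph{virtually} at most $2$-step nilpotent. For a general finitely generated group, $2$-step nilpotency of the Malcev completion says nothing of the sort: the completion factors through the nilpotent quotients modulo torsion (Proposition \ref{malcev properties}(iii)), and it cannot detect whether $\pi_1(X)$ is virtually nilpotent at all — that is \cite[Conjecture 11.4]{CDY}, itself open except in the linear and virtually solvable cases. Your proposed fix — restricting to torsion-free nilpotent $\pi_1(X)$ so that $\mu$ is injective — is exactly assuming away this hard part, not a hypothesis you are entitled to add. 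So even as a conditional argument, your proof needs to state explicitly that it establishes only the implication (Conjecture \ref{special dominant} $\Rightarrow$ Conjecture \ref{special nilpotent} for virtually nilpotent fundamental groups), not the conjecture itself.
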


Conjecture \ref{special dominant} implies Conjecture \ref{special nilpotent} by Corollary \ref{nilpotent cor}. At the same time, Conjecture \ref{special nilpotent} follows from \cite[Conjecture 11.4]{CDY} and Conjecture \ref{campana conjecture}.

\bibliography{references.bib}{}
\bibliographystyle{alpha}

\end{document}